\newtheorem{theorem}{Theorem}[section]
\newtheorem{lemma}[theorem]{Lemma}
\newtheorem{proposition}[theorem]{Proposition}
\newtheorem{corollary}[theorem]{Corollary}
\theoremstyle{definition}
\newtheorem{definition}[theorem]{Definition}
\newtheorem{remark}[theorem]{Remark}
\newcommand{\Q}{\mathbb{Q}}
\newcommand{\F}{\mathbb{F}}
\newcommand{\Qp}{\mathbb{Q}_p}
\newcommand{\Cp}{\mathbb{C}_p}
\newcommand{\Zp}{\mathbb{Z}_p}
\newcommand{\N}{\mathbb{N}}
\newcommand{\R}{\mathbb{R}}
\newcommand{\Z}{\mathbb{Z}}
\newcommand{\C}{\mathbb{C}}
\newcommand{\dd}{\mathrm{d}}
\newcommand{\X}{\mathfrak{X}}
\newcommand{\g}{\mathfrak{g}}
\newcommand{\e}{\mathrm{e}}
\newcommand{\ii}{\mathrm{i}}
\newcommand{\orm}{\mathrm{O}}
\newcommand{\letpprime}{Let $p$ be a prime number}
\newcommand{\ocal}{\mathcal{O}}
\renewcommand{\le}{\leqslant}
\renewcommand{\ge}{\geqslant}
\DeclareMathOperator{\rank}{rank}
\DeclareMathOperator{\ord}{ord}
\numberwithin{equation}{section}
\title{The $p$-adic Jaynes-Cummings model in symplectic geometry}
\author[Luis Crespo, \'Alvaro Pelayo]{Luis Crespo\,\,\,\,\,\, \'Alvaro Pelayo}
\address{Luis Crespo,
	Departamento de Matem\'{a}ticas, Estad\'{i}stica y Computaci\'{o}n, Universidad de Cantabria, Av.~de Los Castros 48, 39005 Santander, Spain}
\email{luis.cresporuiz@unican.es}
\address{\'Alvaro Pelayo,
	Facultad de Ciencias Matem\'aticas,
	Universidad Complutense de Madrid, 28040 Madrid, Spain, and Real Academia de Ciencias Exactas, F\'isicas y Naturales de Espa\~na}
\email{alvpel01@ucm.es}
\begin{document}

\begin{abstract}
	The notion of classical $p$-adic integrable system on a $p$-adic symplectic manifold was proposed by Voevodsky, Warren and the second author a decade ago in analogy with the real case.
	In the present paper we introduce and study, from the viewpoint of symplectic geometry and topology, the basic properties of the $p$-adic version of the classical Jaynes-Cummings model. The Jaynes-Cummings model is a fundamental example of integrable system going back to the work of Jaynes and Cummings in the 1960s, and which applies to many physical situations, for instance in quantum optics and quantum information theory. Several of our results depend on the value of $p$: the structure of the model depends on the class of the prime $p$ modulo $4$ and $p=2$ requires special treatment.
\end{abstract}

\maketitle

\section{Introduction}
Symplectic geometry is concerned with the study of symplectic manifolds, that is, smooth manifolds $M$ endowed with a closed non-degenerate $2$-form. The subject has its roots in the study of planetary motions in the XVII and XVIII centuries, and has undergone extensive developments since then. We refer to \cite{Eliashberg,Pelayo,Schlenk,Weinstein} for surveys on symplectic geometry which also mention historical developments.

Usually in symplectic geometry one works with smooth manifolds and differential forms over the real numbers, and there is a large body of work for this case. On the other hand, in this paper we are going to work with the case in which the field of real numbers $\R$ is replaced by the field of $p$-adic numbers $\Qp$, where $p$ is a fixed prime number. Recall that $\Qp$ can be defined as a completion of $\Q$ with respect to a non-archimedean absolute value (as we will recall in Appendix \ref{app:prelim}). In \cite{PVW} V. Voevodsky, M. Warren and the second author gave a construction of $\Qp$ from the point of view of homotopy type theory and Voevodsky's Univalent Foundations. Their long-term goal was constructing the theory of $p$-adic integrable systems on $p$-adic symplectic manifolds, with an eye on eventually formalizing this theory using proof assistants. In \cite[Section 7]{PVW} the authors propose a notion of $p$-adic integrable system and sketch a possible research program concerning these systems. We recommend \cite{APW,PelWar,PelWar2} for a concise introduction to homotopy type theory and Voevodsky's Univalent Axiom.

The present paper continues the work sketched by Pelayo, Voevodsky and Warren in \cite[Section 7]{PVW}. More precisely, we start to develop the basics of $p$-adic integrable systems proposed therein, by focusing on working out the $p$-adic analog of an integrable system of great importance: the \textit{classical Jaynes-Cummings model} (Figure \ref{fig:real-JC}), also known in the mathematics community as the \textit{coupled spin-oscillator}, because it comes from coupling an oscillator and a spin in a non-trivial way. It is a fundamental example of integrable system with two degrees of freedom. The Jaynes\--Cummings model was originally introduced \cite{JayCum}  to give a description of  the interaction between an atom prepared in a mixed state with a quantum particle in an optical cavity.  The Jaynes\--Cummings model has been studied or applied in different contexts, and from several viewpoints, and
is relevant across a number of areas within physics, chemistry and mathematics,  the reason being that it  represents the simplest possible way  to have a finite dimensional state, like a spin, be in interaction with an oscillator. For instance, the model has been found to apply to physical situations in the contexts of quantum physics, quantum optics or quantum information theory~\cite{GuAg, RaBrHa, ShKn}. It is also of high interest in mathematical physics, see for
 instance~\cite{BaCaDo, BaDo12, BaDo15}, and symplectic geometry~\cite{AlDuHo,LFPeVN16, PelVuN}.

Unlike in \cite{PVW}, in the present paper we use the more familiar non-constructive definition of $\Qp$, since the purpose of the current paper is to start developing the theory of $p$-adic integrable systems, and it does not matter which definition of $\Qp$ one uses. However, for problems involving the use of algorithms or constructive proofs one should use the construction of $\Qp$ which is given in \cite{PVW}.

We believe that it is crucial to have at least an important example of $p$-adic integrable system worked out in detail before trying to develop a general theory, hence this paper. The example of the Jaynes-Cummings model shows that a general theory will be subtle, yet worthwhile pursuing in our opinion. In particular, understanding this example requires combining techniques from symplectic geometry and $p$-adic analysis and some of the conclusions were surprising to us. For instance, the structure of the system varies significantly depending on the class of the prime $p$ modulo $4$, and the case of $p=2$ often requires a special treatment.

\begin{figure}
	\begin{tikzpicture}[scale=1.5]
		\node (esfera) at (0,0) {\includegraphics[trim=5cm 1.5cm 5cm 1.5cm,height=7.5cm]{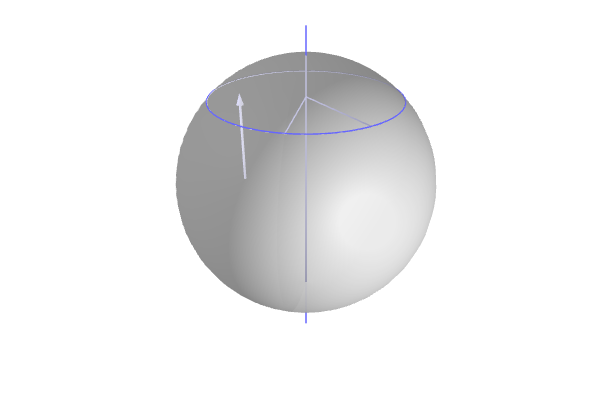}};
		\node (theta) at (0.2,1.3) {$\theta$};
		\node (h) at (-1.3,1) {$h$};
		\node (x) at (3,0) {$\times$};
		\draw[->] (4,0) -- (8,0);
		\draw[->] (6,-2) -- (6,2);
		\draw (6,0) circle (1);
		\node (u) at (8,0.2) {$u$};
		\node (v) at (6.2,2) {$v$};
	\end{tikzpicture}
	\caption{The real Jaynes-Cummings model on $\mathrm{S}^2\times\R^2$.}
	\label{fig:real-JC}
\end{figure}

The following result, from the paper \cite{PelVuN} by V\~u Ng\d oc and the second author, describes the basic properties of the classical ``real'' Jaynes-Cummings model.

\begin{proposition}[{Real Jaynes Cummings model \cite[Prop. 2.1]{PelVuN}}]\label{prop:JC-real}
	The coupled spin-oscillator $(\mathrm{S}^2\times\R^2,\omega_{\mathrm{S}^2}\oplus\omega_0,(J,H))$, defined by
	\[
	\left\{
	\begin{aligned}
	J(x,y,z,u,v)&=\frac{u^2+v^2}{2}+z; \\
	H(x,y,z,u,v)&=\frac{ux+vy}{2},
	\end{aligned}
	\right.
	\]
	with coordinates $(\theta,h)=(x,y,z)$ on $\mathrm{S}^2$ and $(u,v)$ on $\R^2$ and symplectic form $\omega=\dd\theta\wedge\dd h+\dd u\wedge\dd v$, is an integrable system, meaning that the Poisson bracket $\{J,H\}$ vanishes everywhere.
	
	In addition, the map $J$ is the momentum map for the Hamiltonian circle action of $\mathrm{S}^1$ on $\mathrm{S}^2\times\R^2$ that rotates simultaneously horizontally about the vertical axis on $\mathrm{S}^2$, and about the origin on $\R^2$.
	
	The critical points of the coupled spin-oscillator are non-degenerate and of elliptic-elliptic, transversally-elliptic or focus-focus type. It has exactly one focus-focus singularity at the ``North Pole'' $(0,0,1,0,0)\in\mathrm{S}^2\times\R^2$ and one elliptic-elliptic singularity at the ``South Pole'' $(0,0,-1,0,0)$.
\end{proposition}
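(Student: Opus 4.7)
The plan is to verify the three assertions in sequence. For integrability and the momentum map I would first write down the $S^1$-action explicitly: it fixes $z$ and simultaneously rotates $(x,y)$ and $(u,v)$; in the Darboux chart $(\theta,h,u,v)$ on the dense open set $(\mathrm{S}^2 \setminus \{N,S\}) \times \R^2$ the infinitesimal generator reads $X = \partial_\theta + u\partial_v - v\partial_u$, and a direct computation $\iota_X \omega = dh + u\, du + v\, dv = dJ$ identifies $J$ as the momentum map. Since $H = (ux+vy)/2$ is visibly invariant under the simultaneous planar rotation, $\{J,H\} = X(H) = 0$ follows, first on the chart and then on all of $\mathrm{S}^2\times\R^2$ by continuity.

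For the critical points I would analyse the condition $\rank(dJ, dH) < 2$. Away from the poles, $dJ = dh + u\,du + v\,dv$ is nowhere zero in the chart, so criticality means $dH = c\,dJ$ for some $c \in \R$; comparing the $d\theta$-, $du$- and $dv$-coefficients forces $\partial_\theta H = 0$ together with $(x,y) = 2c(u,v)$, and a further substitution using $c = \partial_h H$ reduces the system to the scalar constraint $1 - h^2 + h(u^2+v^2) = 0$, cutting out a smooth submanifold of rank-$1$ critical points (the transversally-elliptic candidates), parametrised by $(u,v) \in \R^2 \setminus \{0\}$. At the poles the chart breaks down, so I would switch to local Cartesian coordinates with $z = \pm\sqrt{1-x^2-y^2}$; here the simultaneous vanishing of $dJ$ and $dH$ forces $u = v = 0$, giving exactly the two rank-$0$ critical points $N = (0,0,1,0,0)$ and $S = (0,0,-1,0,0)$.

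The technical core is the Williamson classification at each rank-$0$ point. Expanding $(J, H)$ to second order gives the quadratic Hamiltonians
\[
J_2^{\pm} = \mp\tfrac{1}{2}(x^2+y^2) + \tfrac{1}{2}(u^2+v^2), \qquad H_2 = \tfrac{1}{2}(ux+vy),
\]
where the local symplectic form carries an orientation sign $\sigma = +1$ at $S$ and $\sigma = -1$ at $N$, coming from the orientation of $\omega_{\mathrm{S}^2}$ near each pole. I would then compute the eigenvalues of the linearised Hamiltonian vector field $M := a L_{J_2^{\pm}} + b L_{H_2}$ for a generic $(a,b)\in\R^2$ via a short block-determinant calculation; the resulting characteristic polynomial has discriminant proportional to $\sigma a^2 b^2$. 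At $S$ this is positive, the eigenvalues are purely imaginary, and since $J_2^{-}$ is positive definite the type is elliptic-elliptic. At $N$ it is negative, and the four eigenvalues take the form $\pm\alpha\pm i\beta$ with $\alpha\beta\neq 0$, the focus-focus signature. Non-degeneracy in both cases follows since zero is not an eigenvalue. For the rank-$1$ critical family the transversally-elliptic type follows by $S^1$-reduction by $J$: the reduced system at a regular $J$-level is one-degree-of-freedom, and the reduced Hessian of $H$ at the critical point is sign-definite. The main obstacle I foresee is the focus-focus verification at $N$, because both poles produce the same-looking quadratic Hamiltonians and linearisations; it is entirely the orientation sign $\sigma$ that flips the discriminant and yields complex (rather than purely imaginary) eigenvalues, so the success of the calculation hinges on tracking this sign carefully.
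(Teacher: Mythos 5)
Your proposal is essentially correct and follows the same overall strategy that this paper uses for the $p$-adic analogue (the real statement itself is only cited here from [PelVuN]; the corresponding computations appear in Theorem \ref{thm:JC-general} and Propositions \ref{prop:nf-rank1}, \ref{prop:nf-elliptic} and \ref{prop:nf-focus}): explicit Hamiltonian vector fields for integrability, a proportionality analysis of $\dd J$ and $\dd H$ for the critical set, and an eigenvalue computation for the linearizations at the two poles, including the crucial sign flip of $\omega_{\mathrm{S}^2}$ between the poles (the paper's $\omega_q^{-1}$ differs by exactly your $\sigma$, and its characteristic polynomials $[t^2+(\lambda\pm\mu)^2]$ versus $[(t\pm\mu)^2+\lambda^2]$ realize your discriminant dichotomy). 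Your rank-$1$ computation in the $(\theta,h,u,v)$ chart is equivalent to the paper's computation in $(x,y,u,v)$: your constraint $1-h^2+h(u^2+v^2)=0$ is the relation $a^2(u^2+v^2)+a^4=1$ with $h=-a^2$, $(x,y)=a(u,v)$. A cosmetic point: the generator of the planar rotation compatible with $\iota_X\omega=\dd J$ is $v\partial_u-u\partial_v$, not $u\partial_v-v\partial_u$ as written.

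Two steps need repair or completion. First, ``non-degeneracy follows since zero is not an eigenvalue'' is not the right criterion: non-degeneracy at a rank-$0$ point means the two Hessians span a Cartan subalgebra of $\mathfrak{sp}(4,\R)$, for which the standard sufficient condition is that a generic combination $\omega^{-1}(a\,\dd^2J+b\,\dd^2H)$ has \emph{four distinct} eigenvalues (proportional Hessians would give nonzero but repeated eigenvalues and fail). Your own computation already yields $\pm\ii(a+b),\pm\ii(a-b)$ at $S$ and $\pm\alpha\pm\ii\beta$ at $N$, which are generically distinct, so the conclusion is fine once the justification is corrected. Second, for the rank-$1$ family you assert that the reduced Hessian of $H$ is sign-definite; this is exactly the point that distinguishes transversally-elliptic from transversally-hyperbolic and it is where the bulk of the paper's work lies (Proposition \ref{prop:nf-rank1} produces the coefficients $\alpha(a)=a^2(a^2+1)^2(3a^4+1)/2$ and $\beta(a)=(3a^4+1)^2/2$, which are manifestly positive for the real parameter range $0<a^2<1$, cf. Corollary \ref{cor:JC-simplified}). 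Without some version of that computation, the transversally-elliptic claim is unsupported; the reduction strategy is legitimate, but the definiteness must actually be checked.
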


In the paper \cite{PelVuN} the following information about the Jaynes-Cummings model was either also determined or it can be easily deduced from it (either from considerations in the paper or by general theory, which is not available to us in the $p$-adic case):
\begin{enumerate}
	\item \textit{Classical spectrum:} the image $F(\mathrm{S}^2\times\R^2)$ of $F$ (Figure \ref{fig:real-JC-image});
	\item \textit{Fibers:} the (singular and regular) fibers $F^{-1}({c})$, $c\in F(\mathrm{S}^2\times\R^2)$ (Figure \ref{fig:real-JC-image});
	\item \textit{Critical sets/values:} the sets of critical points and critical values of $F$;
	\item \textit{Ranks of critical points:} the rank of all critical points of $F$.
\end{enumerate}

\begin{figure}
	\begin{tikzpicture}[scale=2]
		\node (im) at (0.8,0) {\includegraphics[width=8cm]{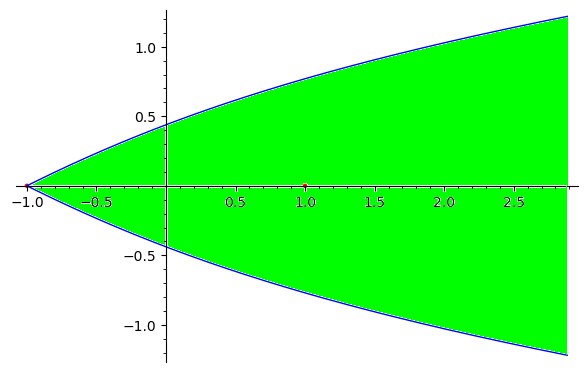}};
		\draw (-1,0)--(-1.5,0.7);
		\fill[orange] (-1.6,0.8) circle (0.02);
		\draw (-0.5,0.25) -- (-0.8,0.9);
		\draw[orange] (-0.8,1.5) ellipse (0.3 and 0.5);
		\draw (0.87,0) -- (0.87,1.5);
		\node (foco) at (0.87,3) {\includegraphics[height=6cm,trim=20cm 0 20cm 0,clip]{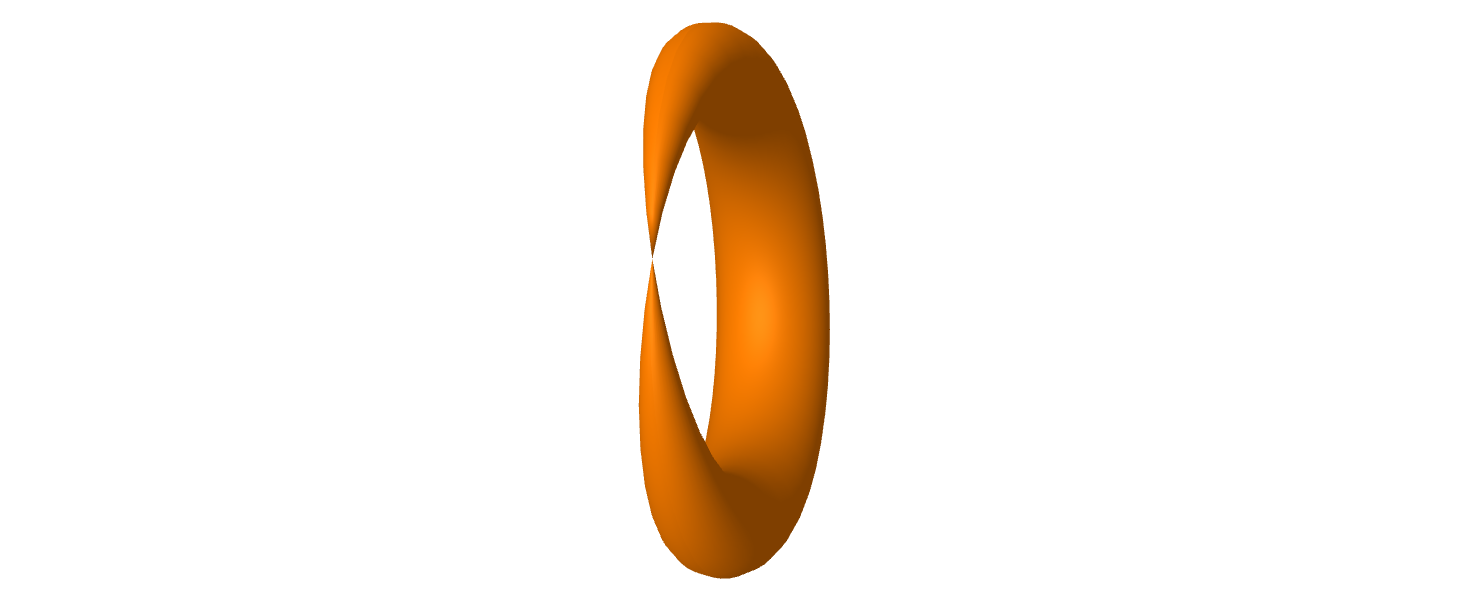}};
		\draw (2,0.5) -- (2.3,1.5);
		\node (toro) at (2.5,3) {\includegraphics[height=6cm,trim=19cm 0 19cm 0,clip]{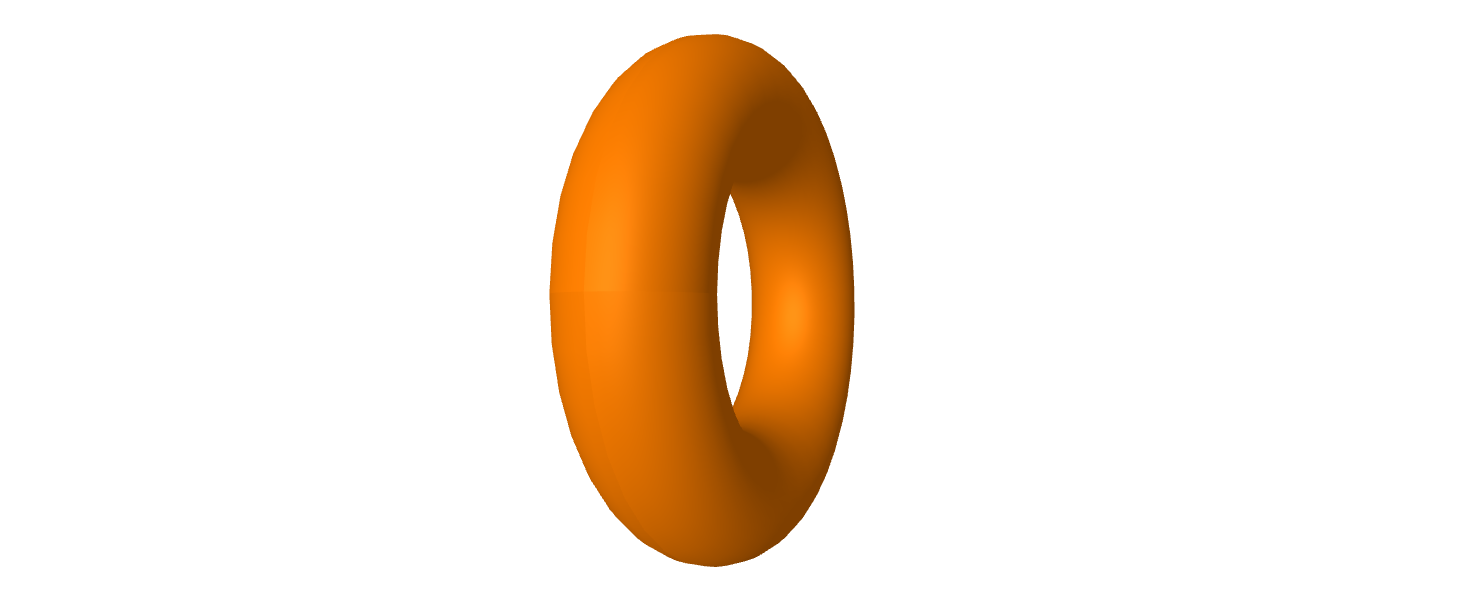}};
	\end{tikzpicture}
	\caption{Image and fibers of the Jaynes-Cummings model of Proposition \ref{prop:JC-real}. The blue curve consists of rank $1$ critical points, and the two red points are rank $0$. The Jaynes\--Cummings model is an example of a class of integrable systems called \emph{semitoric systems}, which were classified in the so called simple case in \cite{PelVuN-semitoric,PelVuN-construct} by the second author and V\~u Ng\d oc and in the non simple case by Palmer, the second author and Tang in \cite{PPT}. Here being simple means that there is at most one pinched point of rank 0 per singular fiber of $J$, a condition which is satisfied by the Jaynes\--Cummings model. All fibers of this system are connected: points, circles, $2$-tori (generic fiber) or a pinched torus. There is exactly one fiber which is not a manifold: the fiber over $(1,0)$, which is the pinched torus.}
	\label{fig:real-JC-image}
\end{figure}

All of these results have analogs if one replaces $\mathrm{S}^2$ by the $p$-adic sphere $\mathrm{S}^2_p$ and $\R^2$ by the $p$-adic plane $(\Qp)^2$, while keeping the same formulas for $\omega$ and $F$. These analogs are more complicated to formulate than in the real case and depend on the value of $p$.

We refer to \cite{DjoDra,Dragovich-quantum,Dragovich-harmonic,DKKV} for works which deal with different geometric aspects of $p$-adic geometry and mathematical physics, and to \cite{Gouvea,Koblitz,Schneider} for books on $p$-adic numbers, $p$-adic analysis, $p$-adic manifolds, and related concepts.

\subsection*{Structure of the paper}
The paper is organized as follows.
\begin{itemize}
	\item In Section \ref{sec:main} we state the main results of the paper.
	\item In Section \ref{sec:integrable} we recall the notions of $p$-adic symplectic manifold and $p$-adic integrable system.
	\item In Section \ref{sec:circ-oscillator} we analyze the structure of $p$-adic circles, in order to calculate the image and fibers of the $p$-adic oscillator.
	\item In Section \ref{sec:spin} we study the $p$-adic spin system, and also calculate its image and fibers.
	\item In Section \ref{sec:JC} we start studying the coupling of both models (oscillator and spin).
	\item In Section \ref{sec:nondeg} we deduce the normal forms of the critical points of this coupling.
\end{itemize}

The paper concludes with three appendices in which we recall some facts we need and prove some technical results that are used in the main part of the paper. Concretely:
\begin{itemize}
	\item In Appendix \ref{app:prelim} we give all necessary preliminaries about the $p$-adic numbers, focusing at $p$-adic analysis and trigonometric functions.
	\item In Appendix \ref{app:vector} we give some definitions about vector fields and forms on $p$-adic analytic manifolds.
	\item In Appendix \ref{app:actions} we review $p$-adic Hamiltonian actions.
\end{itemize}

\subsection*{Acknowledgements}
The first  author is funded by grants PID2019-106188GB-I00 and PID2022-137283NB-C21 of
MCIN/AEI/10.13039/501100011033 / FEDER, UE and by project CLaPPo (21.SI03.64658) of Universidad
de Cantabria and Banco Santander.

The second author is funded by a BBVA (Bank Bilbao Vizcaya Argentaria) Foundation Grant for Scientific Research Projects with title \textit{From Integrability
	to Randomness in Symplectic and Quantum Geometry}. He thanks the Dean of the
School of Mathematics Antonio Br\'u and the Chair
of the Department of Algebra, Geometry and Topology at the Complutense University of Madrid, Rutwig Campoamor, for their support
and excellent resources he is being provided with to carry out the BBVA project.

We are very thankful to Mar\'ia In\'es de Frutos, Antonio D\'iaz-Cano and Juan Ferrera for many helpful discussions and feedback on a preliminary version of this paper.

\section{Main results}\label{sec:main}

The following results describe the basic properties of the $p$-adic Jaynes-Cummings model. In contrast with the real case and for reasons which will become clear later, we develop the theory of this system viewed as a $p$-adic analytic map on a $p$-adic analytic manifold, which is the situation for which we later give general definitions. Below the notation $\ord(x)$, where $x\in\Qp$, refers to \emph{$p$-adic order}, that is, an integer such that
\[|x|_p=p^{-\ord_p(x)}.\]
The concepts of \emph{critical point}, \emph{rank of a critical point} and \emph{non-degeneracy for a critical point} which appear in the statement below are defined in the $p$-adic case, via direct analogy with the real case. The concept of \emph{action of a $p$-adic Lie group} is defined in Appendix \ref{app:actions}. Finally, the $p$-adic analog of the circle $\mathrm{S}^1$, and in general the $n$-sphere $\mathrm{S}^n$ for $n\in\N$, is the \textit{$p$-adic $n$-sphere}, which is defined as
\[\mathrm{S}^n_p=\left\{(x_1,\ldots,x_{n+1})\in(\Qp)^{n+1}\,\,\middle|\,\, \sum_{i=1}^{n+1}x_i^2=1\right\}.\]
We refer to Figure \ref{fig:padic-JC-image} for an illustration of the following result when $p=5$.

\begin{theorem}[Classical spectrum and critical points of $p$-adic Jaynes-Cummings model]\label{thm:JC1}
	Let $p$ be a prime number. Let $F=(J,H):\mathrm{S}_p^2\times(\Qp)^2\to(\Qp)^2$ be the $p$-adic Jaynes-Cummings model, that is, the $p$-adic analytic map given by
	\[\left\{\begin{aligned}
		J & = \frac{u^2+v^2}{2}+z; \\
		H & = \frac{ux+vy}{2},
	\end{aligned}\right.\]
	where $(x,y,z)\in\mathrm{S}_p^2$, $u,v\in(\Qp)^2$ and $\mathrm{S}_p^2\times(\Qp)^2$ is endowed with the $p$-adic analytic symplectic form $\dd\theta\wedge\dd h+\dd u\wedge\dd v$, where $(\theta,h)$ are angle-height coordinates on $\mathrm{S}_p^2\times(\Qp)^2$.
	Then the following statements hold.
	\begin{enumerate}
		\item The map $F=(J,H):\mathrm{S}_p^2\times(\Qp)^2\to(\Qp)^2$ is a $p$-adic analytic integrable system, that is, $\{J,H\}=0$ (Theorem \ref{thm:JC-general}).
		\item The map $J:\mathrm{S}_p^2\times(\Qp)^2\to\Qp$ is the momentum map of the Hamiltonian action of $\mathrm{S}^1_p$ that rotates simultaneously horizontally about the vertical axis on $\mathrm{S}^2_p$, and about the origin on $(\Qp)^2$ (Theorem \ref{thm:JC-general}(1)).
		\item The image of $F$, that is, the classical spectrum of $F$, is given as follows.
		\begin{enumerate}
			\item If $p\ne 2$, the classical spectrum is $F(\mathrm{S}_p^2\times(\Qp)^2)=(\Qp)^2$ (Corollary \ref{cor:JC-surj}).
			\item \label{item:JC-image2}If $p=2$, the classical spectrum $F(\mathrm{S}_p^2\times(\Qp)^2)$ contains all points $(j,h)$ with $\ord(j)\ge 1$ and $\ord(h)\ge 0$ and some points with $\ord(j)=0$ and $\ord(h)\ge -1$, $\ord(j)<0$ odd and $\ord(h)=\ord(j)/2-1$, and $\ord(j)<0$ even, and $\ord(h)\ge (\ord(j)-1)/2$ (Propositions \ref{prop:JC-image-nec} to \ref{prop:JC-image-suf}).
		\end{enumerate}
		\item \label{item:JC-critical}The set of critical points of $F:\mathrm{S}_p^2\times(\Qp)^2\to(\Qp)^2$ is given as follows (Theorem \ref{thm:JC-general}(2)).
		\begin{enumerate}
			\item the set of rank $0$ points is $\{(0,0,-1,0,0),(0,0,1,0,0)\}$.
			\item the set of rank $1$ points is
			\[\Big\{(au,av,-a^2,u,v)\,\,\Big|\,\, a,u,v\in\Qp, (u,v)\ne(0,0),a^2(u^2+v^2)+a^4=1\Big\}.\]
		\end{enumerate}
	\end{enumerate}
\end{theorem}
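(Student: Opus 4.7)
The plan is to treat the four parts of the theorem separately, reducing most of the work to analyses of the oscillator and spin factors carried out in later sections.

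For parts (1) and (2), the integrability and the Hamiltonian nature of $J$ reduce to direct computation. The Hamiltonian vector field of $(u^2+v^2)/2$ on $(\Qp)^2$ with respect to $\dd u\wedge\dd v$ is $u\,\partial_v-v\,\partial_u$, and the Hamiltonian vector field of $z=h$ on $\mathrm{S}^2_p$ with respect to $\dd\theta\wedge\dd h$ is $\partial_\theta$; hence the Hamiltonian flow of $J$ is the diagonal rotation described, and $J$ is its momentum map. Once (2) is in place, $H=(ux+vy)/2$ is manifestly invariant under simultaneous rotation of $(x,y)$ and $(u,v)$, which gives $\{J,H\}=0$.

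For part (4), I would pass to the ambient $(\Qp)^5$ with defining function $g=x^2+y^2+z^2-1$ and characterize critical points of $F$ through rank drops of the $3\times 5$ Jacobian whose rows are $\nabla J=(0,0,1,u,v)$, $2\nabla H=(u,v,0,x,y)$, and $\tfrac12\nabla g=(x,y,z,0,0)$. Rank $0$ points of $F$ correspond to this matrix dropping to rank $1$, which forces $u=v=x=y=0$ and hence $z=\pm 1$. Rank $1$ points correspond to rank $2$: the minor in columns $(1,2,3)$ gives $uy=vx$, which in the generic case $(u,v)\neq(0,0)$ yields $(x,y)=a(u,v)$ for some $a\in\Qp$; the minor in columns $(1,3,4)$ then reads $u^2(a^2+z)=0$, forcing $z=-a^2$, and the sphere equation reduces to $a^2(u^2+v^2)+a^4=1$. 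Verifying that all remaining $3\times 3$ minors vanish on this locus is routine, and the Jacobian has rank exactly $2$ there since $\nabla g$ and $\nabla J$ remain linearly independent whenever $(u,v)\neq(0,0)$.

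The main obstacle is part (3), the description of the classical spectrum. Unlike the real case, where the image is pinned down by a Cauchy--Schwarz bound, $p$-adic arithmetic offers no such tool, and solvability is governed instead by whether specific elements of $\Qp$ are squares or sums of two squares. My strategy, for a target $(j,h)\in(\Qp)^2$, is to introduce $z\in\Qp$ as a free parameter and reduce to solving the system $u^2+v^2=2(j-z)$, $ux+vy=2h$, $x^2+y^2=1-z^2$ simultaneously. For $p\neq 2$, combining the image descriptions of the oscillator and spin from Sections~\ref{sec:circ-oscillator} and~\ref{sec:spin} with Hensel's lemma allows one to choose $z$ realizing any target; the key input is that for odd $p$ the form $X^2+Y^2$ represents enough of $\Qp$ (all of it for $p\equiv 1\pmod 4$, the even-order elements for $p\equiv 3\pmod 4$) to leave room to meet both equations. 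For $p=2$, squares in $\Qp$ are characterized modulo $8$, forcing a substantially finer case analysis based on $\ord(j)$ and $\ord(h)$; the intricate conditions listed in item (3)(b) emerge from an exhaustive enumeration of these cases, and this is where the technical weight of Section~\ref{sec:JC} resides.
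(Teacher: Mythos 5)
Parts (1), (2) and (4) of your proposal are essentially sound. For (4) you take a genuinely different route from the paper: you work extrinsically in $(\Qp)^5$ with the constraint $g=x^2+y^2+z^2-1$ and detect rank drops via the $3\times 3$ minors of the augmented Jacobian, whereas the paper (Theorem \ref{thm:JC-general}) works intrinsically, expressing $\dd J$ and $\dd H$ in a basis of the cotangent space of $\mathrm{S}^2_p\times(\Qp)^2$ and imposing collinearity; your version buys you a uniform treatment without the paper's separate discussion of the locus $z=0$, at the cost of having to check that all remaining minors vanish (they do; this really is routine). One small slip in (2): with $\imath(X)\,\dd u\wedge\dd v=\dd\bigl(\tfrac{u^2+v^2}{2}\bigr)$ the Hamiltonian vector field is $v\,\partial_u-u\,\partial_v$, not $u\,\partial_v-v\,\partial_u$; with your sign the two rotations would run in opposite senses and $J$ would not generate the simultaneous rotation. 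Your derivation of $\{J,H\}=0$ from the invariance of $H$ under the diagonal $\mathrm{S}^1_p$-action is a clean alternative to the paper's direct evaluation of $\omega(X_J,X_H)$.

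The genuine gap is in part (3). You correctly reduce membership of $(j,h)$ in the image to the simultaneous solvability of $u^2+v^2=2(j-z)$, $x^2+y^2=1-z^2$, $ux+vy=2h$ for some $z$, but then assert that the image descriptions of the oscillator and spin ``leave room to meet both equations.'' Knowing separately that $2(j-z)$ and $1-z^2$ are sums of two squares does not control the middle equation: the three conditions are coupled, and the coupling is governed by the identity $(ux+vy)^2+(uy-vx)^2=(u^2+v^2)(x^2+y^2)$. This is why the paper introduces $b=uy-vx$ and shows (Theorem \ref{thm:JC-general}(3), Definition \ref{def:UV}) that solvability for a given $z$ is equivalent to $2(j-z)$ being a sum of two squares \emph{and} $2(j-z)(1-z^2)-4h^2$ being a square in $\Qp$; only then does the surjectivity argument for $p>2$ go through, by taking $z=2w^2$ of sufficiently negative order so that $2(z^2-1)(z-j)$ has order $3\ord(z)$ and dominates $4h^2$ (Corollary \ref{cor:JC-surj}). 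Without this compatibility condition your argument cannot conclude, and for $p=2$ the ``exhaustive enumeration'' you invoke is precisely the content of Propositions \ref{prop:JC-image-nec} to \ref{prop:JC-image-suf}, which again hinge on tracking the order and leading digits of $2(j-z)(1-z^2)-4h^2$; your proposal supplies no substitute for that analysis.
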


\begin{remark}
	In part (\ref{item:JC-image2}) of the previous theorem, we use ``contains'' because we do not have a complete description of the image of the system for $p=2$. Deciding whether some points are in the image seems more complicated than for other primes, partly because $\mathrm{S}_2^2$ is compact while $\mathrm{S}_p^2$ is not compact for any other $p$.
\end{remark}

The following is the most interesting result of the paper, and the one for which the calculations are more involved. A depiction of this result is given in Figure \ref{fig:padic-JC-image}.

\pagebreak
\begin{theorem}[Fibers of $p$-adic Jaynes-Cummings model]\label{thm:JC2}
	Let $p$ be a prime number. Let $F=(J,H):\mathrm{S}_p^2\times(\Qp)^2\to(\Qp)^2$ be the $p$-adic Jaynes-Cummings model, that is, the $p$-adic analytic map given by
	\[\left\{\begin{aligned}
	J(x,y,z,u,v) & = \frac{u^2+v^2}{2}+z; \\
	H(x,y,z,u,v) & = \frac{ux+vy}{2},
	\end{aligned}\right.\]
	where $(x,y,z)\in\mathrm{S}_p^2$, $u,v\in(\Qp)^2$ and $\mathrm{S}_p^2\times(\Qp)^2$ is endowed with the $p$-adic analytic symplectic form $\dd\theta\wedge\dd h+\dd u\wedge\dd v$, where $(\theta,h)$ are angle-height coordinates on $\mathrm{S}_p^2\times(\Qp)^2$.
		The fibers of $F$ are given as follows.
		\begin{enumerate}
			\item Suppose that $p\not\equiv 1\mod 4$ (Theorems \ref{thm:JC-topology} and \ref{thm:JC-topology2}).
			\begin{enumerate}
				\item\label{item:JC-fiber-elliptic} If $(j,h)=(-1,0)$, then the fiber $F^{-1}(\{(j,h)\})$ is the disjoint union of a $2$-dimensional $p$-adic analytic submanifold, which may be empty depending on the value of $p$, and an isolated point at $(0,0,-1,0,0)$.
				\item If $(j,h)=(1,0)$, then the fiber $F^{-1}(\{(j,h)\})$ has dimension $2$ and a singularity at $(0,0,1,0,0)$. (By this we mean that $F^{-1}(\{(1,0)\})$ minus the critical point is a $p$-adic analytic submanifold of dimension $2$, but as a whole it is not a manifold because it has a singularity at the critical point, as happens in the real case for the same point, as in Figure \ref{fig:real-JC-image}).
				\item If $(j,h)$ is a rank $1$ critical value, that is, $j=(1-3a^4)/2a^2$ and $h=(1-a^4)/2a$ for some $a\in\Qp$ such that $1-a^4$ is the sum of two squares, then the following statements hold.
				\begin{enumerate}
					\item If $-3a^4-1$ is a non-square modulo $p$, the fiber $F^{-1}(\{(j,h)\})$ is the disjoint union of a $2$-dimensional $p$-adic analytic submanifold and the $1$-dimensional $p$-adic analytic submanifold homeomorphic to $\mathrm{S}^1_p$ which consists exactly of the critical points whose image is $(j,h)$.
					\item If $-3a^4-1$ is a square modulo $p$, the fiber $F^{-1}(\{(j,h)\})$ has dimension $2$ and singularities at the critical points. (By this we mean, as in case (b), that $F^{-1}(\{(j,h)\})$ minus the set of critical points is a $p$-adic analytic submanifold of dimension $2$, but as a whole it has singularities at the critical points contained in $F^{-1}(\{(j,h)\})$. In the real case this only happens for the pinched torus in Figure \ref{fig:real-JC-image}.)
				\end{enumerate} 
				\item For the rest of values of $(j,h)\in F(\mathrm{S}_p^2\times(\Qp)^2)$, the fiber $F^{-1}(\{(j,h)\})$ is a $2$-dimensional $p$-adic analytic submanifold.
			\end{enumerate}
			\item Suppose that $p\equiv 1\mod 4$ (Theorem \ref{thm:JC-topology1}).
			\begin{enumerate}
				\item If $(j,h)=(\pm 1,0)$ is a rank $0$ critical value, the fiber $F^{-1}(\{(j,h)\})$ has dimension $2$ and a singularity at every point of $L_j$, where
				\[L_{-1}=\Big\{\left(\delta u,\delta\epsilon \ii u,-1,u,\epsilon \ii u\right)\Big| u\in\Qp,\epsilon=\pm 1,\delta=\pm 1\Big\},\]
				\[L_1=\Big\{\left(-\delta\epsilon \ii u,\delta u,1,u,\epsilon \ii u\right)\Big| u\in\Qp,\epsilon=\pm 1,\delta=\pm 1\Big\}.\]
				\item Otherwise, $F^{-1}(\{(j,h)\})$ has the same form as in part (1).
			\end{enumerate}
		\end{enumerate}
	\end{theorem}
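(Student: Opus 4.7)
The plan is to solve the equations $J = j$, $H = h$, $x^2+y^2+z^2 = 1$ as explicitly as possible and then combine three ingredients: the $p$-adic analytic implicit function theorem on the regular locus, a second-order Taylor expansion at each critical point yielding a binary quadratic form, and the arithmetic of sums of two squares in $\Qp$ (controlled by whether $-1$ is a square, i.e., by $p \bmod 4$). A fiber point is essentially determined once $(u,v)$ is chosen: setting $w = u^2+v^2$ forces $z = j - w/2$, so one only needs to intersect the line $ux+vy = 2h$ in the $(x,y)$-plane with the conic $x^2+y^2 = 1 - z^2$. This presents the fiber as the zero set of a single polynomial in $(x,u,v)$ after eliminating $y$ (when $v \ne 0$), from which $p$-adic analyticity of the regular part follows immediately from the implicit function theorem; only the behaviour at the critical points inside each fiber, listed in Theorem~\ref{thm:JC1}, needs separate attention.

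For a rank $1$ critical value I would fix a representative critical point on the circle and, using the $\mathrm{S}^1_p$-symmetry generated by $J$, work at the convenient base point $P_0 = (au_0, 0, -a^2, u_0, 0)$ with $u_0^2 = 1/a^2 - a^2$. In the slice $v = 0$, which is transverse to the critical circle at $P_0$, eliminating $u$ via $ux = 2h$ turns the fiber equation into $y^2 = 1 - g(x)$ with $g(x) = x^2 + (j - 2h^2/x^2)^2$, and a direct Taylor expansion at $x = au_0$ gives the leading form
\[
y^2 = -\frac{3a^4+1}{a^4}\,X^2 + O(X^3), \qquad X = x - au_0.
\]
Thus the fiber, transversally to the critical circle, is cut out to leading order by a binary quadratic form of discriminant $-3a^4-1$ up to nonzero squares. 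When $-3a^4-1$ is a non-square modulo $p$ the form is anisotropic and the only nearby zero in the slice is $P_0$ itself, so the $2$-dimensional part of the fiber does not approach the critical circle, giving the disjoint decomposition of part~(1)(c)(i); when $-3a^4-1$ is a square modulo $p$, the form factors and two smooth branches of the fiber meet transversally along the critical circle, producing the singularity described in part~(1)(c)(ii).

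For the rank $0$ critical value $(j,h) = (-1,0)$, substituting $z = -1 - w/2$ turns the sphere equation into $x^2 + y^2 = -w(1 + w/4)$ while the linear constraint reads $ux + vy = 0$. When $(u,v) \ne (0,0)$ I parametrize $(x,y) = t(-v,u)$, reducing the problem to $t^2 = -(1 + w/4)$. If $p \not\equiv 1 \pmod 4$, so that $-1$ is not a square in $\Qp$, then for $w$ of small $p$-adic absolute value the right hand side is close to $-1$ and therefore not a square, so this equation has no solution; this forces the only fiber point in a neighbourhood of the South Pole to be the South Pole itself, yielding the isolated-point conclusion. At $(j,h) = (1,0)$ the analogous calculation gives $t^2 = 1 - w/4$, which always admits two Hensel-lifted branches $t \approx \pm 1$ for small $w$, explaining the pinch singularity at the North Pole. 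The existence of the regular $2$-dimensional part of each fiber (with the possible emptiness for $p=2$ noted) then follows by combining this local analysis with the description of the image in Theorem~\ref{thm:JC1}.

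The main obstacle is the case $p \equiv 1 \pmod 4$, where $-1 = \ii^2$ for some $\ii \in \Qp$ and the equation $u^2+v^2 = 0$ acquires the isotropic solutions $v = \pm \ii u$. These produce the four extra families of critical points $L_{\pm 1}$ sitting in the fibers of $(\pm 1, 0)$, and one must verify both that every point of $L_{\pm 1}$ is a genuine singular point of the fiber (not merely a smooth point of the critical locus) and that the fiber minus $L_{\pm 1}$ remains a $p$-adic analytic submanifold of dimension $2$. Both steps reduce to a Jacobian rank computation on the defining ideal of the fiber together with a Hensel-lifting argument away from $L_{\pm 1}$, with the isotropic directions $v = \pm \ii u$ providing precisely the directions along which the fiber fails to be smooth.
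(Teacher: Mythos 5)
Your route is genuinely different from the paper's. The paper never slices: it introduces the auxiliary quantity $b=uy-vx$, decomposes each fiber as a union of circle-like sub-fibers $U_{j,h}(z,b)$ over the plane curve $b^2=f_{j,h}(z):=2(z^2-1)(z-j)-4h^2$, and reads off the local structure entirely from whether $f_{j,h}''(z_0)/2$ is a square at the double roots of $f_{j,h}$ (Corollary \ref{cor:square}). You instead take a transversal slice at a critical point and extract a binary quadratic form by a second-order Taylor expansion. The two methods agree where it matters: your coefficient $-(3a^4+1)/a^4$ is $f''_{j,h}(-a^2)/2$ up to the square $a^{-2}$, and your computations $t^2=1-w/4$ and $t^2=-(1+w/4)$ at the poles reproduce the paper's $f''/2=\pm 4$ dichotomy. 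Your approach buys a cleaner conceptual link to the normal-form analysis of Section \ref{sec:nondeg}; the paper's buys a global description of the fiber (which values of $z$ occur, how many circles sit over each) that your local method does not give and that is needed, e.g., to justify that the $2$-dimensional part over $(-1,0)$ ``may be empty depending on $p$''.

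There are two genuine gaps. First, your base point $P_0=(au_0,0,-a^2,u_0,0)$ requires $u_0^2=(1-a^4)/a^2$, i.e.\ that $1-a^4$ be a \emph{square} in $\Qp$; the hypothesis only guarantees that it is a \emph{sum of two squares}, and by Corollary \ref{cor:image} these classes differ for every $p$ (e.g.\ for $p\equiv 3\bmod 4$ take $1-a^4$ of even order with non-square leading digit). For such $a$ the critical circle contains no point with $v=0$ and your slice computation cannot be performed as written; since $\mathrm{S}^1_p$ acts transitively on the critical circle and preserves $F$, the discriminant class you compute is indeed independent of the base point, but you must carry out the expansion at a general point $(au_0,av_0,-a^2,u_0,v_0)$ (or otherwise justify the reduction) rather than assume $v_0=0$. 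Second, the case $p\equiv 1\bmod 4$ is announced as a plan, not proved: you never derive the sets $L_{\pm 1}$, nor show that the $2$-dimensional part of the fiber actually accumulates on them. This is the heart of Theorem \ref{thm:JC-topology1}, where the paper computes the limit of the circle sub-fibers as $z\to j$, obtaining $\lim b/(2(j-z))=\pm\sqrt{j}$ and hence the four lines $\delta(u,\epsilon\ii u)$, and uses that $\pm 4$ are both squares to conclude that nearby regular sub-fibers exist on all sides. Note also that for $p\equiv 1\bmod 4$ the points of $L_{\pm1}\setminus\{\text{poles}\}$ are rank~$1$ critical points with $a=\pm1$ or $a=\pm\ii$, so the fiber of $(\pm 1,0)$ mixes rank~$0$ and rank~$1$ critical points; a generic ``Jacobian rank computation together with Hensel lifting'' does not by itself distinguish a smooth $1$-dimensional component of the fiber (as in case (1)(c)(i)) from a genuine singularity, which is exactly the distinction the theorem is about.
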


\begin{remark}
	In Theorem \ref{thm:JC2}, part (\ref{item:JC-fiber-elliptic}), ``isolated'' is used in the topological sense, that is, there is a ball around this point that does not contain any other point in the fiber.
\end{remark}

We conclude this introduction discussing the local models of the $p$-adic Jaynes-Cummings model at the critical points. In this case the computations are analogous to the real case, and roughly speaking so are the conclusions, although some simplifications of the expressions below can be given in the real case, as we see later (Corollary \ref{cor:JC-simplified}).
\begin{proposition}[Normal forms at critical points of $p$-adic Jaynes-Cummings model]\label{prop:JC3}
		All critical points in part (\ref{item:JC-critical}) of Theorem \ref{thm:JC1} are non-degenerate and their local normal forms are given as follows.
		\begin{enumerate}
			\item At $(0,0,-1,0,0)$, there are local coordinates $(x,\xi,y,\eta)$ such that the $p$-adic symplectic form is given by $\omega=(\dd x\wedge\dd\xi+\dd y\wedge\dd\eta)/2$ and \[\tilde{F}(x,\xi,y,\eta)=\frac{1}{2}(x^2+\xi^2,y^2+\eta^2)+\ocal((x,\xi,y,\eta)^3).\] Here $\tilde{F}=B\circ(F-F(0,0,-1,0,0))$ with
			\[B=\begin{pmatrix}
			1 & 2 \\
			1 & -2
			\end{pmatrix}.\]
			We say that $(0,0,-1,0,0)$ is a point of ``elliptic-elliptic'' type (Proposition \ref{prop:nf-elliptic}).
			\item At $(0,0,1,0,0)$, there are local coordinates $(x,\xi,y,\eta)$ such that the $p$-adic symplectic form is given by $\omega=(\dd x\wedge\dd\xi+\dd y\wedge\dd\eta)/2$ and \[\tilde{F}(x,\xi,y,\eta)=(x\eta-y\xi,x\xi+y\eta)+\ocal((x,\xi,y,\eta)^3).\] Here $\tilde{F}=B\circ(F-F(0,0,1,0,0))$ with
			\[B=\begin{pmatrix}
			2 & 0 \\
			0 & 4
			\end{pmatrix}.\]
			We say that $(0,0,1,0,0)$ is a point of ``focus-focus'' type (Proposition \ref{prop:nf-focus}).
			\item Let $\alpha,\beta,\gamma:\Qp\to\Qp$ be the $p$-adic analytic functions given by $\alpha(a)=a^2(a^2+1)^2(3a^4+1)/2,\beta(a)=(3a^4+1)^2/2,\gamma(a)=a^2(1-a^4)(a^2+1)^2/2$. Then, at any rank $1$ point $(au,av,-a^2,u,v)$, there are local coordinates $(x,\xi,y,\eta)$ such that the $p$-adic symplectic form is given by $\omega=(1-a^4)(a^2+1)a^{-3}(\dd x\wedge\dd\xi+\dd y\wedge\dd\eta)$ and \[\tilde{F}(x,\xi,y,\eta)=(\eta+\ocal(\eta^2),\alpha(a)x^2+\beta(a)\xi^2+\gamma(a)\eta^2+\ocal((x,\xi,\eta)^3)).\] Here $\tilde{F}=B\circ(F-F(au,av,-a^2,u,v))$ with
			\[B=\begin{pmatrix}
			a & 0 \\
			\frac{a^6(3a^4+1)}{1-a^4} & \frac{-2a^5(3a^4+1)}{1-a^4}
			\end{pmatrix}.\]
			We say that any of the points $(au,av,-a^2,u,v)$ is a point of ``transversally elliptic'' type (Proposition \ref{prop:nf-rank1}). Recall that these points were defined in Theorem \ref{thm:JC1}(4).
		\end{enumerate}
	\end{proposition}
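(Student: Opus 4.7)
The approach is to prove the three parts separately, reducing each to the corresponding subsidiary normal-form result (Propositions \ref{prop:nf-elliptic}, \ref{prop:nf-focus}, \ref{prop:nf-rank1}) referenced in the statement. For each critical point $p_0$ the common scheme is: (i) produce local $p$-adic analytic coordinates that put $\omega$ into constant-coefficient Darboux form, (ii) Taylor-expand $J$ and $H$ to second order in those coordinates, (iii) apply the prescribed matrix $B$ to $F - F(p_0)$ and verify that the resulting pair matches the quadratic normal form of the stated type (elliptic-elliptic, focus-focus, or transversally elliptic). Non-degeneracy will then be read off directly from the explicit quadratic part. Because the algebraic identities involved are the same polynomial identities as in the real case \cite{PelVuN}, the novelty of the proof lies entirely in verifying that the coordinate changes and power series we need are $p$-adic analytic on a genuine open ball around $p_0$.

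At the two poles $(0,0,\pm 1,0,0)$, I would replace $\theta$ (which is singular at the poles) by the Cartesian coordinates $(x,y)$ on $\mathrm{S}^2_p$, writing $z = \pm\sqrt{1-x^2-y^2}$ via the $p$-adic binomial series from Appendix~\ref{app:prelim}. A direct chain-rule computation shows that $\dd\theta \wedge \dd h = \dd x \wedge \dd y / z$, so $\omega = \pm\,\dd x \wedge \dd y + \dd u \wedge \dd v$ to leading order near the poles; a rescaling combined with a $p$-adic analytic Darboux step then yields the stated normalization $\omega = (\dd x \wedge \dd\xi + \dd y \wedge \dd\eta)/2$. Taylor expanding gives $J \mp 1 = (u^2+v^2)/2 \mp (x^2+y^2)/2 + \ocal(4)$ and $H = (ux+vy)/2 + \ocal(3)$. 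Applying the south-pole matrix $B=\bigl(\begin{smallmatrix}1 & 2 \\ 1 & -2\end{smallmatrix}\bigr)$ decouples these into two independent oscillators $(x^2+\xi^2)/2$ and $(y^2+\eta^2)/2$ after a linear diagonalization, and applying the north-pole matrix $B=\mathrm{diag}(2,4)$ yields $(x\eta - y\xi,\, x\xi + y\eta)$ similarly, matching the focus-focus form.

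At a rank $1$ point $p_a = (au,av,-a^2,u,v)$, the momentum map $J$ generates a circle action (Theorem~\ref{thm:JC1}(2)) whose orbit through $p_a$ is exactly the critical circle described in part (4)(b) of Theorem~\ref{thm:JC1}. Using the $p$-adic equivariant Darboux-type result from Appendix~\ref{app:actions}, I would straighten this action to obtain a coordinate $\eta$ along the orbit whose dual $y$ is transverse to it in the level set of $J$, and pick $(x,\xi)$ along the symplectic complement. Because the action is linear to first order, the first component of $B\circ(F-F(p_a))$ picks up the linear term $\eta$, and the constant prefactor $(1-a^4)(a^2+1)a^{-3}$ in $\omega$ arises from the Jacobian of this parametrization of $\mathrm{S}^2_p$ near $p_a$ combined with the rescaling in $B$. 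A bookkeeping-heavy second-order expansion then produces the coefficients $\alpha(a),\beta(a),\gamma(a)$ as stated; the resulting Hessian has rank $2$ in $(x,\xi)$ whenever $\alpha(a)\beta(a)\ne 0$, giving non-degeneracy and transversally elliptic type.

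The main obstacle will be the rank $1$ case, where one must verify that the Darboux straightening, the inversion of the Jacobian, and the square-root definition of $z$ as a function of $(x,y)$ are all $p$-adic analytic on a common open ball around $p_a$. For $p=2$ the convergence region of the relevant binomial series shrinks and one must check explicitly that the prefactor $(1-a^4)(a^2+1)a^{-3}$ is a $p$-adic unit at $p_a$, which excludes $a=0$ and $a^2=-1$---consistently with the case distinction in Theorem~\ref{thm:JC2}. Once this $p$-adic analytic regularity is in place, the remaining identities $B\circ(F-F(p_a))=(\eta,\alpha(a)x^2+\beta(a)\xi^2+\gamma(a)\eta^2)+\ocal(3)$ are the same polynomial identities as in the real case and require no new argument.
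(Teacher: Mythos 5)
Your overall scheme (Taylor-expand $J,H$ at the critical point, apply the prescribed matrix $B$, and exhibit linear symplectic coordinates realizing the quadratic normal form) is the same as the paper's, and your treatment of the two poles is essentially correct. But there is a genuine gap in how you propose to construct the coordinates. You invoke ``a $p$-adic analytic Darboux step'' at the poles and ``the $p$-adic equivariant Darboux-type result from Appendix \ref{app:actions}'' at the rank $1$ points. No such results exist in this paper: Appendix \ref{app:actions} only defines $p$-adic Hamiltonian actions and momentum maps, and no $p$-adic Darboux theorem (equivariant or otherwise) is established anywhere. Indeed the paper explicitly notes that Eliasson--Vey type normal form theorems are not (yet) available in the $p$-adic case, which is precisely why Propositions \ref{prop:nf-elliptic}, \ref{prop:nf-focus} and \ref{prop:nf-rank1} are formulated and proved at the \emph{linear} level: the map $\phi$ is a linear symplectomorphism of tangent spaces, $\omega$ is only normalized as the constant bilinear form $\omega_q$ at the point, and the coordinates are produced by explicit changes of basis (the matrices $C$, and additionally $D$ in the rank $1$ case, needed to kill the $\xi\eta$ and $\dd x\wedge\dd\eta$ cross terms). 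So the machinery you lean on is both unavailable and unnecessary; the proof is pure linear algebra on $\mathrm{T}_q(\mathrm{S}^2_p\times(\Qp)^2)$.

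Two further points. First, for the rank $1$ case the key step --- which your sketch does not isolate --- is to first determine the linear combination $\lambda J+\mu H$ whose differential vanishes at $q$ (one finds $(\lambda,\mu)\propto(a,-2)$); this is what forces the second row of $B$ and reduces the second component to a genuine quadratic form, whose Hessian $a\,\dd^2J-2\,\dd^2H$ is then diagonalized by $C$ and $D$. Second, for the rank $0$ points ``non-degenerate'' means that $\dd^2J(q)$ and $\dd^2H(q)$ span a Cartan subalgebra; the paper verifies this by computing the characteristic polynomial of $\omega_q^{-1}(\lambda\,\dd^2J+\mu\,\dd^2H)$ and checking it has four distinct roots. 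This can indeed be ``read off'' once the normal form is in hand, but it is a separate verification, not automatic from the formula for $\tilde F$. Your requirement that $(1-a^4)(a^2+1)a^{-3}$ be a $p$-adic \emph{unit} is also stronger than needed; it only has to be nonzero for $\omega_1$ to be symplectic.
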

\begin{remark}
	In principle the local models in Proposition \ref{prop:JC3} do not depend on the value of $p$, but of course some further simplifications are possible for some values of $p$. However the ``type'' of the point (elliptic-elliptic, etc.) does not change.
\end{remark}

\begin{table}
	\begin{tabular}{p{.18\linewidth}|p{.18\linewidth}|p{.18\linewidth}|p{.18\linewidth}|p{.18\linewidth}|}
		& & \multicolumn{3}{l|}{$p$-adic} \\\cline{3-5}
		& Real & $p=2$ & $p\equiv 1\mod 4$ & $p\equiv 3\mod 4$ \\ \hline
		Image of Hamiltonians & green region in Figure \ref{fig:real-JC-image} & no easy description & all $(\Qp)^2$ & all $(\Qp)^2$ \\ \hline
		Fiber of regular value & dimension $2$ analytic manifold (isomorphic to a torus) & \multicolumn{3}{p{.58\linewidth}|}{dimension $2$ analytic manifold (not isomorphic to a torus)} \\ \hline
		Fiber of rank $1$ value ($-3a^4-1$ not square) & circle & \multicolumn{3}{p{.58\linewidth}|}{circle + dimension $2$ analytic manifold} \\ \hline
		Fiber of rank $1$ value ($-3a^4-1$ square) & never happens & never happens & \multicolumn{2}{p{.38\linewidth}|}{dimension $2$, singular at a circle} \\ \hline
		Fiber of $(-1,0)$ & point & point & dimension $2$, singular at four lines & point + dimension $2$ analytic manifold \\ \hline
		Fiber of $(1,0)$ & dimension $2$, singular at a point & dimension $2$, singular at a point & dimension $2$, singular at four lines & dimension $2$, singular at a point \\ \hline
	\end{tabular}
	\medskip
	\caption{Comparison of the real and $p$-adic Jaynes-Cummings models.}
	\label{table:JC}
\end{table}

Table \ref{table:JC} summarizes the results of Theorems \ref{thm:JC1} and \ref{thm:JC2}. Figure \ref{fig:padic-JC-image} gives an idea of what the critical points look like in the $p$-adic case. The richness which this simple example exhibits is an indication that a general $p$-adic theory of integrable systems will include many intricacies.

\begin{figure}
	\begin{tikzpicture}
		\node (im) at (4,4.5) {\includegraphics[width=10cm,trim=0 5cm 0 0]{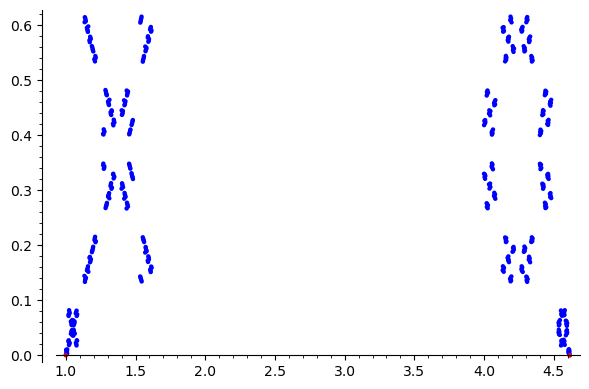}};
		\draw (0.1,0) -- (1,-3) node {\includegraphics[width=6cm]{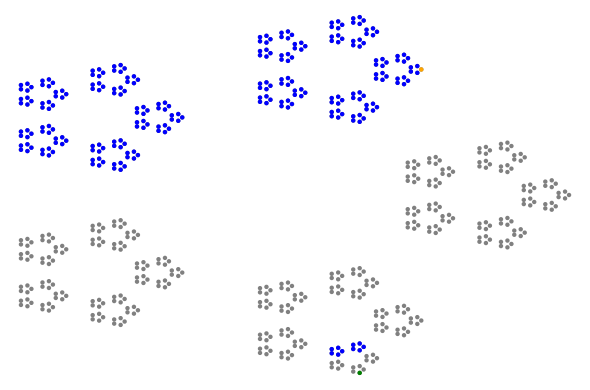}};
		\draw (8.6,0) -- (7.5,-3) node {\includegraphics[width=6cm]{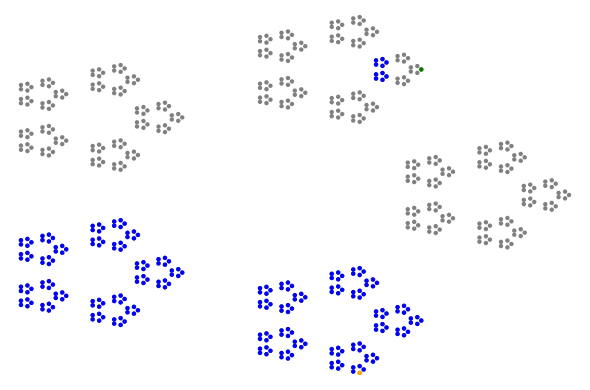}};
		\draw (1.1,4.4) -- (1,9) node {\includegraphics[width=6cm]{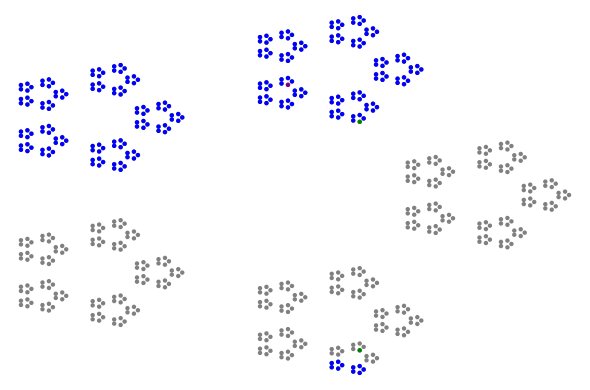}};
		\draw (6,1.3) -- (7.5,9) node {\includegraphics[width=6cm]{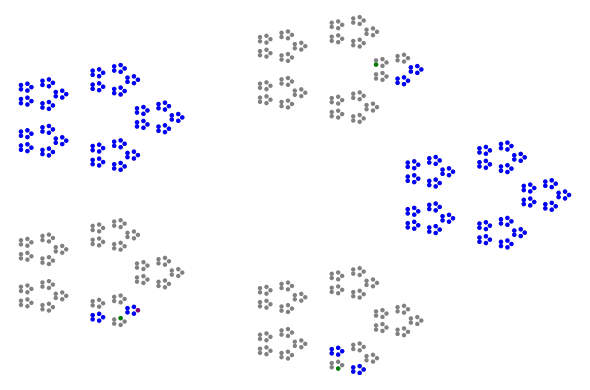}};
	\end{tikzpicture}
	\caption{Abstract representation of image and fibers of the $p$-adic Jaynes-Cummings model $F:\mathrm{S}_p^2\times(\Qp)^2\to(\Qp)^2$ for $p=5$. For other values of $p$ the results are not exactly the same, but there are many common aspects. As in Figure \ref{fig:real-JC-image}, the two red points correspond to rank $0$ critical points and the blue ones correspond to rank $1$ critical points. In the fibers, only the $z$ coordinate is represented; at the blue points the other coordinates form two ($p$-adic) circles, at the green points one circle, at the purple points they also have dimension $1$, and at the yellow points they form two $2$-planes. The grey points are values of $z$ that are not in the fiber.}
	\label{fig:padic-JC-image}
\end{figure}

In this article when we discuss images and fibers of functions, we will frequently graphically represent our findings. This is a problem because we are working with the $p$-adic field, and representations are usually done with real values. The solution is to ``translate'' the $p$-adic numbers into real ones.

We will use two types of representations, which we will call $1$-dimensional and $2$-dimensional. In the $1$-dimensional ones, only one variable is to be represented: suppose it is $x$. We choose a map \[R:\Zp\to\C\] from the values of $x$ to complex numbers and represent them as such (in the usual way where $\C$ is represented as $\R^2$). The mapping is chosen so that the distance between $R(x_1)$ and $R(x_2)$ represents the distance between $x_1$ and $x_2$ in the most accurate possible way. The solution we have found, which is not the only one possible, is as follows:

\begin{itemize}
	\item If $p=2$,
	\[R\left(\sum_{n=\ord(x)}^\infty x_n2^n\right)=\sum_{n=\ord(x)}^\infty x_n\left(\frac{3\mathrm{i}}{5}\right)^n.\]
	\item If $p=3$,
	\[R\left(\sum_{n=\ord(x)}^\infty x_n3^n\right)=\sum_{n=\ord(x)}^\infty \mathrm{e}^{\frac{2\pi \mathrm{i}x_n}{3}}\frac{1}{2^n}.\]
	\item If $p=5$,
	\[R\left(\sum_{n=\ord(x)}^\infty x_n5^n\right)=\sum_{n=\ord(x)}^\infty \mathrm{e}^{\frac{2\pi \mathrm{i}x_n}{5}}\left(\frac{3}{10}\right)^n.\]
\end{itemize}
Examples of these representations can be seen in Figures \ref{fig:fibers3}, \ref{fig:fibers1} and \ref{fig:fibers2}, for the different primes. For example, in Figure \ref{fig:fibers1} we can see a pentagon divided in five pentagons. Each one of these five pentagons represents a remainder modulo $5$ (hence, a $5$-adic ball of radius $1/5$). They are in turn subdivided in five pentagons, which represents balls of radius $1/25$, and so on.

The $2$-dimensional representations are recognized by their coordinate axes. Here two variables are being represented, say $x$ and $y$. A point $(x,y)\in\Zp^2$ is mapped to $\R^2$ by applying componentwise the correspondence
\[\sum_{i=\ord(x)}^\infty x_ip^i\mapsto\sum_{i=\ord(x)}^\infty x_ic^i\]
where $c=2/5$ for $p=2$, $c=2/9$ for $p=3$, and $c=2/15$ for $p=5$. For example, in Figure \ref{fig:exp} of the Appendix \ref{app:prelim}, we are representing a function from $5\Z_5$ to $1+5\Z_5$; $x$ is the independent variable and $y$ is the dependent one. The five clusters in which the points are divided correspond to $5$-adic balls of radius $1/25$ when projected to the $x$-axis. They can be seen, in turn, subdivided into five clusters, which correspond to $x$ being in a ball of radius $1/125$, and so on. The same clusters occur in the $y$-axis. The values in the axes do not represent $p$-adic numbers, but the real numbers resulting from the mapping.

All figures in this paper, including the $p$-adic representations, have been done using computer code developed in Sage.

\section{$p$-adic analytic integrable systems}\label{sec:integrable}

In this section we review the basic notions of $p$-adic symplectic geometry when the field $\mathbb{R}$ of coefficients
is replaced by the field of $p$\--adic numbers $\mathbb{Q}_p$. These extensions were proposed in an earlier paper by Pelayo, Voevodsky and Warren~\cite[Section 7]{PVW} in 2015.

\subsection{$p$\--adic analytic manifolds}
First we review some concepts for $p$-adic differential geometry, which can be found in the literature (see for example \cite{Schneider}), starting with the concept of a $p$-adic manifold. The following definitions are straightforward extensions of the real case. Following \cite[Sections 7-8]{Schneider}, given a Hausdorff topological space $M$ and an integer $n$, an \emph{$n$-dimensional $p$-adic analytic atlas} is a set of functions $A=\{\phi:U_\phi\to V_\phi\}$, where $U_\phi\subset M$ and $V_\phi\subset (\Qp)^n$ are open subsets, such that
	\begin{itemize}
		\item $\phi$ is a homeomorphism between $U_\phi$ and $V_\phi$;
		\item for any $\phi,\psi\in A$, the change of charts
		$\psi\circ \phi^{-1}:\phi(U_\phi\cap U_\psi)\to \psi(U_\phi\cap U_\psi)$
		is bi-analytic, i.e. it is analytic with analytic inverse.
	\end{itemize}
Such an $M$ together with such an atlas is called an \textit{$n$-dimensional $p$-adic analytic manifold}. A \emph{maximal atlas} for $M$ has a chart for each open set. The integer $n$ is called the \emph{dimension} of $M$.

Now let $M$ and $N$ be $p$-adic analytic manifolds of dimensions $m$ and $n$ respectively, a map $F:M\to N$ is \textit{analytic} if, for any $u\in M$, there are neighborhoods $U_\phi$ of $u$ and $U_\psi$ of $F(u)$ such that $\psi\circ F\circ\phi^{-1}$ is analytic (as a function from a subset of $(\Qp)^m$ to a subset of $(\Qp)^n$). $F$ is \textit{bi-analytic}, or an \textit{isomorphism of $p$-adic analytic manifolds}, if it is bijective and $F$ and $F^{-1}$ are analytic.

\begin{theorem}[{\cite[Proposition 8.6]{Schneider}}]\label{thm:paracompact}
	\letpprime. For a $p$-adic analytic manifold $M$ the following conditions are equivalent.
	\begin{enumerate}
		\item $M$ is paracompact (any open covering can be refined to a locally finite one).
		\item $M$ is strictly paracompact (any open covering can be refined to one consisting in pairwise disjoint sets).
		\item $M$ is an ultrametric space (its topology can be defined by a metric that satisfies the strict triangle inequality).
	\end{enumerate}
\end{theorem}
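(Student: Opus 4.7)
The plan is to prove the cycle $(3)\Rightarrow(2)\Rightarrow(1)\Rightarrow(3)$. Throughout, the key structural feature of any $p$-adic analytic manifold $M$ that drives every step is the existence of a basis of compact clopen subsets: in any chart $\phi:U_\phi\to V_\phi$, the preimages of balls $B(x_0,p^{-k})\subset(\Qp)^n$ are simultaneously open and closed, because the non-archimedean absolute value on $\Qp$ makes balls clopen (see Appendix \ref{app:prelim}). This endows $M$ with a refined form of local compactness combined with total disconnectedness, and every implication is an exploitation of this basis.

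The two easy implications come first. For $(2)\Rightarrow(1)$, a pairwise disjoint open cover is automatically locally finite: any point lies in a unique member of the cover, which serves as a neighborhood meeting only itself. For $(3)\Rightarrow(2)$, given an open cover $\{U_i\}$ of an ultrametric space $M$, for each $x\in M$ pick a ball $B(x,r(x))$ contained in some $U_{i(x)}$; exploiting the defining property that any two open balls in an ultrametric are either disjoint or nested, one extracts a pairwise disjoint refinement by selecting, via Zorn's lemma, a maximal antichain of such balls under inclusion.

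The substantive direction is $(1)\Rightarrow(3)$. First I would refine the cover of $M$ by chart domains by basic compact clopen balls each sitting inside a single chart, and then use paracompactness to pass to a locally finite such refinement $\{U_\alpha\}_{\alpha\in A}$. Well-order $A$ and set $V_\alpha = U_\alpha\setminus\bigcup_{\beta<\alpha}U_\beta$. Because the family is locally finite and each $U_\beta$ is closed, the union $\bigcup_{\beta<\alpha}U_\beta$ is closed, so each $V_\alpha$ is open and, being the difference of compact clopen sets, is itself compact and clopen, still contained in a chart. The family $\{V_\alpha\}$ then gives a pairwise disjoint decomposition $M=\bigsqcup_\alpha V_\alpha$. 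To build an ultrametric on $M$, I would pull back the restriction of the $p$-adic metric to each $V_\alpha$ via its chart, rescale so that every $V_\alpha$ has diameter at most $1/2$, and extend by declaring $d(x,y)=1$ whenever $x,y$ lie in different components. The strict triangle inequality then holds because distances within a single $V_\alpha$ are ultrametric by construction, while any triangle touching two distinct components has its two largest sides both equal to $1$.

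The main obstacle I expect lies in the implication $(1)\Rightarrow(3)$, specifically in justifying that the transfinite subtraction $U_\alpha\setminus\bigcup_{\beta<\alpha}U_\beta$ yields open sets. This rests on the crucial but nontrivial fact that, in a locally finite family, any union of closed sets is again closed — a property that fails without local finiteness and is precisely what makes paracompactness the right hypothesis to combine with the clopen basis. A secondary subtlety, easily overlooked, is arranging that each $V_\alpha$ genuinely lies inside a single chart; this must be built in at the very first refinement step before the subtraction argument is applied.
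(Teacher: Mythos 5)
The paper gives no proof of this statement---it is quoted directly from Schneider \cite[Proposition 8.6]{Schneider}---so your argument has to stand on its own. Your overall architecture (the cycle $(3)\Rightarrow(2)\Rightarrow(1)\Rightarrow(3)$, the clopen basis of balls coming from the non-archimedean absolute value, the disjoint-or-nested property of ultrametric balls, the transfinite subtraction trick, and the gluing of rescaled chart metrics with inter-component distance $1$) is the standard one, and $(2)\Rightarrow(1)$ is complete as written. But each of the two substantive implications has a genuine gap.

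In $(3)\Rightarrow(2)$, a maximal pairwise disjoint (equivalently, by the disjoint-or-nested dichotomy, a maximal antichain) family of admissible balls need not cover $M$: maximality only forbids adding a new ball disjoint from all chosen ones, and an uncovered point $x$ can have the property that every admissible ball around it strictly contains a member already in the family. Concretely, in $\Zp$ with the single cover set $\Zp$, any maximal disjoint family extending $\{p^k+p^{k+1}\Zp\}_{k\ge 0}$ misses $0$, since every ball containing $0$ is some $p^k\Zp$ and already meets $p^k+p^{k+1}\Zp$. The standard repair fixes a decreasing sequence of radii $2^{-n}$, assigns to each $x$ the least $n$ with $B(x,2^{-n})$ inside some cover member, uses that two balls of the \emph{same} radius are equal or disjoint, and discards those balls swallowed by one of strictly larger assigned radius.

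In $(1)\Rightarrow(3)$, the gap is at the very first step: paracompactness, as defined in the statement, yields a locally finite refinement by arbitrary open sets, not by compact clopen balls, and your subtraction argument needs local finiteness and closedness of the $U_\beta$ \emph{simultaneously}. The obvious fix---decomposing each member of the locally finite refinement into disjoint balls via Corollary \ref{cor:disjoint}---destroys local finiteness in general (e.g.\ $\Qp\setminus\{0\}$ is a disjoint union of balls accumulating at $0$). Producing a locally finite cover by compact clopen chart-balls is essentially the whole content of this direction; the standard route uses local compactness to split a paracompact locally compact Hausdorff space into a disjoint union of open $\sigma$-compact pieces, exhausts each piece by compacta to get a \emph{countable}, locally finite cover by compact clopen balls inside charts, and only then runs your subtraction argument. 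By contrast, the two obstacles you flagged---closedness of $\bigcup_{\beta<\alpha}U_\beta$ and keeping each $V_\alpha$ in a single chart---are the unproblematic parts once that cover is secured. (A minor further point: a chart image need not be bounded, so ``rescale to diameter at most $1/2$'' should be ``replace $d$ by $\min(d,1/2)$'', which still satisfies the strict triangle inequality; this is moot once the pieces are compact.)
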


\begin{corollary}\label{cor:disjoint-union}
	\letpprime. Any paracompact $p$-adic analytic manifold is isomorphic to a disjoint union of $p$-adic analytic balls. Hence, a compact $p$-adic analytic manifold is isomorphic to a finite disjoint union of $p$-adic analytic balls.
\end{corollary}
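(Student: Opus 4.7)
The plan is to derive this directly from Theorem \ref{thm:paracompact}, using the equivalence between paracompactness and strict paracompactness for $p$-adic analytic manifolds.

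First, I would start with an arbitrary paracompact $p$-adic analytic manifold $M$ of dimension $n$ and cover it by chart domains $\{U_\phi\}_{\phi\in A}$ taken from a maximal atlas. Since balls in $(\Qp)^n$ form a base of the topology, after shrinking each chart I may assume that each $V_\phi=\phi(U_\phi)$ is an open ball in $(\Qp)^n$, so that each $U_\phi$ is a $p$-adic analytic ball. This gives an open cover of $M$ by $p$-adic analytic balls.

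Next, I would invoke Theorem \ref{thm:paracompact}: paracompactness of $M$ implies strict paracompactness, so the cover by chart-balls can be refined to a pairwise disjoint open cover $\{W_i\}_{i\in I}$. Each $W_i$ is contained in some chart $U_{\phi_i}$, and therefore $\phi_i(W_i)$ is an open subset of a $p$-adic ball in $(\Qp)^n$. Now the key sub-step is to show that any open subset $V\subset (\Qp)^n$ is itself a disjoint union of open balls. I would argue this by noting that an open subset of $(\Qp)^n$ is still ultrametric (hence paracompact by the theorem), and so may itself be covered by a disjoint family of balls; alternatively, one can appeal directly to the ultrametric property, which guarantees that any two balls meeting are comparable, and pick a maximal disjoint collection of balls contained in $V$ via Zorn's lemma. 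Transferring back through the bi-analytic charts $\phi_i^{-1}$, each $W_i$ decomposes as a disjoint union of $p$-adic analytic balls, and combining these decompositions over $i\in I$ presents $M$ as a disjoint union of $p$-adic analytic balls, as required.

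For the second assertion, assume $M$ is compact and write $M=\bigsqcup_{j\in J} B_j$ with each $B_j$ a $p$-adic analytic ball, as just established. Each $B_j$ is nonempty, open, and (being a clopen set in an ultrametric space) also closed in $M$. Hence $\{B_j\}_{j\in J}$ is an open cover of the compact space $M$ with no proper subcover, forcing $J$ to be finite.

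The main obstacle is the internal lemma that an open subset of $(\Qp)^n$ is a disjoint union of balls; this is where one actually uses the non-archimedean nature of $\Qp$, via the fact that distinct balls of the same radius are disjoint. Everything else is a routine application of Theorem \ref{thm:paracompact} together with transport across charts.
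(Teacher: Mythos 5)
Your overall route is the same as the paper's: combine Theorem \ref{thm:paracompact} with the fact that an open subset of $(\Qp)^n$ is a disjoint union of balls (this is exactly Corollary \ref{cor:disjoint}, which the paper simply cites, via Schneider's Lemma 1.4, and then invokes together with Theorem \ref{thm:paracompact} to prove the corollary). Your reduction to charts and your finiteness argument in the compact case are both fine. The problem is that the two justifications you offer for the key sub-lemma do not work as stated. The first is circular: Theorem \ref{thm:paracompact} only guarantees that an open cover of an ultrametric space refines to a family of pairwise disjoint \emph{open sets}, not pairwise disjoint \emph{balls}; upgrading ``disjoint open sets'' to ``disjoint balls'' is precisely the statement you are trying to prove. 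The second (Zorn's lemma) is actually false as stated: a maximal pairwise disjoint collection of balls contained in $V$ need not cover $V$. For example, in $V=\Zp$ the collection $\{cp^n+p^{n+1}\Zp \mid n\ge 0,\ 1\le c\le p-1\}$ is pairwise disjoint and maximal (any ball in $\Zp$ either contains $0$, and hence contains some member of the collection, or omits $0$, and hence is contained in some member), yet its union is $\Zp\setminus\{0\}$.

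The correct elementary argument, and essentially the one behind the cited Lemma 1.4 of \cite{Schneider}, is pointwise rather than by maximal families: for $x\in V$ let $B_x$ be the union of all balls that contain $x$ and are contained in $V$. Any two balls containing $x$ are nested by the ultrametric inequality, so $B_x$ is an increasing union of balls and hence is itself a ball (or all of $(\Qp)^n$, which one decomposes separately as $\bigsqcup_{w\in(\Qp)^n/(\Zp)^n}(w+(\Zp)^n)$), and for $x,y\in V$ the sets $B_x$ and $B_y$ are either equal or disjoint. This yields the decomposition of $V$ into disjoint balls, and the rest of your argument then goes through. Alternatively, you can simply quote Corollary \ref{cor:disjoint} (in its $n$-dimensional form) at this step, which is what the paper does.
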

\begin{proof}
	This is a consequence of Theorem \ref{thm:paracompact} and Corollary \ref{cor:disjoint}.
\end{proof}

Corollary \ref{cor:disjoint-union} implies that, when defining an atlas for a manifold, we can take the open sets in the atlas as disjoint, and the charts sending them to balls in $(\Qp)^n$.

The last part of Corollary \ref{cor:disjoint-union} was strengthened by Serre \cite{Serre}:
two finite disjoint unions of balls are isomorphic if and only if the corresponding numbers of balls differ by a multiple of $p-1$. That is, there are exactly $p-1$ compact $p$-adic manifolds, modulo isomorphism.

The notions of \emph{$p$-adic analytic function}, \emph{$p$-adic vector field} and \emph{$p$-adic differential form} are analogous to the ones in the real case. In order to make the paper as accessible as possible and make some elementary comparisons with the real case, we review these notions in Appendix \ref{app:vector}.

\subsection{$p$-adic integrable systems}

Let $p$ be a prime number. A \textit{$p$-adic analytic symplectic manifold} is a pair $(M,\omega)$ where $M$ is a $p$-adic analytic manifold and $\omega$ is a closed non-degenerate analytic $2$-form in $M$. For example,
if $S$ is a $p$-adic analytic manifold, then the canonical symplectic form on $M=\mathrm{T}^*S$ is also analytic by construction.

Given a $p$-adic analytic symplectic manifold $(M,\omega)$ and a $p$-adic analytic function $H:M\to\Qp$, there is a unique $p$-adic analytic vector field that satisfies
	\begin{equation}\label{eq:hamilton}
		\imath(X_H)\omega=\dd H.
	\end{equation}
As in the real case, $X_H$ is called the \textit{Hamiltonian vector field} associated to $H$.
We recall the proof of this fact, which is the same as in the real case. Let $q\in M$. We may assume that $\omega_q$ has the form
	\[\dd x_1\wedge\dd y_1+\ldots+\dd x_n\wedge\dd y_n\]
	in coordinates $(x_1,y_1,\ldots,x_n,y_n)$ near $q$, and
	$\dd H(q)=\sum_{i=1}^n\left(\frac{\partial H}{\partial x_i}(q)\dd x_i+\frac{\partial H}{\partial y_i}(q)\dd y_i\right)$. Hence
	$X_H(q)=\sum_{i=1}^n\left(\frac{\partial H}{\partial y_i}(q)\frac{\partial}{\partial x_i}-\frac{\partial H}{\partial x_i}(q)\frac{\partial}{\partial y_i}\right)$.

Also as in the real case, the \textit{Poisson bracket} $\{\cdot,\cdot\}$ of two $p$-adic analytic functions $f,g:M\to\Qp$ is defined by
	\[\{f,g\}=\omega(X_f,X_g).\]

\begin{definition}[{Pelayo-Voevodsky-Warren \cite[Definition 7.1]{PVW}}]\label{def:integrable}
	\letpprime\ and let $(M,\omega)$ be a $p$-adic analytic symplectic manifold. We say that a $p$-adic analytic map \[F:=(f_1,\ldots,f_n):(M,\omega)\to(\Qp)^n\] is a \emph{$p$-adic analytic integrable system} if two conditions hold:
	\begin{enumerate}
		\item The functions $f_1,\ldots,f_n$ satisfy $\{f_i,f_j\}=0$ for all $1\le i\le j\le n$;
		\item The set where the $n$ differential $1$-forms
		$\dd f_1,\ldots,\dd f_n$
		are linearly dependent has $p$-adic measure zero.
	\end{enumerate}
\end{definition}

In item (2) of Definition \ref{def:integrable} the $p$-adic measure is with respect to the $p$-adic volume form $\Omega=\omega^n$.

Throughout the paper, whenever we speak of analytic maps we always mean $p$-adic analytic maps, and similarly for manifolds.

\section{The $p$-adic analytic oscillator on $(\Qp)^2$}\label{sec:circ-oscillator}

This section starts the main part of the paper. Recall that our goal is to start developing the theory of $p$-adic integrable systems, so we will start with the simplest example, which is the oscillator, that is, the system \[f:(\Qp)^2\to\Qp\] with Hamiltonian \[f(x,y)=x^2+y^2\] on $(\Qp)^2$ endowed with the standard $p$-adic symplectic form $\dd x\wedge\dd y$. Since this section we believe is interesting in its own right, independently on the upcoming sections, we use $(x,y)$ for the variables on $(\Qp)^2$, instead of $(u,v)$, because it is more common.

As it is well known, in the real case the trajectory in the phase space of this system coincides with the fiber of $f$, which is a circle. \textit{In the $p$-adic case, we will see that the trajectory is part of the fiber of $f$.} Actually, $f$ is the momentum map of a $p$-adic circle action (see Definition \ref{def:momentum-map}). This will be useful later because the oscillator is used in the construction of the Jaynes-Cummings model.

As the fiber of $f$ is a circle, we will first find some structure in $\mathrm{S}^1_p$ and other circles in the $p$-adic plane $(\Qp)^2$, to which we dedicate Section \ref{sec:circles}. Then in Section \ref{sec:oscillator} we study the $p$-adic harmonic oscillator.

We refer to Appendix \ref{app:prelim} for the basic definitions concerning the $p$-adic numbers, which we use below.

\subsection{The structure of $p$-adic analytic circles $\mathrm{S}^1_p$}\label{sec:circles}

 We can understand a point $(a,b)$ in $\mathrm{S}^1_p$ as \textit{acting} on $(\Qp)^2$ (in the sense of Appendix \ref{app:actions}) as multiplication by the matrix
\[\begin{pmatrix}
	a & -b \\
	b & a
\end{pmatrix}.\]
These matrices are called \textit{unitary} in real symplectic algebra, due to its identification with complex numbers of absolute value $1$ that associates to this matrix the number $a+b\mathrm{i}$. In $p$-adic algebra, this identification does not make much sense because not all complex $p$-adics can be expressed as $a+b\mathrm{i}$, for $a,b\in\Qp$; actually some of them are transcendental over $\Qp$ (the field $\Cp$ is not defined as the algebraic closure of $\Qp$, but as the metric completion of that closure). This means that the identification will not be surjective.

Despite of this, we will still call these matrices \textit{unitary} because it is not clear what this term should mean in the complex $p$-adic context. (A unitary matrix in $\C$ is a matrix $A$ such that $\overline{A}^TA=I$. This definition uses the notion of complex conjugate, which has no canonical equivalent in $\Cp$ because the Galois group of the extension $\Cp/\Qp$ is infinite.)

Let $C_k=f^{-1}(\{k\})$, where $k\in\Qp$. In the real case, given two points $(x,y),(x',y')\in C_k$, for $k>0$, there is a unitary matrix that sends one point to the other:
\begin{equation}\label{eq:unitmatrix}
	\frac{1}{k}\begin{pmatrix}
		xx'+yy' & x'y-xy' \\
		xy'-x'y & xx'+yy'
	\end{pmatrix}.
\end{equation}
Also, all unitary matrices have the form
\begin{equation}
	\label{eq:rotation}
	\begin{pmatrix}
		\cos t & -\sin t \\
		\sin t & \cos t
	\end{pmatrix}.
\end{equation}
In terms of Lie groups, this is to say that the group of unitary matrices $\mathrm{S}^1$ is the same as the group of rotation matrices, which is essentially $\R/2\pi\Z$, because the domain of the functions is $\R$ and their values repeat with period $2\pi$.

Now we turn to the $p$-adic case. The group of rotation matrices can now be identified with $p^d\Zp$, where $d=2$ if $p=2$ and otherwise $d=1$, because this is the domain of the cosine and the sine, and the latter is injective as a function from $p^d\Zp$ to $p^d\Zp$ (see Appendix \ref{app:prelim}). We will see now that this group does not coincide with $\mathrm{S}^1_p$, but instead it is a proper subgroup. Equivalently, the set of points of the form $(\cos t,\sin t)$ for $t\in p^d\Zp$, does not give all the points in $\mathrm{S}^1_p$.

\begin{proposition}\label{prop:hamplane}
	Let $p$ be a prime number. Let $f:(\Qp)^2\to \Qp$ be given by $f(x,y)=x^2+y^2$. Let $C_k=f^{-1}(\{k\})$ and $C_k^*=C_k\setminus\{(0,0)\}$ (concretely, $C_k^*=C_k$ if $k\ne 0$). Then the following statements hold.
	\begin{enumerate}
		\renewcommand{\theenumi}{\roman{enumi}}
		\item Any two points in $C_k^*$ are related by the action of $\mathrm{S}^1_p$, except if $k=0$, in which case only proportional points are related.
		\item Two points $(x,y),(x',y')\in C_k^*$ are related by a rotation matrix if and only if
		\begin{equation}
		\left\{\begin{aligned}
		x&\equiv x'\mod p^{r+d} \label{eq:cond} \\
		y&\equiv y'\mod p^{r+d}
		\end{aligned}\right.
		\end{equation}
		where
		\[r=\min\{\ord(x),\ord(y),\ord(x'),\ord(y')\},\]
		and $d=2$ if $p=2$ and otherwise $d=1$.
	\end{enumerate}
\end{proposition}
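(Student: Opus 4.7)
My plan divides into two parts corresponding to the two items of the proposition.

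\textbf{Part (i).} For $k \neq 0$ I would use the explicit matrix of equation (\ref{eq:unitmatrix}) as the candidate element of $\mathrm{S}^1_p$; the verification reduces to two routine computations, namely that it sends $(x,y)^T$ to $(x',y')^T$ and that its entries $(a,b)$ satisfy $a^2 + b^2 = 1$, the latter being the Brahmagupta--Fibonacci identity
\[(xx'+yy')^2 + (xy' - x'y)^2 = (x^2+y^2)(x'^2+y'^2) = k^2.\]
For $k = 0$ I would first observe that $C_0^*$ is empty unless $-1$ is a $p$-adic square, i.e. unless $p \equiv 1 \mod 4$, in which case $C_0^* = L_+ \sqcup L_-$ for the two isotropic lines $L_\pm = \{(x, \pm\ii x) : x \in \Qp^*\}$. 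A direct computation shows any rotation with entries $a, b$ sends $(x, \ii x)$ to $((a-b\ii)x, \ii(a-b\ii)x)$, preserving $L_+$; since $(a-b\ii)(a+b\ii)=1$, the scalar $a - b\ii$ ranges over all of $\Qp^*$ as $(a,b)$ varies in $\mathrm{S}^1_p$, so the action is transitive on each line while the two orbits stay disjoint.

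\textbf{Part (ii), forward direction.} Writing $(x',y') = R_t(x,y)$ for $t \in p^d\Zp$ yields
\[x' - x = x(\cos t - 1) - y\sin t, \qquad y' - y = x\sin t + y(\cos t - 1).\]
I would extract from the $p$-adic Taylor expansions (Appendix \ref{app:prelim}) the bounds $\ord(\sin t) \geq d$ and $\ord(\cos t - 1) \geq d + 1$: for $p$ odd with $d = 1$ this is $\ord(-t^2/2) \geq 2$, and for $p = 2$ with $d = 2$ it is $\ord(-t^2/2) \geq 3$. The strict triangle inequality then immediately gives $\ord(x'-x), \ord(y'-y) \geq r + d$.

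\textbf{Part (ii), backward direction.} Starting from the unitary $U$ supplied by Part (i), I would write its entries as $a = (xx'+yy')/k$, $b = (xy'-x'y)/k$. Substituting $x' = x + \delta_x$, $y' = y + \delta_y$ with $\ord(\delta_x), \ord(\delta_y) \geq r + d$, the numerator $xy' - x'y$ collapses to $x\delta_y - y\delta_x$, so $\ord(xy' - x'y) \geq 2r + d$. In the generic case $\ord(k) = 2r$ this yields $b \in p^d\Zp$; since $\sin$ restricts to an isometric bijection $p^d\Zp \to p^d\Zp$ (by Taylor inversion), there is a unique $t \in p^d\Zp$ with $\sin t = b$. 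Then $\cos^2 t = 1 - b^2 = a^2$, and a parallel estimate $a - 1 = (x\delta_x + y\delta_y)/k \in p^d\Zp$ combined with $\cos t \equiv 1 \mod p^{d+1}$ forces $\cos t = +a$ rather than $-a$, so $U = R_t$ as desired.

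\textbf{Main obstacle.} The delicate case will be the ``near-isotropic'' regime $\ord(k) > 2r$, which only arises for $p \equiv 1 \mod 4$ and for points $(x,y)$ close to one of the lines $L_\pm$; there the generic identity $\ord(k) = 2r$ fails, and I expect to need to extract extra cancellation from $x'^2 + y'^2 = x^2 + y^2$ to still deduce $\ord(b) \geq d$. A secondary subtlety is the $p = 2$ bookkeeping: $\cos t - 1$ gains one fewer factor of $p$ than in the odd case, but the bound $d + 1 = 3$ is still enough to push the final inequality through.
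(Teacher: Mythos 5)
Your part (i) and the forward direction of part (ii) are sound; in fact your forward direction (expanding $R_t(x,y)-(x,y)$ directly and using $\ord(\sin t)\ge d$, $\ord(\cos t-1)\ge d+1$) is more direct than the paper's, which instead runs the congruences through the system $xx'+yy'=k\cos t$, $xy'-x'y=k\sin t$. However, the backward direction has a genuine gap, and it is exactly the one you flag as the ``main obstacle'': when $\ord(k)>2r$ the estimate $\ord(xy'-x'y)\ge 2r+d$ does not give $b=(xy'-x'y)/k\in p^d\Zp$, and your proposal stops at ``I expect to need to extract extra cancellation.'' This is not a side case one can defer: it is the hardest step of the proof, and without it the statement is unproved. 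Two further inaccuracies: (a) the regime $\ord(k)>2r$ is \emph{not} confined to $p\equiv 1\bmod 4$ --- for $p=2$ with $x,y$ both odd one always has $\ord(k)=2r+1$, so your ``generic'' argument leaves a deficit of one power of $2$ there as well; (b) your backward direction divides by $k$, so the $k=0$ case of part (ii) (nonempty for $p\equiv 1\bmod 4$) is not covered, though it follows quickly from your own observation that a rotation acts on the isotropic line as multiplication by $\exp(-\ii t)$, whose range is exactly $1+p\Zp$.

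For comparison, the paper closes the gap as follows (after normalizing $r=0$ and assuming $x$ a unit, with $s=\ord(k)$). Set $u=y/x$ and $u'=y'/x'$, so $\ord(1+u^2)=\ord(1+u'^2)=s$. From $x\equiv x'\bmod p^d$ one gets $x'^2\equiv x^2\bmod p^{2d-1}$ (for $p=2$ the extra factor comes from $\ord(x+x')\ge 1$), hence $x^2(1+u^2)=k=x'^2(1+u'^2)$ forces $u^2\equiv u'^2\bmod p^{s+2d-1}$ and then $u\equiv u'\bmod p^{s+d}$ (the sign is pinned down by $y\equiv y'\bmod p^d$ together with $u$ being a unit when $s>0$). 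This yields the refined cancellation $xy'-x'y=xx'(u'-u)\equiv 0\bmod p^{s+d}$, which is precisely what is needed to place $b$ in the image of the sine and, via the companion congruence $xx'+yy'\equiv k\bmod p^{s+d}$, to resolve the sign of the cosine. Your ``extract extra cancellation from $x'^2+y'^2=x^2+y^2$'' is the right instinct, but the argument above (or an equivalent) must actually be carried out for the proof to be complete.
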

This $r$, that is constant in an orbit of $p^d\Zp$, will be called the \textit{order} of the orbit.
\begin{proof}
	We start with the ``degenerate case'' $k=0$. If $p=2$ or $p\equiv 3\mod 4$, the only solution to $x^2+y^2=0$ is $(0,0)$ and $C_0^*=\varnothing$, so suppose $p\equiv 1\mod 4$.
	
	Now \[C_0=\{(x,\mathrm{i}x)\mid x\in\Qp\},\] where $\mathrm{i}$ is an element of $\Qp$ such that $\mathrm{i}^2=-1$. Let $(x,\mathrm{i}x)$ and $(x',\mathrm{i}'x')$ be two elements in $C_0$ with $x,x'\ne 0$. If there is a unitary matrix which sends one to the other, we have
	\[x'=(a-\mathrm{i}b)x\]
	and
	\[\mathrm{i}'x'=(b+\mathrm{i}a)x=\mathrm{i}(a-\mathrm{i}b)x=\mathrm{i}x',\]
	which implies $\mathrm{i}=\mathrm{i}'$. For the other direction, if $\mathrm{i}=\mathrm{i}'$, the matrix
	\[\frac{1}{2xx'}\begin{pmatrix}
		x^2+x'^2 & \mathrm{i}(x^2-x'^2) \\
		\mathrm{i}(x'^2-x^2) & x^2+x'^2
	\end{pmatrix}\]
	is unitary and sends $(x,\mathrm{i}x)$ to $(x',\mathrm{i}x')$. This shows part (i).
	
	In order to prove part (ii), we need the matrix to be a rotation matrix. This happens when
	\[x'=x\cos t-\mathrm{i}x\sin t=x\exp(-\mathrm{i}t).\]
	This needs that $x'/x\in 1+p\Zp$, which implies \[\ord(x'-x)\ge \ord(x)+1\] as we wanted.
	
	Now we turn to $k\ne 0$. The unitary matrix for part (i) is the same matrix (\ref{eq:unitmatrix}) as in the real case. For part (ii), this matrix is a rotation matrix if and only if
	\begin{equation}
	\left\{\begin{aligned}
	xx'+yy'&=k\cos t; \label{eq:cos-sin} \\
	xy'-x'y&=k\sin t.
	\end{aligned}\right.
	\end{equation}
	for some $t$.
	
	By changing $(x,y,x',y',k)$ to \[(p^{-r}x,p^{-r}y,p^{-r}x',p^{-r}y',p^{-2r}k),\] we may assume that $r=0$. Now, the first four numbers are in $\Zp$, but not all of them in $p\Zp$. Without loss of generality, suppose $x\notin p\Zp$. Let $s=\ord(k)$. The conditions \eqref{eq:cond} are now written as
	\begin{align}
		x&\equiv x' \mod p^d \label{eq:xdif} \\
		y&\equiv y' \mod p^d \label{eq:ydif}
	\end{align}
	
	Suppose first that we have a solution of \eqref{eq:cos-sin}. Proposition \ref{prop:functions} gives
	\[
	\left\{\begin{aligned}
	xx'+yy'&\equiv k\mod p^{s+d}; \\
	xy'-x'y&\equiv 0\mod p^{s+d}.
	\end{aligned}\right.
	\]
	
	Now make the change $y=ux$, where $u\in\Zp$ (because $\ord(y)\ge 0=\ord(x)$):
	\begin{align}
		x(x'+uy')&\equiv k\mod p^{s+d} \label{eq:cos3} \\
		x(y'-ux')&\equiv 0\mod p^{s+d} \label{eq:sin3}
	\end{align}
	
	As $\ord(x)=0$, \eqref{eq:sin3} solves as $y'\equiv ux'\mod p^{s+d}$. Substituting in \eqref{eq:cos3}, we get
	\[xx'(1+u^2)\equiv k\mod p^{s+d}.\]
	But we also know that
	\[x^2(1+u^2)=x^2+y^2=k\Longrightarrow xx'(1+u^2)=kx^{-1}x'\]
	and together with the previous equation
	\[kx^{-1}x'\equiv k\mod p^{s+d}\Longrightarrow x^{-1}x'\equiv 1\mod p^d \Longrightarrow x\equiv x'\mod p^d\]
	and
	\[y=ux\equiv ux'\equiv y'\mod p^d\]
	as we wanted.
	
	Conversely, suppose that \eqref{eq:xdif} and \eqref{eq:ydif} hold. We know that $x\notin p\Zp$, and by \eqref{eq:xdif}, also $x'\notin p\Zp$. Let $u=x^{-1}y\in\Zp$ and $u'=x'^{-1}y'\in\Zp$. We have that
	\begin{align*}
	\ord(1+u^2) & =\ord(x^2(1+u^2)) \\
	& =\ord(x^2+y^2) \\
	& =\ord(k)=s,
	\end{align*}
	and the same for $u'$. Using \eqref{eq:xdif} we get
	\begin{align*}
	x^2(1+u^2) & =k=x'^2(1+u'^2) \\
	& \equiv x^2(1+u'^2)\mod p^{s+2d-1},
	\end{align*}
	which implies $u^2\equiv u'^2\mod p^{s+2d-1}$, that is
	\[u\equiv\pm u'\mod p^{s+d}\]
	(the exponent of $p$ is always the same unless $p=2$, in which case it goes one up when squaring and one down when canceling the squares, as in Corollary \ref{cor:hensel}). We claim that the plus sign holds.
	
	By \eqref{eq:ydif},
	\[ux=y\equiv y'\equiv u'x\mod p^d,\]
	so we must have $u\equiv u'\mod p^d$. If $s=0$, the claim is proved. Otherwise, $1+u^2$ is a multiple of $p$, which implies $u$ is not and $u'\equiv u\not\equiv -u\mod p^d$. So the minus sign cannot hold, and the claim is proved.
	
	Now
	\[xy'-x'y=xu'x'-x'ux\equiv 0\mod p^{s+d}.\]
	It follows from Proposition \ref{prop:functions} that $(xy'-x'y)/k$ is in the image of the sine series, so there is $t$ such that $xy'-x'y=k\sin t$. This implies
	\begin{align*}
	(xx'+yy')^2 & =(x^2+y^2)(x'^2+y'^2)-(xy'-x'y)^2 \\
	& =k^2-k^2\sin^2 t
	& =k^2\cos^2 t
	\end{align*}
	and
	\[xx'+yy'=\pm k\cos t.\]
	Moreover,
	\begin{align*}
	xx'+yy' & \equiv xx'(1+u^2) \\
	& \equiv x^2(1+u^2)=k\mod p^{s+d};
	\end{align*}
	\[\pm k\cos t\equiv \pm k\mod p^{s+d},\]
	so the plus sign must hold, and $(x,y)$ and $(x',y')$ are related by a rotation.
\end{proof}

\begin{figure}
	\includegraphics[scale=0.5]{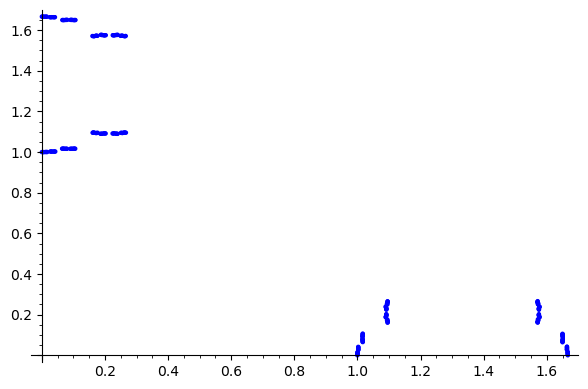}
	\includegraphics[scale=0.5]{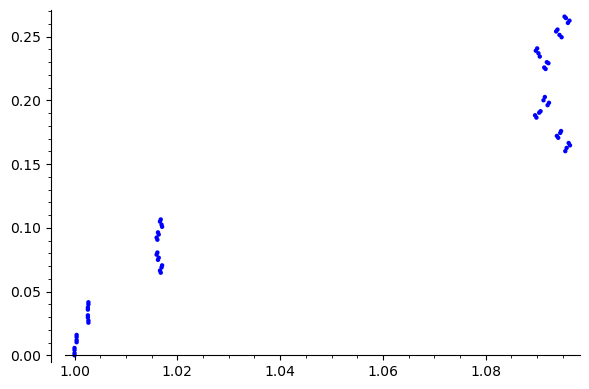}
	\caption{Left: The circle $\mathrm{S}^1_2$, consisting on four ``sectors''. The points in each sector are related by rotation. Right: a close-up on one sector.}
	\label{fig:S1-2}
\end{figure}

\begin{proposition}\label{prop:numorbits}
	\letpprime. Let $f:(\Qp)^2\to \Qp$ be given by $f(x,y)=x^2+y^2$. Let $k\in\Qp$ and let $C_k=f^{-1}(\{k\})$ and $C_k^*=C_k\setminus\{(0,0)\}$ (concretely, $C_k^*=C_k$ if $k\ne 0$). Given $r\in\Z$, the number $\orm(r,k)$ of order $r$ orbits of the rotation group $p^d\Zp$ in $C_k^*$ is given as follows.
	\begin{itemize}
		\item If $p\equiv 1\mod 4$:
		\[\orm(r,k)=\begin{cases}
		2p-2 & \text{if }\ord(k)>2r. \\
		p-1 & \text{if }\ord(k)=2r. \\
		0  & \text{if }\ord(k)<2r.
		\end{cases}\]
		\item If $p\equiv 3\mod 4$:
		\[\orm(r,k)=\begin{cases}
		p+1 & \text{if }\ord(k)=2r. \\
		0  & \text{otherwise.}
		\end{cases}\]
		\item If $p=2$:
		\[\orm(r,k)=\begin{cases}
		4 & \text{if }\ord(k)=2r\text{ and }\ord(k-2^{2r})\ge 2r+2. \\
		4 & \text{if }\ord(k)=2r+1\text{ and }\ord(k-2^{2r})\ge 2r+3. \\
		0 & \text{otherwise.}
		\end{cases}\]
	\end{itemize}
\end{proposition}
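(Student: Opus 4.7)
The plan is to reduce everything to the case $r = 0$ by a rescaling and then to count liftable residue classes modulo $p^d$. The rescaling $(x, y) \mapsto (p^{-r} x, p^{-r} y)$ sends $C_k^* \cap \{\min(\ord x, \ord y) = r\}$ bijectively onto $C_{\tilde k}^* \cap \{\min(\ord x, \ord y) = 0\}$, where $\tilde k = k p^{-2r}$, and conjugates the rotation action into itself. An order $0$ point has $(x, y) \in \Zp^2$, so $\tilde k = x^2 + y^2 \in \Zp$, showing that $\ord(k) \ge 2r$ is necessary and that $\orm(r, k) = 0$ otherwise. By Proposition \ref{prop:hamplane}(ii), the order $0$ orbits in $C_{\tilde k}^*$ are in bijection with residues $(\bar x, \bar y) \in (\Zp/p^d)^2$ (with at least one unit) that actually lift to a solution of $x^2 + y^2 = \tilde k$. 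I will verify liftability by Hensel's lemma: viewing $\tilde k - y^2 = x^2$ as an equation in $x$ with parameter $y$, an approximate root $\bar x$ lifts provided $\ord(\bar x^2 + \bar y^2 - \tilde k) > 2 \ord(2 \bar x)$ (and symmetrically with the roles of $x$ and $y$ exchanged).

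For $p$ odd one has $d = 1$ and $|2|_p = 1$, so liftability reduces to $\bar x^2 + \bar y^2 \equiv \tilde k \pmod p$. When $\ord(\tilde k) = 0$, I count the $\F_p$-points of the affine conic $x^2 + y^2 = \tilde k$: for $p \equiv 1 \pmod 4$ the form factors as $(x + \ii y)(x - \ii y)$ and the nonzero value $\tilde k$ is attained by exactly the $p - 1$ pairs $(u, \tilde k/u)$, $u \in \F_p^*$; for $p \equiv 3 \pmod 4$ the form is anisotropic, hence the norm from $\F_{p^2}/\F_p$, so each nonzero value is attained exactly $p + 1$ times. When $\ord(\tilde k) > 0$ one instead needs $\bar x^2 + \bar y^2 \equiv 0 \pmod p$ with $(\bar x, \bar y) \ne 0$: for $p \equiv 3 \pmod 4$ no such solution exists, giving $\orm = 0$; for $p \equiv 1 \pmod 4$ the solutions are the $2(p - 1)$ residues on the two lines $\bar x \equiv \pm \ii \bar y \pmod p$, $\bar y \in \F_p^*$. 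All these residues lift because $|2 \bar x|_p = 1$ wherever $\bar x \ne 0$.

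The main obstacle is the case $p = 2$, where $d = 2$ and $|2|_p = 1/2$, so Hensel lifting now demands accuracy modulo $8$. The key identity is that every odd $a \in \Z_2$ satisfies $a^2 \equiv 1 \pmod 8$, while for even $b$ one has $b^2 \equiv 0$ or $4 \pmod{16}$ according to $b \bmod 4$. Consequently the order $0$ residues split into two families: either exactly one of $\bar x, \bar y$ is odd, forcing $\bar x^2 + \bar y^2 \equiv 1$ or $5 \pmod 8$ and hence $\tilde k \equiv 1 \pmod 4$; or both are odd, forcing $\bar x^2 + \bar y^2 \equiv 2 \pmod 8$ and hence $\tilde k \equiv 2 \pmod 8$. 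In the first subcase, $\tilde k \bmod 8$ pins down whether the even residue is $\equiv 0$ or $\equiv 2 \pmod 4$, leaving four liftable residues (two choices for the odd variable and one for the even, doubled to account for which of $x$ or $y$ is odd). In the second subcase, the four residues $\{1, 3\}^2$ all lift. Unscaling by $p^{2r}$ translates these conditions into $\ord(k - 2^{2r}) \ge 2r + 2$ when $\ord(k) = 2r$ and $\ord(k - 2^{2r+1}) \ge 2r + 3$ when $\ord(k) = 2r + 1$, with $\orm(r, k) = 4$ in both cases and $\orm(r, k) = 0$ elsewhere.
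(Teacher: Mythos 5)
Your proof is correct and follows essentially the same route as the paper's: rescale to $r=0$, identify order-$0$ orbits with residue classes modulo $p^d$ via Proposition \ref{prop:hamplane}(ii), count finite-field solutions (factoring the form when $p\equiv 1\bmod 4$, the norm form of $\F_{p^2}/\F_p$ when $p\equiv 3\bmod 4$), lift by Hensel, and run a mod-$8$ analysis for $p=2$. One remark: in the subcase $\ord(k)=2r+1$ your derived condition $\ord(k-2^{2r+1})\ge 2r+3$ is the correct one (consistent with Corollary \ref{cor:image}), whereas the condition $\ord(k-2^{2r})\ge 2r+3$ printed in the statement is vacuous there and is evidently a typo.
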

\begin{proof}
	As before, we start with the case $k=0$ (and $\ord(k)=\infty$). If $p=2$ or $p\equiv 3\mod 4$, $C_0^*$ is empty and the number of orbits is zero. Otherwise, an orbit is
	\[\{(x',\mathrm{i}x')\in\Qp\mid \ord(x'-x)\ge \ord(x)+1\}\]
	for fixed $x$ and $\mathrm{i}$ with $\mathrm{i}^2=-1$, with the order determined by the order of $x$. Two choices of $x$ give different orbits if and only if their leading digits differ, which gives $2p-2$ orbits ($2$ values of $i$ times $p-1$ leading digits of $x$).
	
	Now we turn to $k\ne 0$. Without loss of generality, we take $r=0$ (otherwise change $k$ to $p^{-2r}k$).
	
	We start with $p>2$. Let $\overline{C}_k$ be the set of orbits and $S_k$ be the set of pairs $(a,b)\in(\F_p)^2\setminus\{(0,0)\}$ such that $a^2+b^2=k$, where $\F_p$ is the finite field of order $p$. Of course, $\overline{C}_k$ and $S_k$ are empty if $\ord(k)<0$.
	
	By Proposition \ref{prop:hamplane}, there is an injective correspondence
	\[g:\overline{C}_k\to S_k\] that assigns to the orbit of $(x,y)$ the pair $(x\mod p,y\mod p)$. For the case $p>2$, $g$ is surjective, because if we have $a,b\in\F_p$ with $a^2+b^2=k$ we can lift them to $\Zp$ (apply Theorem \ref{thm:hensel}).
	
	Suppose that $\ord(k)>0$. $(a,b)$ must satisfy $a^2+b^2=0$, and $a$ and $b$ are not both $0$. This means $(ab^{-1})^2=-1$, which has no solution if $p\equiv 3\mod 4$ and leaves two possible values for $ab^{-1}$ otherwise. For each of these solutions, $a$ and $b$ can take $p-1$ possible values, and we are done.
	
	Now let us assume that $\ord(k)=0$. This means that $k\ne 0\in\F_p$, and $(a,b)$ must satisfy $a^2+b^2=k$.
	\begin{itemize}
		\item If $p\equiv 1\mod 4$, there is $u\in\F_p$ such that $u^2=-1$ and
	\begin{equation}
		\label{eq:a+ub}
		k=a^2+b^2=(a+ub)(a-ub)
	\end{equation}
	Substituting $c=a-ub$ in \eqref{eq:a+ub},
	\[k=c(c+2ub)\]
	which, for a fixed value of $c\ne 0$, has one solution for $b$, and in turn one solution for $a=c+ub$. Moreover, the same $(a,b)$ cannot be obtained for two values of $c$, hence there are as many solutions as values for $c$, which are $p-1$.
	
	\item If $p\equiv 3\mod 4$, $x^2+1$ is irreducible modulo $p$, so it has a root $u$ in $\F_{p^2}$ such that $\{1,u\}$ generates $\F_{p^2}$ as a $\F_p$-vector space. Defining as usual the conjugate of $x+uy\in\F_{p^2}$ as $x-uy$ for $x,y\in\F_p$, we can write
	\[k=a^2+b^2=(a+ub)(a-ub)=c\overline{c}\]
	and the problem reduces to count the number of $c\in\F_{p^2}$ such that $c\overline{c}=k$. Let $R_k$ be the set of such $c$. We have that:
	\begin{itemize}
		\item $\F_{p^2}\setminus\{0\}=R_1\cup R_2\cup\ldots\cup R_{p-1}$;
		\item If $x\in R_i$ and $y\in R_j$,
		\[xy\cdot\overline{xy}=x\overline{x}\cdot y\overline{y}=ij\]
		and $xy\in R_{ij}$;
		\item $R_i$ is not empty: 
		if $i$ is a square modulo $p$, this is obvious. Otherwise, let $a$ be the smallest non-square modulo $p$ and $i=ab^2$. By our choice of $a$, $a-1=c^2$ for some $c$, and
		\[i=ab^2=(c^2+1)b^2=(bc)^2+b^2,\]
		which implies $bc+rb\in R_i$.
	\end{itemize}
	This together implies that, for $s\in R_{i^{-1}j}$, $x\mapsto sx$ is a bijection between $R_i$ and $R_j$, which implies that all the $R_i$ have the same size and
	\[|R_i|=\frac{p^2-1}{p-1}=p+1\]
	as we wanted.
	
	\item If $p=2$, we suppose without loss of generality that $x$ is odd. Consider first the case where $y$ is even. In this case $k\equiv 1\mod 4$, so the other case ($k\equiv 3\mod 4$) has no solution. We need to choose $x\mod 4$ and $y\mod 4$. $x^2$ will always be $1\mod 8$, and $y^2$ can be $0$ or $4$. Hence $k\mod 8$ (which is $1$ or $5$) determines $y\mod 4$, and $x\mod 4$ can be chosen freely between $1$ and $3$. Once we have $x\mod 4$ and $y\mod 4$ so that $x^2+y^2\equiv k\mod 8$, we can fix $y$ and lift $x$ using Corollary \ref{cor:hensel}. This leaves two orbits, and the other two come from swapping $x$ and $y$.
	
	In the other case, $x$ and $y$ are both odd and $x^2+y^2\equiv 2\mod 8$. This means that the cases $k\equiv 6\mod 8$ and $\ord(k)>1$ have no solution. We can choose four possibilities for $x$ and $y$ modulo $4$. Again, Corollary \ref{cor:hensel} allows us to lift this to four orbits for $x$ and $y$.
\end{itemize}
\end{proof}

\begin{corollary}\label{cor:orbits}
	Let $f:(\Qp)^2\to \Qp$ be given by $f(x,y)=x^2+y^2$. Let $k$ in $\Qp$ and let $C_k=f^{-1}(\{k\})$.
	\begin{itemize}
		\item If $p\equiv 1\mod 4$ and $k\ne 0$, $C_k$ consists of $2p-2$ orbits for the rotation group with each order $r<\ord(k)/2$, together with $p-1$ orbits with order $\ord(k)/2$ if $\ord(k)$ is even. The set $C_0$ consists of two lines, that form $2p-2$ orbits with each integer order and one with their intersection point $(0,0)$.
		\item If $p\equiv 3\mod 4$ and $k\ne 0$, $C_k$ is empty if $\ord(k)$ is odd, and otherwise consists of $p+1$ orbits, all with order $\ord(k)/2$. The set $C_0$ is a single point.
		\item If $p=2$ and $k\ne 0$, $C_k$ consists of $4$ orbits with order $\lfloor \ord(k)/2\rfloor$, if it is not empty. The set $C_0$ is again a single point.
	\end{itemize}
	In any case, each orbit is homeomorphic to $p\Zp$ by definition. See Table \ref{table:oscillator} for a comparison to the real case, and Figures \ref{fig:S1-2}, \ref{fig:S1-3} and \ref{fig:S1-5} for representations of $S^1_p$ for $p=2,3,5$.
\end{corollary}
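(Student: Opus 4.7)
The plan is to derive this corollary as a direct consolidation of Proposition \ref{prop:numorbits}: for each residue class of $p$ modulo $4$ and each $k \in \Qp$, I would identify the set of orders $r$ for which $\orm(r,k) > 0$ and read off the total. Since $\orm(r,k) = 0$ whenever $\ord(k) < 2r$, only finitely many $r$ contribute, so the bookkeeping is finite.

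For the case $k \ne 0$, I would unpack each of the three subcases of Proposition \ref{prop:numorbits} in turn. When $p \equiv 1 \mod 4$, the values $r < \ord(k)/2$ give $2p-2$ orbits each, and the single boundary order $r = \ord(k)/2$ contributes $p-1$ orbits precisely when $\ord(k)$ is even, matching the stated formula. When $p \equiv 3 \mod 4$, orbits exist only at the unique $r$ with $\ord(k) = 2r$, so $C_k$ is empty unless $\ord(k)$ is even, in which case there are $p+1$ orbits of order $\ord(k)/2$. When $p = 2$, the two surviving clauses of the proposition both force $r = \lfloor \ord(k)/2 \rfloor$, giving $4$ orbits whenever the relevant congruence holds, and $C_k$ is empty otherwise.

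For $k = 0$, the cases $p = 2$ and $p \equiv 3 \mod 4$ are immediate because $x^2 + y^2 = 0$ forces $(x,y) = (0,0)$, so $C_0$ is a single point. For $p \equiv 1 \mod 4$, I would fix a square root $\ii$ of $-1$ in $\Qp$ and use that $C_0$ is the union of the two lines $y = \pm \ii x$. From the proof of Proposition \ref{prop:hamplane} two nonzero points on $C_0$ lie in the same rotation orbit exactly when they lie on the same line and satisfy $\ord(x' - x) \ge \ord(x) + 1$; this partitions each line into $p-1$ orbits per integer order (one for each nonzero leading $p$-adic digit), yielding $2p-2$ orbits of each order overall, together with the fixed orbit $\{(0,0)\}$.

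Finally, for the homeomorphism claim, the rotation group $p^d \Zp$ acts freely on $C_k^*$: a rotation by angle $t$ fixing a nonzero point forces $\sin t = 0$, and by Proposition \ref{prop:functions} the sine is injective on $p^d \Zp$, so $t = 0$. Each orbit is therefore in continuous bijection with $p^d \Zp$, which is itself homeomorphic to $p\Zp$ as an ultrametric closed ball. The main bookkeeping difficulty is the $p = 2$ case, whose conditions are less symmetric than for odd $p$; but the heavy lifting was already carried out in Proposition \ref{prop:numorbits}, so at the level of the corollary no genuine obstacle remains.
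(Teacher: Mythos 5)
Your proposal is correct and follows essentially the same route as the paper: the corollary is stated there without a separate proof precisely because it is the direct consolidation of Proposition \ref{prop:numorbits} (together with the $k=0$ analysis already contained in the proofs of Propositions \ref{prop:hamplane} and \ref{prop:numorbits}) that you carry out. Your added justification of the homeomorphism claim, via freeness of the rotation action and compactness of $p^d\Zp$, fills in a point the paper dismisses as holding ``by definition'', but does not constitute a different approach.
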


\begin{table}
	\begin{tabular}{p{.15\linewidth}|p{.18\linewidth}|p{.19\linewidth}|p{.2\linewidth}|p{.18\linewidth}|}
		& & \multicolumn{3}{l|}{$p$-adic} \\\cline{3-5}
		& Real & $p=2$ & $p\equiv 1\mod 4$ & $p\equiv 3\mod 4$ \\ \hline
		Uniqueness of flow & Unique & \multicolumn{3}{l|}{Not unique, but any two solutions coincide near $0$} \\ \hline
		Image of Hamiltonian & $[0,\infty)$ & ending in $01$ & all $\Qp$ & even order \\ \hline
		Fiber of nonzero & circle (1 sector) & circle (4 sectors) & circle ($\infty$ sectors) & circle ($p+1$ sectors) \\ \hline
		Fiber of $0$ & point & point & two lines & point \\ \hline
	\end{tabular}
	\medskip
	\caption{Comparison of the real and $p$-adic oscillators. ``Flow'' here refers to the flow of the Hamiltonian $x^2+y^2$, and ``image'' and ``fiber'' to those of this Hamiltonian.}
	\label{table:oscillator}
\end{table}

\begin{figure}
	\includegraphics[scale=0.5]{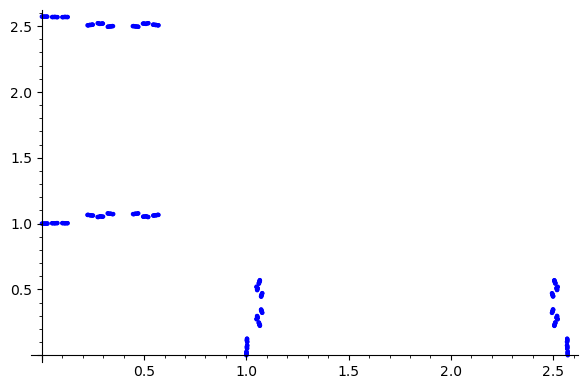}
	\includegraphics[scale=0.5]{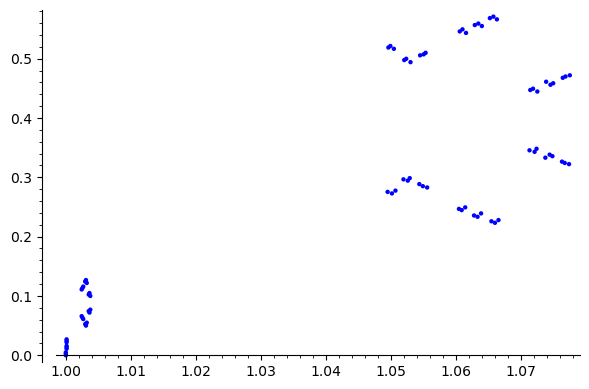}
	\caption{Left: The circle $\mathrm{S}^1_3$, consisting on four ``sectors''. The points in each sector are related by rotation. Right: a close-up on one sector.}
	\label{fig:S1-3}
\end{figure}

In view of the above we can now compute the image of $f$:
\begin{corollary}\label{cor:image}
	Let $f:(\Qp)^2\to \Qp$ be given by $f(x,y)=x^2+y^2$.
	\begin{itemize}
		\item If $p\equiv 1\mod 4$, $f$ is surjective.
		\item If $p\equiv 3\mod 4$, the image of $f$ consists of all even-order $p$-adics and zero.
		\item If $p=2$, the image of $f$ consists of the $p$-adics $x$ of order $r$ such that \[\ord(x-2^r)\ge r+2,\] and zero.
	\end{itemize}
\end{corollary}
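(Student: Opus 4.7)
The plan is to observe that the image of $f$ consists exactly of those $k\in\Qp$ for which the fiber $C_k=f^{-1}(\{k\})$ is nonempty, so the corollary can be read off directly from the orbit counts computed in Proposition \ref{prop:numorbits}. Since $(0,0)\in C_0$ for every $p$, the value $k=0$ is always attained, and the task reduces to characterizing the nonzero $k$ for which at least one rotation orbit exists, i.e.\ for which $\orm(r,k)>0$ for some integer $r$.

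For $p\equiv 1\mod 4$, I would invoke the first branch of Proposition \ref{prop:numorbits}: $\orm(r,k)=2p-2>0$ as soon as $2r<\ord(k)$. Given any nonzero $k\in\Qp$, one can always choose $r$ sufficiently negative that this inequality holds, producing a nonempty fiber. Hence every $k$ is in the image and $f$ is surjective.

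For $p\equiv 3\mod 4$, the formula shows $\orm(r,k)>0$ only when $\ord(k)=2r$, so $C_k$ is nonempty precisely when $\ord(k)$ is even; and in that case there are $p+1$ orbits, so $k$ is attained. This yields the claimed description (even-order elements together with zero).

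For $p=2$ the argument is the same in spirit, but requires combining the two subcases of the orbit formula into the single unified criterion stated in the corollary. Writing $s=\ord(k)$ and $r=\lfloor s/2\rfloor$, the subcase with $s$ even matches the first clause of Proposition \ref{prop:numorbits} and gives the condition $\ord(k-2^s)\ge s+2$ directly; the subcase with $s$ odd matches the second clause, and a short calculation (using $2^{2r+1}=2^s$ and the fact that $2^{2r}$ has order strictly smaller than $\ord(k)$) shows that the condition there likewise reduces to $\ord(k-2^s)\ge s+2$. The main, and only mildly delicate, point is this bookkeeping step showing that both parities of $\ord(k)$ collapse to the same criterion; no new ideas beyond Proposition \ref{prop:numorbits} are required.
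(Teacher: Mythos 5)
Your proposal is correct and is essentially the paper's own (implicit) argument: the corollary is stated as an immediate consequence of Proposition \ref{prop:numorbits}, reading off for which $k$ some orbit count $\orm(r,k)$ is positive, exactly as you do. The only delicate point is the one you flag for $p=2$ with $\ord(k)$ odd: as printed, the second clause of Proposition \ref{prop:numorbits} requires $\ord(k-2^{2r})\ge 2r+3$ while $\ord(k)=2r+1$ forces $\ord(k-2^{2r})=2r$, so that clause is vacuous as literally stated; it is a typo for $\ord(k-2^{2r+1})\ge 2r+3$ (as the proof of that proposition shows, e.g.\ $k=2=1^2+1^2$), and with that correction your bookkeeping does collapse both parities to the single criterion $\ord(k-2^{\ord(k)})\ge\ord(k)+2$.
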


The relations between points in the same orbit of the rotation group can be used to deduce the relation between $\mathrm{S}^1_p$ and $p^d\Zp$:
\begin{corollary}\label{cor:groups} \letpprime. The following statements hold.
	\begin{itemize}
		\item If $p\equiv 1\mod 4$, $\mathrm{S}^1_p/p\Zp$ is isomorphic to $\Z\times\F_p^*$.
		\item If $p\equiv 3\mod 4$, the quotient group $\mathrm{S}^1_p/p\Zp$ is isomorphic to $\F_{p^2}^*/\F_p^*$.
		\item If $p=2$, $\mathrm{S}^1_2/4\Z_2$ is isomorphic to $\Z/4\Z$.
	\end{itemize}
\end{corollary}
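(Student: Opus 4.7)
The strategy is to treat the three cases separately, exploiting a different algebraic structure on $\mathrm{S}^1_p$ in each one.

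For $p\equiv 1\pmod 4$, I would fix $\mathrm{i}\in\Qp$ with $\mathrm{i}^2=-1$ and build a group isomorphism $\Qp^*\to\mathrm{S}^1_p$ sending $u$ to $\bigl((u+u^{-1})/2,\,(u-u^{-1})/(2\mathrm{i})\bigr)$, with inverse $(a,b)\mapsto a+b\mathrm{i}$. Bijectivity is immediate: given $(a,b)\in\mathrm{S}^1_p$, the element $u=a+b\mathrm{i}$ satisfies $u(a-b\mathrm{i})=1$ and hence lies in $\Qp^*$. The homomorphism property holds because the rule $(a,b)\cdot(a',b')=(aa'-bb',ab'+a'b)$ defining the group law on $\mathrm{S}^1_p$ is exactly complex multiplication. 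Under this identification the rotation subgroup $p\Zp\subset\mathrm{S}^1_p$ (image of $t\mapsto(\cos t,\sin t)$) corresponds to $\{\exp(\mathrm{i}t):t\in p\Zp\}$, which equals $1+p\Zp$ because $\mathrm{i}\in\Zp^*$ preserves $p\Zp$ and the $p$-adic exponential is an isomorphism $p\Zp\to 1+p\Zp$ (Appendix \ref{app:prelim}). The conclusion $\mathrm{S}^1_p/p\Zp\cong\Z\times\F_p^*$ then follows from the Teichm\"uller decomposition $\Qp^*\cong\Z\times\F_p^*\times(1+p\Zp)$.

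For $p\equiv 3\pmod 4$, Corollary \ref{cor:orbits} confines $\mathrm{S}^1_p$ to $\Zp^2$ (only orbits of order $0$ exist), so reduction modulo $p$ gives a group homomorphism $\pi\colon\mathrm{S}^1_p\to\{(a,b)\in\F_p^2:a^2+b^2=1\}$. By Proposition \ref{prop:hamplane}(ii) applied with $r=0$ and $d=1$, the kernel of $\pi$ is exactly the rotation subgroup $p\Zp$, and surjectivity follows by counting: Corollary \ref{cor:orbits} gives $p+1$ orbits while the target also has $p+1$ elements. Since $-1$ is not a square modulo $p$, I identify the target with the kernel of the norm $N\colon\F_{p^2}^*\to\F_p^*$ via $(a,b)\mapsto a+b\mathrm{i}$. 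Hilbert's Theorem 90 for the cyclic extension $\F_{p^2}/\F_p$ then supplies the canonical isomorphism $\ker N\cong\F_{p^2}^*/\F_p^*$ given by $x\mapsto x/\sigma(x)$, where $\sigma$ is Frobenius.

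For $p=2$, Corollary \ref{cor:orbits} yields four orbits of $4\Z_2$ in $\mathrm{S}^1_2$; the four elements $(1,0)$, $(0,1)$, $(-1,0)$, $(0,-1)$ all lie in $\mathrm{S}^1_2$ and are pairwise inequivalent modulo $4$ (hence lie in distinct orbits by Proposition \ref{prop:hamplane}), so they form a complete set of representatives. A direct computation with the product $(a,b)\cdot(a',b')=(aa'-bb',ab'+a'b)$ gives $(0,1)^2=(-1,0)$, $(0,1)^3=(0,-1)$, and $(0,1)^4=(1,0)$, so the class of $(0,1)$ generates the quotient and has order $4$, giving $\mathrm{S}^1_2/4\Z_2\cong\Z/4\Z$.

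The main obstacle is the $p\equiv 1\pmod 4$ case: the algebraic identification $\mathrm{S}^1_p\cong\Qp^*$ is essentially formal, but it takes care to verify that the rotation subgroup (defined analytically through the $p$-adic sine and cosine series) coincides with $1+p\Zp$ under this identification. This is the single step where the analytic content from Appendix \ref{app:prelim} enters non-trivially; the rest of each case is standard structure theory for $\Qp^*$ and finite cyclic groups.
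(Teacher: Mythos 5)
Your proposal is correct and follows essentially the same route as the paper: for $p\equiv 1\bmod 4$ the same identification $(a,b)\mapsto a+b\ii$ of $\mathrm{S}^1_p$ with $\Qp^*$ carrying the rotation subgroup onto $1+p\Zp$, for $p\equiv 3\bmod 4$ the same reduction to the circle over $\F_p$ extracted from the orbit count, and for $p=2$ the same four representatives $(1,0),(0,1),(-1,0),(0,-1)$. The only cosmetic difference is that you finish the $p\equiv 3\bmod 4$ case by invoking Hilbert's Theorem 90 to identify $\ker N$ with $\F_{p^2}^*/\F_p^*$ canonically, where the paper relies on the norm-fiber counting already carried out in its Proposition \ref{prop:numorbits}; both are valid.
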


\begin{proof}
	The last two parts are direct consequences of the proof of Proposition \ref{prop:numorbits}. In the second the quotient is given by the pairs $(a,b)\in \F_p^2$ such that $a^2+b^2=1$, or equivalently the unitary matrices over $\F_p$. In the third it is given by
	\[\left\{\begin{pmatrix}
	1 & 0 \\
	0 & 1
	\end{pmatrix},
	\begin{pmatrix}
	3 & 0 \\
	0 & 3
	\end{pmatrix},
	\begin{pmatrix}
	0 & 3 \\
	1 & 0
	\end{pmatrix},
	\begin{pmatrix}
	0 & 1 \\
	3 & 0
	\end{pmatrix}\right\}\]
	where the matrices are taken with entries in $\Z/4\Z$ (note that, in this case, it is a proper subgroup of the unitary matrices modulo $4$).
	
	For the first part, consider $\ii\in\Qp$ such that $\ii^2=-1$ and $\phi:\mathrm{S}^1_p\to\Qp^*$ defined as $\phi(a,b)=a+\ii b$. We see that $\phi$ is a group morphism because
	\[\begin{pmatrix}
	a & -b \\
	b & a
	\end{pmatrix}
	\begin{pmatrix}
	a' & -b' \\
	b' & a'
	\end{pmatrix}=
	\begin{pmatrix}
	aa'-bb' & -ab'-a'b \\
	ab'+a'b & aa'-bb'
	\end{pmatrix}\]
	and
	\[aa'-bb'+\ii(ab'+a'b)=(a+\ii b)(a'+\ii b').\]
	
	Actually, it is an isomorphism, because
	\[(a+\ii b)(a-\ii b)=1\]
	so $a+\ii b$ determines its inverse $a-\ii b$, and they determine uniquely $a$ and $b$. The image by $\phi$ of $t\in p\Zp$ is \[\phi(\cos t,\sin t)=\exp(\ii t)\in 1+p\Zp.\] By Proposition \ref{prop:functions}, this implies $\phi(p\Zp)=1+p\Zp$ and
	\[\mathrm{S}^1_p/p\Zp=\Qp^*/(1+p\Zp)\]
	The class of a number in this quotient is given by its order in $\Z$, which is additive by multiplication, and its leading digit in $\F_p^*$, which is multiplicative. Hence, the quotient group is $\Z\times\F_p^*$.
\end{proof}

In any case, the three groups have the same Lie algebra, $\Qp$.

\begin{figure}
	\includegraphics[width=0.32\linewidth]{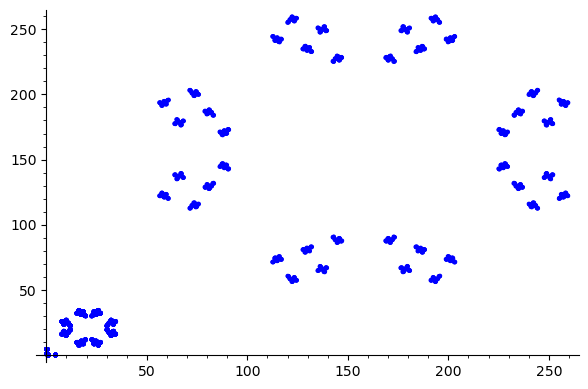}
	\includegraphics[width=0.32\linewidth]{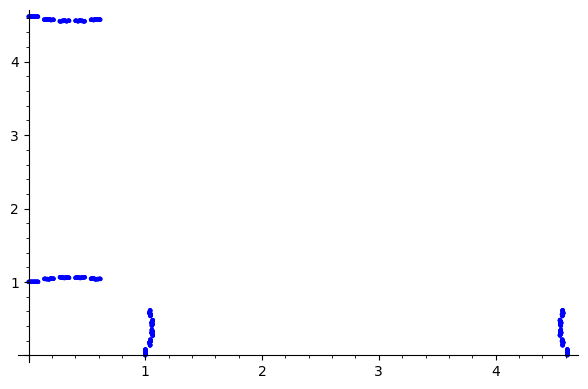}
	\includegraphics[width=0.32\linewidth]{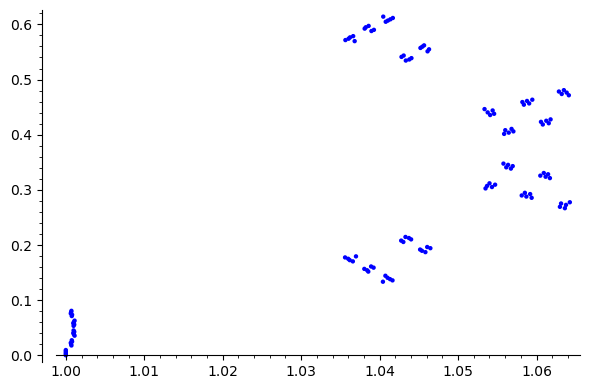}
	\caption{Left: Part of the circle $\mathrm{S}^1_5$, consisting on infinitely many ``sectors''. The points in each sector are related by rotation. The circle extends indefinitely, repeating the same pattern: we are seeing here four sectors of order $0$ (bottom left), eight of order $-1$ (small ones near them) and eight of order $-2$ (big ones). Middle: a close-up in the four order $0$ sectors. Right: a close-up on one sector.}
	\label{fig:S1-5}
\end{figure}

\subsection{Formulas and results for the $p$-adic analytic oscillator on $(\Qp)^2$}\label{sec:oscillator}

Consider the classical Hamiltonian $f(x,y)=x^2+y^2$ on the plane $\R^2$ with the standard symplectic form $\dd x\wedge\dd y$. By Hamilton's equations
\[\imath(X_f)\omega=\dd f\]
we have
\[\imath(X_f)(\dd x\wedge \dd y)=2x\dd x+2y\dd y\]
and taking $X_f=f_1\partial/\partial x+f_2\partial/\partial y$ we get $f_1\dd y-f_2\dd x=2x\dd x+2y\dd y$, that is, $f_1=2y$ and $f_2=-2x$.

To find the flow, we need to solve the differential equation
\[\left\{
\begin{aligned}
\frac{\partial\psi_t(x_0,y_0)}{\partial t}&=X_f(\psi_t(x_0,y_0));\\ \psi_0(x_0,y_0)&=(x_0,y_0).
\end{aligned}\right.
\]
that is, taking $\psi_t(x_0,y_0)=(x(t),y(t))$,
\begin{equation}\label{eq:diff}
\left\{
\begin{aligned}
\frac{\partial x(t)}{\partial t}&=2y(t);\\
\frac{\partial y(t)}{\partial t}&=-2x(t);\\
x(0)&=x_0;\\
y(0)&=y_0.
\end{aligned}\right.
\end{equation}
As we know, the solution to this problem in the real case is
\begin{equation}
	\label{eq:solution}
	\begin{pmatrix}
		x(t) \\ y(t)
	\end{pmatrix}
	=\begin{pmatrix}
		\cos 2t & \sin 2t \\
		-\sin 2t & \cos 2t
	\end{pmatrix}
	\begin{pmatrix}
		x_0 \\ y_0
	\end{pmatrix}.
\end{equation}

In the $p$-adic case, the equations \eqref{eq:solution} have infinitely many solutions and they do not even coincide near the origin. However, if we restrict to analytic functions, then there are still infinitely many solutions, but any two of them coincide near the origin (Proposition \ref{prop:initial}). So we can look for an analytic solution for this problem which is given as a power series around the initial point $t=0$:
\[x(t)=\sum_{i=0}^{\infty}a_it^i,\quad y(t)=\sum_{i=0}^{\infty}b_it^i,\]
and the equations \eqref{eq:diff} become
\[(i+1)a_{i+1}=2b_i,\quad (i+1)b_{i+1}=-2a_i,\quad a_0=x_0,\quad b_0=y_0.\]
Solving the recurrence
\[\left\{
\begin{aligned}
a_{2i}&=\frac{(-1)^i2^{2i}}{(2i)!}x_0, & a_{2i+1}&=\frac{(-1)^i2^{2i+1}}{(2i+1)!}y_0, \\
b_{2i}&=\frac{(-1)^i2^{2i}}{(2i)!}y_0, & b_{2i+1}&=\frac{(-1)^{i+1}2^{2i+1}}{(2i+1)!}x_0,
\end{aligned}
\right.\]
and substituting in the expressions for $x(t)$ and $y(t)$, we obtain the same solution \eqref{eq:solution} as in the real case. Despite this similarity, there are important differences:
\begin{itemize}
	\item In the $p$-adic case, by Proposition \ref{prop:initial}, the solution of the initial value problem is not unique, so the matrix \eqref{eq:rotation} is not the unique flow. Any other flow, however, will coincide near $0$.
	\item The image of the Hamiltonian in the real case is $[0,\infty)$. In the $p$-adic case, the image is given by Corollary \ref{cor:image}.
	\item In both cases, the fiber of each point different from $0$ is a ``circle''. But the structure of a circle is much more complicated in the $p$-adic case, as seen in Section \ref{sec:circles}.
	\item The fiber of $0$ is a point in the real case. In the $p$-adic case, by Corollary \ref{cor:orbits}, the same happens for some values of $p$ ($p=2$ and $p\equiv 3\mod 4$) but not for the $p\equiv 1\mod 4$: in this case the fiber consists of two lines.
\end{itemize}
See Table \ref{table:oscillator} for a summary of these differences.

\section{The $p$\--adic analytic spin system on $\mathrm{S}^2_p$}\label{sec:spin}

Once we have studied the oscillator, the next step is to study the spin system, which is the other system involved in the construction of the Jaynes-Cummings model. This will be relatively easy now because some results previously known for the oscillator can be used again.

\begin{figure}
	\begin{tikzpicture}[scale=1.5]
		\node (esfera) at (0,0) {\includegraphics[trim=5cm 1.5cm 5cm 1.5cm,height=7.5cm]{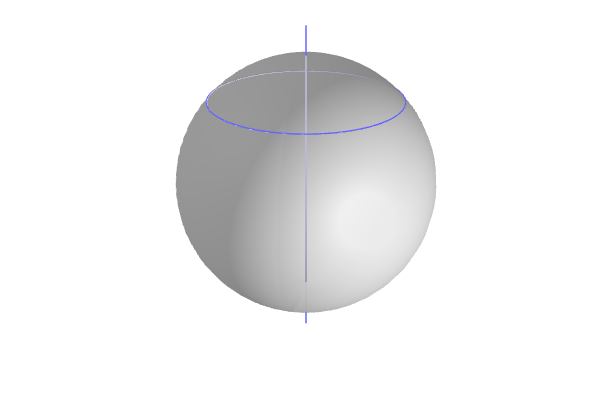}};
		\draw[->] (2,1.7) -- (3.8,1.7); \node at (4,1.7) {$\bullet$};
		\draw (4,-2) -- (4,3);
		\draw (3.8,2.4) -- (4.2,2.4) node[label=0:$1$] {};
		\draw (3.8,-1.7) -- (4.2,-1.7) node[label=0:$-1$] {};
	\end{tikzpicture}
	\caption{Image and a fiber of the real spin system}
	\label{fig:spin-real}
\end{figure}

In the real case, this system is simple: the momentum map is defined on the sphere $\mathrm{S}^2$ and it sends a point to its $z$ coordinate (see Figure \ref{fig:spin-real}). This gives the rotational action of $\mathrm{S}^1$ on $\mathrm{S}^2$. The image of the system is the interval $[-1,1]$, and the fiber of each point is a circle except for $-1$ and $1$ themselves. Now we turn to the $p$-adic case: \textit{the fibers are still circles, but the image is more complicated.}

We can give $\mathrm{S}^2_p$ (and in general $\mathrm{S}^n_p$) a structure of a $p$-adic analytic manifold: for fixed values of $x$ and $y$, the coordinate $z$ takes two possible values if $1-x^2-y^2$ is a square different from zero, one if it is zero, and no possible values otherwise, so we define
\[\phi_{\pm}:\{(x,y)\mid 1-x^2-y^2\text{ is a square}\ne 0\}\to\Qp\]
\[(x,y)\mapsto(x,y,\pm\sqrt{1-x^2-y^2})\]
and similarly one defines additional charts with the same formula but changing the order of the coordinates. From the formula, we deduce that
\[\dd z=\frac{\partial z}{\partial x}\dd x+\frac{\partial z}{\partial y}\dd y=-\frac{x\dd x+y\dd y}{z}.\]
We can also define the $1$-form $\dd\theta$ by
\[\dd \theta=\frac{y\dd x-x\dd y}{x^2+y^2}.\]

Now we consider on $\mathrm{S}^2_p$ the symplectic form given by
\[\omega=\dd\theta\wedge\dd z=-\frac{1}{z}\dd x\wedge\dd y\]
and the actions of the groups $\mathrm{S}^1_p$ (as the unitary matrices) and $p^d\Zp$ (as the rotation matrices) on the coordinates $(x,y)$ (see Appendix \ref{app:actions} for the definition of Hamiltonian actions). Substituting the expression of the induced vector field we get
\[\imath\left(y\frac{\partial}{\partial x}-x\frac{\partial}{\partial y}\right)\omega=-\frac{x\dd x+y\dd y}{z}=\dd z,\]
which implies, by $\omega=\dd\theta\wedge\dd z$, that this vector field is exactly $\partial/\partial \theta$. So $\theta$ represents the rotation angle around the $z$ axis in the clockwise direction. (Normally this angle is taken in counter-clockwise direction, but here we take it the other way to achieve consistency with the case of the oscillator.)

Of course, $\dd\theta$ and $\dd z$, and hence $\omega$, are invariant by these actions. So the actions are symplectic, as in the real case. The induced vector field is in the direction of $\theta$, and this is a Hamiltonian vector field with Hamiltonian function $f(x,y,z)=z$
This makes the actions Hamiltonian, because in this case $f(x,y,z)=\mu(x,y,z)$, that is,
\begin{equation}\label{eq:muspin}
	\mu(x,y,z)=z.
\end{equation}
A flow for this vector field can be calculated as in the case of the oscillator, resulting in the same solution \eqref{eq:solution}.

Now we characterize the fibers and image of $\mu$. As with our previous example, the results will depend on the value of $p$. So we start with $p\equiv 3\mod 4$.

\begin{proposition}
	\letpprime\ such that $p\equiv 3\mod 4$. Let $\mu:\mathrm{S}_p^2\to\Qp$ be the momentum map of the $p$-adic spin system given by \eqref{eq:muspin}. Given $z\in\Qp$, $\mu^{-1}(z)$ is equal to:
	\begin{enumerate}
		\item $\{(0,0,z)\}$ if $z=\pm 1$;
		\item empty if $z$ equals $t+1$ or $t-1$, and $t$ has odd positive order;
		\item a circle (that is, a set homeomorphic to $\mathrm{S}^1_p$) otherwise.
	\end{enumerate}
\end{proposition}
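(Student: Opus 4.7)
The plan is to reduce the problem to the already-analyzed oscillator fiber structure. Observe that $\mu^{-1}(z) = \{(x,y,z)\in\mathrm{S}^2_p\} = \{(x,y)\in(\Qp)^2\mid x^2+y^2 = 1-z^2\}\times\{z\}$, so computing $\mu^{-1}(z)$ reduces to computing $f^{-1}(1-z^2)$ where $f(x,y)=x^2+y^2$. The structure of such fibers for $p\equiv 3\mod 4$ is given by Corollary \ref{cor:orbits} and Corollary \ref{cor:image}: $f^{-1}(k)$ is empty if $\ord(k)$ is odd, is the single point $\{(0,0)\}$ if $k=0$, and consists of $p+1$ orbits of the rotation group (each homeomorphic to $p\Zp$) if $\ord(k)$ is even. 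So the trichotomy in the proposition should correspond exactly to the trichotomy $1-z^2=0$, $\ord(1-z^2)$ odd, and $\ord(1-z^2)$ even and nonzero.

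To match the cases, I would analyze $\ord(1-z^2)=\ord(1-z)+\ord(1+z)$. The case $1-z^2=0$ clearly gives $z=\pm 1$ and matches case (1) via $\mu^{-1}(\pm 1)=\{(0,0,\pm 1)\}$. For the remaining $z$, I would split on $\ord(z)$: if $\ord(z)\ne 0$ or if $\ord(z)=0$ with $z\not\equiv\pm 1\mod p$, then exactly one of $\ord(1\pm z)$ is negative or both are zero, forcing $\ord(1-z^2)$ to be even. The only way to get $\ord(1-z^2)$ odd is when $z\equiv\pm 1\mod p$, writing $z=\pm 1+t$ with $\ord(t)\ge 1$; then $\ord(1\mp z)=0$ (since $p\ne 2$) and $\ord(1\pm z)=\ord(t)$, so $\ord(1-z^2)=\ord(t)$, and oddness of $\ord(1-z^2)$ corresponds exactly to $\ord(t)$ positive and odd. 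This matches case (2).

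For case (3), I would need to show that if $k=1-z^2$ has finite even order and is nonzero, then $f^{-1}(k)$ is homeomorphic to $\mathrm{S}^1_p$ (and not merely having the same number of orbits of the same abstract shape). Write $k=p^{2r}u$ with $u\in\Zp$ a unit in the image of $f$. The scaling $(x,y)\mapsto(p^{-r}x,p^{-r}y)$ gives a homeomorphism $C_k\to C_u$. Since $u$ is in the image of $f$, pick any $(a,b)\in C_u$; then multiplication by the unitary matrix with entries $a,b,-b,a$ multiplies the value of $f$ by $a^2+b^2=u$, and so sends $C_1=\mathrm{S}^1_p$ bijectively and bicontinuously onto $C_u$ (its inverse is multiplication by the same matrix scaled by $1/u$). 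Composing yields the desired homeomorphism $\mathrm{S}^1_p\to C_k\cong\mu^{-1}(z)$.

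The main obstacle is case (3): the topological identification of $f^{-1}(k)$ with $\mathrm{S}^1_p$, rather than just matching the orbit count from Corollary \ref{cor:orbits}. The rest is bookkeeping on $p$-adic orders. The surjectivity argument for obtaining a point $(a,b)\in C_u$ uses that $u$ lies in the image of $f$, which is guaranteed by Corollary \ref{cor:image} once $\ord(u)=0$ is even.
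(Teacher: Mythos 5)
Your proposal is correct and takes essentially the same route as the paper: reduce $\mu^{-1}(z)$ to the oscillator fiber $f^{-1}(1-z^2)$ and split on whether $\ord(1-z^2)=\ord(1+z)+\ord(1-z)$ is infinite, odd, or even, using the fiber description for $p\equiv 3\bmod 4$. Your explicit homeomorphism $C_k\cong\mathrm{S}^1_p$ (scaling by $p^{-r}$ followed by multiplication by a unitary matrix built from a point of $C_u$) is a nice addition that the paper leaves implicit; the only slip is the intermediate claim that ``exactly one of $\ord(1\pm z)$ is negative'' --- when $\ord(z)<0$ both equal $\ord(z)$, which is in fact why the sum $2\ord(z)$ is even, so the conclusion stands.
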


\begin{proof}
	First suppose that $\ord(z)\ne 0$. Then $\ord(1-z^2)$ is even, which implies by Corollary \ref{cor:image} that the $(x,y)$ such that $x^2+y^2=1-z^2$ form a circle.
	
	Now take $z$ with $\ord(z)=0$. We have
	\[\ord(1-z^2)=\ord(1+z)+\ord(1-z).\]
	If both are zero, we are in the previous case. Otherwise, suppose without loss of generality that $\ord(1-z)>0$. Then $\ord(1+z)=0$ and $\ord(1-z^2)=\ord(1-z)$. If this is even, we have again a circle, if it is odd there is no preimage, and if it is $\infty$ (i.e. $z=1$), the preimage is a single point.
\end{proof}

The case $p\equiv 1\mod 4$ is similar:
\begin{proposition}
	\letpprime\ such that $p\equiv 1\mod 4$. Let $\mu:\mathrm{S}_p^2\to\Qp$ be the momentum map of the $p$-adic spin system given by \eqref{eq:muspin}. Given $z\in\Qp$, $\mu^{-1}(z)$ is equal to
	\begin{enumerate}
		\item two lines intersecting at $\{(0,0,z)\}$, if $z=\pm 1$;
		\item a circle (that is, a set homeomorphic to $\mathrm{S}^1_p$), otherwise.
	\end{enumerate}
\end{proposition}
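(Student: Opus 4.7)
The plan is to reduce the computation of $\mu^{-1}(z)$ to the analysis of the oscillator fibers $C_k = f^{-1}(\{k\})$ already carried out in Section \ref{sec:circ-oscillator}, since by definition
\[
\mu^{-1}(z) \;=\; \{(x,y,z)\in\mathrm{S}^2_p \mid x^2+y^2 = 1-z^2\},
\]
so $\mu^{-1}(z)$ is, up to the constant $z$-coordinate, just $C_{1-z^2}\subset(\Qp)^2$. I would split into the two cases according to whether $1-z^2$ vanishes.

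\textbf{Case $z=\pm 1$.} Here $k:=1-z^2=0$. Because $p\equiv 1\mod 4$, there exists $\ii\in\Qp$ with $\ii^2=-1$, so
\[
x^2+y^2 \;=\; (x+\ii y)(x-\ii y),
\]
and $C_0$ is the union of the two lines $\{x=-\ii y\}$ and $\{x=\ii y\}$ in $(\Qp)^2$, meeting exactly at the origin. Lifted to the plane $z=\pm 1$, this yields two lines in $\mathrm{S}^2_p$ intersecting at $(0,0,\pm 1)$, as claimed.

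\textbf{Case $z\neq\pm 1$.} Here $k:=1-z^2\neq 0$. By Corollary \ref{cor:image}, $f$ is surjective when $p\equiv 1\mod 4$, so $C_k$ is nonempty; pick any $(x_0,y_0)\in C_k$. I would now exhibit a homeomorphism $\mathrm{S}^1_p \to C_k$ by showing that the unitary action is free and transitive on $C_k$. Transitivity is Proposition \ref{prop:hamplane}(i). For freeness, any $(a,b)\in\mathrm{S}^1_p$ fixing $(x_0,y_0)$ satisfies
\[
\begin{pmatrix} x_0 & -y_0 \\ y_0 & x_0 \end{pmatrix}\begin{pmatrix} a-1 \\ b \end{pmatrix} = \begin{pmatrix}0\\0\end{pmatrix},
\]
and the coefficient matrix has determinant $x_0^2+y_0^2=k\neq 0$, forcing $(a,b)=(1,0)$. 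Hence the orbit map $\Phi:(a,b)\mapsto(ax_0-by_0,\,bx_0+ay_0)$ is a continuous bijection $\mathrm{S}^1_p\to C_k$; its inverse is given explicitly by
\[
\Phi^{-1}(x,y) \;=\; \bigl((xx_0+yy_0)/k,\;(yx_0-xy_0)/k\bigr),
\]
which is also continuous (in fact polynomial), so $\Phi$ is a homeomorphism. Lifting back gives $\mu^{-1}(z)\cong\mathrm{S}^1_p$.

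The only potentially delicate point is the freeness check, but as just noted, once one recognizes that the stabilizer equation is a linear system with determinant equal to $k$, the non-vanishing of $k$ closes the argument. Everything else follows directly from the results already proved in Section \ref{sec:circ-oscillator}, so the proof is really a short case analysis after invoking those tools.
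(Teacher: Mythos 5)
Your proof is correct and follows essentially the same route as the paper: reduce $\mu^{-1}(z)$ to the oscillator fiber $C_{1-z^2}$ and invoke the Section \ref{sec:circ-oscillator} analysis (surjectivity of $x^2+y^2$ for $p\equiv 1\bmod 4$ from Corollary \ref{cor:image}, transitivity of the $\mathrm{S}^1_p$-action from Proposition \ref{prop:hamplane}, and the factorization $x^2+y^2=(x+\ii y)(x-\ii y)$ giving the two lines when $1-z^2=0$). Your explicit check that the action is free and that the orbit map is a homeomorphism onto $C_k$ is a welcome addition that the paper leaves implicit in Proposition \ref{prop:hamplane} and Corollary \ref{cor:orbits}.
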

\begin{proof}
	The cases are the same as in the previous proof, but now Corollary \ref{cor:image} gives a different result.
\end{proof}

If $p=2$, the calculations are more involved. Coincidentally, this is the only case in which the sphere is compact.
\begin{proposition}
	Let $p=2$. Let $\mu:\mathrm{S}_2^2\to\Q_2$ be the momentum map of the $p$-adic spin system given by \eqref{eq:muspin}. Given $z\in\Q_2$, $\mu^{-1}(z)$ equals
	\begin{enumerate}
		\item $\{(0,0,z)\}$, if $z=\pm 1$;
		\item a circle (that is, a set homeomorphic to $\mathrm{S}^1_2$), if
		\[z\in 2\Z_2\cup (5+16\Z_2)\cup (11+16\Z_2)\cup \Big\{1+2^m(3+4u)\Big| m\in\N, m\ge 3, u\in\Z_2\Big\}\]\[\cup \Big\{-1+2^m(1+4u)\Big| m\in\N, m\ge 3, u\in\Z_2\Big\};\]
		\item the empty set, otherwise.
	\end{enumerate}
\end{proposition}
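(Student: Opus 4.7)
The plan is to reduce the problem to asking when $w:=1-z^2$ lies in the image of the oscillator Hamiltonian $f(x,y)=x^2+y^2$, since the correspondence $\mu^{-1}(z)=\{(x,y,z)\mid x^2+y^2=1-z^2\}$ identifies $\mu^{-1}(z)$ with $C_{1-z^2}$. When $w=0$, this gives the single point $(0,0,z)$. When $w\ne 0$ lies in the image of $f$, Corollary \ref{cor:orbits} applied with $p=2$ says $C_w$ consists of four orbits of $4\Z_2$, each homeomorphic to $2\Z_2$, hence $C_w$ is homeomorphic to $\mathrm{S}^1_2$. When $w\ne 0$ is outside the image, the fiber is empty. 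By Corollary \ref{cor:image}, being in the image amounts to $w/2^{\ord(w)}\equiv 1\mod 4$.

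Next I dispatch the easy cases. For $z=\pm 1$, we get $w=0$ and the singleton. For $\ord(z)=-k<0$, write $z=2^{-k}v$ with $v\in\Z_2^\times$; then $v^2\equiv 1\mod 8$ forces $1-z^2=2^{-2k}(2^{2k}-v^2)$ to have unit part congruent to $3\mod 4$, so the image condition fails and the fiber is empty. For $z\in 2\Z_2$ we have $z^2\in 4\Z_2$, hence $w\in 1+4\Z_2$ has $\ord(w)=0$ and $w\equiv 1\mod 4$; the image condition holds and the fiber is a circle, accounting for the piece $2\Z_2$ in the statement.

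The substantial case is $z\in\Z_2^\times\setminus\{\pm 1\}$. Since $(z-1)+(z+1)=2z$ has $2$-adic order exactly $1$ while both summands have order at least $1$, the ultrametric inequality forces exactly one of $\ord(z-1)$ and $\ord(z+1)$ to equal $1$, and the other to equal some integer $m\ge 2$. Choose $\epsilon\in\{\pm 1\}$ with $\ord(z-\epsilon)=1$ and $\ord(z+\epsilon)=m$, and write $z-\epsilon=2a$, $z+\epsilon=2^m b$ with $a,b$ odd. Then $1-z^2=-(z-1)(z+1)=-2^{m+1}ab$, so the image condition becomes $ab\equiv 3\mod 4$. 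The identity $\epsilon+2a=-\epsilon+2^m b$ gives $a=-\epsilon+2^{m-1}b$, which couples the residues of $a$ and $b$ modulo $4$. For $m=2$ this yields $z\in 11+16\Z_2$ when $\epsilon=1$ and $z\in 5+16\Z_2$ when $\epsilon=-1$. For $m\ge 3$ the identity forces $a\equiv -\epsilon\mod 4$, whereupon the condition $ab\equiv 3\mod 4$ selects $b\equiv 1\mod 4$ if $\epsilon=1$ and $b\equiv 3\mod 4$ if $\epsilon=-1$, producing exactly the families $z=-1+2^m(1+4u)$ and $z=1+2^m(3+4u)$ of the statement.

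The main obstacle is the $m=2$ subcase: the residue coupling $a=-\epsilon+2b$ must be unpacked carefully to separate the cosets $5+16\Z_2$ and $11+16\Z_2$ from their complements in $\Z_2^\times$, and it is this step which explains why the $m=2$ and $m\ge 3$ lists in the statement take different forms. Once that case is handled, assembling the four disjoint contributions produces the listed set where the fiber is a circle, with all remaining $z$ giving an empty fiber.
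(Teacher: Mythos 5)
Your proof is correct and follows essentially the same route as the paper's: both reduce the question to whether $1-z^2$ lies in the image of $x^2+y^2$ via Corollary \ref{cor:image} and then split on $\ord(z)$ and, for odd units, on which of $z\pm 1$ has order $1$. Your bookkeeping via the factorization $1-z^2=-2^{m+1}ab$ and the residues of $a,b$ modulo $4$ extracts the final congruences a bit more cleanly than the paper's direct expansion of $(1+2^m+z')^2$, but the underlying argument is the same.
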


\begin{proof}
	Let $r=\ord(z)$ and $s=\ord(1-z^2)$. Corollary \ref{cor:image} implies that we have one point when $1-z^2=0$, that is, when $z=\pm 1$, a circle if $\ord(1-z^2-2^s)\ge s+2$ and nothing if this is $s+1$.
	
	If $r>0$, $s=\ord(1-z^2)=0$ and $\ord(1-z^2-1)=2\ord(z)\ge 2$, so we have a circle in this case.
	
	If $r<0$, $s=2r$. Let $z=2^r+z'$, with $\ord(z')>r$.
	\[1-z^2=1-2^{2r}+2^{r+1}z'-z'^2\]
	\begin{align*}
	\ord(1-z^2-2^{2r}) & =\ord(1-2^{2r+1}-2^{r+1}z'-z'^2) \\
	& =2r+1
	\end{align*}
	because $2^{2r+1}$ is the term with smallest order. So this case has no solution.
	
	Now suppose $r=0$ (and $z\ne\pm 1$, where we already know the solution). In this case
	\[s=\ord(1-z^2)=\ord(1+z)+\ord(1-z).\]
	One of these two summands is $1$ and the other is greater than $1$, depending on $z\mod 4$. Suppose without loss of generality that $z\equiv 1\mod 4$, the other case will follow by changing $z$ by $-z$. Then $s=1+\ord(z-1)$.
	
	Let $m=\ord(z-1)\ge 2$, so that $s=m+1$. We write $z-1=2^m+z'$, with $\ord(z')>m$.
	\begin{align*}
		\ord(1-z^2-2^{m+1}) & =\ord(z^2+2^{m+1}-1) \\
		& =\ord((1+2^m+z')^2+2^{m+1}-1) \\
		& =\ord(2^{2m}+z'^2+2^{m+2}+2z'+2^{m+1}z')
	\end{align*}
	This is at least $m+2$, because each term has at least this order. We want to know when it is at least $s+2=m+3$. Let $z'=2^{m+1}t$, and the sum becomes
	\[2^{2m}+2^{2m+2}t^2+2^{m+2}+2^{m+2}t+2^{2m+2}t=2^{m+2}(2^{m-2}+2^mt^2+1+t+2^mt)\]
	This has order at least $m+3$ if and only if the parenthesis is even. If $m=2$, this happens when $t$ is even, and if $m>2$, when $t$ is odd.
	
	It remains only to plug this back into $z$:
	\begin{align*}
		z & =1+2^m+z' \\
		& =1+2^m+2^{m+1}t \\
		& =1+2^m(1+2t)
	\end{align*}
	If $m=2$, putting $t=2u$ results in
	\[z=1+4(1+4u)=5+16u.\]
	If $m>2$, putting $t=2u+1$ results in $z=1+2^m(3+4u)$. The other two cases in the statement correspond to changing $z$ by $-z$.
\end{proof}

\begin{corollary}\label{cor:image-spin}
	Let $\mu:\mathrm{S}_p^2\to\Qp$ be the momentum map of the $p$-adic spin system given by \eqref{eq:muspin}. The image of $\mu$ is given by:
	\begin{enumerate}
		\item $\Qp$, if $p\equiv 1\mod 4$;
		\item the set $(\Qp\setminus\Zp)\cup A_p\cup B_p\cup\{1,-1\}$ where \[A_p=\{x\in\Zp\mid x\not\equiv \pm 1\mod p\}\] and \[B_p=\{\pm 1+p^{2m}u\mid m\in\N, m\ge 1, u\in\Zp\setminus p\Zp\},\] if $p\equiv 3\mod 4$;
		\item the set $2\Z_2\cup (5+16\Z_2)\cup (11+16\Z_2)\cup A_2\cup B_2\cup\{1,-1\}$ where \[A_2=\{1+2^m(3+4u)\mid m\in\N, m\ge 3, u\in\Z_2\}\] and \[B_2=\{-1+2^m(1+4u)\mid m\in\N, m\ge 3, u\in\Z_2\},\] if $p=2$.
	\end{enumerate}
\end{corollary}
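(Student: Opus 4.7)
The image of $\mu$ is precisely the set of $z\in\Qp$ for which the preceding three propositions guarantee that the fiber $\mu^{-1}(z)$ is non-empty, so my plan is to unpack each of the three classifications and translate the ``non-empty'' conditions into the explicit sets listed in the statement.

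For $p\equiv 1\mod 4$, the proposition describing $\mu^{-1}(z)$ shows that every $z$ gives either two intersecting lines (if $z=\pm 1$) or a full circle; in particular no value is excluded, so the image is all of $\Qp$. For $p\equiv 3\mod 4$, the corresponding proposition says $\mu^{-1}(z)$ is empty exactly when $z=\pm 1 + t$ with $\ord(t)$ positive and odd. The plan here is to split $\Qp$ as $(\Qp\setminus\Zp)\cup\{\pm 1\}\cup\{z\in\Zp\setminus\{\pm 1\}\}$ and check each piece:
\begin{itemize}
\item If $z\notin\Zp$ then $\ord(z\pm 1)<0$, hence $z$ is automatically in the image.
\item The points $z=\pm 1$ give the isolated fiber $\{(0,0,\pm 1)\}$.
\item If $z\in\Zp$ with $z\not\equiv\pm 1\mod p$, both $\ord(z-1)$ and $\ord(z+1)$ vanish, so $z\in A_p$ is in the image.
\item If $z\equiv 1\mod p$ (resp.\ $z\equiv -1\mod p$) with $z\ne\pm 1$, then exactly one of $\ord(z\mp 1)$ is positive, and the non-emptiness criterion requires it to be even. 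Writing $z=\pm 1+p^m u$ with $u\in\Zp\setminus p\Zp$ this picks out precisely $m$ even, i.e.\ the set $B_p$.
\end{itemize}
Assembling the four cases yields the claimed description $(\Qp\setminus\Zp)\cup A_p\cup B_p\cup\{1,-1\}$.

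For $p=2$, the work has already been done inside the $p=2$ proposition: the explicit union of $2\Z_2$, the residues $5+16\Z_2$ and $11+16\Z_2$, the families $\pm 1+2^m(\mathrm{odd}+4\Z_2)$ for $m\ge 3$, and $\{\pm 1\}$ is precisely the list of $z$ whose fiber is non-empty, so the image coincides with this union. The mildly delicate point is the bookkeeping for $p\equiv 3\mod 4$ in splitting by the residue class of $z$ modulo $p$ and then by the parity of $\ord(z\mp 1)$; once this is set up the remainder is a direct transcription of the earlier propositions and no further analysis is needed.
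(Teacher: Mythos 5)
Your proposal is correct and follows exactly the route the paper intends: the corollary is stated without proof as an immediate consequence of the three fiber propositions, and your case analysis (splitting by $\ord(z)$ and the residue of $z$ modulo $p$ for $p\equiv 3\bmod 4$, and reading off the explicit lists for $p\equiv 1\bmod 4$ and $p=2$) is the straightforward unpacking of their non-emptiness conditions. No gaps.
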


\begin{table}
	\begin{tabular}{p{.2\linewidth}|p{.09\linewidth}|p{.26\linewidth}|p{.11\linewidth}|p{.24\linewidth}|}
		& & \multicolumn{3}{l|}{$p$-adic} \\\cline{3-5}
		& Real & $p=2$ & $p\equiv 1\mod 4$ & $p\equiv 3\mod 4$ \\ \hline
		Uniqueness of flow & Unique & \multicolumn{3}{l|}{Not unique, but any two solutions coincide near $0$} \\ \hline
		Image of Hamiltonian & $[-1,1]$ & $2\Z_2\cup (5+16\Z_2)\cup (11+16\Z_2)\cup \{1+2^m(3+4u)\mid m\in\N, m\ge 3, u\in\Z_2\}\cup \{-1+2^m(1+4u)\mid m\in\N, m\ge 3, u\in\Z_2\}\cup\{1,-1\}$ & all $\Qp$ & $(\Qp\setminus\Zp)\cup\{x\in\Zp\mid x\not\equiv \pm 1\mod p\}\cup\{\pm 1+p^{2m}u\mid m\in\N, m\ge 1, u\in\Zp\setminus p\Zp\}\cup\{1,-1\}$ \\ \hline
		Fiber of $-1$ and $1$ & point & point & two lines & point \\ \hline
		Fiber of other points & circle (1 sector) & circle (4 sectors) & circle ($\infty$ sectors) & circle ($p+1$ sectors) \\ \hline
	\end{tabular}
	\medskip
	\caption{Comparison of the real and $p$-adic spin systems.}
	\label{table:spin}
\end{table}

The results of Corollary \ref{cor:image-spin} are in strong contrast with the real case, where the image of the momentum map is $[-1,1]$. See Table \ref{table:spin} for a summary of the results.

\section{The $p$-adic Jaynes-Cummings model}\label{sec:JC}

Now we turn our attention to the coupling of the models: oscillator (Section \ref{sec:circ-oscillator}) and spin (Section \ref{sec:spin}). In the real case, this is known as the Jaynes-Cummings model \cite{JayCum,PelVuN}.

First we need two definitions which will help us write the statements later on.

\begin{definition}\label{def:UV}
	\letpprime. For $j,h\in\Qp$, let $V_{j,h}$ be the set of points $(z,b)\in(\Qp)^2$ such that $2(j-z)$ is the sum of two squares (the numbers that satisfy this are characterized in Corollary \ref{cor:image}) and
	\[b^2=2(j-z)(1-z^2)-4h^2.\]
	Also, for $(z,b)\in V_{j,h}$ and $z\ne j$, we define
	\[U_{j,h}(z,b)=\left\{\left(\frac{2hu-bv}{u^2+v^2},\frac{2hv+bu}{u^2+v^2},z,u,v\right)\Big| u,v\in\Qp,u^2+v^2=2(j-z)\right\}\]
	and for $z=j$,
	\[U_{j,h}(z,b)=\Big\{(x,y,j,u,v)\Big| x^2+y^2=1-j^2,u^2+v^2=0,ux+vy=2h,uy-vx=b\Big\}.\]
\end{definition}

\subsection{General results}

\begin{figure}
	\includegraphics[width=0.32\linewidth]{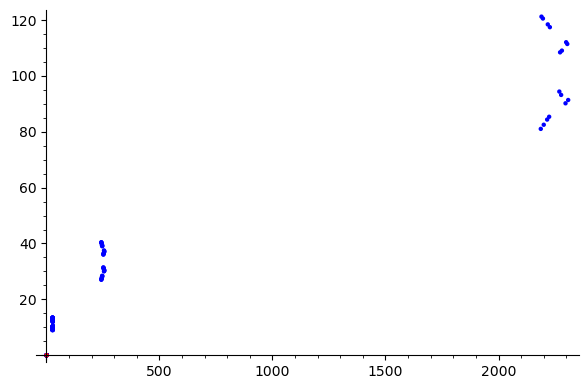}
	\includegraphics[width=0.32\linewidth]{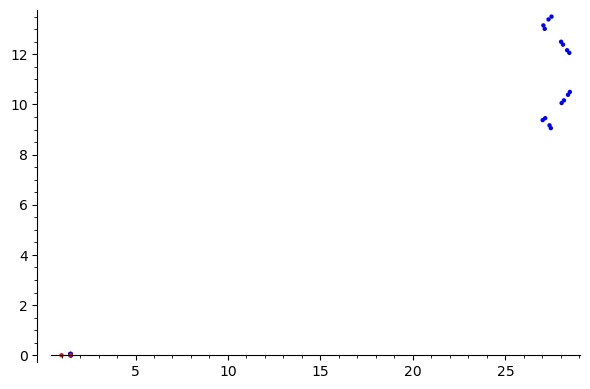}
	\includegraphics[width=0.32\linewidth]{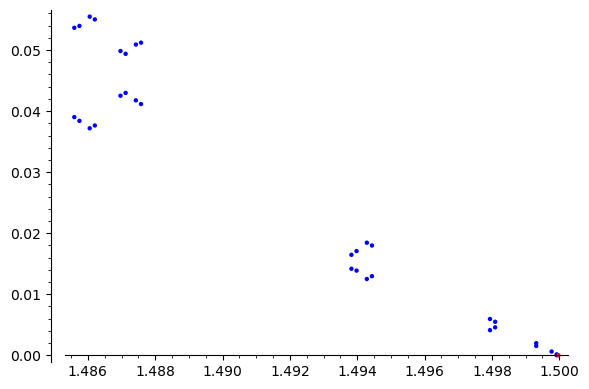}
	\caption{Three views of the critical points of the Jaynes-Cummings model for $p=2$. The blue curve (it is really a $p$-adic curve) corresponds to rank $1$ critical points. The two red dots correspond to rank $0$ critical points: the leftmost one, isolated, is the focus-focus point, the other the elliptic point. The rest of the figure corresponds to regular points. The blue curve extends indefinitely repeating the pattern in the first figure.}
	\label{fig:critical2}
\end{figure}

\begin{theorem}\label{thm:JC-general}
	\letpprime. The map
	\[F=(J,H):\mathrm{S}^2_p\times (\Qp)^2\to(\Qp)^2\]
	defined by
	\begin{equation}\label{eq:JC}
		\left\{\begin{aligned}
			J(x,y,z,u,v) & = \frac{u^2+v^2}{2}+z; \\
			H(x,y,z,u,v) & = \frac{ux+vy}{2},
		\end{aligned}\right.
	\end{equation}
	is a $p$-adic analytic integrable system on the $p$-adic analytic manifold $\mathrm{S}^2_p\times(\Qp)^2$ endowed with the $p$-adic analytic symplectic form $\dd\theta\wedge\dd z+\dd u\wedge\dd v$, where $(\theta,z)$ are angle-height coordinates on $\mathrm{S}^2_p\times(\Qp)^2$. In addition, this integrable system has the following properties.
	\begin{enumerate}
		\item The map $J:\mathrm{S}^2_p\times(\Qp)^2\to\Qp$ is the momentum map for the Hamiltonian circle action of $\mathrm{S}^1_p$ on $\mathrm{S}^2_p\times (\Qp)^2$ that rotates simultaneously horizontally about the vertical axis of $\mathrm{S}^2_p$ and about the origin of $(\Qp)^2$ (recall that the notion of $p$-adic Hamiltonian action is given in Appendix \ref{app:actions}).
		\item The set of rank $1$ critical points of $F$ is given by
		\[\Big\{(au,av,-a^2,u,v)\Big| a,u,v\in\Qp, (u,v)\ne(0,0),a^2(u^2+v^2)+a^4=1\Big\}.\]
		The set of rank $0$ critical points of $F$ is
		\[\{(0,0,-1,0,0),(0,0,1,0,0)\}.\]
		The set of critical values of $F$ is
		\[\left\{\left(\frac{1-3a^4}{2a^2},\frac{1-a^4}{2a}\right)\Big| a\in\Qp,1-a^4\text{ is sum of two squares}\right\}\cup\{(-1,0),(1,0)\}.\]
		\item The fiber of $F$ corresponding to $(j,h)$ is given by
		\[F^{-1}(\{(j,h)\})=\bigcup_{(z,b)\in V_{j,h}}U_{j,h}(z,b)\]
		where $V_{j,h}$ and $U_{j,h}(z,b)$ are given in Definition \ref{def:UV}.
	\end{enumerate}
\end{theorem}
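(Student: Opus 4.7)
\textbf{Proof plan for Theorem \ref{thm:JC-general}.}

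My plan is to treat the four assertions in order, since each one can be reduced to an explicit computation in local coordinates on $\mathrm{S}^2_p \times (\Qp)^2$.

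For part (1), I work in the chart $\phi_\pm$ from Section~\ref{sec:spin}, where the symplectic form is $\omega = -z^{-1}\dd x\wedge\dd y + \dd u\wedge\dd v$ on coordinates $(x,y,u,v)$. Using $\iota(X_J)\omega = \dd J$ with $\dd J = -(x/z)\dd x - (y/z)\dd y + u\,\dd u + v\,\dd v$, I read off
\[ X_J = y\,\partial_x - x\,\partial_y + v\,\partial_u - u\,\partial_v. \]
Then $\{J,H\} = X_J(H) = \frac{1}{2}(yu - xv + vx - uy) = 0$, which gives the Poisson vanishing. The non-degeneracy of $\dd F$ on a set of full measure will follow from the explicit description of critical points in step three. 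For the momentum map statement, I observe that the vector field $X_J$ above is precisely the sum of the rotation generator $y\partial_x - x\partial_y$ on $\mathrm{S}^2_p$ (from Section~\ref{sec:spin}) and the rotation generator $v\partial_u - u\partial_v$ on $(\Qp)^2$ (from Section~\ref{sec:circ-oscillator}), hence $J$ is the momentum map of the diagonal Hamiltonian $\mathrm{S}^1_p$-action, as defined in Appendix~\ref{app:actions}.

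For part (2), I compute the linear dependence condition on $\dd J$ and $\dd H$. Rank $0$ forces all partials of $J$ to vanish: $\partial_u J = u$, $\partial_v J = v$, $\partial_x J = -x/z$, $\partial_y J = -y/z$ all zero gives $x=y=u=v=0$, and then the sphere equation forces $z = \pm 1$; one checks that $\dd H$ also vanishes there. For rank~$1$, writing $\alpha\,\dd J + \beta\,\dd H = 0$ with $(\alpha,\beta)\ne(0,0)$ and normalizing $\alpha=1$, the four linear equations in $dx, dy, du, dv$ decouple into two pairs yielding $x = -2u/\beta$, $y = -2v/\beta$, and $z = -4/\beta^2$. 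Setting $a = -2/\beta$ recovers $(x,y,z) = (au, av, -a^2)$, and the constraint $x^2+y^2+z^2 = 1$ becomes $a^2(u^2+v^2) + a^4 = 1$. Evaluating $J$ and $H$ on this locus, using $u^2+v^2 = (1-a^4)/a^2$, yields $J = (1-3a^4)/(2a^2)$ and $H = (1-a^4)/(2a)$, and the existence of such $(u,v)$ in $\Qp$ requires $(1-a^4)/a^2$, or equivalently $1-a^4$, to be a sum of two squares.

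For part (3), I parametrize the fiber $F^{-1}(\{(j,h)\})$ by first choosing $z$, then $(u,v)$ on the $\Qp$-circle $u^2+v^2 = 2(j-z)$, then solving for $(x,y)$. When $z \ne j$, one particular solution of $ux + vy = 2h$ is $(2hu,2hv)/(u^2+v^2)$, and the full solution line is parametrized by $t \mapsto (2hu - tv(u^2+v^2), 2hv + tu(u^2+v^2))/(u^2+v^2)$; substituting into $x^2+y^2 = 1-z^2$ and writing $b = t(u^2+v^2) = 2t(j-z)$ yields the single scalar equation $b^2 = 2(j-z)(1-z^2) - 4h^2$, which is exactly the defining condition of $V_{j,h}$, and then $(x,y) = ((2hu - bv)/(u^2+v^2), (2hv + bu)/(u^2+v^2))$ as in $U_{j,h}(z,b)$. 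The degenerate case $z = j$ forces $u^2+v^2 = 0$, which is non-trivial only when $p \equiv 1 \bmod 4$; there I solve the system $\{x^2+y^2 = 1-j^2,\ u^2+v^2=0,\ ux+vy=2h,\ uy-vx=b\}$ directly, which matches the second branch of the definition of $U_{j,h}$.

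The step I expect to require the most care is the fiber parametrization in part~(3): I must ensure the map $(z,b,u,v) \mapsto (x,y,z,u,v)$ is surjective onto $F^{-1}(\{(j,h)\})$ without double-counting, and I must treat the cases $j = z$, $(u,v)=(0,0)$, and the sign ambiguity in $b$ uniformly. The characterization of when $2(j-z)$ is a sum of two squares in $\Qp$ (Corollary~\ref{cor:image}) is used here, and the resulting fiber is the disjoint union as $(z,b)$ ranges over $V_{j,h}$ of the circle-bundle pieces $U_{j,h}(z,b)$, completing the proof.
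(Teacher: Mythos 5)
Your proposal is correct and follows essentially the same route as the paper's proof: the same computation of $X_J$ and of the vanishing of $\{J,H\}$ (you evaluate $X_J(H)$ where the paper evaluates $\omega(X_J,X_H)$, which is equivalent), the same linear-dependence analysis for the critical points with an equivalent normalization of the proportionality constant (your $a=-2/\beta$ versus the paper's $-a$), and the same reduction of the fiber to the quantity $b=uy-vx$ subject to $b^2=2(j-z)(1-z^2)-4h^2$, with the linear system in $(x,y)$ solved uniquely when $z\ne j$. The only step you leave out is the locus $z=0$, which is not covered by the chart $\phi_\pm$ in which all your formulas (e.g.\ $\partial_xJ=-x/z$) live: the paper disposes of it by observing that there $\{\dd x,\dd z,\dd u,\dd v\}$ is a basis of the cotangent space and $\dd J$ has a nonzero $\dd z$-component while $\dd H$ has none, so proportionality would force $\dd H=0$, hence $x=y=0$, contradicting $x^2+y^2=1$; you should include this short check so that the critical-point classification is complete on all of $\mathrm{S}^2_p\times(\Qp)^2$.
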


\begin{figure}
	\includegraphics[width=0.32\linewidth]{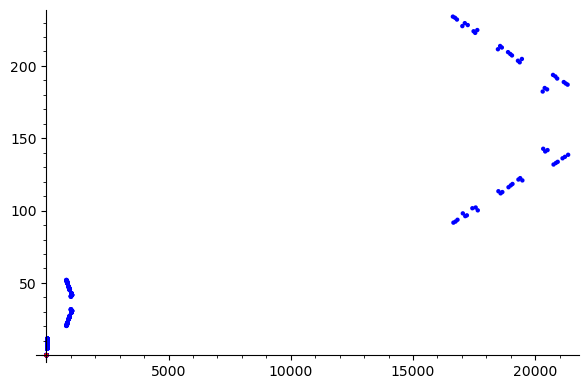}
	\includegraphics[width=0.32\linewidth]{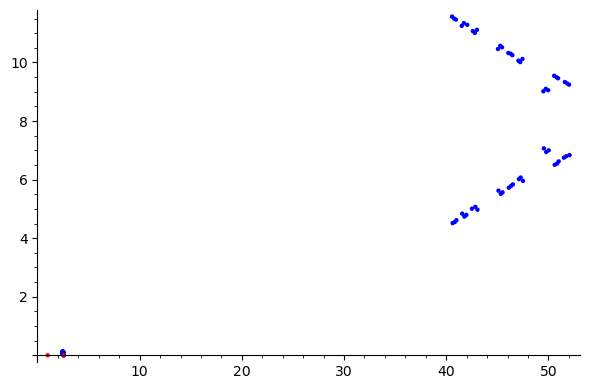}
	\includegraphics[width=0.32\linewidth]{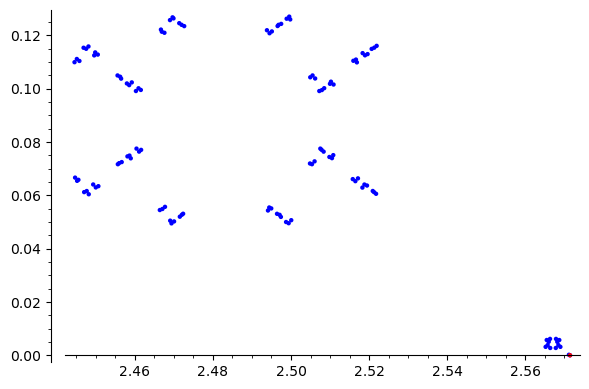}
	\caption{Three views of the critical points of the Jaynes-Cummings model for $p=3$. The blue curve (it is really a $p$-adic curve) corresponds to rank $1$ critical points. The two red dots correspond to rank $0$ critical points: the leftmost one, isolated, is the focus-focus point, the other the elliptic point. The rest of the figure corresponds to regular points. The blue curve extends indefinitely repeating the pattern in the first figure.}
	\label{fig:critical3}
\end{figure}

See Figures \ref{fig:critical2}, \ref{fig:critical3} and \ref{fig:critical5} for representations of the critical points for $p=2,3,5$.

\begin{proof}
	First, $x,y,z,u,v$ are analytic functions in $\mathrm{S}^2_p\times (\Qp)^2$: either they are the variables in the charts themselves, or they are related to them by analytic expressions of the form $z=\sqrt{1-x^2-y^2}$. This implies that $J$ and $H$ are analytic, because they are polynomials in these variables, and the $2$-form $\omega$ is also analytic.
	
	Next we see that
	\[\left\{\begin{aligned}
		\dd J &=u\dd u+v\dd v+\dd z; \\
		\dd H &=\frac{u\dd x+x\dd u+v\dd y+y\dd v}{2}.
	\end{aligned}\right.\]
	Using \eqref{eq:hamilton}, we get
	\[\left\{\begin{aligned}
		X_J &=-u\frac{\partial}{\partial v}+v\frac{\partial}{\partial u}+\frac{\partial}{\partial \theta}=-u\frac{\partial}{\partial v}+v \frac{\partial}{\partial u}-x\frac{\partial}{\partial y}+y \frac{\partial}{\partial x}; \\
		X_H &=\frac{1}{2}\left(-uz\frac{\partial}{\partial y}-x\frac{\partial}{\partial v}+vz\frac{\partial}{\partial x}+y\frac{\partial}{\partial u}\right).
	\end{aligned}\right.\]
	This concludes part (1) of the theorem, because the first vector field corresponds exactly to the rotation action in the planes $(x,y)$ and $(u,v)$. We also get
	\begin{align*}
		\{J,H\} & =\omega(X_J,X_H) \\
		& =-\frac{-uyz+vxz}{2z}+\frac{vx-uy}{2}=0.
	\end{align*}
	Now we have to find where $\dd J$ and $\dd H$ are collinear. If $z=0$, $\dd x$ and $\dd y$ are proportional in $\mathrm{S}^2_p$, and $\{\dd x,\dd z,\dd u,\dd v\}$ form a basis of the cotangent space. So the only way the two forms can be proportional is if $\dd H=0$, but that implies $x=y=0$, which is not possible. Hence, $z\ne 0$.
	
	Now the problem reduces to find collinearity of
	\[\left\{\begin{aligned}
		z\dd J &=uz\dd u+vz\dd v-x\dd x-y\dd y; \\
		2\dd H &=x\dd u+y\dd v+u\dd x+v\dd y.
	\end{aligned}\right.\]
	By calling $-a$ the proportionality constant and using that $\{\dd x,\dd y,\dd u,\dd v\}$ is a basis, this boils down to
	\[\left\{\begin{aligned}
		uz &=-ax; \\
		vz &=-ay; \\
		x & =au; \\
		y & =av.
	\end{aligned}\right.\]
	If $u$ and $v$ are both $0$, we obtain $x=y=0$ and $z=\pm1$, which leads to two points where the rank is $0$. Otherwise
	\[x=au,y=av,z=-a^2\]
	and substituting in $x^2+y^2+z^2=1$, we get \[a^2(u^2+v^2)+a^4=1.\] The description of the critical values follows from substituting the expressions for the critical points in $J$ and $H$. This completes part (2). Also, these points form a set with measure $0$ (it has only $2$ degrees of freedom), so, together with $\{J,H\}=0$, we obtain that this is an integrable system.
	
	Now we turn to study the fibers. Let $(j,h)\in(\Qp)^2$. Our goal is to find $(x,y,z,u,v)$ such that
	\[\left\{\begin{aligned}
		\frac{u^2+v^2}{2}+z & =j \\
		ux+vy & =2h \\
		x^2+y^2+z^2 & =1
	\end{aligned}\right.\]
	Define $b=uy-vx$. These equations imply that
	\[u^2+v^2=2(j-z)\]
	and
	\begin{align*}
		b^2 & =(uy-vx)^2 \\
		& =(u^2+v^2)(x^2+y^2)-(ux+vy)^2 \\
		& =2(j-z)(1-z^2)-4h^2
	\end{align*}
	so $(z,b)\in V_{j,h}$. Once we have $z$ and $b$, the next step is to choose suitable $(u,v)$. Finally, for $(x,y)$ we have a linear system
	\[\left\{\begin{aligned}
		ux+vy & =2h; \\
		uy-vx & =b.
	\end{aligned}\right.\]
	The solution is unique if $u^2+v^2\ne 0$, that is, if $z\ne j$, and leads to
	\[\left\{\begin{aligned}
		x & =\frac{2hu-bv}{u^2+v^2}; \\
		y & =\frac{2hv+bu}{u^2+v^2}
	\end{aligned}\right.\]
	as we wanted.
	This finishes part (3).
\end{proof}

\begin{figure}
	\includegraphics[width=0.32\linewidth]{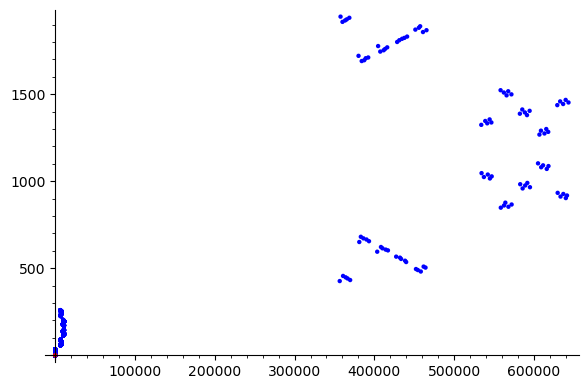}
	\includegraphics[width=0.32\linewidth]{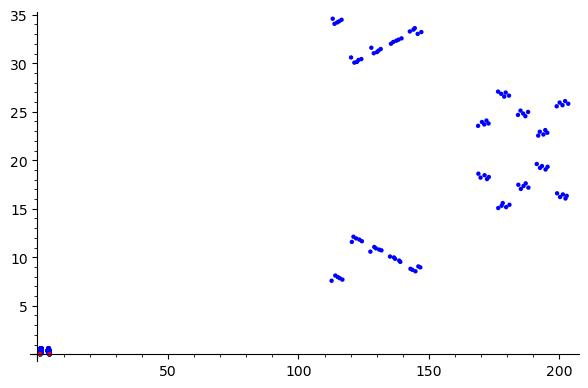}
	\includegraphics[width=0.32\linewidth]{crit-5}
	\caption{Three views of the critical points of the Jaynes-Cummings model for $p=5$. The blue curve (it is really a $p$-adic curve) corresponds to rank $1$ critical points. The two red dots correspond to rank $0$ critical points: the leftmost one is the focus-focus point (in this case it is not isolated), the other the elliptic point. The rest of the figure corresponds to regular points. The blue curve extends indefinitely repeating the pattern in the first figure.}
	\label{fig:critical5}
\end{figure}

\begin{corollary} \label{cor:products}
	\letpprime. Let $F:\mathrm{S}_p^2\times(\Qp)^2\to(\Qp)^2$ be the $p$-adic analytic Jaynes-Cummings model given by \eqref{eq:JC}. Let $V_{j,h}$ and $U_{j,h}(z,b)$ as in Definition \ref{def:UV}. Then, for any $j,h\in\Qp$ and $z,b\in V_{j,h}$, $U_{j,h}(z,b)$ is homeomorphic to $\mathrm{S}^1_p$ except if $z=j$, in which case the following statements hold:
	\begin{enumerate}
		\item If $p\not\equiv 1\mod 4$ and $(j,h)\notin\{(-1,0),(1,0)\}$, then $U_{j,h}(z,b)$ is still homeomorphic to $\mathrm{S}^1_p$.
		\item If $p\not\equiv 1\mod 4$ and $(j,h)\in\{(-1,0),(1,0)\}$, then $U_{j,h}(z,b)$ is a point.
		\item If $p\equiv 1\mod 4$ and $(j,h)\notin\{(-1,0),(1,0)\}$, then $U_{j,h}(z,b)$ is homeomorphic to $\Qp^*$.
		\item If $p\equiv 1\mod 4$ and $(j,h)\in\{(-1,0),(1,0)\}$, then $U_{j,h}(z,b)$ is the union of two $2$-planes.
	\end{enumerate}
\end{corollary}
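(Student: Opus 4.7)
The plan is to split on whether $z=j$ or $z\ne j$, since the defining expression for $U_{j,h}(z,b)$ is qualitatively different in each case. When $z\ne j$, the condition $(z,b)\in V_{j,h}$ says that $2(j-z)$ is a nonzero sum of two squares, so by Corollary \ref{cor:orbits} the oscillator fiber $C_{2(j-z)}=\{(u,v)\mid u^2+v^2=2(j-z)\}$ is homeomorphic to $\mathrm{S}^1_p$; and $U_{j,h}(z,b)$ is by inspection the graph of a $p$-adic analytic map from $C_{2(j-z)}$ to $(\Qp)^2$, hence also homeomorphic to $\mathrm{S}^1_p$. When $z=j$, the condition $(z,b)\in V_{j,h}$ becomes $b^2=-4h^2$. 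If $p\not\equiv 1\mod 4$, then $-1$ is not a square, forcing $h=b=0$, and $u^2+v^2=0$ forces $u=v=0$; the set $U$ then reduces to $\{(x,y,j,0,0)\mid x^2+y^2=1-j^2\}$, which is a single point when $j=\pm 1$ and homeomorphic to $\mathrm{S}^1_p$ otherwise, proving (1) and (2).

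For $p\equiv 1\mod 4$, pick $\ii\in\Qp$ with $\ii^2=-1$; then $b=\pm 2\ii h$ and the condition $u^2+v^2=0$ forces $(u,v)=(u,\epsilon\ii u)$ for some $\epsilon=\pm 1$. Assume first $(j,h)\ne(\pm 1,0)$. If $h\ne 0$, the equation $ux+vy=2h$ excludes $u=0$, the two equations $ux+vy=2h$ and $uy-vx=b$ together pin down $\epsilon$ uniquely in terms of the sign of $b$, and using $x^2+y^2=(x+\epsilon\ii y)(x-\epsilon\ii y)=1-j^2$ one recovers $(x,y)$ as an explicit rational function of $u\in\Qp^*$; thus $U$ is homeomorphic to $\Qp^*$. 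If instead $h=0$ (so $b=0$ and $j\ne\pm 1$), then the case $u\ne 0$ forces $x^2+y^2=0$, contradicting $1-j^2\ne 0$; only $u=v=0$ survives, and $U\cong C_{1-j^2}\cong\mathrm{S}^1_p$. Invoking Corollary \ref{cor:groups} to identify $\mathrm{S}^1_p$ and $\Qp^*$ topologically when $p\equiv 1\mod 4$ completes case (3).

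Finally, for $(j,h)=(\pm 1,0)$, so that also $1-j^2=0$, both $x^2+y^2=0$ and $u^2+v^2=0$ hold, giving $y=\delta\ii x$ and $v=\epsilon\ii u$ for signs $\delta,\epsilon=\pm 1$. The compatibility equations simplify to
\[ux(1-\epsilon\delta)=0\quad\text{and}\quad \ii ux(\delta-\epsilon)=0,\]
both of which hold for free $u$ and $x$ exactly when $\delta=\epsilon$; the degenerate subcases $u=0$ or $x=0$ are absorbed into one of the two $\delta=\epsilon$ branches. This yields precisely the two $2$-planes $\{(x,\ii x,j,u,\ii u)\mid x,u\in\Qp\}$ and $\{(x,-\ii x,j,u,-\ii u)\mid x,u\in\Qp\}$, meeting only at $(0,0,j,0,0)$, proving (4). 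The main obstacle is the bookkeeping in the case $z=j$, $p\equiv 1\mod 4$: one has to track the sign choices $\epsilon,\delta$ carefully, verify their compatibility with the fixed value of $b$, and reconcile two superficially different descriptions of $U$ (one via $u\in\Qp^*$ when $h\ne 0$, one via $C_{1-j^2}$ when $h=0$), which are equivalent topologically only thanks to Corollary \ref{cor:groups}.
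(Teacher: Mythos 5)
Your proof is correct and follows essentially the same route as the paper's: the case $z\ne j$ is handled by viewing $U_{j,h}(z,b)$ as the graph of a map on the circle $C_{2(j-z)}$, and the case $z=j$ by splitting on $p\mod 4$, on $h$, and on whether $(u,v)=(0,0)$, arriving at the same explicit parametrizations --- your use of the factorization $x^2+y^2=(x+\epsilon \ii y)(x-\epsilon \ii y)$ to solve for $(x,y)$ in terms of $u$ is a slightly cleaner derivation of the paper's formulas, and your explicit appeal to Corollary \ref{cor:groups} to identify the circle with $\Qp^*$ in the subcase $h=0$, $j\ne\pm 1$, $p\equiv 1\mod 4$ makes visible a step the paper leaves implicit. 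The one point where you are less careful than the paper: in case (1), when $1-j^2$ is not a sum of two squares (e.g.\ $p=3$, $j=2$, where $1-j^2=-3$ has odd order), the set $U_{j,0}(j,0)$ is empty rather than homeomorphic to $\mathrm{S}^1_p$; the paper's proof records this possibility (``a circle if $1-j^2$ is sum of two squares, \dots, and nothing otherwise''), even though the corollary's statement, like your write-up, glosses over it.
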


\begin{proof}
	If $z\ne j$, then $U_{j,h}(z,b)$ is homeomorphic to $\mathrm{S}^1_p$. Now we must see that, if $z=j$, $U_{j,h}(z,b)$ is homeomorphic to $\mathrm{S}^1_p$, a point, $\Qp^*$, or two $2$-planes, in the different cases. First note that $z=j$ implies $u^2+v^2=0$.
	\begin{enumerate}
		\item If $p\not\equiv 1\mod 4$, we must have $u=v=0$. If $h\ne 0$, the conditions become incompatible, so it must be $h=0$, and $x$ and $y$ can be chosen freely such that $x^2+y^2=1-z^2=1-j^2$. This leads to a circle if $1-j^2$ is sum of two squares, a single point if $j=\pm 1$, and nothing otherwise.
		\item If $p\equiv 1\mod 4$, we have that $b^2=-4h^2$, and it is always a square in this case, and $b=\pm 2\ii h$ for $\ii$ such that $\ii^2=-1$. Once we choose a solution, the solutions to $u^2+v^2=0$ are of the form $(u,\ii u)$ and $(u,-\ii u)$. So we have $v=\epsilon \ii u$, for $\epsilon=\pm 1$, and
		\[2h=ux+vy=ux+\epsilon \ii uy,\]
		as well as that
		\[b=uy-vx=uy-\epsilon \ii ux=-\epsilon \ii (ux+\epsilon \ii uy)=-2\epsilon \ii h,\]
		hence the value of $b$ decides $\epsilon$. $u$ can be freely chosen, and it forces $v$. We must now make three cases.
		\begin{enumerate}
			\item If $u=v=0$, we must have $h=0$. We are again in case (1), but with two lines instead of a point for $j=\pm 1$ and a circle for any other $j$.
			\item If $u,v\ne 0$ and $h=0$, we have $x+\epsilon \ii y=0$. This implies $x^2+y^2=0$, hence $j=\pm 1$. In this case $U_{j,h}(z,b)$ is formed by the points $(x,\epsilon \ii x,j,u,\epsilon \ii u)$, which form two planes. The previous case is already included here.
			\item If $u,v\ne 0$ and $h\ne 0$, we have that
			\[x=\frac{2h}{u}-\epsilon \ii y.\]
			Substituting in $x^2+y^2+z^2=1$,
			\begin{align*}
			1 & =\frac{4h^2}{u^2}-\frac{4\epsilon \ii hy}{u}-y^2+y^2+j^2 \\
			& =-\frac{4h^2}{v^2}+\frac{4hy}{v}+j^2.
			\end{align*}
			Solving in $y$,
			\[y=\frac{v}{4h}(1-j^2)+\frac{h}{v},\]
			and analogously
			\[x=\frac{u}{4h}(1-j^2)+\frac{h}{u}.\]
			Hence, in this case $u$ determines $v$, $x$ and $y$. Every value $u\ne 0$ is valid, so we have $U_{j,h}(z,b)$ homeomorphic to $\Qp^*$. \qedhere
		\end{enumerate}
	\end{enumerate}
\end{proof}

\begin{remark}
	In the real case, the fiber of $(J,H)$ is a point at $(-1,0)$, homeomorphic to a circle at the points in two curves, homeomorphic to a pinched torus at $(1,0)$ and homeomorphic to a torus otherwise (Figure \ref{fig:real-JC-image}). This seems different to the $p$-adic case, but it actually reproduces parts (1) and (2) of Corollary \ref{cor:products}: $V_{j,h}$ in the real case is topologically a circle, except in the rank $1$ critical points and $(-1,0)$, where it degenerates to a point. Multiplying $V_{j,h}$ by $\mathrm{S}^1_p$ and pinching one $\mathrm{S}^1_p$ at the two points $(-1,0)$ and $(1,0)$ leaves exactly the fibers we know. Parts (3) and (4) have no real equivalent, and correspond to special properties of the $p$-adic fields.
\end{remark}

\subsection{Fibers and image for $p>2$}

To complete the study of the fibers and image of the Jaynes-Cummings model, we only need to describe $V_{j,h}$ for the different values of $j$ and $h$. As it will turn out, the results are different for $p=2$ respect to $p>2$. (This is not strange, after having seen that $\mathrm{S}^2_p$ is compact only for $p=2$.) We start with $p>2$. This will actually be divided in two cases, depending on $p$ modulo $4$.

\begin{definition}\label{def:classes}
Given $j\in\Qp$, we classify the $p$-adic numbers in three classes. If $p\equiv 1\mod 4$, the classes are as follows:
\begin{itemize}
	\item \textit{first class:} $z$ such that $2(z^2-1)(z-j)$ is a square;
	\item \textit{second class:} $z$ such that $2(z^2-1)(z-j)$ has even order but is not a square;
	\item \textit{third class:} $z$ such that $2(z^2-1)(z-j)$ has odd order.
\end{itemize}
If $p\equiv 3\mod 4$, the classes need to be slightly modified:
\begin{itemize}
	\item \textit{first class:} $z$ such that $2(z^2-1)(z-j)$ is a square and the two parts $z^2-1$ and $2(z-j)$ separately have even order.
	\item \textit{second class:} $z$ such that $z^2-1$ and $2(z-j)$ have even order, but their product is not a square.
	\item \textit{third class:} $z$ such that at least one of $z^2-1$ and $2(z-j)$ has odd order.
\end{itemize}
Here, $1$, $-1$ and $j$ are considered to be in the first class.
\end{definition}

In what follows, for a fixed $j\in\Qp$, we call
\[\ord(2(z^2-1)(z-j))=\ord(z+1)+\ord(z-1)+\ord(z-j)\]
the \textit{potential} at $z\in\Qp$. This value increases as we move toward $1$, $-1$ and $j$: the ``equipotential contours'' are initially balls containing these three numbers, then one around a number and another around the other two, and finally a different ball around each number.

\begin{theorem}\label{thm:JC-z}
	Let $V_{j,h}$ be as given in Definition \ref{def:UV}. The projection of $V_{j,h}$ to the coordinate $z$ consists of:
	\begin{enumerate}
		\item all numbers in the first class according to Definition \ref{def:classes} with potential less than $2\ord(h)+2$,
		\item some numbers in the first and second classes according to Definition \ref{def:classes} with potential exactly $2\ord(h)+2$ (which points depends on the concrete value of $h$), and
		\item if $p\equiv 1\mod 4$, all numbers with potential greater than $2\ord(h)+2$ (independently of the class).
	\end{enumerate}
\end{theorem}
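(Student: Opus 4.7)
The plan is to analyze independently the two conditions that define membership of $z$ in the first-coordinate projection of $V_{j,h}$: condition (A), that $2(j-z)$ is a sum of two squares in $\Qp$, and condition (B), that the polynomial $Q(z):=2(j-z)(1-z^2)-4h^2$ admits a square root in $\Qp$. Condition (A) is handled by Corollary \ref{cor:image}: it is automatic when $p\equiv 1\mod 4$, and when $p\equiv 3\mod 4$ it reduces to $\ord(z-j)$ being even (or $z=j$). This parity requirement is already built into the first- and second-class definitions of Definition \ref{def:classes}, so the bulk of the proof concerns condition (B).

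For (B) I will compare the potential $\ord(P(z))$, where $P(z):=2(j-z)(1-z^2)$, with $\ord(4h^2)=2\ord(h)$ (recall $p$ is odd). When the potential is sufficiently small that the $P(z)$ term dominates, I factor $Q(z)=P(z)\bigl(1-4h^2/P(z)\bigr)$; the parenthesis is a principal unit and hence a square by Hensel's lemma, so (B) becomes equivalent to ``$P(z)$ is a square'', i.e.\ to $z$ being first class, which yields case (1). When the potential is large enough that the $-4h^2$ term dominates, I factor instead $Q(z)=-4h^2\bigl(1-P(z)/(4h^2)\bigr)$ and apply the same Hensel argument to reduce (B) to ``$-4h^2$ is a square''; since $h^2$ and $4$ are squares, this holds iff $-1$ is a quadratic residue mod $p$, i.e.\ iff $p\equiv 1\mod 4$, yielding case (3). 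In the intermediate boundary regime, where $\ord(P(z))$ is close to $2\ord(h)+2$, the leading $p$-adic digits of $P(z)$ and of $4h^2$ sit at the same scale and may partially cancel, so whether $Q(z)$ is a square depends on the precise leading digit of $h$; this is the source of the word ``some'' in case (2).

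Finally I will tie the three regimes back to Definition \ref{def:classes}. For $p\equiv 1\mod 4$, ``first class'' coincides with ``$P(z)$ is a square'' and forces even potential, fitting cleanly into case (1); ``second class'' is then the natural complementary notion that becomes relevant only in the boundary regime of case (2). For $p\equiv 3\mod 4$, the requirement that $z^2-1$ and $2(z-j)$ individually have even order is exactly what simultaneously enables $P(z)$ to be a square and guarantees condition (A), so first- or second-class membership absorbs both arithmetic constraints. I expect the main technical obstacle to be the bookkeeping in the boundary regime at potential $2\ord(h)+2$: one must track how much cancellation between $P(z)$ and $4h^2$ can raise $\ord(Q(z))$ above its nominal value, whether the raised order is even or odd, and whether the surviving leading digit is a quadratic residue mod $p$; these effects interact nontrivially with the class of $z$ and force the $h$-dependence of case (2).
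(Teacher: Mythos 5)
Your overall strategy---splitting into the regime where $P(z)=2(z^2-1)(z-j)$ dominates, the regime where $4h^2$ dominates, and a boundary regime with possible cancellation, and reading off squareness of $b^2$ from the dominant term---is exactly the strategy of the paper's proof; your Hensel/principal-unit factorization is just a cleaner justification of the step ``the dominant term's order and leading digit determine those of $b^2$''. Your handling of condition (A) via Corollary \ref{cor:image}, and its absorption into the class definitions when $p\equiv 3\mod 4$, also matches the paper.

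There is, however, a concrete inconsistency you must resolve before your plan proves the statement as written. You correctly compute $\ord(4h^2)=2\ord(h)$ for odd $p$, so your three regimes are separated at potential $2\ord(h)$: the term $P(z)$ dominates when the potential is $<2\ord(h)$, genuine digit cancellation can occur only at potential exactly $2\ord(h)$, and $-4h^2$ dominates as soon as the potential exceeds $2\ord(h)$. Yet the statement places all three boundaries at $2\ord(h)+2$, and you silently adopt that value when you describe the boundary regime as ``close to $2\ord(h)+2$''; both cannot be right. The discrepancy is not cosmetic. A first-class $z$ with potential exactly $2\ord(h)$ falls under case (1) of the statement, which asserts it always lies in the projection; but at that potential $P(z)$ and $4h^2$ are two squares of the same order whose difference need not be a square (for instance $p=7$, $P(z)\equiv 1$ and $4h^2\equiv 2\mod 7$ give $b^2\equiv -1\equiv 6$, a non-residue). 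Conversely, for $p\equiv 1\mod 4$ a third-class $z$ with potential $2\ord(h)+1$ does lie in the projection but is covered by neither case (1) nor case (3) as stated, and at potential exactly $2\ord(h)+2$ there is no cancellation at all for odd $p$, so no $h$-dependence of the kind case (2) describes. Carried out faithfully, your argument proves the version of the theorem with every occurrence of $2\ord(h)+2$ replaced by $2\ord(h)$; note that the paper's own proof asserts ``the order of the result is $2\ord(h)+2$'', which uses $\ord(4)=2$ and is therefore the $p=2$ arithmetic. You should either state and prove the corrected thresholds explicitly, flagging the discrepancy with the theorem as printed, or exhibit a reason for an extra $+2$ when $p$ is odd; I do not see one.
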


\begin{proof}
	This follows from a case analysis of
	\[b^2=2(z^2-1)(z-j)-4h^2.\]
	If the potential is less than $2\ord(h)+2$, the part $2(z^2-1)(z-j)$ wins: its order and leading digit determine those of $b^2$. This must be a square if we want $b^2$ to be a square. Also, if $p\equiv 3\mod 4$, the two factors separately must have even order.
	
	If the potential is exactly $2\ord(h)+2$, we are subtracting two things with the same order, so the result may or may not be a square.
	
	Finally, if the potential is greater than $2\ord(h)+2$, the order of the result is $2\ord(h)+2$. This is even, and the leading digit will be that of $-4h^2$. This is a square if and only if $p\equiv 1\mod 4$.
\end{proof}

\begin{corollary}\label{cor:JC-surj}
	\letpprime. Let $F:\mathrm{S}_p^2\times(\Qp)^2\to(\Qp)^2$ be the $p$-adic analytic Jaynes-Cummings model given by \eqref{eq:JC}. If $p>2$, the image $F(\mathrm{S}_p^2\times(\Qp)^2)$ of $F$ (i.e. the classical spectrum of the system) is the whole $(\Qp)^2$.
\end{corollary}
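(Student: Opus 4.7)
The plan is to use Theorem \ref{thm:JC-general}(3), which describes the fiber over $(j,h)$ as $\bigcup_{(z,b) \in V_{j,h}} U_{j,h}(z,b)$. Thus surjectivity amounts to exhibiting, for each $(j,h) \in (\Qp)^2$, some $(z,b) \in V_{j,h}$ with $z \ne j$, because the definition of $V_{j,h}$ then already requires $2(j-z)$ to be a sum of two squares, which is precisely the condition guaranteeing that $U_{j,h}(z,b)$ is nonempty in this case. Accordingly, I would reduce the problem to finding a suitable $z$ in the projection of $V_{j,h}$ to the first coordinate, using the classification of Theorem \ref{thm:JC-z}.

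For $p \equiv 1 \mod 4$ the argument is short: every element of $\Qp$ is a sum of two squares by Corollary \ref{cor:image}, so only the potential condition of Theorem \ref{thm:JC-z} matters. Taking $z = j + p^N$ for $N$ large, one has $\ord(z-j) = N$ while $\ord(z \pm 1)$ stays bounded, so the potential $\ord(z-1)+\ord(z+1)+\ord(z-j)$ eventually exceeds $2\ord(h)+2$; case (3) of Theorem \ref{thm:JC-z} then supplies the matching $b$.

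For $p \equiv 3 \mod 4$, I plan to choose $z = c p^{-2n}$ for a large integer $n$ and a leading digit $c \in \{1,\ldots,p-1\}$ arranged so that $2c$ is a quadratic residue modulo $p$ (such $c$ exists: take $c=1$ if $2$ is a quadratic residue mod $p$, otherwise any non-residue). For $n$ large enough that $-2n < \ord(j)$, the orders $\ord(z-1)$, $\ord(z+1)$, and $\ord(z-j)$ all equal $-2n$, which is even, and the leading digit of the product $(z^2-1)\cdot 2(z-j)$ is $2c^3$, a quadratic residue by the choice of $c$. Hensel's lemma then upgrades the product to a square in $\Qp$, placing $z$ in the first class of Definition \ref{def:classes}. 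The potential equals $-6n$, which is less than $2\ord(h)+2$ once $n$ is large, so Theorem \ref{thm:JC-z}(1) supplies the matching $b$. Finally $\ord(2(j-z)) = -2n$ is even, so $2(j-z)$ is a sum of two squares by Corollary \ref{cor:image}, confirming $(z,b) \in V_{j,h}$.

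The main obstacle is dovetailing the three constraints in the $p \equiv 3 \mod 4$ case: first class membership, the potential inequality, and the sum-of-two-squares condition on $2(j-z)$. The observation that makes the argument go through is that first class membership already forces $\ord(2(z-j))$ to be even, which is exactly the sum-of-two-squares criterion in this residue class, so once the first class and potential conditions are met, the third comes for free.
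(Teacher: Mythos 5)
Your reduction of surjectivity to exhibiting a point $(z,b)\in V_{j,h}$ with $z\ne j$ is correct, and your $p\equiv 3\bmod 4$ construction ($z=cp^{-2n}$ with $2c$ a quadratic residue, so that $2(z^2-1)(z-j)$ has even order $-6n$ and square leading digit $2c^3$) is essentially the paper's own proof, which takes any $z$ of sufficiently negative order with $2z$ a square and lands in case (1) of Theorem \ref{thm:JC-z}. You are also right, and more explicit than the paper, that first-class membership already forces $\ord(2(j-z))$ to be even, which is the sum-of-two-squares criterion for these primes, so $U_{j,h}(z,b)$ is nonempty.

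However, your $p\equiv 1\bmod 4$ branch has a genuine gap at $h=0$. There $\ord(h)=+\infty$, so the threshold $2\ord(h)+2$ is $+\infty$ and no choice of $z=j+p^N$ ever has potential exceeding it; case (3) of Theorem \ref{thm:JC-z} is vacuous for $h=0$ and supplies no $b$. Indeed, for $h=0$ one needs $2(j-z)(1-z^2)$ itself to be a square, which is not automatic for $z=j+p^N$: its leading digit is essentially that of $2(j^2-1)$, which may be a non-residue. (A smaller blemish: when $j=\pm 1$ the claim that $\ord(z\pm 1)$ stays bounded is false, though the potential still tends to infinity, so that step survives.) The fix is immediate: the construction you use for $p\equiv 3\bmod 4$ works verbatim for $p\equiv 1\bmod 4$ as well, since the first-class condition is only weaker there, and case (1) of Theorem \ref{thm:JC-z} applies uniformly, including when $h=0$. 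This is exactly why the paper runs a single low-order, first-class argument for all odd $p$ rather than splitting by residue class.
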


\begin{proof}
	We must prove that $V_{j,h}$ is never empty. This is a consequence of the previous result: for all $z$ with low enough order,
	\[\ord(2(z^2-1)(z-j))=3\ord(z)\]
	and $\ord(z-j)=\ord(z)$. The number $2(z^2-1)(z-j)$ will be a square if and only if $2z^3$ is a square, that is, if and only if $2z$ is a square. So any $z$ with low enough order such that $2z$ is a square will work, because $\ord(z-j)$ will automatically be even.
\end{proof}

We now know the values of $z$ that appear in $V_{j,h}$. The next step is to determine which ones lead to one value of $b$ (which is zero) and which ones lead to two, but this is easy: there is only one $b$ if and only if
\[2(z^2-1)(z-j)=4h^2.\]
Hence, there are at most three values of $z$ with this condition, and all of them are in the first class or in the third. Only the $z$ in the first class (i.e. those with $\ord(z^2-1)$ and $\ord(z-j)$ even, if $p\equiv 3\mod 4$, and all of them if $p\equiv 1\mod 4$) are involved here. This completes the characterization of $V_{j,h}$.

Corollary \ref{cor:products} tells us the form of the ``sub-fiber'' $U_{j,h}(z,b)$, that can be a circle, a point, a curve, or the union of two $2$-planes. With this information we can give a topological characterization of the fibers, analogous to the one for the real case. For this, we need a criterion to decide whether a zero of an analytic function is surrounded by squares, non-squares, or a mixture. (If we interpret ``squares'' as the $p$-adic equivalent of ``positive real numbers'', this is similar to the criterion to classify local extrema of real functions.)

\begin{proposition}\label{prop:square}
	\letpprime. Let
	\[f(x)=\sum_{i=1}^\infty a_ix^i\]
	be a power series in one variable with coefficients in $\Qp$ and without constant term that converges in some open ball. Then the following statements hold.
	\begin{enumerate}
		\item If $a_1\ne 0$, for any $\epsilon>0$, there are $x_1$ and $x_2$ such that $|x_1|<\epsilon$, $|x_2|<\epsilon$, $f(x_1)$ is a square and $f(x_2)$ is not.
		\item If $a_1=0$ and $a_2$ is a square different from $0$, there is $\epsilon>0$ such that, for all $x$ with $|x|<\epsilon$, $f(x)$ is a square.
		\item If $a_1=0$ and $a_2$ is a non-square, there is $\epsilon>0$ such that, for all $x$ with $|x|<\epsilon$, $f(x)$ is a non-square.
	\end{enumerate}
\end{proposition}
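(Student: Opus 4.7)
The plan is to reduce all three parts to Hensel's lemma via the standard fact that any $p$-adic number sufficiently close to $1$ is a square. Specifically, for $p$ odd, every element of $1+p\Zp$ is a square in $\Zp^*$, and for $p=2$, every element of $1+8\Z_2$ is a square in $\Z_2^*$. This follows by applying Hensel's lemma to $y^2-u=0$ with initial approximation $y_0=1$ (resp.\ $y_0=1+4a$ when $u=1+8a$ in the $p=2$ case), and is among the basic facts recalled in Appendix \ref{app:prelim}.

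With this in hand, parts (2) and (3) can be handled simultaneously. Factor
\[f(x) = a_2 x^2 \bigl(1+h(x)\bigr), \qquad h(x) = \frac{a_3}{a_2}x + \frac{a_4}{a_2}x^2 + \cdots.\]
Since $f$ converges in an open ball around $0$, so does $h$, and $h(0)=0$. Choose $\epsilon>0$ small enough that $|h(x)|_p \le p^{-d}$ for all $x$ with $|x|_p<\epsilon$, where $d=1$ if $p$ is odd and $d=3$ if $p=2$. Then $1+h(x)$ lies in $1+p\Zp$ (resp.\ $1+8\Z_2$), hence is a square, and $x^2$ is trivially a square. Consequently $f(x)$ has the same quadratic character as $a_2$ for every $x\ne 0$ in the ball. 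This gives (2) directly and (3) provided one reads the statement as applying to $x\ne 0$ (the single point $x=0$ gives $f(x)=0$, which is a square).

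For part (1), an analogous factorization $f(x)=a_1 x\bigl(1+h(x)\bigr)$ with $h(x)=(a_2/a_1)x+\cdots$ shows that for $|x|_p$ small enough $1+h(x)$ is a square, hence $f(x)$ is a square if and only if $a_1 x$ is. It then suffices to exhibit arbitrarily small $x_1,x_2$ with $a_1 x_1$ a square and $a_1 x_2$ a non-square. Fix a square $s\in\Qp^*$ and a non-square $t\in\Qp^*$; such a $t$ exists because $(\Qp^*)^2$ is a proper subgroup of $\Qp^*$ of index $4$ for $p$ odd and $8$ for $p=2$. For $n$ large enough, $x_1 = p^{2n}s/a_1$ and $x_2 = p^{2n}t/a_1$ both have norm less than $\epsilon$, while $a_1 x_1 = p^{2n}s$ is a square and $a_1 x_2 = p^{2n}t$ is a non-square.

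The only delicate point, and the one to watch throughout, is the case $p=2$: squareness is detected modulo $8$ rather than modulo $p$, so $\epsilon$ must be taken slightly smaller to ensure $1+h(x)\in 1+8\Z_2$, and the test elements $s,t$ in part (1) must be chosen to represent the correct cosets of $(\Z_2^*)^2$ in $\Z_2^*$. Beyond this bookkeeping the argument is uniform in $p$, and no genuine obstacle appears.
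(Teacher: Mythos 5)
Your proposal is correct and follows essentially the same route as the paper: both reduce the question to the first nonzero monomial by showing that for $|x|$ small the tail of the series cannot affect the quadratic character of $f(x)$. Your multiplicative formulation $f(x)=a_kx^k\bigl(1+h(x)\bigr)$ combined with the fact that $1+p\Zp$ (resp.\ $1+8\Z_2$) consists of squares is, if anything, slightly more careful at $p=2$ than the paper's ``order and leading digit'' criterion, and you rightly flag that part (3) must exclude $x=0$ since $f(0)=0$ is a square.
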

\begin{proof}
	The condition of $f(x)$ being a square is only determined by the order of $f(x)$ and its leading digit. For $x$ small enough, the order and leading digit of $f(x)$ are the same as those of the first nonzero term of the series, because the rest of terms have greater order. Hence the problem reduces to the case where the series has only one term.
	
	If that term is $a_1x$, to make this a square we take a square as $x$ if $a_1$ is a square, and a non-square otherwise. To make it a non-square, we do the opposite.
	
	If that term is $a_2x^2$, that is automatically a square if $a_2$ is a square, and a non-square otherwise.
\end{proof}

\begin{corollary}\label{cor:square}
	\letpprime. Let $U\subset \Qp$ be an open set, $x_0\in U$, and $f:U\to\Qp$ analytic such that $f(x_0)=0$. Also let $f'$ be the derivative of $f$ and $f''$ be its second derivative. Then the following statements hold.
	\begin{enumerate}
		\item If $f'(x_0)\ne 0$, then for any $\epsilon>0$, there are $x_1$ and $x_2$ such that $|x_1-x_0|<\epsilon$, $|x_2-x_0|<\epsilon$, $f(x_1)$ is a square and $f(x_2)$ is not.
		\item If $f'(x_0)=0$ and $f''(x_0)/2$ is a square different from $0$, then there is $\epsilon>0$ such that, for all $x$ with $|x-x_0|<\epsilon$, $f(x)$ is a square.
		\item If $f'(x_0)=0$ and $f''(x_0)/2$ is a non-square, then there is $\epsilon>0$ such that, for all $x$ with $|x-x_0|<\epsilon$, $f(x)$ is a non-square.
	\end{enumerate}
\end{corollary}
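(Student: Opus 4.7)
The plan is to reduce Corollary \ref{cor:square} to Proposition \ref{prop:square} by translating the point $x_0$ to the origin and invoking analyticity to expand $f$ as a convergent power series in the new variable. Since $f$ is $p$-adic analytic on $U$ and $f(x_0)=0$, in a neighbourhood of $x_0$ we can write
\[
f(x)=\sum_{i=1}^{\infty}c_i(x-x_0)^i,
\]
where the absence of a constant term reflects $f(x_0)=0$, and the Taylor coefficients are $c_1=f'(x_0)$ and $c_2=f''(x_0)/2$. Set $g(y):=f(x_0+y)=\sum_{i=1}^{\infty}c_iy^i$; this is a power series in $y$ with no constant term, convergent in some open ball around $y=0$, so Proposition \ref{prop:square} applies to $g$ with $a_1=c_1=f'(x_0)$ and $a_2=c_2=f''(x_0)/2$.

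Now I would handle the three cases in turn. For part (1), with $f'(x_0)\neq 0$, i.e.\ $a_1\neq 0$ for $g$, Proposition \ref{prop:square}(1) produces, for every $\epsilon>0$, points $y_1,y_2$ with $|y_1|,|y_2|<\epsilon$ such that $g(y_1)$ is a square and $g(y_2)$ is not; setting $x_k=x_0+y_k$ gives $|x_k-x_0|<\epsilon$ and the desired square/non-square values of $f$. For part (2), $a_1=0$ and $a_2$ is a nonzero square, so Proposition \ref{prop:square}(2) supplies an $\epsilon>0$ such that $g(y)$ is a square whenever $|y|<\epsilon$; translating back, $f(x)$ is a square whenever $|x-x_0|<\epsilon$. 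Part (3) is entirely analogous, using Proposition \ref{prop:square}(3).

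There is no real obstacle here beyond making sure the translation step is legitimate, so the only thing to verify carefully is that an analytic function on an open subset of $\Qp$ vanishing at $x_0$ genuinely has a Taylor expansion with no constant term converging on some open ball around $x_0$; this is immediate from the definition of $p$-adic analyticity recalled in Appendix \ref{app:prelim} and the fact that analyticity is a local property, so the reduction to Proposition \ref{prop:square} is entirely mechanical. The statement is essentially a coordinate-free reformulation of the preceding proposition, and the proof is one short paragraph once this translation is noted.
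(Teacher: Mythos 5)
Your proposal is correct and is essentially identical to the paper's proof, which simply applies Proposition \ref{prop:square} to the translated function; you have merely spelled out the substitution $g(y)=f(x_0+y)$ and the identification of the Taylor coefficients $a_1=f'(x_0)$, $a_2=f''(x_0)/2$ in more detail.
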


\begin{proof}
	Apply Proposition \ref{prop:square} to $f(x-x_0)$.
\end{proof}

\begin{lemma}\label{lemma:firstclass}
	\letpprime\ such that $p\equiv 3\mod 4$. If $z_0$ is a first class number, any $z$ sufficiently near $z_0$ such that $2(z^2-1)(z-j)-4h^2$ is a square is also in the first class, except if $j=\pm 1$ and $z_0=j$, in which case some numbers are in the first class and the rest in the third.
\end{lemma}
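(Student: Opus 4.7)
My plan is to carry out a case analysis based on whether $z_0$ is one of the three special points $\{-1,1,j\}$ where a factor of $2(z^2-1)(z-j)$ vanishes.

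In the generic case $z_0\notin\{-1,1,j\}$, both $\ord(z_0^2-1)$ and $\ord(2(z_0-j))$ are finite, and by the hypothesis that $z_0$ is first class they are even, while $2(z_0^2-1)(z_0-j)$ is a nonzero square. Since the $p$-adic valuation and the leading digit of any nonzero analytic quantity are locally constant on a small enough ultrametric ball, the parities of these two orders and the ``square or non-square'' status of the product are all preserved for $z$ sufficiently close to $z_0$. Hence such $z$ is first class automatically, and the hypothesis on $b^2=2(z^2-1)(z-j)-4h^2$ being a square is not even needed here.

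The substantive case is $z_0\in\{-1,1,j\}$, and by the evident symmetry it suffices to discuss $z_0=1$. Assume $j\ne 1$, so we are outside the exception. The factor $z-1$ vanishes at $z_0$, while $z+1$ has order $0$ and $z-j$ has order $\ord(1-j)$, both locally constant near $z_0$. Thus the potential $\ord(2(z^2-1)(z-j))$ equals $\ord(z-1)+\ord(1-j)$ near $z_0$ and tends to $\infty$ as $z\to 1$. I would then reuse the analysis from the proof of Theorem~\ref{thm:JC-z}: once the potential exceeds $\ord(4h^2)$, the quantity $b^2$ has order exactly $\ord(4h^2)$ with leading digit that of $-4h^2$. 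Using the defining property of $p\equiv 3\mod 4$ that $-1$ is not a square modulo $p$, I deduce that $-4h^2$ is a non-square and hence $b^2$ is a non-square, contradicting the hypothesis. Therefore any $z$ satisfying the hypothesis near $z_0$ must have potential at most $\ord(4h^2)$; Theorem~\ref{thm:JC-z} together with the square condition then forces $z$ into the first class.

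The exceptional case $z_0=j=\pm 1$ stands apart because the polynomial factorizes differently: for $j=1$, $2(z^2-1)(z-j)=2(z+1)(z-1)^2$, and the squared factor $(z-1)^2$ has even order and square leading digit for every $z$, regardless of the parity of $\ord(z-1)$. Hence the condition $b^2\in(\Qp)^2$ imposes no parity constraint on $\ord(z-1)$; both parities can be realized, producing first-class numbers (even $\ord(z-1)$) and third-class numbers (odd $\ord(z-1)$) arbitrarily close to $z_0$, which accounts exactly for the exception.

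The main obstacle I anticipate is the substantive case: to rule out $z$ very close to $z_0\in\{-1,1,j\}$ I must squeeze the hypothesis through the sign analysis of $-4h^2$ using $p\equiv 3\mod 4$, and to certify that the surviving $z$ land in the first rather than the second class I must re-examine the borderline potential regime, likely aided by Corollary~\ref{cor:square} applied to $b^2$ as a function of $z$ near the critical level $\ord(4h^2)$.
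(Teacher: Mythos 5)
Your case division and the treatment of the generic and exceptional cases coincide with the paper's. For $z_0\notin\{-1,1,j\}$ the paper likewise just observes that the orders of the two factors are locally constant; for $z_0=j=\pm 1$ it likewise observes that $\ord(z^2-1)=\ord(z-j)$ near $z_0$ (since $z_0+j$ is a unit), so the product always has even order, the square condition imposes no parity constraint, and both parities of $\ord(z-z_0)$ occur, giving first- and third-class points. In the middle case the paper is more direct than you: it notes that $-4h^2$ must be a square (this is where $p\equiv 3\mod 4$ enters, exactly as in your argument), hence $h=0$, and then does parity bookkeeping on the factors: the non-vanishing factor has locally constant, even order, the product is a square and so has even order, therefore the vanishing factor has even order too and $z$ is first class.

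The step of yours that does not close is the final appeal to Theorem \ref{thm:JC-z}. That theorem describes the projection of $V_{j,h}$, and membership in $V_{j,h}$ requires in addition that $2(j-z)$ be a sum of two squares, i.e.\ (for $p\equiv 3\mod 4$) that one of the two factors have even order -- which is precisely the half of ``first class'' that the lemma is supposed to deliver. The stated hypothesis, that $2(z^2-1)(z-j)-4h^2$ is a square, only controls the parity of the \emph{sum} $\ord(z^2-1)+\ord(2(z-j))$, not of each summand. Concretely, for $p=3$, $j=-2$, $h=0$, the points $z=1+3^{2k+1}$ satisfy the square condition, converge to $z_0=1$, and are third class; what rules this configuration out in the intended setting is that $\ord(j-z_0)$ is odd there, so $z_0\notin V_{j,h}$. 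So the input you actually need is that the non-vanishing factor at $z_0$ has even order (which follows from $z_0$ lying in $V_{j,h}$, not from the square condition), and with that in hand the conclusion should be drawn by the direct parity argument above rather than by citing Theorem \ref{thm:JC-z}. Your own closing worry about the ``borderline potential regime'' is misplaced: once $h=0$ the threshold $\ord(4h^2)$ is infinite and no borderline case arises; the real issue is the splitting of the parity between the two factors.
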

\begin{proof}
	If $z_0\notin\{1,-1,j\}$, the two factors $z^2-1$ and $z-j$ will not be zero near $z_0$, so their order will be locally constant.
	
	If $z_0\in\{1,-1,j\}$ and it is not true that $z_0=j=\pm 1$, one of the factors will not be zero and its order will be locally constant. In this case, $-4h^2$ is a square, which implies $h=0$, and $2(z^2-1)(z-j)$ is a square, so the order of the other factor is also preserved. In the case $z_0=j=\pm 1$, the order of the two factors is unbounded, but as $z_0+j$ has order $0$, the two orders always coincide. If they are even, $z$ is in the first class, and if they are odd, $z$ is in the third class.
\end{proof}

\newcommand{\fibra}[1]{\includegraphics[width=0.5\linewidth]{fibra-#1}}

\begin{figure}
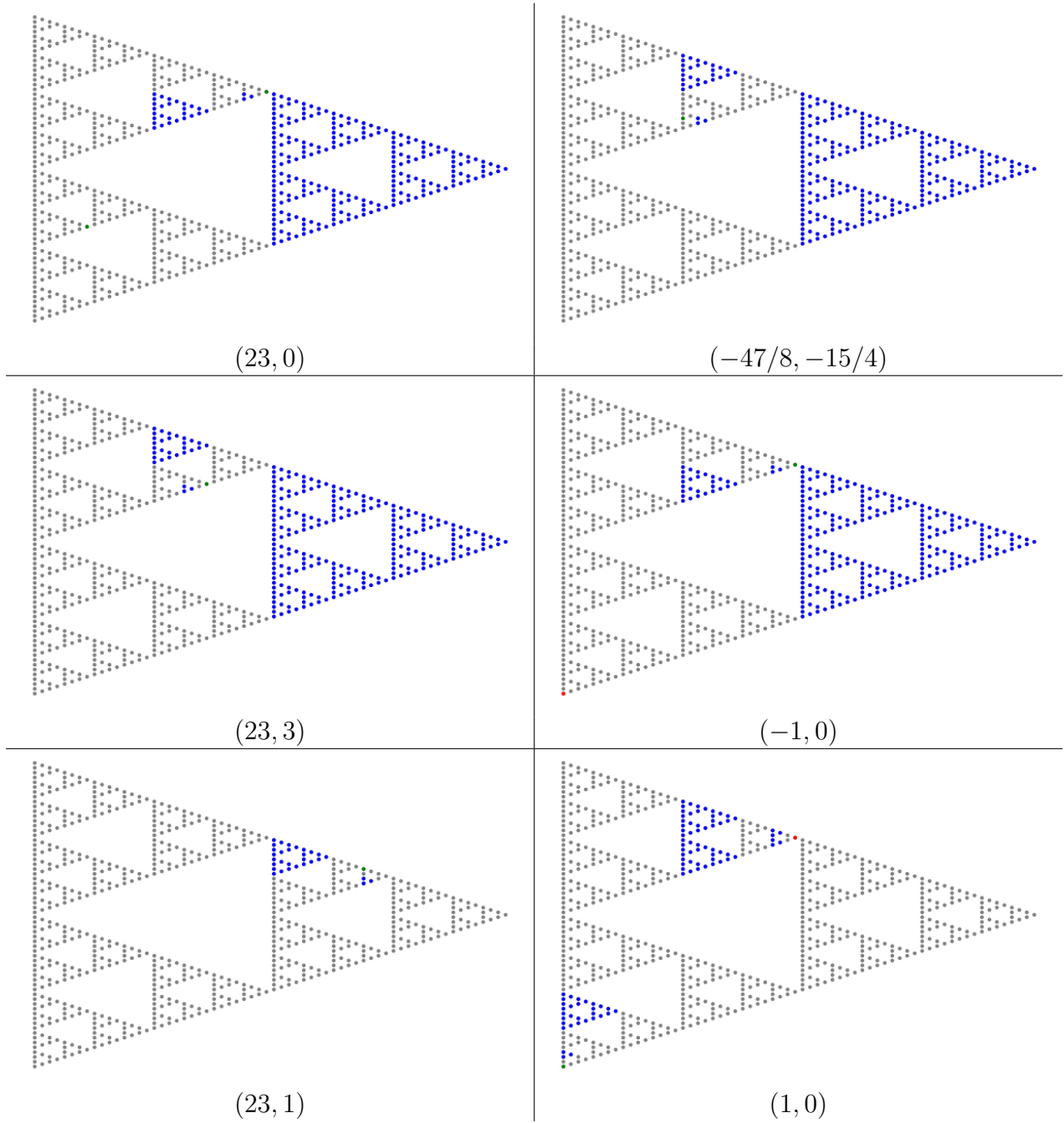

	\begin{tabular}{c|c}
		\fibra{3-r2-h0} & \fibra{3-r1} \\
		$(23,0)$ & $(-47/8,-15/4)$ \\ \hline
		\fibra{3-r2-h3} & \fibra{3-elip} \\
		$(23,3)$ & $(-1,0)$ \\ \hline
		\fibra{3-r2-h1} & \fibra{3-foco} \\
		$(23,1)$ & $(1,0)$
	\end{tabular}
	\caption{Fibers of the Jaynes-Cummings model when $p=3$. The three on the left have rank $2$, the one on the top right has rank $1$, and the other two have rank $0$. The blue points represent values of $z$ for which $x,y,u,v$ form two circles, and at the green points they form only one circle (because $b=0$ at those points). The grey points are values of $z$ that are not in the fiber. In the two figures for rank $0$, there is a red point at $z=j$, which represents a single point in the fiber (isolated in the elliptic case).}
	\label{fig:fibers3}
\end{figure}

\begin{theorem}\label{thm:JC-topology}
	\letpprime\ such that $p\equiv 3\mod 4$. Let $F:\mathrm{S}_p^2\times(\Qp)^2\to(\Qp)^2$ be the $p$-adic analytic Jaynes-Cummings model given by \eqref{eq:JC}.
	\begin{enumerate}
		\item If $(j,h)=(-1,0)$, then the fiber $F^{-1}(\{(j,h)\})$ is the disjoint union of a $2$-dimensional $p$-adic analytic submanifold and an isolated point at $(0,0,-1,0,0)$.
		\item If $(j,h)=(1,0)$, then the fiber $F^{-1}(\{(j,h)\})$ has dimension $2$ and a singularity at $(0,0,1,0,0)$.
		\item If $(j,h)$ is a rank $1$ critical value, that is, $j=(1-3a^4)/2a^2$ and $h=(1-a^4)/2a$ for some $a\in\Qp$ such that $1-a^4$ is the sum of two squares:
		\begin{enumerate}
			\item If $-3a^4-1$ is a non-square modulo $p$, then the fiber $F^{-1}(\{(j,h)\})$ is the disjoint union of a $2$-dimensional $p$-adic analytic submanifold and the circle formed by the critical points for that value:
			\[\Big\{(au,av,-a^2,u,v)\Big| u,v\in\Qp,a^2(u^2+v^2)+a^4=1\Big\}\]
			\item If $-3a^4-1$ is a square modulo $p$, then the fiber $F^{-1}(\{(j,h)\})$ has dimension $2$ and singularities at the critical points.
		\end{enumerate} 
		\item For the rest of values of $(j,h)\in F(\mathrm{S}_p^2\times(\Qp)^2)$, the fiber $F^{-1}(\{(j,h)\})$ is a $2$-dimensional manifold.
	\end{enumerate}
\end{theorem}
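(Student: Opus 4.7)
The plan is to combine the fiber decomposition of Theorem \ref{thm:JC-general}(3) with the sub-fiber structure from Corollary \ref{cor:products} and an analysis of the one-variable auxiliary function
\[
g(z) := 2(j-z)(1-z^2) - 4h^2.
\]
Recall that the projection of $V_{j,h}$ onto the $z$-coordinate consists of those $z$ for which $g(z)$ is a square in $\Qp$, with $b = \pm\sqrt{g(z)}$, and that for $p \equiv 3 \mod 4$ each $U_{j,h}(z,b)$ is homeomorphic to $\mathrm{S}^1_p$ unless $z=j$ and $(j,h) \in \{(\pm 1,0)\}$, in which case $U_{j,h}(j,0)$ is a single point. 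Thus each fiber is obtained by bundling a circle over a discrete-or-continuous set of $z$-values, and its local geometry is dictated by the local behavior of $g$ near its zeros.

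First I would dispatch part (4). At every point of a fiber over a regular value, the differentials $\dd J$ and $\dd H$ are linearly independent; the $p$-adic implicit function theorem in charts of $\mathrm{S}^2_p \times (\Qp)^2$ then produces a $2$-dimensional $p$-adic analytic submanifold structure. This same argument shows that, in every case, the complement of the critical points inside the fiber is a $2$-dimensional submanifold, so it only remains to study the behavior near each critical point.

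For part (1), $(j,h)=(-1,0)$: factor $g(z)=2(z-1)(z+1)^2$. Only $z=-1$ lies over the rank $0$ point $(0,0,-1,0,0)$. For $z$ in a punctured neighborhood of $-1$, $g(z)/(z+1)^2 = 2(z-1) \equiv -4 \mod p^k$ for large $k$; since $p \equiv 3\mod 4$ makes $-1$ a non-square in $\F_p$, the value $-4$ is a non-square in $\Qp$, so $g(z)$ is a non-square for all such $z$. This isolates $(0,0,-1,0,0)$ from the remaining (regular) fiber. For part (2), $(j,h)=(1,0)$: now $g(z)=2(1-z)^2(1+z)$ and near $z=1$ the ratio $g(z)/(1-z)^2 = 2(1+z) \equiv 4 \mod p$ is a square, so by Hensel's lemma $g(z)$ is a square throughout a neighborhood of $z=1$. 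The isolated fiber point $(0,0,1,0,0)$ thus sits at the ``vertex'' of a full $2$-dimensional piece --- the analog of the real pinched torus.

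For part (3), the rank $1$ critical points over $\bigl((1-3a^4)/(2a^2),(1-a^4)/(2a)\bigr)$ project to $z=-a^2$. A direct computation gives $g(-a^2)=0$, $g'(-a^2)=0$ and
\[
\tfrac{1}{2}g''(-a^2) = -\tfrac{3a^4+1}{a^2}.
\]
Since $a^{-2}$ has even order and leading digit a square, the square class of this number in $\Qp$ matches the square class of $-3a^4-1$ modulo $p$. Corollary \ref{cor:square} then splits the two subcases: if $-3a^4-1$ is a non-square mod $p$, then $g(z)$ is a non-square for all $z$ near $-a^2$ with $z\ne -a^2$, so only the critical circle $\{(au,av,-a^2,u,v)\mid a^2(u^2+v^2)+a^4=1\}$ appears locally, disjoint from the rest (which is a $2$-dimensional submanifold by part (4)). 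If instead $-3a^4-1$ is a square mod $p$, then $g(z)$ is a square for every $z$ near $-a^2$ and (together with Lemma \ref{lemma:firstclass} ensuring the first-class condition persists locally) the $2$-dimensional regular piece limits onto the critical circle, which becomes the singular set of a $2$-dimensional fiber.

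The main obstacle is the singular subcases (2) and (3)(b): one must verify that, even though the local structure of $g$ is a double zero, the full fiber (the circle-bundle $\bigcup_{z,b} U_{j,h}(z,b)$) really has topological dimension $2$ away from the critical set and is not a manifold there. I expect the cleanest way is to use the local normal forms of Proposition \ref{prop:JC3} (focus-focus for (2), transversally elliptic for (3)(b)) to read off the singular geometry, and to combine them with the explicit parametrization of $V_{j,h}$ and $U_{j,h}(z,b)$ to confirm that the non-critical part is an analytic submanifold and that the critical set is genuinely a singular locus in the sense claimed.
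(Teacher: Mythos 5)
Your proposal is correct and follows essentially the same route as the paper: reduce to the auxiliary polynomial $f_{j,h}(z)=2(z^2-1)(z-j)-4h^2$, locate its double zeros at the critical $z$-values, and apply the square/non-square dichotomy of Corollary \ref{cor:square} to $f''_{j,h}(z_0)/2$, which equals $\pm 4$ at the rank $0$ points and $-(3a^4+1)/a^2$ at the rank $1$ points, exactly as in the paper. The only caveat is that in parts (1) and (2) you should also track the condition that $2(j-z)$ and $1-z^2$ have even order (the ``first class'' condition handled by Lemma \ref{lemma:firstclass}), since for $p\equiv 3\bmod 4$ the squareness of $f_{j,h}(z)$ alone does not guarantee that a nearby $z$ contributes fiber points; this does not affect your conclusions, and your appeal to Proposition \ref{prop:JC3} for the singular cases is an optional extra the paper does not need.
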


\begin{proof}
	We define
	\[f_{j,h}(z):=2(z^2-1)(z-j)-4h^2\]
	By Lemma \ref{lemma:firstclass}, when $z_0\in V_{j,h}$ is a first class number and not a rank $0$ critical point, all $z$ near $z_0$, provided $f_{j,h}(z)$ is a square, are also in $V_{j,h}$. So we only need to analyze whether $f_{j,h}(z)$ is a square or not.
	
	If $b\ne 0$, the sub-fiber $U_{j,h}(z,b)$ is a circle, and $V_{j,h}$ has dimension $1$ because $b$ varies analytically with $z$. So in these points, the fiber has locally dimension $2$ and only the cases with $b=0$ need separate treatment.
	
	To achieve $b=0$ we need $f_{j,h}(z_0)=0$.
	We will call $f'_{j,h}$ the derivative of $f_{j,h}$. If this is nonzero at $z_0$, by Corollary \ref{cor:square}, some $z$ near $z_0$ give two values of $b$ and other $z$ give no possible value. But
	\begin{align*}
		\frac{\partial z}{\partial b}(z_0,0) & =2b\frac{\partial z}{\partial (b^2)}(z_0,0) \\
		& =2b\left(\frac{\partial (b^2)}{\partial z}(z_0,0)\right)^{-1} \\
		& =\frac{2b}{f'_{j,h}(z_0)}=0
	\end{align*}
	so we can parameterize near $(z_0,0)$ by $b$, and $V_{j,h}$ still has dimension $1$ at $(z_0,0)$. If $f'_{j,h}(z_0)=0$, we have two cases by Corollary \ref{cor:square}:
	\begin{itemize}
		\item If $f''_{j,h}(z_0)/2$ is a square, $f_{j,h}$ is a square near $z_0$. Unless we are at a rank $0$ point, this means that all $z$ near $z_0$ give two values of $b$ (while $z_0$ gives only one), so this is a singularity in $V_{j,h}$.
		\item If $f''_{j,h}(z_0)/2$ is a non-square, $f_{j,h}$ is a non-square near $z_0$, which means that no $z$ near $z_0$ appears in $V_{j,h}$, so this is an isolated point.
	\end{itemize}
	
	We have that
	\[f'_{j,h}(z_0)=2(z_0^2-1)+4z_0(z_0-j)=6z_0^2-4jz_0-2\]
	The cases where we have not dimension $1$ are those with $f_{j,h}(z_0)=f'_{j,h}(z_0)=0$, and this leads to
	\[6z_0^2-4jz_0-2=0\Longrightarrow j=\frac{3z_0^2-1}{2z_0}\]
	and
	\begin{align}
	0 & =2(z_0^2-1)(z_0-j)-4h^2\nonumber \\
	& =2(z_0^2-1)\frac{1-z_0^2}{2z_0}-4h^2\nonumber \\
	& =-\frac{(z_0^2-1)^2}{z_0}-4h^2\label{eq:JC-rel-z-h}
	\end{align}
	If $h\ne 0$, this can be rewritten as
	\[z_0=-\left(\frac{z_0^2-1}{2h}\right)^2\]
	Calling $-a$ what is inside the parenthesis, we get $z_0=-(-a)^2=-a^2$ and
	\[(j,h)=\left(\frac{1-3a^4}{2a^2},\frac{1-a^4}{2a}\right)\]
	Hence, this case leads exactly to the rank $1$ critical points. We must now check the second derivative:
	\begin{align*}
	f''_{j,h}(z_0) & =12z_0-4j \\
	& =-12a^2-\frac{2-6a^4}{a^2} \\
	& =-\frac{6a^4+2}{a^2}.
	\end{align*}
	This divided by $2$ is a square if and only if $-3a^4-1$ is a square.
	
	Now suppose $h=0$. The equation \eqref{eq:JC-rel-z-h} now gives
	\[-\frac{(z_0^2-1)^2}{z_0}=0,\]
	which implies $z_0=\pm 1$, and
	\[j=\frac{3z_0^2-1}{2z_0}=\pm 1.\]
	These are the rank $0$ critical points. The second derivative is
	\[f''_{j,h}(z_0)=12z_0-4j=\pm 8\]
	$4$ is always a square, and for the current primes $-4$ is a non-square (because $-1$ is). By Corollary \ref{cor:square} we have an isolated point for $(j,h)=(-1,0)$. If $(j,h)=(1,0)$ we cannot use as before Lemma \ref{lemma:firstclass} to ensure that the points near the critical one are in $V_{j,h}$, because we are at a rank $0$ point, but the same lemma tells us that there are points in $V_{j,h}$ arbitrarily close to the critical point, so we have a singularity. This completes the proof.
\end{proof}

For the other primes, we have:

\begin{figure}
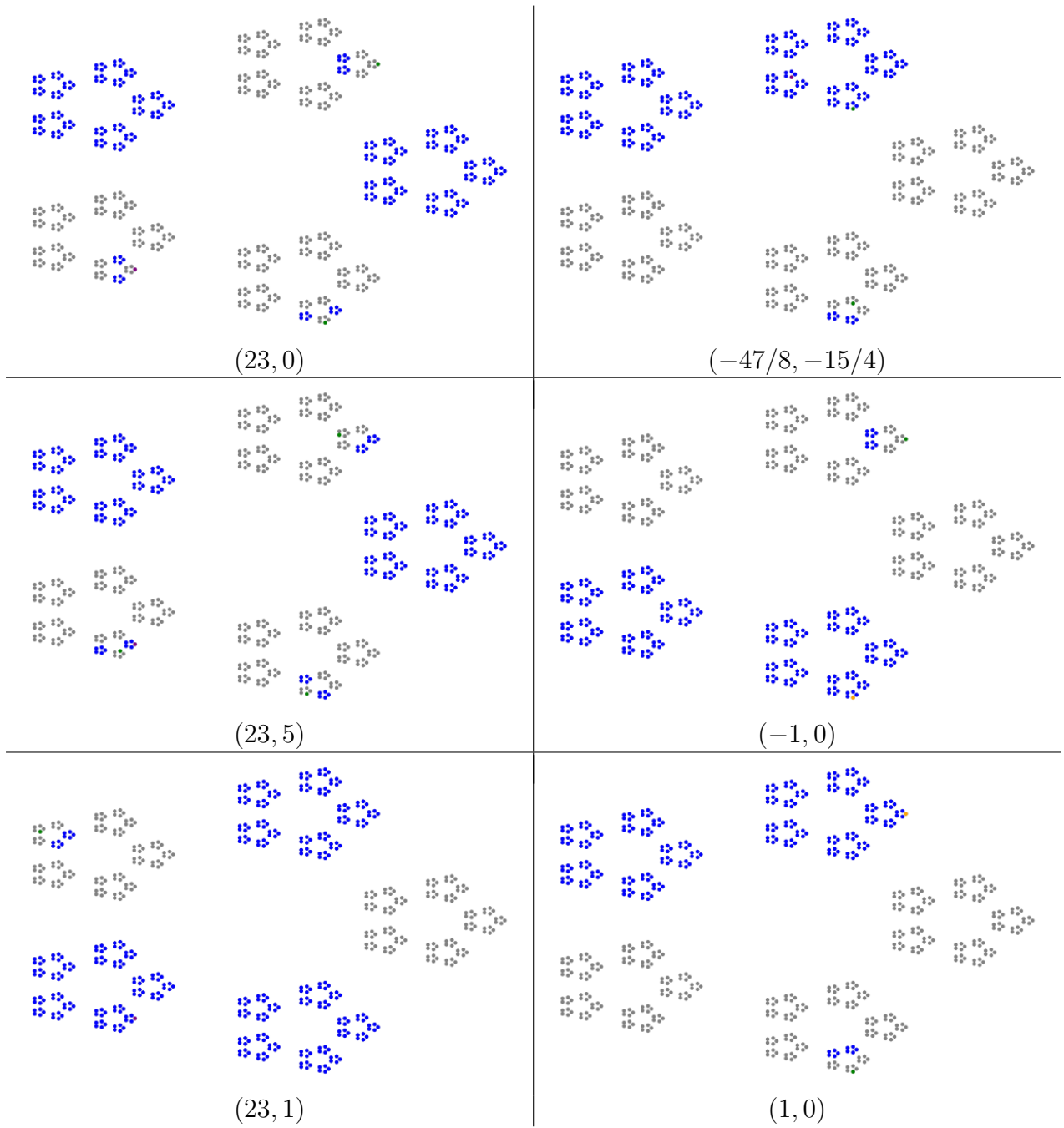

	\begin{tabular}{c|c}
		\fibra{5-r2-h0} & \fibra{5-r1} \\
		$(23,0)$ & $(-47/8,-15/4)$ \\ \hline
		& \\[-13pt]
		\fibra{5-r2-h5} & \fibra{5-elip} \\
		$(23,5)$ & $(-1,0)$ \\ \hline
		& \\[-13pt]
		\fibra{5-r2-h1} & \fibra{5-foco} \\
		$(23,1)$ & $(1,0)$
	\end{tabular}
	\caption{Fibers of the Jaynes-Cummings model for $p=5$. The three fibers on the left have rank $2$, the one on the top right has rank $1$, and the other two have rank $0$. The blue points represent values of $z$ for which $x,y,u,v$ form two circles, at the green points they form only one circle (because $b=0$ at those points), and $z=j$ is marked by a purple point if $x,y,u,v$ have dimension $1$ and by a yellow point if they have dimension $2$ (two planes). The grey points are values of $z$ that are not in the fiber.}
	\label{fig:fibers1}
\end{figure}

\begin{theorem}\label{thm:JC-topology1}
	\letpprime\ such that $p\equiv 1\mod 4$. Let $F:\mathrm{S}_p^2\times(\Qp)^2\to(\Qp)^2$ be the $p$-adic analytic Jaynes-Cummings model given by \eqref{eq:JC}, $\ii\in\Qp$ such that $\ii^2=-1$ and
	\[L_{-1}=\Big\{\left(\delta u,\delta\epsilon \ii u,-1,u,\epsilon \ii u\right)\,\,\Big|\,\, u\in\Qp,\epsilon=\pm 1,\delta=\pm 1\Big\},\]
	\[L_1=\Big\{\left(-\delta\epsilon \ii u,\delta u,1,u,\epsilon \ii u\right)\,\,\Big|\,\, u\in\Qp,\epsilon=\pm 1,\delta=\pm 1\Big\}.\]
	\begin{enumerate}
		\item If $(j,h)=(\pm 1,0)$ is a rank $0$ critical value, then the fiber $F^{-1}(\{(j,h)\})$ has dimension $2$ and a singularity at every point of $L_j$.
		\item If $(j,h)$ is a rank $1$ critical value, that is, $j=(1-3a^4)/2a^2$ and $h=(1-a^4)/2a$ for some $a\in\Qp$:
		\begin{enumerate}
			\item If $-3a^4-1$ is a non-square modulo $p$, then the fiber $F^{-1}(\{(j,h)\})$ is the disjoint union of a $2$-dimensional $p$-adic analytic submanifold and the circle formed by the critical points for that value:
			\[\Big\{(au,av,-a^2,u,v)\,\,\Big|\,\, u,v\in\Qp,a^2(u^2+v^2)+a^4=1\Big\}\]
			\item If $-3a^4-1$ is a square modulo $p$, it has dimension $2$ and singularities at the critical points.
		\end{enumerate} 
		\item For the rest of values of $(j,h)$, the fiber $F^{-1}(\{(j,h)\})$ is a $2$-dimensional $p$-adic analytic manifold.
	\end{enumerate}
\end{theorem}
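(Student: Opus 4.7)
The plan is to adapt the strategy of Theorem \ref{thm:JC-topology}, again using the auxiliary function $f_{j,h}(z) := 2(z^2-1)(z-j) - 4h^2$ together with Corollary \ref{cor:square} to control the local shape of $V_{j,h}$, and combining this with the description of the sub-fibres $U_{j,h}(z,b)$ from Corollary \ref{cor:products}. The key structural difference from the $p \equiv 3 \bmod 4$ case is that $u^2+v^2=0$ admits the non-trivial solutions $v=\pm\ii u$; consequently the sub-fibre over $z=j$ is no longer a point but is either homeomorphic to $\Qp^*$ or, when $(j,h)=(\pm 1,0)$, a union of two $2$-planes.

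For the generic and rank $1$ cases the argument is essentially unchanged. Wherever $f_{j,h}(z_0)\ne 0$ or $f_{j,h}'(z_0)\ne 0$, the implicit function theorem (applied to $z$ or to $b$) shows $V_{j,h}$ is locally a $1$-dimensional analytic curve, and crossing with the circle sub-fibres over $z\ne j$ produces a $2$-dimensional analytic piece. When $(j,h)\notin\{(\pm 1,0)\}$ the $\Qp^*$-piece $U_{j,h}(j,\pm 2\ii h)$ of Corollary \ref{cor:products}(3) is itself a $1$-dimensional analytic submanifold that glues smoothly onto the neighbouring cylinders, establishing part (3). At a rank $1$ critical value $(j,h)=((1-3a^4)/(2a^2),(1-a^4)/(2a))$ one has $z_0=-a^2$, and since $a^4=1$ would force $(j,h)=(\pm 1,0)$ (which are rank $0$), at a genuine rank $1$ value $z_0\ne j$. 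The sub-fibre at $z_0$ therefore remains $\mathrm{S}_p^1$, the computation $f_{j,h}''(z_0)/2=-(3a^4+1)/a^2$ is identical to that in Theorem \ref{thm:JC-topology}, and the two sub-cases of part (2) follow verbatim from Corollary \ref{cor:square}.

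The main novelty, and the expected main obstacle, is part (1). Writing $w := z \mp 1$, one computes $f_{\pm 1,0}(z) = \pm 4 w^2 (1 \pm w/2)$. Since $\ii\in\Qp$ and $1+p\Zp\subset(\Qp^*)^2$, the factor $\pm(1\pm w/2)$ has an analytic square root $\beta(w)$ in $\Qp$ for $w$ sufficiently small; consequently $b^2=(2w\beta(w))^2$, so $V_{j,h}$ near $(z_0,0)$ is the union of two smooth branches $b=\pm 2w\beta(w)$ meeting at $(z_0,0)$. Crossing these branches with the circle sub-fibres over $z\ne z_0$ yields a $2$-dimensional analytic piece of the fibre, while the sub-fibre at $z=z_0$ is the union of two $2$-planes by Corollary \ref{cor:products}(4).

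It remains to identify the singular locus as $L_j$. Substituting $b = \pm 2w\beta(w)$ and $u^2+v^2=-2w$ into the formulas of Definition \ref{def:UV} expresses $(x,y)$ as analytic functions of $(u,v,w)$; passing to the limit $w\to 0$ along the degenerating conic $u^2+v^2=0$, where $v=\epsilon\ii u$, and combining with the sign choice $\delta=\pm 1$ of the branch of $b$, a direct substitution shows that the accumulation points of the $2$-dimensional cylindrical piece inside the two $2$-planes $U_{j,h}(z_0,0)$ are exactly the four lines making up $L_j$ as given in the statement. Away from $L_j$, the cylindrical piece and the two $2$-planes are disjoint and each is an analytic submanifold, so the fibre is a $2$-dimensional $p$-adic analytic manifold there; at the points of $L_j$ the two pieces meet in a non-smooth way, producing the asserted singularities.
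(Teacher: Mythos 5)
Your proposal is correct and follows essentially the same route as the paper: the same decomposition of the fiber into sub-fibers $U_{j,h}(z,b)$ over the curve $V_{j,h}$, the same second-derivative/square analysis of $f_{j,h}$ at the exceptional $z_0$ (your explicit factorization $f_{\pm 1,0}=\pm 4w^2(1\pm w/2)$ is just a hands-on version of the paper's appeal to Corollary \ref{cor:square} with $f''/2=\pm 4$ a square), and the same limit computation of $(x,y)$ as $z\to j$ along $v\to\epsilon\ii u$ to identify the singular locus $L_j$. The only difference is presentational, so no further comparison is needed.
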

\begin{proof}
	This is similar to the case $p\equiv 3\mod 4$, so we will focus on the differences between the two proofs.
	\begin{itemize}
		\item Lemma \ref{lemma:firstclass} is not needed, because for these primes there is no condition in the factors $z^2-1$ and $z-j$.
		\item For $b\ne 0$, the sub-fiber may not be a circle: this happens if $z=j$. But even in this case the sub-fiber $U_{j,h}(z,b)$ has dimension $1$, so this does not alter the situation.
		\item The cases with $b=0$ and $h=0$ lead again to the two rank $0$ points, but in this case their sub-fiber has dimension $2$ instead of $0$. This creates a singularity at the points in $U_{j,0}(j,0)$, for $j=\pm 1$, which are in the limit of the dimension $1$ sub-fibers when $z$ tends to $j$. So we must calculate this limit. Fixing $u$, we know (Corollary \ref{cor:products}) that $v$ tends to $\epsilon \ii u$ and $b$ tends to $0$, and
		\begin{align*}
			\lim_{z\to j} \left(\frac{2hu-bv}{u^2+v^2},\frac{2hv+bu}{u^2+v^2}\right) & =\lim_{z\to j} \left(\frac{-bv}{2(j-z)},\frac{bu}{2(j-z)}\right) \\
			& =(-\epsilon \ii u,u)\lim_{z\to j}\frac{b}{2(j-z)}
		\end{align*}
		Squaring the limit
		\begin{align*}
			\lim_{z\to j}\frac{b^2}{4(j-z)^2} & =\lim_{z\to j}\frac{2(j-z)(1-z^2)}{4(j-z)^2} \\
			& =\lim_{z\to j}\frac{2(j-z)(j^2-z^2)}{4(j-z)^2} \\
			& =\lim_{z\to j}\frac{j+z}{2}=j
		\end{align*}
		so the original limit gives $\delta(u,\epsilon \ii u)$, where $\delta=\pm 1$, if $j=-1$, and $\delta(-\epsilon \ii u,u)$ if $j=1$.
		The second derivatives, $4$ and $-4$, are now both perfect squares, so the part of the fiber with $z\ne j$ has points arbitrarily close to the two planes for $z=j$.
	\end{itemize}
\end{proof}

\begin{remark}
	$L_{-1}$ consists of four lines of rank $1$ points with $a=\delta$ that meet at the rank $0$ point $(0,0,-1,0,0)$. Analogously, $L_1$ consists of four lines of rank $1$ points with $a=-\delta\epsilon \ii$ that meet at $(0,0,1,0,0)$.
\end{remark}

\subsection{Fibers and image if $p=2$}
In what follows, we will talk about the $2$-adic expansions of numbers. We say that a $2$-adic number \emph{ends in $a_1a_2\ldots a_k$}, where each $a_i$ is $0$ or $1$, if the first $k$ digits (the rightmost ones) are $a_1a_2\ldots a_k$, not counting the infinitely many zeros at the right. With this definition, all numbers end in $1$, the multiples of $4$ plus $1$ end in $01$ (as well as these numbers multiplied by a power of two), the multiples of $4$ plus $3$ in $11$, and so on.

We know that $x$ is a square if and only if $\ord(x)$ is even and $x$ ends in $001$, and $x$ is sum of two squares if and only if $x$ ends in $01$ (Corollary \ref{cor:image}). We have now four classes of $z$, for a fixed $j$:
\begin{enumerate}
	\item $j-z$ and $1-z^2$ end in $01$, $\ord(2(j-z)(1-z^2))$ is even and $2(j-z)(1-z^2)$ ends in $001$.
	\item $j-z$ and $1-z^2$ end in $01$, $\ord(2(j-z)(1-z^2))$ is even and $2(j-z)(1-z^2)$ ends in $101$.
	\item $j-z$ and $1-z^2$ end in $01$, and $\ord(2(j-z)(1-z^2))$ is odd. (Note that $2(j-z)(1-z^2)$ necessarily ends in $01$.)
	\item At least one of $j-z$ and $1-z^2$ ends in $11$.
\end{enumerate}
Again, $1$, $-1$ and $j$ are considered to be in the first class.

However, we know that $(x,y,z)\in \mathrm{S}^2_p$, and for $p=2$ this implies that $z$ has one of the forms
\[2u,\quad 5+16u,\quad 11+16u,\quad 1+2^m(3+4u),\quad -1+2^m(1+4u)\]
in addition to the special cases $1$ and $-1$ (basically, for all $z$ not included here $1-z^2$ ends in $11$ and they are automatically in the last class). Concretely, $z$ must be a $p$-adic integer.

The analog of Theorem \ref{thm:JC-z} for $p=2$ is the following:

\begin{theorem}\label{thm:JC-z2}
	Let $V_{j,h}$ be as in Definition \ref{def:UV}. The projection of $V_{j,h}$ to the coordinate $z$ consists of
	\begin{enumerate}
		\item all numbers in the first class with potential less than $2\ord(h)$,
		\item all numbers in the second class with potential $2\ord(h)$,
		\item some numbers in the first and second classes with potential $2\ord(h)+2$ (which points depends on the concrete value of $h$), and
		\item all numbers in the third class with potential $2\ord(h)+3$.
	\end{enumerate}
\end{theorem}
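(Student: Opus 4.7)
The plan is to reduce to analyzing when the equation $b^2=A-B$ admits a solution $b\in\Q_2$, where $A:=2(z^2-1)(z-j)$ (whose $2$-adic order is the potential $k$ at $z$) and $B:=4h^2$ (so $\ord(B)=2\ord(h)+2$). The main input is the standard characterization of $2$-adic squares: a nonzero $x\in\Q_2$ is a square precisely when $\ord(x)$ is even and $x/2^{\ord(x)}\equiv 1\mod 8$. Because every $2$-adic unit squares to $1$ modulo $8$, the leading unit $u_B:=B/2^{\ord(B)}$ is automatically $\equiv 1\mod 8$, so the entire analysis reduces to leading-digit bookkeeping on $A$.

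I would split into three regimes according to the comparison between $k$ and $\ord(B)$. In the regime $k<\ord(B)$, one has $\ord(A-B)=k$ with $(A-B)/2^k\equiv u_A\mod 2^{\ord(B)-k}$, where $u_A:=A/2^k$: when $\ord(B)-k\ge 3$ (i.e.\ $k\le 2\ord(h)-1$) being a square reduces to $A$ being a square, isolating exactly Class 1 and proving item (1); when $\ord(B)-k=2$ (i.e.\ $k=2\ord(h)$), the congruence $(A-B)/2^k\equiv u_A-4\mod 8$ picks out $u_A\equiv 5\mod 8$, which is Class 2 and yields item (2); when $\ord(B)-k=1$ the order $k$ is odd and no squares arise. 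In the regime $k>\ord(B)$, one has $\ord(A-B)=\ord(B)$ and $(A-B)/2^{\ord(B)}=2^{k-\ord(B)}u_A-u_B$; a gap of at least $2$ gives $\equiv 7\mod 8$ (non-square), while a gap of exactly $1$ (so $k=2\ord(h)+3$) yields $2u_A-u_B\equiv 1\mod 8$ iff $u_A\equiv 1\mod 4$. Since odd $k$ rules out Classes 1 and 2, and since Class 4 has $u_A\equiv 3\mod 4$, only Class 3 survives, and conversely every Class 3 element with this potential works, proving item (4).

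The equality regime $k=\ord(B)=2\ord(h)+2$ is what produces item (3). Here Class 3 is excluded by parity of $k$, while for Class 4 one computes $u_A-u_B\equiv 2\mod 4$, so $\ord(A-B)$ is odd and $A-B$ is never a square. For Classes 1 and 2, the difference $u_A-u_B$ vanishes modulo $8$ (Class 1) or modulo $4$ but not $8$ (Class 2), and whether $A-B$ is a square then depends on cancellations in higher digits of $u_A$ and on the specific value of $h$; the existence of squares among these cases without a uniform description accounts for the ``some'' clause. The main obstacle will be not any single case but the systematic translation between the class conditions (phrased in terms of $j-z$, $1-z$, $1+z$) and the residues of $u_A$ modulo $4$ and $8$, which rests on the observation that two units each $\equiv 1\mod 4$ have a product $\equiv 1\mod 4$ but possibly $\equiv 1$ or $\equiv 5\mod 8$, while a single factor $\equiv 3\mod 4$ forces $u_A\equiv 3\mod 4$ and sends $z$ into Class 4.
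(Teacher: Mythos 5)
Your overall strategy---casing on the potential $k=\ord(A)$ against $\ord(B)=2\ord(h)+2$ and deciding squareness of $A-B$ by parity of the order and the unit part modulo $8$---is exactly the paper's argument, and the leading-digit computations in each regime are correct (one harmless slip: for a gap $k-\ord(B)=2$ the unit of $A-B$ is $3$, not $7$, modulo $8$; either way it is $3$ modulo $4$, hence a non-square).

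The genuine problem is the ``observation'' on which you rest the translation into classes: it is \emph{not} true that a single factor $\equiv 3\bmod 4$ forces $u_A\equiv 3\bmod 4$, nor that Class 4 always has $u_A\equiv 3\bmod 4$. Class 4 only requires that \emph{at least one} of $j-z$ and $1-z^2$ end in $11$; if \emph{both} do, then $u_A\equiv 9\equiv 1\bmod 4$, possibly even $\equiv 1\bmod 8$. Such a $z$ with even potential $k<2\ord(h)$ makes $A$, and hence $A-B$, a genuine square, so your residue test would wrongly place it in the projection, contradicting item (1) of the theorem. What actually excludes these points is not the arithmetic of $u_A$ but the membership condition in Definition \ref{def:UV}: $(z,b)\in V_{j,h}$ requires $2(j-z)$ to be a sum of two squares, i.e.\ $j-z$ must end in $01$. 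Your proposal never invokes this condition. Once it is brought in, the only surviving Class 4 possibility is ``$j-z$ ends in $01$ and $1-z^2$ ends in $11$,'' which does force $u_A\equiv 3\bmod 4$, and then your modulo-$4$ and modulo-$8$ computations dispose of it in every regime. The same point is needed in regime (1): ``$A-B$ is a square'' reduces to ``$A$ is a square,'' but that alone does not put $z$ in Class 1, since both factors could end in $11$ with product $\equiv 1\bmod 8$; it is again the sum-of-two-squares condition on $2(j-z)$ that completes the identification with Class 1.
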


\begin{proof}
	If the potential of $z$ is less than $2\ord(h)$, subtracting $4h^2$ from $2(j-z)(1-z^2)$ does not change the order nor the first three digits from the right, so the result is a square if and only if $2(j-z)(1-z^2)$ is a square.
	
	If the potential is exactly $2\ord(h)$, subtracting $4h^2$ keeps the order and also the second digit, but inverts the third, so we must have $101$ before subtracting in order to reach a square at the end.
	
	If the potential is $2\ord(h)+1$, the result will have that order, and it will not be a square.
	
	If the potential is $2\ord(h)+2$, subtracting $4h^2$ will increase the order at least to $2\ord(h)+4$ (the first two digits are $01$ in both numbers). The new order and the new first three digits cannot be known in advance.
	
	If the potential is $2\ord(h)+3$, we need the initial value to end in $01$ so that subtracting $001$ at order $2\ord(h)+2$ results in $001$, but that is automatic.
	
	Finally, if the potential is greater than $2\ord(h)+3$, subtracting $4h^2$ cannot leave another square because the new first two digits will be $11$ instead of $01$.
\end{proof}

Calculating the image is significantly more complicated here, because of the compactness of $\mathrm{S}^2_p$, that forbids taking $z$ ``big enough'' as we did with $p\equiv 3\mod 4$. Actually, $F$ is not surjective, as the following necessary condition shows.

\begin{proposition}\label{prop:JC-image-nec}
	Let $(j,h)$ be in the image $F(\mathrm{S}_p^2\times(\Qp)^2)$ of the Jaynes-Cummings model $F:\mathrm{S}_p^2\times(\Qp)^2\to(\Qp)^2$ given by \eqref{eq:JC}. The following statements hold.
	\begin{enumerate}
		\item If $\ord(j)\ge 1$, then $\ord(h)\ge 0$ (that is, $h$ is integer).
		\item If $\ord(j)=0$, then $\ord(h)\ge -1$ and $\ord(h)\ne 0$.
		\item If $\ord(j)<0$ is even, then $\ord(h)=\ord(j)/2-1$.
		\item If $\ord(j)<0$ is odd, then $\ord(h)\ge (\ord(j)-1)/2$.
	\end{enumerate}
\end{proposition}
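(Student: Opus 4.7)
The strategy is to extract necessary arithmetic constraints from the three defining equations of a preimage $(x,y,z,u,v)\in F^{-1}(\{(j,h)\})$: the sphere condition $x^2+y^2+z^2=1$, the identity $u^2+v^2=2(j-z)$ coming from $J$, and $ux+vy=2h$ coming from $H$. The preparatory observation is that $\mathrm{S}_2^2$ is compact (as noted earlier in the paper), so $x,y,z\in\Z_2$ and in particular $\ord(1-z^2)\ge 0$.

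Two elementary classification lemmas drive the whole argument. First, for $u,v\in\Q_2$: if $\ord(u)\ne \ord(v)$ then $\ord(u^2+v^2)=2\min(\ord(u),\ord(v))$ is even, whereas if $\ord(u)=\ord(v)=a$, writing $u=2^a u_0, v=2^a v_0$ with $u_0,v_0$ odd, the congruence $u_0^2+v_0^2\equiv 2\pmod 8$ (odd squares are $\equiv 1\pmod 8$) yields $\ord(u^2+v^2)=2a+1$, odd. Hence the parity of $\ord(2(j-z))$ determines whether $(u,v)$ lies in the ``unequal-order'' or ``equal-order'' regime and fixes $\min(\ord(u),\ord(v))$. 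Second, from $x^2+y^2=1-z^2$: if $\ord(z)\ge 1$ then $1-z^2$ is odd and so exactly one of $x,y$ is odd; if $\ord(z)=0$ then the factorization $1-z^2=(1-z)(1+z)$ together with $(1-z)+(1+z)=2$ forces $\ord(1-z^2)\ge 3$, whence $x^2+y^2\equiv 0\pmod 8$ and both $x,y$ are even.

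With these two classifications in place, each of parts (1), (2), (4) reduces to tabulating the parity of $ux+vy=2h$ in every admissible combination of $\ord(j)$ and $\ord(z)$. For (1), $\ord(j)\ge 1$ gives $\ord(2(j-z))\ge 1$; both subcases $\ord(z)=0$ and $\ord(z)\ge 1$ yield $ux+vy\in 2\Z_2$, hence $\ord(h)\ge 0$. For (2), the subcase $\ord(z)\ge 1$ forces $u,v$ odd and $x,y$ of opposite parity, giving $\ord(h)=-1$, while the subcase $\ord(z)=0$ gives $\ord(h)\ge 1$; together these rule out $\ord(h)=0$. For (4), $\ord(2(j-z))=\ord(j)+1$ is even, so $\min(\ord(u),\ord(v))=(\ord(j)+1)/2$, and the parity table yields the stated lower bound on $\ord(h)$.

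Part (3) is the main obstacle: since $\ord(2(j-z))=1+\ord(j)$ is odd, one lies in the equal-order regime with $\ord(u)=\ord(v)=\ord(j)/2$, and the pure parity argument only gives $\ord(h)\ge \ord(j)/2-1$, with exact equality in the subcase $\ord(z)\ge 1$ but merely $\ord(h)\ge \ord(j)/2$ in the subcase $\ord(z)=0$ (where both $x,y$ are even). To recover the claimed equality, I would combine the auxiliary identity $b^2=2(j-z)(1-z^2)-4h^2$ (with $b=uy-vx$, derived in the proof of Theorem \ref{thm:JC-general}) with the constraint that $b^2$ must be a $2$-adic square, i.e.\ of the form $2^{2m}\cdot(1\!\bmod\! 8)$; this rules out the degenerate $\ord(z)=0$ sub-case by a contradiction between the computed leading digit of $b^2$ and the required congruence $\equiv 1\pmod 8$, using that $-1$ is not a $2$-adic square. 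The careful higher-order valuation bookkeeping in this sub-case, together with the exclusion of degenerate configurations (such as $x=y=0$, $z=\pm 1$) where the cancellation is extreme, is the most delicate technical point of the proof.
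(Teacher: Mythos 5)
Your treatment of parts (1), (2) and (4) by direct valuation bookkeeping on the three equations $u^2+v^2=2(j-z)$, $x^2+y^2=1-z^2$, $ux+vy=2h$ is a genuinely different and more elementary route than the paper's, which instead deduces everything from the ``potential'' inequality $\ord(2(j-z)(1-z^2))\le 2\ord(h)+3$ of Theorem \ref{thm:JC-z2}. Your two classification lemmas are correct, and those three parts go through; in part (2), sub-case $\ord(z)=0$ with $\ord(j-z)=0$, you additionally need the observation that $u_0x_0+v_0y_0$ is even when all four are odd units in order to reach $\ord(h)\ge 1$ rather than $\ord(h)\ge 0$, but that is within the spirit of what you wrote.

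The gap is in part (3), exactly where you flag it, and your proposed repair does not close it. When $\ord(j)=2m<0$ and the witnessing $z$ is odd, one has $\ord(u)=\ord(v)=m$ and $\min(\ord(x),\ord(y))\ge 1$, so parity only gives $\ord(h)\ge m$. The square condition on $b^2=2(j-z)(1-z^2)-4h^2$ does rule out $\ord(h)=m$ (there $\ord(4h^2)=2m+2$ is strictly smaller than $\ord(2(j-z)(1-z^2))\ge 2m+4$, so $b^2/2^{2m+2}\equiv -h_0^2\equiv 3\mod 4$ is not a square), but it cannot rule out $\ord(h)\ge m+1$: then $\ord(4h^2)\ge 2m+4$ is comparable to $\ord(2(j-z)(1-z^2))$ and $b^2$ can genuinely be a square. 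Concretely, take $j=5/16$, $h=2$, $z=5$: then $2(j-z)=-75/8=2^{-3}\cdot(-75)$ with $-75\equiv 1\mod 4$ is a sum of two squares, $1-z^2=-24=2^3\cdot(-3)$ with $-3\equiv 1\mod 4$ is a sum of two squares, and $b^2=(-75/8)(-24)-16=209\equiv 1\mod 8$ is a square in $\Z_2$; hence $(5/16,2)$ lies in the image even though $\ord(j)=-4$ and $\ord(h)=1\ne\ord(j)/2-1=-3$. So part (3) as stated cannot be proved; the valuation analysis only yields the disjunction $\ord(h)=\ord(j)/2-1$ or $\ord(h)\ge\ord(j)/2+1$, and the example shows the second branch is realized. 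You should not be troubled that your argument stalls here: the paper's own proof of part (3) silently asserts $\ord(1-z^2)=0$, which is precisely the assumption that $z$ is even, and therefore has the same hole.
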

\begin{proof}
	If $(j,h)$ is in the image, there is $z$ in the situation of Theorem \ref{thm:JC-z2}. The potential of $z$ must be at most $2\ord(h)+3$, so
	\begin{align*}
	2\ord(h)+3 & \ge \ord(2(j-z)(1-z^2)) \\
	& =1+\ord(j-z)+\ord(1-z^2).
	\end{align*}
	There are two important aspects in this inequality:
	\begin{itemize}
		\item If the right-hand side is odd, equality must be attained. This is because the only case with odd potential has it equal to $2\ord(h)+3$.
		\item If $\ord(z)=0$, $z^2$ ends in $001$ and $\ord(1-z^2)\ge 3$.
	\end{itemize}
	
	With this into account, now we distinguish the possible cases. If $\ord(j)\ge 1$ and $\ord(z)\ge 1$, $2\ord(h)+3\ge 1+1+1$ and $\ord(h)\ge 0$. If $\ord(z)=0$, $2\ord(h)+3\ge 1+0+3$ and $\ord(h)\ge 1$.
	
	If $\ord(j)=0$ and $\ord(z)\ge 1$, $\ord(j-z)=0$ and $\ord(1-z^2)=0$, so it must be equal, $2\ord(h)+3=1$, and $\ord(h)=-1$. If $\ord(z)=0$, $2\ord(h)+3\ge 1+0+3$ and $\ord(h)\ge 1$.
	
	If $\ord(j)<0$ is even, $\ord(j-z)=\ord(j)$ and $\ord(1-z^2)=0$, so it must be equal again, $2\ord(h)+3=1+\ord(j)$, and $\ord(h)=\ord(j)/2-1$.
	
	Finally, if $\ord(j)<0$ is odd, $2\ord(h)+3\ge 1+\ord(j)+0$ and $\ord(h)\ge (\ord(j)-2)/2$, which implies $\ord(h)\ge (\ord(j)-1)/2$.
\end{proof}

Another necessary condition that applies only to $\ord(j)\le -2$ is the following:

\begin{proposition}\label{prop:JC-image-01}
	Let $F:\mathrm{S}_p^2\times(\Qp)^2\to(\Qp)^2$ be the Jaynes-Cummings model given by \eqref{eq:JC}.
	If $(j,h)\in F(\mathrm{S}_p^2\times(\Qp)^2)$ and $\ord(j)\le -2$, then $j$ ends in $01$.
\end{proposition}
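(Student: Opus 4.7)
The plan is to read off the lowest-order $2$-adic digits of $j$ from the decomposition $j = (u^2+v^2)/2 + z$, using the severe constraints that $p=2$ imposes on each summand.

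First I would observe that the height $z$ of any point of $\mathrm{S}^2_2$ is a $2$-adic integer: indeed, Corollary \ref{cor:image-spin} shows that the image of $\mu$ for $p=2$ is contained in $\Z_2$. Since $\ord(z)\ge 0$ while the hypothesis says $\ord(j)\le -2$, no cancellation can occur in the sum $(u^2+v^2)/2 + z = j$ at the orders in question, and so $\ord((u^2+v^2)/2) = \ord(j)\le -2$. In particular $u^2+v^2\neq 0$ and $r := \ord(u^2+v^2) = \ord(j)+1\le -1$.

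Next I would invoke Corollary \ref{cor:image} for $p=2$: any nonzero element in the image of the oscillator has the form $u^2+v^2 = 2^r(1+4t)$ with $t\in\Z_2$. Dividing by $2$, the $2$-adic digit of $(u^2+v^2)/2$ at position $r-1 = \ord(j)$ is $1$ and at position $r = \ord(j)+1$ is $0$. Since $\ord(z)\ge 0 > r$, the summand $z$ contributes no digit at either of these negative positions, so the same two digits appear in $j$. In other words, the lowest-order nonzero digit of $j$ is $1$ and the next one is $0$, i.e., $j$ ends in $01$.

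The whole argument is essentially digit-tracking, and the only point to be checked carefully is the absence of cancellation in the sum $j = (u^2+v^2)/2 + z$; this is immediate from the order comparison $\ord(z)\ge 0 > \ord((u^2+v^2)/2)$ forced by the hypothesis $\ord(j)\le -2$. I do not anticipate any substantive obstacle.
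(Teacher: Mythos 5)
Your proof is correct and follows essentially the same route as the paper's: the paper deduces the claim from the facts that $j-z$ ends in $01$ (equivalently, $(u^2+v^2)/2=j-z$ lies in the image of the oscillator) and that $z$ is a $2$-adic integer, which is exactly your digit-tracking argument. Your version just unwinds the fiber condition directly from $j=(u^2+v^2)/2+z$ and Corollary \ref{cor:image}, making the absence of cancellation explicit via the ultrametric inequality.
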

\begin{proof}
	This is an immediate consequence of the condition that $j-z$ ends in $01$ together with $z$ being integer.
\end{proof}

We can also formulate some sufficient conditions for a point to be in the image:

\begin{proposition}\label{prop:JC-image-suf}
	Let $(j,h)\in(\Qp)^2$ and let $F:\mathrm{S}_p^2\times(\Qp)^2\to(\Qp)^2$ be the Jaynes-Cummings model given by \eqref{eq:JC}. Then the following statements hold.
	\begin{enumerate}
		\item If $\ord(j)\ge 1$ and $\ord(h)\ge 0$, $(j,h)$ is in the image of $F$.
		\item If $\ord(j)\le 0$ is even, $j$ ends in $01$ and $\ord(h)=\ord(j)/2-1$, $(j,h)$ is in the image.
		\item If $\ord(j)\le -3$ is odd, $j$ ends in $01$ and $\ord(h)\ge (\ord(j)+1)/2$, $(j,h)$ is in the image.
	\end{enumerate}
\end{proposition}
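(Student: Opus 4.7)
The plan is to exhibit, in each case, an explicit pair $(z,b)\in V_{j,h}$ (Definition \ref{def:UV}) with $z$ lying in the image of the spin momentum map $\mu:\mathrm{S}_2^2\to\Q_2$ (Corollary \ref{cor:image-spin}); Theorem \ref{thm:JC-general}(3) will then produce a preimage of $(j,h)$ under $F$. Concretely the task is to choose $z\in\Z_2$ such that $1-z^2$ is a sum of two squares (so $z$ is a height of some sphere point), $2(j-z)$ is a sum of two squares (so $u,v$ exist), and $b^2=2(j-z)(1-z^2)-4h^2$ is a square in $\Q_2$ (so $b=uy-vx$ exists). The last condition, for $p=2$, will be checked by the familiar test that $\ord(b^2)$ is even and $b^2/2^{\ord(b^2)}\equiv 1\pmod 8$.

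For case (1) with $h=0$ we take $z=j$, so that $u=v=0$ and $x^2+y^2=1-j^2\equiv 1\pmod 4$ is a sum of two squares. For $h\ne 0$ we take $z=j-2^{2\ord(h)+2}$; then $z\in 2\Z_2$, $j-z$ is itself a square, and, writing $h=2^{\ord(h)}h_0$ with $h_0$ an odd unit, a direct computation will give $b^2/2^{2\ord(h)+2}\equiv 2(1-z^2)-h_0^2\equiv 2-1\equiv 1\pmod 8$. For case (2), we take $z=0$ and write $j=2^{\ord(j)}(1+4u)$ and $h=2^{\ord(j)/2-1}h_0$ with $h_0$ odd; then $\ord(b^2)=\ord(j)$, which is even, and $b^2/2^{\ord(j)}\equiv 2(1+4u)-h_0^2\equiv 1\pmod 8$.

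For case (3) we write $\ord(j)=-(2k+1)$ with $k\ge 1$, $j=2^{-(2k+1)}(1+4u)$, and $h=2^{-k+m}h_0$ with $m\ge 0$ and $h_0$ an odd unit. When $h=0$ we take $z=1$, so that $b=0$ and $2(j-1)$ is a sum of two squares because, for $k\ge 1$, subtracting $1$ leaves the two lowest digits of the unit part of $j$ untouched. When $h\ne 0$ we take $z\in\{0,2\}\subset 2\Z_2\subset\im\mu$, obtaining $\ord(b^2)=-2k$ in either case; a calculation modulo $8$ gives $2(j-z)(1-z^2)\cdot 2^{2k}\equiv 1+4u\pmod 8$ when $z=0$ and $\equiv 5+4u\pmod 8$ when $z=2$, while $4h^2\cdot 2^{2k}\equiv 4\pmod 8$ if $m=0$ and $\equiv 0\pmod 8$ if $m\ge 1$. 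For each of the four parity combinations of $u$ and of ``$m=0$'' versus ``$m\ge 1$'', exactly one of the two choices $z=0$ or $z=2$ makes the resulting difference congruent to $1$ modulo $8$, completing the construction. The main obstacle is precisely this four-way case analysis: although each individual congruence is elementary, one must carefully track how the leading digit $1+4u$ of the unit part of $j$ and the order $-k+m$ of $h$ together dictate the correct choice of $z\in\{0,2\}$.
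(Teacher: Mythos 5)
Your proof is correct and follows essentially the same route as the paper's: in each case you exhibit an explicit $z$ and verify that $b^2=2(j-z)(1-z^2)-4h^2$ is a square, and your choices ($z=j-2^{2\ord(h)+2}$ in case (1), $z=0$ in case (2), and the order-$1$ versus order-$\ge 2$ dichotomy $z\in\{0,2\}$ in case (3), which is exactly the paper's mechanism for flipping or preserving the third digit of $2(j-z)(1-z^2)$) are concrete instances of the paper's constructions. The only cosmetic difference is that you check the relevant congruences modulo $8$ directly rather than routing the verification through the class/potential bookkeeping of Theorem \ref{thm:JC-z2}.
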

\begin{proof}
	For part (1), we will prove that there is a $z$ in the third class with potential $2\ord(h)+3$. To do that, we take $z$ such that $\ord(j-z)=2\ord(h)+2$ and $j-z$ ends in $01$. This $z$ has positive order, so $1-z^2$ also ends in $01$ and the same happens with the product. We have also
	\begin{align*}
	\ord(2(j-z)(1-z^2)) & =1+2\ord(h)+2+0 \\
	& =2\ord(h)+3
	\end{align*}
	so this $z$ is in the third class and we are done.
	
	For part (2), we follow the same strategy, but now making $\ord(j-z)=\ord(j)$ and $z$ still with positive order. Then the potential is
	\[1+\ord(j)=2\ord(h)+3,\]
	as we want.
	
	For part (3), we will construct a $z$ in the first class if $\ord(j)<2\ord(h)-1$ and in the second class if $\ord(j)=2\ord(h)-1$. This $z$ will have positive order, so 
	\[\ord(2(j-z)(1-z^2))=1+\ord(j)\]
	which, in any of the two cases, is the correct potential for the selected class. In order to construct $z$, it is only left to force $2(j-z)(1-z^2)$ to end in $001$ if $z$ needs to be in the first class, and in $101$ for the second class. As $z$ has positive order, $j-z$ will end in the same three digits as $j$, that are $001$ or $101$. Now we take $z$ with order $1$ if the current ending is not the desired one (so that multiplying by $1-z^2$ inverts the third digit), and greater order otherwise (so that the ending stays as is).
\end{proof}

This settles the cases $\ord(j)\ge 1$, $\ord(j)$ even $\le -2$, and $\ord(j)$ odd $\le -3$ with \[\ord(h)\ne(\ord(j)-1)/2.\] Determining whether other cases are in the image seems more complicated.

We have only left to discuss the topological structure of the fibers. This is similar to the case $p\equiv 3\mod 4$: for fixed $j$ and $h$, there are at most three $z$ such that $b=0$, and they are in the first and fourth classes. Those in the first class are the ones interesting to us. Then, $U_{j,h}(z,b)$ forms a circle, except in the rank $0$ points where it is a single point.

\begin{lemma}\label{lemma:firstclass2}
	Let $p=2$. If $z_0\in\Q_2$ is a first class number (such that $2(z_0^2-1)(z_0-j)-4h^2$ is a square), any $z\in\Q_2$ sufficiently near $z_0$ such that $2(z^2-1)(z-j)-4h^2$ is a square is also in the first class, except if $j=\pm 1$ and $z_0=j$, in which case some numbers are in the first class and the rest in the fourth.
\end{lemma}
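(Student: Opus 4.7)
The argument will mirror the case analysis in Lemma \ref{lemma:firstclass}, organized by the position of $z_0$ relative to the three zeros $1$, $-1$, and $j$ of the polynomial $(1-z^2)(z-j)$. The new feature at $p=2$ is that membership in the first class is no longer determined by orders alone but also by the last three $2$-adic digits of $j-z$, of $1-z^2$, and of their product, so at each step one must track both orders and suitable residues modulo $8$.

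If $z_0\notin\{1,-1,j\}$, then both $1-z^2$ and $z-j$ are $2$-adic units in a small ball around $z_0$, and by continuity their $2$-adic expansions agree with those of $1-z_0^2$ and $z_0-j$ up to any prescribed precision for $z$ sufficiently close to $z_0$. Hence the four conditions defining the first class are preserved. If $z_0\in\{1,-1,j\}$ but we are not in the exceptional case $z_0=j=\pm 1$, then exactly one of the factors vanishes at $z_0$ and the other has locally constant order and locally constant leading digits. The hypothesis $(z_0,0)\in V_{j,h}$ then forces $-4h^2$ to be a square in $\Q_2$; since $-1\notin(\Q_2^*)^2$ this gives $h=0$. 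Under $h=0$ the condition $z\in V_{j,h}$ reduces to ``$j-z$ ends in $01$'' together with ``$2(j-z)(1-z^2)$ is a perfect square'', and a short computation modulo $8$ shows that these two conditions already force $1-z^2$ to end in $01$, so $z$ is first class.

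The remaining case $z_0=j=\pm 1$ is the exception. Here both $z-j$ and $1-z^2$ vanish at $z_0$, so neither has locally constant order. Writing $z=j+w$ with $\ord(w)\ge 1$, a direct computation yields $2(j-z)(1-z^2)=2w^2(w+2j)$, so the square condition depends only on $2(w+2j)$, while the endings of $j-z=-w$ and $1-z^2=-w(w+2j)$ modulo $8$ depend on the residue of $w$ modulo a suitable power of $2$. Examining these residues will show that in every neighborhood of $z_0$ there are both first-class values of $z$ (for which $-w$ and $-w(w+2j)$ each end in $01$) and fourth-class values (for which at least one ends in $11$), which is the claimed exceptional behavior.

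The main obstacle is the digit bookkeeping in the last case: since $\ord(2)=1$ for $p=2$, the explicit factor of $2$ in $2(j-z)$ shifts orders, and the two factors $w$ and $w+2j$ can have coinciding or differing $2$-adic orders depending on the exact residue of $w$, so I must enumerate enough subcases to be sure that no value of $z$ in $V_{j,h}$ near $z_0$ can fall into the second or third class.
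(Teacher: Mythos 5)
Your plan is correct and follows essentially the same route as the paper, whose proof of this lemma is simply the proof of Lemma \ref{lemma:firstclass} with ``even order'' replaced by ``ending in $01$'': a case split on the position of $z_0$ relative to $\{1,-1,j\}$, local constancy of the leading digits of the non-vanishing factors, the deduction $h=0$ from $-4h^2$ being a square, and a direct digit analysis of $2w^2(w+2j)$ in the exceptional case $z_0=j=\pm 1$. Your version is in fact more detailed than the paper's (the mod $8$ deduction that a $01$ ending of $j-z$ plus squareness of the product forces a $01$ ending of $1-z^2$ is exactly the missing bookkeeping); only the passing claim that $1-z^2$ and $z-j$ are $2$-adic \emph{units} near a non-root $z_0$ should be weakened to ``non-vanishing, hence of locally constant order and leading digits'', which is what your argument actually uses.
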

\begin{proof}
	The same as Lemma \ref{lemma:firstclass}, changing even order by $01$ ending.
\end{proof}

\begin{figure}
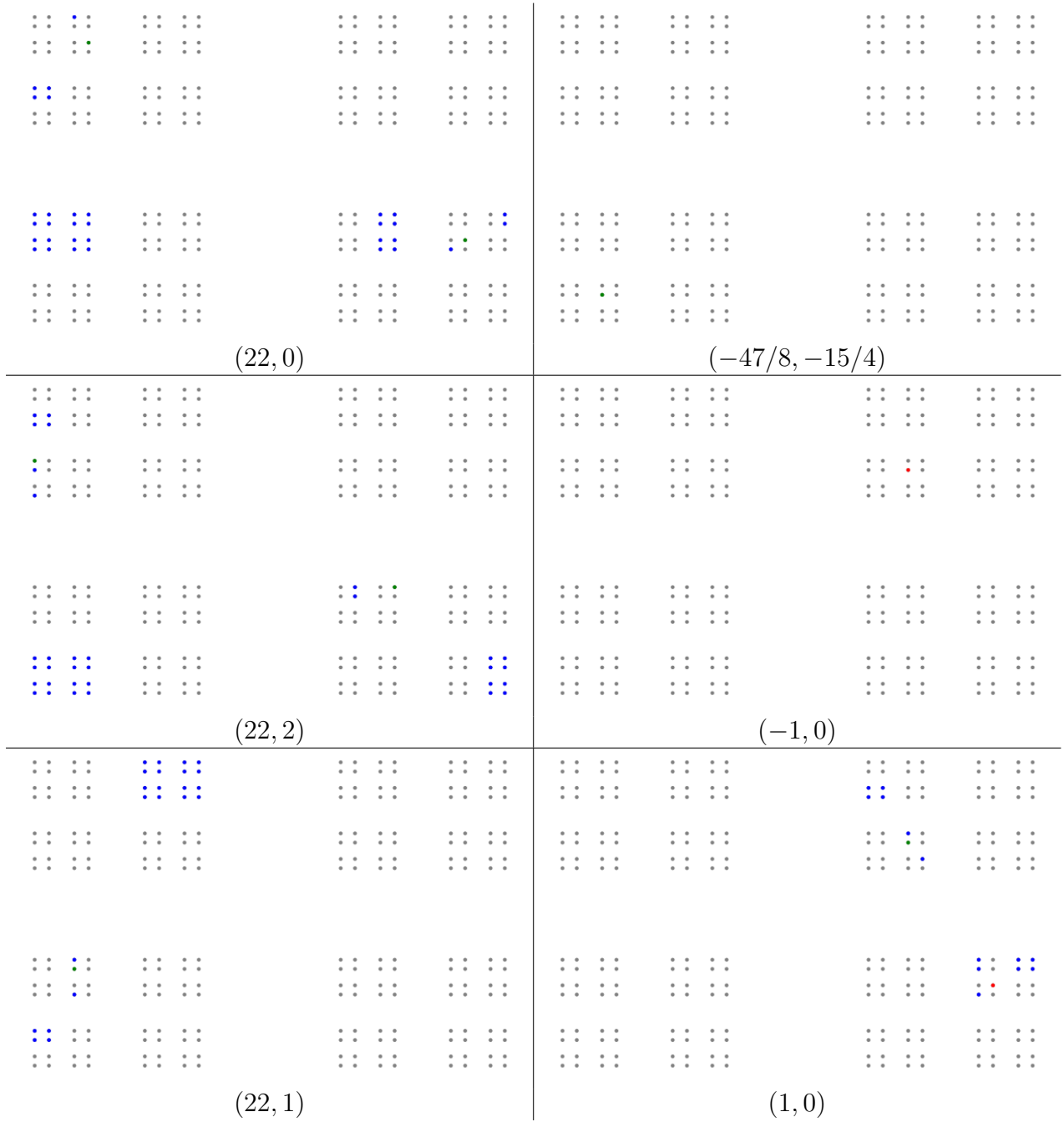

	\begin{tabular}{c|c}
		\fibra{2-r2-h0} & \fibra{2-r1} \\
		$(22,0)$ & $(-47/8,-15/4)$ \\ \hline
		\fibra{2-r2-h2} & \fibra{2-elip} \\
		$(22,2)$ & $(-1,0)$ \\ \hline
		\fibra{2-r2-h1} & \fibra{2-foco} \\
		$(22,1)$ & $(1,0)$
	\end{tabular}
	\caption{Fibers of the Jaynes-Cummings model if $p=2$. The three on the left have rank $2$, the one on the top right has rank $1$, and the other two have rank $0$. The blue points represent values of $z$ for which $x,y,u,v$ form two circles, and at the green points they form only one circle (because $b=0$ at those points). The grey points are values of $z$ that are not in the fiber. In the two figures for rank $0$, there is a red point at $z=j$, which represents a single point in the fiber (the unique point in the fiber in the elliptic case).}
	\label{fig:fibers2}
\end{figure}

For an illustration of the following result see Figure \ref{fig:fibers2}.

\begin{theorem}\label{thm:JC-topology2}
	Let $p=2$. Let $F:\mathrm{S}_2^2\times(\Q_2)^2\to(\Q_2)^2$ be the $p$-adic analytic Jaynes-Cummings model given by \eqref{eq:JC}. The following statements hold.
	\begin{enumerate}
		\item If $(j,h)=(-1,0)$, then the fiber $F^{-1}(\{(j,h)\})$ consists of a single point at $(0,0,-1,0,0)$.
		\item If $(j,h)=(1,0)$, then the fiber $F^{-1}(\{(j,h)\})$ has dimension $2$ and a singularity at $(0,0,1,0,0)$.
		\item If $(j,h)$ is a rank $1$ critical value, then the fiber $F^{-1}(\{(j,h)\})$ is the disjoint union of a $2$-dimensional $p$-adic analytic submanifold, maybe empty, and the circle formed by the critical points for that value:
		\[\Big\{(au,av,-a^2,u,v)\,\,\Big|\,\, u,v\in\Qp,a^2(u^2+v^2)+a^4=1\Big\}.\]
		\item For the rest of values of $(j,h)\in F(\mathrm{S}_p^2\times(\Qp)^2)$, the fiber $F^{-1}(\{(j,h)\})$ is a $2$-dimensional $p$-adic analytic submanifold.
	\end{enumerate}
\end{theorem}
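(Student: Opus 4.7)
The plan is to mirror the proof of Theorem \ref{thm:JC-topology}, now using the $p=2$ analogues: Lemma \ref{lemma:firstclass2} in place of Lemma \ref{lemma:firstclass}, and the sum-of-two-squares characterization from Corollary \ref{cor:image}. Setting
\[f_{j,h}(z) := 2(z^2-1)(z-j) - 4h^2,\]
membership $(z,b) \in V_{j,h}$ amounts to $b^2 = f_{j,h}(z)$ together with the first-class digit condition that $j-z$ and $1-z^2$ each end in $01$. By Corollary \ref{cor:products}, every sub-fiber $U_{j,h}(z,b)$ is either a circle or, when $(j,h) \in \{(1,0),(-1,0)\}$ and $z=j$, the single critical point $(0,0,\pm 1,0,0)$. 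At any $z_0 \in V_{j,h}$ with $b_0 \ne 0$ the sub-fiber is a circle and $V_{j,h}$ is locally one-dimensional in $z$, so the fiber is two-dimensional there; at $z_0$ with $b_0 = 0$ but $f_{j,h}'(z_0) \ne 0$ one parametrizes $V_{j,h}$ by $b$ instead. Only the doubly-critical points where $f_{j,h}(z_0) = f_{j,h}'(z_0) = 0$ remain, which by Theorem \ref{thm:JC-general} are exactly the rank-$0$ and rank-$1$ critical values, with $z_0 = -a^2$ (rank~$1$) or $z_0 = \pm 1$ (rank~$0$).

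At each doubly-critical $z_0$, I would apply Corollary \ref{cor:square} to $f_{j,h}$, whose outcome depends only on whether $f_{j,h}''(z_0)/2$ is a square in $\Q_2$. For a rank-$1$ value $z_0 = -a^2$ the computation
\[f_{j,h}''(-a^2)/2 = -(3a^4+1)/a^2\]
from the proof of Theorem \ref{thm:JC-topology} carries over, and the key claim is that this number is \emph{never} a square in $\Q_2$. Writing $a = 2^k u$ with $u \in \Z_2^*$ and using $u^4 \equiv 1 \pmod{16}$, one finds $3a^4+1 \equiv 4 \pmod{16}$ if $k=0$ and $3a^4+1 \equiv 1 \pmod 8$ if $k \ne 0$, so that $-(3a^4+1)/a^2$ has its odd part congruent to $3$, $5$ or $7 \pmod 8$ and hence lies in a non-square class of $\Q_2^*/(\Q_2^*)^2$. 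By Corollary \ref{cor:square}, the point $(-a^2,0)$ is then isolated in $V_{j,h}$ in the projection to $z$; Lemma \ref{lemma:firstclass2} ensures no $z$ close to $-a^2$ enters $V_{j,h}$, and identifying $U_{j,h}(-a^2,0)$ with the circle of rank-$1$ critical points (as in the proof of Theorem \ref{thm:JC-general}) yields the claimed disjoint union of that circle with a possibly empty two-dimensional manifold coming from the other $z \in V_{j,h}$.

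For the rank-$0$ values, $f_{j,h}''(\pm 1)/2 = \pm 4$; since $4 = 2^2$ is a square in $\Q_2$ but $-4$ is not (as $-1 \equiv 7 \pmod 8$), Corollary \ref{cor:square} gives opposite behaviors at the two points. At $(1,0)$, $f_{j,h}$ is a square in a punctured neighborhood of $z=1$, and Lemma \ref{lemma:firstclass2} produces first-class $z$'s arbitrarily close to $1$, giving fiber points arbitrarily close to $(0,0,1,0,0)$ and hence the claimed dimension-$2$ singularity. At $(-1,0)$, no $z$ close to $-1$ lies in $V_{-1,0}$. The hard part is then to show that no $z \in \Z_2$ at all, \emph{other than} $-1$, lies in $V_{-1,0}$, so that the fiber reduces to the single point and does not split off an additional manifold as it does for $p \equiv 3 \pmod 4$. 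Since $V_{-1,0}$ is cut out by the requirements that $2(z-1)(z+1)^2$ be a nonzero square and $-2(z+1)$ end in $01$, I would write $z+1 = 2^k u$, $z-1 = 2^m v$ with $u,v \in \Z_2^*$, use $(z+1)-(z-1) = 2$ to force $\min(k,m)=1$, and then exhaust the remaining cases on the parities of $k,m$ and the residues of $u,v$ modulo $8$ imposed by the two conditions, concluding each sub-case yields a contradiction.

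For every remaining $(j,h) \in F(\mathrm{S}^2_2 \times (\Q_2)^2)$, no $z_0 \in V_{j,h}$ is doubly critical, the local analysis above produces a one-dimensional $V_{j,h}$ with circle sub-fibers throughout, and patching the resulting two-dimensional local charts yields a $2$-dimensional $p$-adic analytic submanifold, completing the proof.
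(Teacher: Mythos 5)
Your proposal is correct and takes essentially the same route as the paper: the proof there is a rerun of Theorem \ref{thm:JC-topology} with exactly the two $p=2$-specific changes you identify, namely that $-(3a^4+1)$ is never a square in $\Q_2$ (you check residues mod $8$ and $16$; the paper checks digit endings --- equivalent arguments) and that $V_{-1,0}$ contains no $z\neq -1$ at all. The one step you leave as an outline, the finite case analysis for $(-1,0)$, closes exactly as you predict: $2(z-1)$ being a nonzero square forces $\ord(z-1)$ to be odd with $z-1$ ending in $001$, whence $z+1=(z-1)+2$ ends in $01$, contradicting the requirement that $-2(z+1)$ be a sum of two squares (and note the case $z$ even, where $\min(k,m)=0$ rather than $1$ in your notation, is killed immediately since $2(z-1)$ then has odd order).
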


\begin{proof}
	The same as Theorem \ref{thm:JC-topology}. The changes are:
	
	\begin{itemize}
		\item If $p=2$ the number $-3a^4-1$ is never a square. This happens because $a^4$ ends in $0001$ and has order multiple of $4$, so $-3a^4$ ends in $1101$ and also has order multiple of $4$. No number with this form, after subtracting $1$, ends in $001$ (the ending stays as $1101$ if its order is negative, changes to $11$ if it is zero, or changes to $1111$ if it is positive).
		\item No point other than $(0,0,-1,0,0)$ can have $(-1,0)$ as image. This is because, if it was so,
		\[2(-1-z)(1-z^2)=2(1+z)(z^2-1)\]
		would be a square, so $2(z-1)$ is also a square. Hence, $z-1$ ends in $001$ and has odd order. But $-1-z$ should end in $01$, so $z+1$ ends in $11$. There is no number ending in $001$ and with odd order that ends in $11$ after adding $2$.
	\end{itemize}
\end{proof}

In the case of the rank $1$ critical values, Figure \ref{fig:fibers2} seems to imply that the fiber only consists of the critical points. We have not been able to deduce this from the formula, and it may well happen that the figure is not taking into consideration enough points to pick one in the fiber.

\section{Non-degeneracy and normal forms of the critical points of $F$}\label{sec:nondeg}

In this section we verify the non-degeneracy of the critical points of the $p$-adic Jaynes-Cummings model $F:\mathrm{S}_p^2\times(\Qp)^2\to (\Qp)^2$ given by \eqref{eq:JC}, and obtain a normal form near each critical point. Because we are dealing with $(\Qp)^2$-valued maps, the calculations have to be done, even if they are analogous to the case of $\R^2$-valued maps. Also, in the $\R^2$ case the calculations are mathematical folklore among experts and we did not see them explicitly written elsewhere so that we could cite them here (the statement is given in \cite[Proposition 2.1, paragraph 3]{PelVuN}), and since the example we are presenting here is foundational for the theory of $(\Qp)^2$-valued integrable systems, we want to present the calculations explicitly.

Also, it should be noted that Williamson's full classification \cite{Williamson} is not (yet) available in the $p$-adic case, so all conclusions concerning the example have to be done by hand. (In the real case there are some simplifications as in \cite[Section 5]{Williamson}).

The following definition is analogous to the one for the real case, \cite[Definition 3.1]{PRV}. In \cite{BolFom,DulPel,LeFPel} there are criteria for deciding whether a point is non-degenerate, for rank $0$ and $1$ points.

\begin{definition}
	Let $(M,\omega)$ be a $p$-adic analytic symplectic four-manifold. Let $F=(f_1,f_2)$ be an integrable system on $(M,\omega)$ and let $m\in M$ be a critical point of $F$. If $\dd F(m)=0$, the $m$ is called \textit{non-degenerate} if the Hessians $\dd^2 f_j(m)$ span a Cartan subalgebra of the symplectic Lie algebra of quadratic forms on the tangent space $(\mathrm{T}_mM,\omega_M)$. If $\rank(\dd F(m))=1$ one can assume that $\dd f_1(m)\ne 0$. Let $\imath:S\to M$ be an embedded local $2$-dimensional symplectic submanifold through $m$ such that $\mathrm{T}_m S\subset\ker(\dd f_1(m))$ and $\mathrm{T}_m S$ is transversal to the Hamiltonian vector field $X_{f_1}$ defined by the function $f_1$. The critical point $m$ of $F$ is called \textit{(transversally) non-degenerate} if $\dd^2(\imath^*f_2)(m)$ is a non-degenerate symmetric bilinear form on $\mathrm{T}_m S$.
\end{definition}

First we treat the case of rank $1$ points.

\begin{proposition}\label{prop:nf-rank1}
	\letpprime. The critical points in the preimage under the Jaynes-Cummings model $F:\mathrm{S}_p^2\times(\Qp)^2\to(\Qp)^2$ given by \eqref{eq:JC} of the curves:
	\[\left\{\left(\frac{1-3a^4}{2a^2},\frac{1-a^4}{2a}\right)\,\,\middle|\,\, a\in\Qp\right\}\]
	are non-degenerate of rank $1$, and of ``elliptic-regular'' type: for a point $q=(au,av,-a^2,u,v)$, where $u,v\in\Qp$ with $a^2(u^2+v^2)+a^4=1$, consider the map
	\[\phi:\mathrm{T}_{(0,0,0,0)}(\Qp)^4\to \mathrm{T}_q(\mathrm{S}^2\times(\Qp)^2)\]
	given by
	\[\phi(x,\xi,y,\eta)=CD\begin{pmatrix}
		x \\
		\xi \\
		y \\
		\eta
	\end{pmatrix}\]
	where
	\[C=\begin{pmatrix}
		v & au & av & u \\
		-u & av & -au & v \\
		-av & -u & v & au \\
		au & -v & -u & av
	\end{pmatrix},
	D=\begin{pmatrix}
		1 & 0 & 0 & 0 \\
		0 & 1 & 0 & \frac{2a^6-a^4-1}{a(3a^4+1)} \\
		\frac{a(a^2-1)^2}{3a^4+1} & 0 & 1 & 0 \\
		0 & 0 & 0 & 1
	\end{pmatrix}.\]
	The map $\phi$ is a linear symplectomorphism, i.e. an automorphism such that $\phi^*\Omega=\omega_1$, where
	\[\omega_1=\frac{(1-a^4)(a^2+1)}{a^3}(\dd x\wedge\dd\xi+\dd y\wedge\dd\eta)\]
	and $\Omega$ is the Jaynes-Cummings symplectic linear form at $q$. In addition, $\phi$ satisfies the equation
	\[\tilde{F}\circ\phi=\left(\eta+\ocal(\eta^2),\alpha(a)x^2+\beta(a)\xi^2+\gamma(a)\eta^2+\ocal((x,\xi,\eta)^3)\right)\]
	where
	\[\left\{
	\begin{aligned}
	\alpha(a) & =\frac{a^2(a^2+1)^2(3a^4+1)}{2}; \\
	\beta(a) & =\frac{(3a^4+1)^2}{2}; \\
	\gamma(a) & =\frac{a^2(1-a^4)(a^2+1)^2}{2};
	\end{aligned}
	\right.\]
	and $\tilde{F}=B\circ(F-F(q))$ with
	\[B=\begin{pmatrix}
		a & 0 \\
		\frac{a^6(3a^4+1)}{1-a^4} & \frac{-2a^5(3a^4+1)}{1-a^4}
	\end{pmatrix}.\]
	In particular, if $\varphi:(\Qp)^2\to(\Qp)^2$ is given by
	\[\varphi(s,t)=\left(s,\frac{a^2(1-a^4)(a^2+1)^2s^2+(3a^4+1)t}{2}\right)\]
	then
	\[\tilde{F}\circ\phi=\varphi\Big(\eta+\ocal(\eta^2),a^2(a^2+1)^2x^2+(3a^4+1)\xi^2+\ocal((x,\xi)^3)\Big).\]
\end{proposition}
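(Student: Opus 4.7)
The plan is to verify the proposition by a direct calculation in the local chart on $\mathrm{S}^2_p \times (\Qp)^2$ near $q$ given by $(x, y, u, v) \mapsto (x, y, -\sqrt{1-x^2-y^2}, u, v)$, which is well-defined because $z(q) = -a^2 \ne 0$ and $1 - a^2(u^2+v^2) = a^4$ has the obvious square root $\pm a^2$. In this chart one has $\Omega = -z^{-1}\,\dd x \wedge \dd y + \dd u \wedge \dd v$, and hence $\Omega|_q = a^{-2}\,\dd x \wedge \dd y + \dd u \wedge \dd v$. From $\dd z|_q = (u\,\dd x + v\,\dd y)/a$ I read off $\dd J|_q = u\,\dd u + v\,\dd v + \tfrac{1}{a}(u\,\dd x + v\,\dd y)$ and $\dd H|_q = \tfrac{1}{2}\bigl[(u\,\dd x + v\,\dd y) + a(u\,\dd u + v\,\dd v)\bigr]$, which immediately yields the rank-$1$ relation $a\,\dd J|_q = 2\,\dd H|_q$, so $\dd\tilde F_2|_q = 0$ and $\dd\tilde F_1|_q \ne 0$ (the latter using $a^2(u^2+v^2) = 1-a^4 \ne 0$).

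Next I would compute the Hessian of $aJ - 2H$ at $q$ by direct differentiation, using $\partial_x z = -x/z$ and iterating. The resulting $4 \times 4$ symmetric matrix $H_0$ has $(x,y)$-block equal to $a^{-3}$ times the symmetric matrix with diagonal $(a^2+u^2,\,a^2+v^2)$ and off-diagonal $uv$, $(u,v)$-block equal to $aI$, and cross-block equal to $-I$ on the $(x,u)$- and $(y,v)$-slots and zero elsewhere. The Hessian of $\tilde F_2$ at $q$ is then $\kappa H_0$ with $\kappa = a^5(3a^4+1)/(1-a^4)$, and the Hessian of $\tilde F_1 = aJ + \mathrm{const}$ has no cross-terms between the $(x,y)$- and $(u,v)$-blocks.

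The heart of the proof is to verify three matrix identities for $\phi = CD$, taking source basis $(\partial_x, \partial_\xi, \partial_y, \partial_\eta)$ and target basis $(\partial_x, \partial_y, \partial_u, \partial_v)$. First, I would check $(CD)^T J_\Omega (CD) = J_{\omega_1}$ directly, where $J_\Omega$ and $J_{\omega_1}$ are the Gram matrices of $\Omega|_q$ and $\omega_1$; the computation uses $a^2(u^2+v^2) = 1-a^4$ at every step, and its structural meaning is that columns $1, 3$ of $C$ (built from the patterns $(v,-u,-av,au)$ and $(av,-au,v,-u)$) are symplectic-orthogonal to each other while columns $2, 4$ supply their symplectic partners up to the shear encoded by $D$. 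Second, compute the row vector $(\dd\tilde F_1|_q) \cdot CD$ and check that its first three components vanish (columns $1, 2, 3$ of $CD$ lie in $\ker(a\,\dd J|_q)$ by the very shape of $C$) and that its fourth component gives the expected $\eta$-coefficient; combined with the absence of pure $(x, \xi, y)$-quadratic terms in $\tilde F_1 \circ \phi$ (a consequence of the block structure of the Hessian of $J$), this produces $\tilde F_1 \circ \phi = \eta + \ocal(\eta^2)$. Third, compute $(CD)^T H_0 (CD)$ and verify that its upper-left $3 \times 3$ block is $\mathrm{diag}\bigl(2\alpha(a)/\kappa,\, 2\beta(a)/\kappa,\, 0\bigr)$ with $(\eta,\eta)$-entry equal to $2\gamma(a)/\kappa$; the entries $\mu = a(a^2-1)^2/(3a^4+1)$ and $\lambda = (2a^6-a^4-1)/(a(3a^4+1))$ of $D$ are precisely the values that annihilate the off-diagonal $(x,y)$- and $(\xi,\eta)$-entries which would otherwise survive.

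Non-degeneracy is then immediate: the $2$-plane $S = \{y = \eta = 0\}$ is symplectic for $\omega_1$, contained in $\ker\dd\tilde F_1|_q$, and transverse to the Hamiltonian vector field $X_{\tilde F_1}$, which is a scalar multiple of $\partial_y$ by the shape of $\omega_1$ together with $\dd\tilde F_1 = \dd\eta$ at $0$; the restriction $\tilde F_2|_S = \alpha(a) x^2 + \beta(a) \xi^2 + \ocal(3)$ has non-degenerate Hessian $\mathrm{diag}(2\alpha(a),\, 2\beta(a))$ since $a \ne 0$ and $3a^4 + 1 \ne 0$ (the latter being automatic wherever the critical value $j = (1-3a^4)/(2a^2)$ is defined). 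The ``in particular'' formula follows by algebraically applying $\varphi$ to the normal form, which reproduces $\alpha(a), \beta(a), \gamma(a)$ as the right combinations of $a^2(a^2+1)^2$, $3a^4+1$, and $a^2(1-a^4)(a^2+1)^2$. The hard part will be the sheer length of the three verification computations, every step of which collapses only after repeatedly invoking $a^2(u^2+v^2) = 1-a^4$; the choices of $C$ and $D$ themselves look unmotivated in isolation but become natural once interpreted as the symplectic decomposition of $T_q$ into the rank-$1$ direction plus its symplectic complement (for $C$), followed by a Gram--Schmidt-type shear (for $D$) that simultaneously diagonalizes the quadratic form of $\tilde F_2$ and eliminates the extraneous $\eta$-cross-terms in $\tilde F_1$.
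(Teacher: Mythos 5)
Your overall route coincides with the paper's: work in the chart $(x,y,u,v)\mapsto(x,y,-\sqrt{1-x^2-y^2},u,v)$, observe $a\,\dd J(q)=2\,\dd H(q)$ to pin down the combination $\tilde F_2$, compute the Hessian of $aJ-2H$ (your block description of $H_0$ and the factor $\kappa=a^5(3a^4+1)/(1-a^4)$ match the paper's matrix $A$ exactly), and then conjugate both this Hessian and the Gram matrix of $\Omega_q$ by $C$ followed by the shear $D$. Your explicit verification of transversal non-degeneracy on $S=\{y=\eta=0\}$ is a detail the paper leaves implicit, and is welcome.

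There is, however, one step that fails as you state it. You derive $\tilde F_1\circ\phi=\eta+\ocal(\eta^2)$ from ``the absence of pure $(x,\xi,y)$-quadratic terms in $\tilde F_1\circ\phi$, a consequence of the block structure of the Hessian of $J$.'' The block-diagonal structure of $\dd^2J=\mathrm{diag}(M,I)$ is not preserved under conjugation by $CD$: with $c_1=(v,-u,-av,au)^T$ the first column of $C$, one gets
\[\bigl(C^T(\dd^2J)C\bigr)_{11}=(v,-u)M(v,-u)^T+a^2(u^2+v^2)=(u^2+v^2)\Bigl(\tfrac{1}{a^2}+a^2\Bigr)=\tfrac{1-a^8}{a^4},\]
which is nonzero unless $a^8=1$, so $aJ\circ\phi$ genuinely contains an $x^2$ term. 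What the computation yields is only $\tilde F_1\circ\phi=\eta+\ocal(2)$ with $\ocal(2)$ quadratic in all four variables --- which is also all the paper's own proof records at that point --- and the sharper $\ocal(\eta^2)$ cannot be obtained from the linear change $\phi$ alone. This does not affect the second component, the symplectic form, or the non-degeneracy argument, but the justification must be weakened accordingly. A secondary caveat: your claim that $3a^4+1\ne 0$ is ``automatic'' is false ($-1/3$ is a fourth power in $\Q_7$, for example), and for $p\equiv 1\bmod 4$ one can also have $a^2+1=0$ or $a^4=1$ at genuine rank-one points; at such parameters $B$, $D$, $\omega_1$ or $\alpha(a)\beta(a)$ degenerate. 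The paper silently assumes these exclusions too, but your non-degeneracy step explicitly needs $a(a^2+1)(3a^4+1)\ne 0$, so it should be stated as a hypothesis rather than dismissed.
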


\begin{proof}
	The point $q$ is of rank $1$ by Theorem \ref{thm:JC-general}. First, we want $\tilde{F}_2$ to be quadratic near $q$. This means that $\dd \tilde{F}_2(q)=0$. Let $\tilde{F}_2=\lambda J+\mu H$.
	\begin{align*}
		0=\dd \tilde{F}_2(q) & =\lambda\dd J(q)+\mu\dd H(q) \\
		& =\lambda\left(u\dd u+v\dd v-\frac{x}{z}\dd x-\frac{y}{z}\dd y\right)+\mu\frac{u\dd x+v\dd y+x\dd u+y\dd v}{2} \\
		& =\lambda\left(u\dd u+v\dd v+\frac{u}{a}\dd x+\frac{v}{a}\dd y\right)+\mu\frac{u\dd x+v\dd y+au\dd u+av\dd v}{2} 
	\end{align*}
	which is true for $(\lambda,\mu)$ proportional to $(a,-2)$. We will take for the moment this combination, and the proportionality constant will be determined later.
	
	In the coordinates $(x,y,u,v)$ we have:
	\[\dd^2 J=\begin{pmatrix}
		-\frac{1}{z}-\frac{x^2}{z^3} & -\frac{xy}{z^3} & 0 & 0 \\
		-\frac{xy}{z^3} & -\frac{1}{z}-\frac{y^2}{z^3} & 0 & 0 \\
		0 & 0 & 1 & 0 \\
		0 & 0 & 0 & 1
	\end{pmatrix}
	=\begin{pmatrix}
		\frac{1}{a^2}+\frac{u^2}{a^4} & \frac{uv}{a^4} & 0 & 0 \\
		\frac{uv}{a^4} & \frac{1}{a^2}+\frac{v^2}{a^4} & 0 & 0 \\
		0 & 0 & 1 & 0 \\
		0 & 0 & 0 & 1
	\end{pmatrix},\]
	\[\dd^2 H=\frac{1}{2}\begin{pmatrix}
		0 & 0 & 1 & 0 \\
		0 & 0 & 0 & 1 \\
		1 & 0 & 0 & 0 \\
		0 & 1 & 0 & 0
	\end{pmatrix}.\]
	The combination
	\[a\dd^2 J-2\dd^2 H\]
	gives
	\[A=\begin{pmatrix}
		\frac{1}{a}+\frac{u^2}{a^3} & \frac{uv}{a^3} & -1 & 0 \\
		\frac{uv}{a^3} & \frac{1}{a}+\frac{v^2}{a^3} & 0 & -1 \\
		-1 & 0 & a & 0 \\
		0 & -1 & 0 & a
	\end{pmatrix}.\]
	
	Choosing the basis given by the columns of $C$ and changing, $A$ becomes
	\[A_1=C^TAC
	=\frac{1-a^4}{a^7}\begin{pmatrix}
		a^4(a^2+1)^2 & 0 & 0 & 0 \\
		0 & a^2(3a^4+1) & 0 & -2a^7+a^5+a \\
		0 & 0 & 0 & 0 \\
		0 & -2a^7+a^5+a & 0 & a^8-2a^6+1
	\end{pmatrix}.\]
	
	Let $(x_1,\xi_1,y_1,\eta_1)$ be the coordinates after this change. Note that $y_1$ does not appear in $A_1$, which is expected because
	\[\frac{\partial}{\partial y_1}=av\frac{\partial}{\partial x}-au\frac{\partial}{\partial y}+v\frac{\partial}{\partial u}-u\frac{\partial}{\partial v}=X_J\]
	and both Hessians of $J$ and $H$ should vanish at $X_J$, which is proportional to $X_H$.
	
	Now we have that
	\[\dd J(q)=\frac{u}{a}\dd x+\frac{v}{a}\dd y+u\dd u+v\dd v=\frac{1}{a}\dd\eta_1(q),\]
	so the linear term in $aJ$ is $\eta_1$, and
	\[aJ-2H=\frac{1}{2}\begin{pmatrix}
		x_1 & \xi_1 & y_1 & \eta_1
	\end{pmatrix}
	A_1
	\begin{pmatrix}
		x_1 \\ \xi_1 \\ y_1 \\ \eta_1
	\end{pmatrix}+\ocal(3)\]
	\[=\frac{1-a^4}{2a^7}[a^4(a^2+1)^2x_1^2+a^2(3a^4+1)\xi_1^2+(-4a^7+2a^5+a)\xi_1\eta_1+(a^8-2a^6+1)\eta_1^2]+\ocal(3)\]
	and
	\begin{align*}
		\Omega & =\frac{1}{a^2}\dd x\wedge\dd y+\dd u\wedge\dd v \\
		& =\frac{(1-a^4)(a^2+1)}{a^3}\left[\dd x_1\wedge\dd\xi_1+\dd y_1\wedge\dd\eta_1+\left(\frac{1}{a}-a\right)\dd x_1\wedge\dd\eta_1\right]
	\end{align*}
	
	We want to remove the terms in $\xi_1\eta_1$ and $\dd x_1\wedge\dd\eta_1$ in these expressions. In order to do this, we first change the coordinates $\xi$ and $\eta$:
	\[(\xi,\eta)=\left(\xi_1+\frac{-2a^6+a^4+1}{a(3a^4+1)}\eta_1,\eta_1\right)\]
	The result is that
	\[(aJ,aJ-2H)=\]
	\[\left(\eta+\ocal(2),\frac{1-a^4}{2a^5(3a^4+1)}[a^2(a^2+1)^2(3a^4+1)x_1^2+(3a^4+1)^2\xi^2+a^2(1-a^4)(a^2+1)^2\eta^2]+\ocal(3)\right)\]
	which is almost the expression we want. Now we change $x$ and $y$:
	\[(x,y)=\left(x_1,y_1-\frac{a(a^2-1)^2}{3a^4+1}x_1\right)\]
	This does not affect $(aJ,aJ-2H)$, because they do not use $y$, so taking the matrix $B$ as above, we have the desired result for $\tilde{F}$.
	
	Making the changes in $\Omega$,
	\begin{align*}
		\Omega & =\frac{(1-a^4)(a^2+1)}{a^3}\left[\dd x_1\wedge\dd\xi_1+\dd y_1\wedge\dd\eta_1+\left(\frac{1}{a}-a\right)\dd x_1\wedge\dd\eta_1\right] \\
		& =\frac{(1-a^4)(a^2+1)}{a^3}\left[\dd x_1\wedge\dd\xi_1+\dd y_1\wedge\dd\eta_1+\left(\frac{-2a^6+a^4+1}{a(3a^4+1)}-\frac{a(a^2-1)^2}{3a^4+1}\right)\dd x_1\wedge\dd\eta_1\right] \\
		& =\frac{(1-a^4)(a^2+1)}{a^3}\left[\dd x_1\wedge\left(\dd\xi_1+\frac{-2a^6+a^4+1}{a(3a^4+1)}\dd\eta_1\right)+\left(\dd y_1-\frac{a(a^2-1)^2}{3a^4+1}\dd x_1\right)\wedge\dd\eta_1\right] \\
		&=\frac{(1-a^4)(a^2+1)}{a^3}(\dd x\wedge\dd\xi+\dd y\wedge\dd\eta)
	\end{align*}
	and we are done.
\end{proof}

The following result is a direct consequence of the proof of Proposition \ref{prop:nf-rank1}, which can be simplified in the case of the real Jaynes-Cummings model.

\begin{corollary}\label{cor:JC-simplified}
	In the case of the real Jaynes-Cummings model $F:\mathrm{S}^2\times\R^2\to\R^2$ given in Proposition \ref{prop:JC-real}, we have that:
	\begin{equation}\label{eq:JC-simplified}
		\tilde{F}\circ\phi=\left(\eta+\ocal(\eta^2),\frac{x^2+\xi^2+\eta^2}{2}+\ocal((x,\xi)^3)\right)
	\end{equation}
	where $\tilde{F}=B\circ(F-F(q))$ with
	\[B=\begin{pmatrix}
		\frac{(1-a^4)(a^2+1)}{(3a^4+1)^{3/4}} & 0 \\
		\frac{a^2}{\sqrt{3a^4+1}} & \frac{-2a}{\sqrt{3a^4+1}}
	\end{pmatrix}\]
	and $(x,\xi,y,\eta)$ are local coordinates around $q$ such that \[\omega=\dd x\wedge\dd\xi+\dd y\wedge\dd\eta.\] In particular, if
	\[\varphi(s,t)=\left(s,\frac{s^2+t}{2}\right)\]
	then
	\[\tilde{F}\circ\phi=\varphi\Big(\eta+\ocal(\eta^2),x^2+\xi^2+\ocal(x,\xi)^3\Big).\]
\end{corollary}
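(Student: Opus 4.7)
The plan is to derive the corollary by applying a further linear symplectic rescaling to the normal form produced by Proposition \ref{prop:nf-rank1}, taking advantage of the fact that over $\R$ every positive real number admits a square root. On the parameter range of real rank $1$ critical points, the equation $a^2(u^2+v^2)+a^4=1$ with $(u,v)\ne(0,0)$ forces $a\in(-1,0)\cup(0,1)$; on this range the coefficients $\alpha(a),\beta(a),\gamma(a)$ from Proposition \ref{prop:nf-rank1} and the constant $K:=(1-a^4)(a^2+1)/a^3$ are all strictly positive, so every square root we need exists in $\R$.

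Starting from the conclusion of Proposition \ref{prop:nf-rank1}, I would introduce rescaled coordinates $x=\tilde\lambda x'$, $\xi=\tilde\mu\xi'$, $y=\tilde\rho y'$, $\eta=\tilde\kappa\eta'$, and fix the scalars by the two conditions $K\tilde\lambda\tilde\mu=K\tilde\rho\tilde\kappa=1$ (so that the symplectic form becomes the standard $\dd x'\wedge\dd\xi'+\dd y'\wedge\dd\eta'$) together with $\alpha\tilde\lambda^2=\beta\tilde\mu^2=\gamma\tilde\kappa^2$ (so that the three quadratic coefficients in the second component become equal). This is uniquely solvable in $\R$: one finds $\tilde\lambda^2=K^{-1}\sqrt{\beta/\alpha}$, $\tilde\mu^2=K^{-1}\sqrt{\alpha/\beta}$, $\tilde\kappa^2=\sqrt{\alpha\beta}/(K\gamma)$, and the common quadratic coefficient becomes $\sqrt{\alpha\beta}/K$. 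Composing the $\phi$ of Proposition \ref{prop:nf-rank1} with this rescaling then gives a new linear symplectomorphism in which
\[\tilde F\circ\phi=\Bigl(\tilde\kappa\eta'+\ocal(\eta'^2),\,\frac{\sqrt{\alpha\beta}}{K}\bigl(x'^2+\xi'^2+\eta'^2\bigr)+\ocal((x',\xi',\eta')^3)\Bigr).\]

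The final step is to absorb the two remaining scalars into the matrix $B$: multiply its first row by $1/\tilde\kappa$ and its second row by $K/(2\sqrt{\alpha\beta})$. Substituting the explicit expressions for $\alpha,\beta,\gamma,K$ is then a routine arithmetic verification producing exactly the matrix
\[B=\begin{pmatrix} \dfrac{(1-a^4)(a^2+1)}{(3a^4+1)^{3/4}} & 0 \\ \dfrac{a^2}{\sqrt{3a^4+1}} & \dfrac{-2a}{\sqrt{3a^4+1}} \end{pmatrix}\]
claimed in the statement; the appearance of $(3a^4+1)^{3/4}$ and $\sqrt{3a^4+1}$ simply traces back to $\tilde\kappa$ and $\sqrt{\alpha\beta}$. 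The alternate form using $\varphi(s,t)=(s,(s^2+t)/2)$ follows by direct expansion: $\varphi(\eta+\ocal(\eta^2),x^2+\xi^2+\ocal((x,\xi)^3))$ equals $(\eta+\ocal(\eta^2),(x^2+\xi^2+\eta^2)/2+\ocal)$, matching \eqref{eq:JC-simplified}.

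The only real obstacle is conceptual rather than computational: in $\Qp$ the analogous reduction would require square and fourth roots of quantities such as $\beta/\alpha$ and $\sqrt{\alpha\beta}/(K\gamma)$, which generically do not exist in $\Qp$. This is precisely why the simplified normal form is available only in the real case, whereas the unsimplified Proposition \ref{prop:nf-rank1} holds for every prime $p$.
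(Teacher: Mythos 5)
Your proposal is correct and takes essentially the same route as the paper: the paper's proof also starts from Proposition \ref{prop:nf-rank1} and applies a diagonal rescaling of $(x,\xi,y,\eta)$ — presented there as five sequential elementary steps rather than by solving your four conditions simultaneously — using real square and fourth roots of the normal-form constants, and then absorbs the leftover scalars into $B$, arriving at the same matrix. One shared caveat (present in the paper's proof as well, which asserts all $C_i>0$): the constant $K=(1-a^4)(a^2+1)/a^3$ is negative when $a<0$, so strictly one should only require $\tilde\lambda\tilde\mu=1/K$ (a product, which may be negative) rather than positivity of $K$ itself; this does not affect the argument since the symplectic condition constrains only the products $\tilde\lambda\tilde\mu$ and $\tilde\rho\tilde\kappa$.
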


\begin{proof}
	If we apply the proof of Proposition \ref{prop:nf-rank1} word by word, we end up with the expressions
	\[\tilde{F}\circ\phi=\left(\eta+\ocal(\eta^2),\frac{C_1x^2+C_2\xi^2+C_3\eta^2}{2}+\ocal((x,\xi)^3)\right)\]
	and
	\[\omega=C_4(\dd x\wedge\dd\xi+\dd y\wedge\dd\eta),\]
	where the constants $C_i$ depend on $a$. At the critical points $1-a^4$ must be sum of two squares, which in the real case means $-1<a<1$. (The endpoints of this interval correspond to a rank $0$ point.) This implies that $C_i>0$ for all $i$.
	Now we can make further simplifications:
	\begin{enumerate}
		\item Multiply the coordinates $x$ and $\xi$ by $\sqrt{C_4}$, so that
		\[\tilde{F}\circ\phi=\left(\eta+\ocal(\eta^2),\frac{C_1x^2+C_2\xi^2}{2C_4}+\frac{C_3\eta^2}{2}+\ocal((x,\xi)^3)\right)\]
		and
		\[\omega=\dd x\wedge\dd\xi+C_4\dd y\wedge\dd \eta.\]
		\item Multiply $x$ by $\sqrt[4]{C_1/C_2}$ and divide $\xi$ by the same amount. This does not alter $\omega$ and makes
		\[\tilde{F}\circ\phi=\left(\eta+\ocal(\eta^2),\frac{\sqrt{C_1C_2}(x^2+\xi^2)}{2C_4}+\frac{C_3\eta^2}{2}+\ocal((x,\xi)^3)\right).\]
		\item Multiply $\eta$ by $\sqrt{C_3C_4}/\sqrt[4]{C_1C_2}$ (in what follows we call this factor $C_5$). After this step,
		\[\tilde{F}\circ\phi=\left(\frac{\eta}{C_5}+\ocal(\eta^2),\frac{\sqrt{C_1C_2}(x^2+\xi^2+\eta^2)}{2C_4}+\ocal((x,\xi)^3)\right)\]
		and
		\[\omega=\dd x\wedge\dd\xi+\frac{C_4}{C_5}\dd y\wedge\dd \eta.\]
		\item Multiply $y$ by $C_4/C_5$, so that $\omega$ takes its final form and $\tilde{F}$ is not altered.
		\item Multiply the first row of $B$ by $C_5$ and the second by $C_4/\sqrt{C_1C_2}$. The simplification is complete.
	\end{enumerate}
\end{proof}

\begin{remark}
	In the $p$-adic case the simplifications in the proof of Corollary \ref{cor:JC-simplified} are in general not possible: we need to take roots of the constants $C_i$, which is only possible for some values of $a$ and $p$.
\end{remark}

\begin{remark}
	We do not know how/if some form of Eliasson and Vey's Theorem \cite{Eliasson,Eliasson-thesis,Russmann,Vey,VuNWac} holds in the $p$-adic case (the analytic case of this theorem is due to R\"u\ss mann \cite{Russmann} for two degrees of freedom and Vey \cite{Vey} in arbitrary dimension). In the real case Eliasson's Theorem (assuming that there are no hyperbolic components) says that there is a local diffeomorphism $\varphi$ and symplectic coordinates $\phi^{-1}=(x,\xi,y,\eta)$ such that $F\circ\phi=\varphi(q_1,q_2)$, where $q_i$ is one of the elliptic, real or focus-focus models. In the real-elliptic case, if we derive this expression twice we obtain the linear statement we have proved, in the simplified form (\ref{eq:JC-simplified}). The term $\eta^2$ is not in the elliptic model $q_2=(x^2+\xi^2)/2$, but it appears when deriving (it comes from a second derivative of $\varphi$).
\end{remark}

Now the rank $0$ points. We know that there are two: $(0,0,1,0,0)$ (whose image is $(1,0)$) and $(0,0,-1,0,0)$ (whose image is $(-1,0)$). Both have a singularity, however, the two singularities have different types.

\begin{proposition}\label{prop:nf-elliptic}
	\letpprime. Let $F:\mathrm{S}_p^2\times(\Qp)^2\to(\Qp)^2$ be the $p$-adic Jaynes-Cummings model given by \eqref{eq:JC}. The point $q=(0,0,-1,0,0)\in\mathrm{S}_p^2\times(\Qp)^2$ is non-degenerate of rank $0$, and of ``elliptic-elliptic'' type:
	consider the map
	\[\phi:\mathrm{T}_{(0,0,0,0)}(\Qp)^4\to \mathrm{T}_q(\mathrm{S}_p^2\times(\Qp)^2)\]
	given by
	\[\phi(x,\xi,y,\eta)=\frac{1}{2}(x+y,\xi+\eta,x-y,\xi-\eta)\]
	The map $\phi$ is a linear symplectomorphism, i.e. an automorphism such that $\phi^*\Omega=\omega_1$, where
	\[\omega_1=\frac{1}{2}(\dd x\wedge\dd\xi+\dd y\wedge\dd\eta)\]
	and $\Omega$ is the Jaynes-Cummings symplectic linear form at $q$. In addition, $\phi$ satisfies the equation
	\[\tilde{F}\circ\phi=\frac{1}{2}(x^2+\xi^2,y^2+\eta^2)+\ocal((x,\xi,y,\eta)^3)\]
	where $\tilde{F}=B\circ(F-F(q))$ with
	\[B=\begin{pmatrix}
		1 & 2 \\
		1 & -2
	\end{pmatrix}.\]
\end{proposition}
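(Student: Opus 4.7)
The plan is direct computation. Since $F(q)=(-1,0)$, we have $\tilde F_1=(J+1)+2H$ and $\tilde F_2=(J+1)-2H$, so the first task is to compute these in a convenient chart near $q$.

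First I would introduce local coordinates $(x_o,y_o,u_o,v_o)$ using the analytic sphere chart $(x_o,y_o)\mapsto(x_o,y_o,-\sqrt{1-x_o^2-y_o^2})$, which is valid near the south pole because $1$ is a square and the usual power series for $-\sqrt{1-t}$ converges for small $t$. A Taylor expansion gives $z=-1+\tfrac12(x_o^2+y_o^2)+\ocal(4)$, hence $J+1=\tfrac12(x_o^2+y_o^2+u_o^2+v_o^2)+\ocal(4)$, while $H=\tfrac12(u_ox_o+v_oy_o)$ is already quadratic. Completing the square yields $\tilde F_1=\tfrac12((x_o+u_o)^2+(y_o+v_o)^2)+\ocal(3)$ and $\tilde F_2=\tfrac12((x_o-u_o)^2+(y_o-v_o)^2)+\ocal(3)$.

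Next I verify the two claimed identities for $\phi$. The map is engineered so that $x_o+u_o=x$, $y_o+v_o=\xi$, $x_o-u_o=y$, $y_o-v_o=\eta$; substitution gives $\tilde F\circ\phi=\tfrac12(x^2+\xi^2,y^2+\eta^2)+\ocal(3)$ essentially for free. For the symplectic form, note that $\Omega=-z^{-1}\,dx_o\wedge dy_o+du_o\wedge dv_o$ specialises to $dx_o\wedge dy_o+du_o\wedge dv_o$ at $q$ since $z(q)=-1$; a short wedge-product computation then shows that the cross terms $dx\wedge d\eta$ and $dy\wedge d\xi$ cancel between the two pieces, while the terms $dx\wedge d\xi$ and $dy\wedge d\eta$ add, producing $\phi^*\Omega=\tfrac12(dx\wedge d\xi+dy\wedge d\eta)=\omega_1$.

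Finally, to conclude non-degeneracy of rank $0$ of elliptic-elliptic type, I must verify (since $\dd F(q)=0$) that the Hessians of $\tilde F_1$ and $\tilde F_2$ at $q$ span a Cartan subalgebra of the symplectic Lie algebra of quadratic forms on $(\mathrm{T}_qM,\Omega_q)$. After pulling back via $\phi$, these Hessians correspond to the quadratic forms $Q_1=x^2+\xi^2$ and $Q_2=y^2+\eta^2$, which Poisson-commute because they live on symplectically decoupled planes; their common centraliser in $\mathrm{sp}(4,\Qp)$ consists of $\Qp$-linear combinations of $Q_1$ and $Q_2$ (any commuting quadratic form must be invariant under the flows of both oscillators and hence block-diagonal), so the span is a two-dimensional Cartan subalgebra. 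I anticipate this last step to be the only non-routine point; but it reduces to the same model calculation already carried out for the $p$-adic oscillator in Section \ref{sec:circ-oscillator}, so no new Williamson-type classification is needed.
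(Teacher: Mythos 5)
Your derivation of the normal form is essentially the paper's, just phrased at the level of functions rather than matrices: expanding $z=-\sqrt{1-x_o^2-y_o^2}=-1+\tfrac12(x_o^2+y_o^2)+\ocal(4)$ and completing the square in $(J+1)\pm 2H$ is exactly the congruence $\dd^2(J\pm 2H)=C^TA_iC$ that the paper exhibits with an explicit matrix $C$, and your pullback check $\phi^*\Omega=\tfrac12(\dd x\wedge\dd\xi+\dd y\wedge\dd\eta)$ matches the paper's last line. Where you genuinely diverge is the non-degeneracy step. The paper takes a generic pencil element $\lambda\,\dd^2J+\mu\,\dd^2H$, multiplies by $\omega_q^{-1}$, and computes the characteristic polynomial $[t^2+(\lambda+\mu)^2][t^2+(\lambda-\mu)^2]$, which for generic $(\lambda,\mu)$ has four distinct roots in a quadratic extension of $\Qp$; the pencil therefore contains a regular semisimple element, so its two-dimensional abelian span is a Cartan subalgebra. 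You instead compute the common centralizer of $Q_1=x^2+\xi^2$ and $Q_2=y^2+\eta^2$. That centralizer is indeed their span (the cross-term quadratics carry nonzero weights under the two commuting infinitesimal rotations, so only one invariant quadratic survives per block), but two points need shoring up. First, phrase the invariance argument via the Poisson bracket (adjoint action) rather than "the flows": over $\Qp$ the exponential of these vector fields converges only on a small subgroup, and the Lie-algebra version is what you actually use. Second, and more importantly, "abelian and self-centralizing" is not by itself the definition of a Cartan subalgebra --- the line through a nilpotent element of $\mathfrak{sl}_2$ is abelian and self-centralizing without being Cartan. You must additionally observe that $\omega^{-1}\dd^2Q_1$ and $\omega^{-1}\dd^2Q_2$ are semisimple (eigenvalues $0,0,\pm c\sqrt{-1}$ over $\overline{\Qp}$), or, more economically, simply exhibit one regular semisimple element of the pencil --- which is precisely what the paper's characteristic-polynomial computation accomplishes in a single step. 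With that addition your argument is complete and correct.
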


\begin{proof}
	The point has rank $0$ because $\dd J=\dd H=0$. The Hessians are
	\[\dd^2 J=\begin{pmatrix}
		1 & 0 & 0 & 0 \\
		0 & 1 & 0 & 0 \\
		0 & 0 & 1 & 0 \\
		0 & 0 & 0 & 1
	\end{pmatrix},
	\dd^2 H=\frac{1}{2}\begin{pmatrix}
		0 & 0 & 1 & 0 \\
		0 & 0 & 0 & 1 \\
		1 & 0 & 0 & 0 \\
		0 & 1 & 0 & 0
	\end{pmatrix}\]
	To see that the point is non-degenerate, we take a linear combination of these matrices and multiply it by $\omega_q^{-1}$:
	\[\omega_q^{-1}(\lambda\dd^2 J+\mu\dd^2 H)=\begin{pmatrix}
		0 & 1 & 0 & 0 \\
		-1 & 0 & 0 & 0 \\
		0 & 0 & 0 & 1 \\
		0 & 0 & -1 & 0
	\end{pmatrix}^{-1}
	\begin{pmatrix}
		\lambda & 0 & \mu & 0 \\
		0 & \lambda & 0 & \mu \\
		\mu & 0 & \lambda & 0 \\
		0 & \mu & 0 & \lambda
	\end{pmatrix}=
	\begin{pmatrix}
		0 & -\lambda & 0 & -\mu \\
		\lambda & 0 & \mu & 0 \\
		0 & -\mu & 0 & -\lambda \\
		\mu & 0 & \lambda & 0
	\end{pmatrix}\]
	The characteristic polynomial of this matrix is
	\begin{align*}
		t^4+2\lambda^2t^2+2\mu^2t^2+\lambda^4+\mu^4-2\lambda^2\mu^2 & =(t^2+\lambda^2+\mu^2)^2-4\lambda^2\mu^2 \\
		& =(t^2+\lambda^2+\mu^2+2\lambda\mu)(t^2+\lambda^2+\mu^2-2\lambda\mu) \\
		& =[t^2+(\lambda+\mu)^2][t^2+(\lambda-\mu)^2]
	\end{align*}
	that has, in general, four different roots (in $\Qp$ if $p\equiv 1\mod 4$, or in a degree $2$ extension otherwise). Hence, the point is non-degenerate.
	
	To show the local expression for $F$, we have that
	\[\dd^2(J+2H)=\begin{pmatrix}
		1 & 0 & 1 & 0 \\
		0 & 1 & 0 & 1 \\
		1 & 0 & 1 & 0 \\
		0 & 1 & 0 & 1 
	\end{pmatrix}=C^TA_1C\]
	and
	\[\dd^2(J-2H)=\begin{pmatrix}
		1 & 0 & -1 & 0 \\
		0 & 1 & 0 & -1 \\
		-1 & 0 & 1 & 0 \\
		0 & -1 & 0 & 1
	\end{pmatrix}=C^TA_2C,\]
	where
	\[A_1=\begin{pmatrix}
		1 & 0 & 0 & 0 \\
		0 & 1 & 0 & 0 \\
		0 & 0 & 0 & 0 \\
		0 & 0 & 0 & 0
	\end{pmatrix},
	A_2=\begin{pmatrix}
		0 & 0 & 0 & 0 \\
		0 & 0 & 0 & 0 \\
		0 & 0 & 1 & 0 \\
		0 & 0 & 0 & 1
	\end{pmatrix},
	C=\begin{pmatrix}
		1 & 0 & 1 & 0 \\
		0 & 1 & 0 & 1 \\
		1 & 0 & -1 & 0 \\
		0 & 1 & 0 & -1
	\end{pmatrix}.\]
	In the basis formed by the columns of $C^{-1}$, the Hessians of $J+2H$ and $J-2H$ have the forms of the matrices $A_1$ and $A_2$, which gives the formula we want. We also have that
	\[\omega_q=C^T\frac{1}{2}\begin{pmatrix}
		0 & 1 & 0 & 0 \\
		-1 & 0 & 0 & 0 \\
		0 & 0 & 0 & 1 \\
		0 & 0 & -1 & 0
	\end{pmatrix}C\]
	hence $\omega_q$ in the new coordinates is $(\dd x\wedge\dd\xi+\dd y\wedge\dd\eta)/2$.
\end{proof}

\begin{remark}\label{rem:elliptic}
	Identifying a Hessian with its quadratic form, the result in Proposition \ref{prop:nf-elliptic} can be written
	\[\dd^2\tilde{F}\circ\phi=\frac{1}{2}(x^2+\xi^2,y^2+\eta^2).\]
	
	In the real case the $1/2$ can be eliminated from the expression of $\omega$ for the rank $0$ points in a similar way to Corollary \ref{cor:JC-simplified}: first divide all coordinates by $\sqrt{2}$ and then adjust $B$ to recover the form of $\tilde{F}$. In the $p$-adic case, this can be done only if $2$ is a square modulo $p$ (that is, if $p\equiv \pm 1\mod 8$).
\end{remark}

\begin{proposition}\label{prop:nf-focus}
	\letpprime. Let $F:\mathrm{S}_p^2\times(\Qp)^2\to(\Qp)^2$ be the $p$-adic Jaynes-Cummings model given by \eqref{eq:JC}. The point $q=(0,0,1,0,0)\in\mathrm{S}_p^2\times(\Qp)^2$ is non-degenerate of rank $0$, and of ``focus-focus'' type:
	consider the map
	\[\phi:\mathrm{T}_{(0,0,0,0)}(\Qp)^4\to \mathrm{T}_q(\mathrm{S}^2\times(\Qp)^2)\]
	given by
	\[\phi(x,\xi,y,\eta)=\frac{1}{2}(\eta-x,y+\xi,y-\xi,\eta+x)\]
	The map $\phi$ is a linear symplectomorphism, i.e. an automorphism such that $\phi^*\Omega=\omega_1$, where
	\[\omega=\frac{1}{2}(\dd x\wedge\dd\xi+\dd y\wedge\dd\eta)\]
	and $\Omega$ is the Jaynes-Cummings symplectic linear form at $q$. In addition, $\phi$ satisfies the equation
	\[\tilde{F}\circ\phi=(x\eta-y\xi,x\xi+y\eta)+\ocal((x,\xi,y,\eta)^3)\]
	where $\tilde{F}=B\circ(F-F(q))$ with
	\[B=\begin{pmatrix}
		2 & 0 \\
		0 & 4
	\end{pmatrix}.\]
\end{proposition}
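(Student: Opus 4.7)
The plan is to follow the same template as the proof of Proposition \ref{prop:nf-elliptic} for the south pole, keeping careful track of two sign changes that arise because now $z=+1$ at $q$ instead of $z=-1$: namely $\omega_q = -\dd x\wedge\dd y+\dd u\wedge\dd v$ (since $\omega=-\frac{1}{z}\dd x\wedge\dd y+\dd u\wedge\dd v$ and $-1/z|_{z=1}=-1$) and the $(x,y)$-block of $\dd^2 J(q)$ flips sign. First I would check that $q$ is a rank-$0$ critical point: the formulas $\dd J = u\,\dd u+v\,\dd v-\frac{x}{z}\dd x-\frac{y}{z}\dd y$ and $\dd H = \tfrac{1}{2}(u\,\dd x+x\,\dd u+v\,\dd y+y\,\dd v)$ each vanish at $q$ because every term contains one of $x,y,u,v$.

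Next I would compute the Hessians in the local coordinates $(x,y,u,v)$. Using $z=\sqrt{1-x^2-y^2}$ one obtains
\[
\dd^2 J(q)=\begin{pmatrix}-1 & 0 & 0 & 0\\ 0 & -1 & 0 & 0\\ 0 & 0 & 1 & 0\\ 0 & 0 & 0 & 1\end{pmatrix},\qquad \dd^2 H(q)=\frac{1}{2}\begin{pmatrix}0 & 0 & 1 & 0\\ 0 & 0 & 0 & 1\\ 1 & 0 & 0 & 0\\ 0 & 1 & 0 & 0\end{pmatrix}.
\]
To check non-degeneracy, compute the characteristic polynomial of $\omega_q^{-1}(\lambda\,\dd^2 J+\mu\,\dd^2 H)$ exactly as in the elliptic case. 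The two sign flips (in $\omega_q$ and in the $(x,y)$-block of $\dd^2 J$) turn the polynomial into $[t^2-(\lambda+\mu)^2][t^2-(\lambda-\mu)^2]$, with four distinct real (as opposed to imaginary) eigenvalues $\pm(\lambda\pm\mu)$ for generic $\lambda,\mu$. This both proves non-degeneracy and exhibits the focus-focus spectral pattern (a quadruple of the form $\{\pm a\pm b\}$), distinguishing this point from the elliptic-elliptic one.

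The remaining step is to verify the claimed normal form directly by substituting $\phi$. Writing $(X,Y,U,V)=\phi(x,\xi,y,\eta)=\tfrac{1}{2}(\eta-x,\,y+\xi,\,y-\xi,\,\eta+x)$ one computes
\[
\phi^\ast(-\dd x\wedge\dd y+\dd u\wedge\dd v)=\tfrac{1}{2}(\dd x\wedge\dd\xi+\dd y\wedge\dd\eta),
\]
after the cross terms involving $\dd x\wedge\dd y$ and $\dd\xi\wedge\dd\eta$ cancel in pairs. For the functions, Taylor expansion at the north pole gives $2z-2=-(x^2+y^2)+\ocal((x,y)^4)$, so
\[
\tilde F_1=2(J-1)=(u^2+v^2)-(x^2+y^2)+\ocal(3),
\]
and substituting yields $U^2+V^2-X^2-Y^2=x\eta-y\xi$. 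Similarly $\tilde F_2=4H=2(ux+vy)$ becomes $2(UX+VY)=x\xi+y\eta$, giving precisely the stated local expression.

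The main obstacle is purely bookkeeping: making sure the sign of $\omega_q$ and the sign of the $(x,y)$-block of $\dd^2 J$ are both flipped (since both come from $-1/z$ evaluated at $z=1$ versus $z=-1$), and then verifying that the particular linear map $\phi$ in the statement simultaneously achieves the desired symplectic normalization and the desired quadratic forms. Once the signs are tracked carefully, every remaining step is straightforward linear algebra parallel to the elliptic-elliptic case, with no arithmetic obstruction depending on $p$.
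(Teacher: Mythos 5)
Your overall strategy is the paper's: redo the elliptic-elliptic computation with the two sign changes coming from $-1/z$ evaluated at $z=1$, check non-degeneracy via the eigenvalues of $\omega_q^{-1}(\lambda\,\dd^2J+\mu\,\dd^2H)$, and verify the normal form by direct substitution of $\phi$. The Hessians, the identity $\omega_q=-\dd x\wedge\dd y+\dd u\wedge\dd v$, the pullback computation $\phi^*\omega_q=\tfrac12(\dd x\wedge\dd\xi+\dd y\wedge\dd\eta)$, and the substitutions producing $x\eta-y\xi$ and $x\xi+y\eta$ are all correct.

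The gap is in the eigenvalue step. You assert that the two sign flips ``turn the polynomial into'' $[t^2-(\lambda+\mu)^2][t^2-(\lambda-\mu)^2]$, with four eigenvalues $\pm(\lambda\pm\mu)$ in the ground field. That is not what the computation gives. Carrying it out, $\omega_q^{-1}(\lambda\,\dd^2J+\mu\,\dd^2H)$ equals
\[\begin{pmatrix} 0&-\lambda&0&\mu\\ \lambda&0&-\mu&0\\ 0&-\mu&0&-\lambda\\ \mu&0&\lambda&0\end{pmatrix}\]
(after absorbing the factor $1/2$ of $\dd^2H$ into $\mu$), and its characteristic polynomial is
\[t^4+2\lambda^2t^2-2\mu^2t^2+\lambda^4+\mu^4+2\lambda^2\mu^2=[(t+\mu)^2+\lambda^2][(t-\mu)^2+\lambda^2],\]
with roots $\pm\mu\pm\lambda\sqrt{-1}$, which lie in $\Qp$ only when $-1$ is a square. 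The two sign flips do not act by a simple substitution on the elliptic-case polynomial, because one enters through $\omega_q^{-1}$ and the other through $\dd^2J$ and they do not combine into a similarity. Moreover you describe the quadruple $\{\pm a\pm b\}$ of ground-field eigenvalues as ``the focus-focus spectral pattern''; that is the hyperbolic-hyperbolic pattern. Focus-focus corresponds to a quadruple $\pm\alpha\pm\beta\sqrt{-1}$ with $\alpha\beta\ne0$, which is exactly what the correct polynomial produces. Generic distinctness of the four roots, hence non-degeneracy, survives either way, but the type identification as you argue it would be wrong; it is rescued only by your (correct) explicit normal form $(x\eta-y\xi,\,x\xi+y\eta)$, not by the spectral argument as written.
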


\begin{proof}
	Again, the rank is $0$ because at this point $\dd J=\dd H=0$. Now the Hessians are
	\[\dd^2 J=\begin{pmatrix}
		-1 & 0 & 0 & 0 \\
		0 & -1 & 0 & 0 \\
		0 & 0 & 1 & 0 \\
		0 & 0 & 0 & 1
	\end{pmatrix},
	\dd^2 H=\frac{1}{2}\begin{pmatrix}
		0 & 0 & 1 & 0 \\
		0 & 0 & 0 & 1 \\
		1 & 0 & 0 & 0 \\
		0 & 1 & 0 & 0
	\end{pmatrix}\]
	Taking a linear combination and multiplying by the inverse of the matrix of the symplectic form,
	\[\omega_q^{-1}(\lambda\dd^2 J+\mu\dd^2 H)=\begin{pmatrix}
		0 & -1 & 0 & 0 \\
		1 & 0 & 0 & 0 \\
		0 & 0 & 0 & 1 \\
		0 & 0 & -1 & 0
	\end{pmatrix}^{-1}
	\begin{pmatrix}
		-\lambda & 0 & \mu & 0 \\
		0 & -\lambda & 0 & \mu \\
		\mu & 0 & \lambda & 0 \\
		0 & \mu & 0 & \lambda
	\end{pmatrix}=
	\begin{pmatrix}
		0 & -\lambda & 0 & \mu \\
		\lambda & 0 & -\mu & 0 \\
		0 & -\mu & 0 & -\lambda \\
		\mu & 0 & \lambda & 0
	\end{pmatrix}.\]
	The characteristic polynomial is now
	\begin{align*}
		t^4+2\lambda^2t^2-2\mu^2t^2+\lambda^4+\mu^4+2\lambda^2\mu^2 & =(t^2+\lambda^2+\mu^2)^2-4\mu^2t^2 \\
		& =(t^2+\lambda^2+\mu^2+2\mu t)(t^2+\lambda^2+\mu^2-2\mu t) \\
		& =[(t+\mu)^2+\lambda^2][(t-\mu)^2+\lambda^2]
	\end{align*}
	that has again four different roots (maybe not in $\Qp$, but in a degree $2$ extension). Hence, the point is non-degenerate. It is only left to find the matrix $C$ which makes the following equalities hold:
	\[2\dd^2 J=C^T\begin{pmatrix}
		0 & 0 & 0 & 1 \\
		0 & 0 & -1 & 0 \\
		0 & -1 & 0 & 0 \\
		1 & 0 & 0 & 0
	\end{pmatrix}C\]
	and
	\[4\dd^2 H=C^T\begin{pmatrix}
		0 & 1 & 0 & 0 \\
		1 & 0 & 0 & 0 \\
		0 & 0 & 0 & 1 \\
		0 & 0 & 1 & 0
	\end{pmatrix}C.\]
	The matrix we are looking for is
	\[C=\begin{pmatrix}
		-1 & 0 & 0 & 1 \\
		0 & 1 & -1 & 0 \\
		0 & 1 & 1 & 0 \\
		1 & 0 & 0 & 1
	\end{pmatrix}\]
	and we can see that $\omega_q$ in the new basis becomes $(\dd x\wedge\dd\xi+\dd y\wedge\dd\eta)/2$.
\end{proof}

\begin{remark}
	Again, identifying in Proposition \ref{prop:nf-focus} a Hessian with its quadratic form,
	\[\dd^2\tilde{F}\circ\phi=(x\eta-y\xi,x\xi+y\eta).\]
	In the real case the $1/2$ can be eliminated from $\omega$, as in Remark \ref{rem:elliptic}.
\end{remark}

\appendix
\section{Basic $p$-adic theory}\label{app:prelim}

In this appendix we review the basic $p$-adic theory we need in the main part of the paper and derive some results (such as Proposition \ref{prop:initial}), the statements of which we did not find explicitly written elsewhere and which we need in the main part of the paper.

\subsection{Properties of the $p$-adic numbers}

The field of real numbers $\R$ is defined as a completion of $\Q$ with respect to the normal absolute value on $\Q$. Analogously, the field of $p$-adic numbers $\Qp$ can be defined as a completion of $\Q$ with respect to a non-archimedean absolute value. \textit{Throughout this section we fix a prime number $p\in\Z$.}

Following \cite[Definitions 2.1.2 and 2.1.4]{Gouvea}, the \textit{$p$-adic valuation} on $\Z$ is the function
\[\ord_p:\Z\setminus\{0\}\to\Z\]
defined as follows: for each integer $n\in\Z$, $n\ne 0$, let $\ord_p(n)$ be the unique positive integer satisfying
\[n=p^{\ord_p(n)}n',\quad\text{with } p\not|n'.\]
We extend $\ord_p$ to the field of rational numbers as follows: if $x=a/b\in\Q\setminus\{0\}$, then
\[\ord_p(x)=\ord_p(a)-\ord_p(b).\]
Also, for any $x\in\Q$, we define the \textit{$p$-adic absolute value} of $x$ by
\[|x|_p=p^{-\ord_p(x)}\]
if $x\ne 0$, and we set $|0|_p=0$.

One can check that $|\cdot|_p$ is a non-archimedean absolute value:
\begin{itemize}
	\item $|x|_p>0$ for all $x\ne 0$,
	\item $|x+y|_p\le\max\{|x|_p,|y|_p\}$ for all $x,y\in\Q$,
	\item $|xy|_p=|x|_p\,|y|_p$ for all $x,y\in\Q$.
\end{itemize}

\begin{theorem}[{\cite[Theorem 3.2.13]{Gouvea}}]\label{thm:p-adics}
	There exists a field $\Qp$ with a non-archimedean absolute value $|\cdot|_p$, such that the following statements hold.
	\begin{enumerate}
		\item There exists an inclusion $\Q\hookrightarrow\Qp$, and the absolute value induced by $|\cdot|_p$ on $\Q$ via this inclusion is the $p$-adic absolute value.
		\item The image of $\Q$ under this inclusion is dense in $\Qp$ with respect to the absolute value $|\cdot|_p$.
		\item $\Qp$ is complete with respect to the absolute value $|\cdot|_p$.
	\end{enumerate}
	The field $\Qp$ satisfying (1), (2) and (3) is unique up to isomorphism of fields preserving the absolute values.
\end{theorem}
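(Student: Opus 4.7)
The plan is to construct $\Qp$ as the metric completion of $\Q$ with respect to $|\cdot|_p$, following the standard Cauchy-sequence procedure in direct analogy with the construction of $\R$ from $\Q$. First I would form the commutative ring $\mathcal{C}$ of Cauchy sequences $(a_n)_{n \ge 1}$ in $\Q$ (with respect to $|\cdot|_p$) under componentwise addition and multiplication; the fact that $\mathcal{C}$ is closed under these operations follows from the observation that Cauchy sequences are bounded in $|\cdot|_p$, together with the non-archimedean estimates $|a_nb_n - a_mb_m|_p \le \max\{|a_n|_p\,|b_n-b_m|_p,\,|b_m|_p\,|a_n-a_m|_p\}$. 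Let $\mathcal{N} \subset \mathcal{C}$ be the ideal of null sequences, and set $\Qp = \mathcal{C}/\mathcal{N}$.

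Next I would verify that $\mathcal{N}$ is a maximal ideal, so that $\Qp$ is a field. The key lemma is that any $(a_n) \in \mathcal{C} \setminus \mathcal{N}$ is \emph{eventually bounded away from zero}: there exist $\epsilon > 0$ and $N$ with $|a_n|_p \ge \epsilon$ for all $n \ge N$. This uses the Cauchy property: if $(a_n)$ does not tend to $0$, some subsequence has $|a_{n_k}|_p \ge 2\epsilon$, and once $|a_n - a_m|_p < \epsilon$ for $n,m \ge N$ the non-archimedean inequality forces $|a_n|_p \ge \epsilon$ for $n$ large. The sequence $(b_n)$ defined as $0$ for $n<N$ and $1/a_n$ for $n \ge N$ is then Cauchy and produces an inverse modulo $\mathcal{N}$. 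With this in hand, I would extend $|\cdot|_p$ to $\Qp$ by $|[(a_n)]|_p := \lim_n |a_n|_p$ (the limit existing in $\R$ since $(|a_n|_p)$ is a Cauchy sequence of real numbers), check that this is well-defined modulo $\mathcal{N}$, and verify the three properties of a non-archimedean absolute value directly from the definition and the corresponding properties on $\Q$.

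For parts (1) and (2), the diagonal embedding $\iota: \Q \to \Qp$, $a \mapsto [(a,a,\ldots)]$, is clearly a field homomorphism which preserves absolute values, and its image is dense because $[(a_n)] - \iota(a_N)$ has absolute value $\lim_m |a_m - a_N|_p$, which is as small as desired for $N$ large. For part (3), given a Cauchy sequence $(x_k)$ in $\Qp$, density lets me pick $a_k \in \Q$ with $|x_k - \iota(a_k)|_p < p^{-k}$; the sequence $(a_k)$ is then Cauchy in $\Q$, and I would check that $[(a_k)]$ is the limit of $(x_k)$ in $\Qp$.

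Finally, for uniqueness, suppose $K$ is another field satisfying (1)–(3). The identity on $\Q$ extends to a map $\Phi: \Qp \to K$ by sending $[(a_n)]$ to $\lim_n a_n$ computed in $K$; existence of the limit uses completeness of $K$, and well-definedness uses the fact that null sequences go to $0$ in $K$ as well. One checks $\Phi$ is a field homomorphism preserving the absolute value, and surjectivity follows by approximating any element of $K$ by a Cauchy sequence from $\Q$ (using density) and taking the class of that sequence. I expect the main obstacle to be the eventually-bounded-away-from-zero lemma in Step~3, since everything else is essentially formal; one has to resist the temptation to divide by $a_n$ before ensuring such division is legal on a tail, and the argument is where the non-archimedean nature of $|\cdot|_p$ enters in an essential way.
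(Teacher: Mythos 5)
Your proposal is correct and follows essentially the same route as the source the paper cites for this result (Gouv\^ea, Theorem 3.2.13): the paper itself gives no proof, and the standard proof there is exactly this Cauchy-sequence completion, with the quotient by null sequences, the eventually-bounded-away-from-zero lemma to get a field, and the formal density/completeness/uniqueness verifications. Nothing further is needed.
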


Following \cite[Definition 3.3.3]{Gouvea},
the ring of $p$-adic integers $\Zp$ is defined by:
\[\Zp=\{x\in\Qp\mid|x|_p\le 1\}.\]

\begin{proposition}[{\cite[Proposition 3.3.4]{Gouvea}}]\label{prop:exp}
	For any $x\in\Zp$, there exists a Cauchy sequence $\alpha_n$ converging to $x$, of the following type:
	\begin{itemize}
		\item $\alpha_n\in\Z$ satisfies $0\le\alpha_n\le p^n-1$;
		\item for every $n$ we have $\alpha_n\equiv\alpha_{n-1}\mod p^{n-1}$.
	\end{itemize}
	The sequence $(\alpha_n)$ with these properties is unique.
\end{proposition}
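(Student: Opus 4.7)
The plan is to reduce the proposition to a single ``rounding'' lemma: for every $x \in \Zp$ and every $n \ge 0$, there exists a unique integer $\beta_n \in \{0, 1, \ldots, p^n - 1\}$ with $|x - \beta_n|_p \le p^{-n}$. Granting this lemma, I would set $\alpha_n := \beta_n$ and check the three conditions in the statement: the congruence $\alpha_n \equiv \alpha_{n-1} \pmod{p^{n-1}}$ is forced by the uniqueness clause at level $n-1$, since the reduction of $\alpha_n$ modulo $p^{n-1}$ lies in $\{0, \ldots, p^{n-1}-1\}$ and still satisfies $|x - \cdot|_p \le p^{-(n-1)}$. The Cauchy property and convergence $\alpha_n \to x$ are then immediate from the strong triangle inequality together with $|x - \alpha_n|_p \le p^{-n} \to 0$.

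For the existence half of the lemma I would invoke Theorem \ref{thm:p-adics}(2) to choose a rational $r = a/b$, with $\gcd(a,b) = 1$, such that $|x - r|_p \le p^{-n}$. Since $|x|_p \le 1$, the non-archimedean inequality gives $|r|_p \le 1$ and hence $p \nmid b$. Because $\gcd(b, p^n) = 1$, Bezout supplies $c \in \Z$ with $bc \equiv 1 \pmod{p^n}$, from which $|r - ac|_p \le p^{-n}$ and therefore $|x - ac|_p \le p^{-n}$; reducing $ac$ modulo $p^n$ to the standard range gives $\beta_n$. Uniqueness is a one-line observation: any two candidates in $\{0, \ldots, p^n - 1\}$ differ by a multiple of $p^n$ in $\Z$ whose ordinary absolute value is strictly less than $p^n$, hence they are equal.

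Uniqueness of the whole sequence claimed in the proposition then drops out. Starting from any sequence $(\alpha_n)$ satisfying the three stated properties, iterating the congruence condition gives $\alpha_m \equiv \alpha_n \pmod{p^n}$ for all $m \ge n$, so $|\alpha_m - \alpha_n|_p \le p^{-n}$; letting $m \to \infty$ and using convergence to $x$ together with the continuity of $|\cdot|_p$ yields $|x - \alpha_n|_p \le p^{-n}$. The uniqueness clause of the lemma then forces $\alpha_n = \beta_n$ for every $n$.

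The only substantive step is the passage from a rational approximation $r = a/b$ to an \emph{integer} approximation $ac$, where the arithmetic of $\Zp$ (rather than just its topology) enters through the invertibility of $b$ modulo $p^n$. Everything else is routine bookkeeping with the strong triangle inequality, so I do not anticipate any serious obstacle to writing this out cleanly.
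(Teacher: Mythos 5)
Your argument is correct. Note that the paper does not prove this statement at all --- it is quoted verbatim from Gouv\^ea's book as background --- so there is no internal proof to compare against; your reduction to the rounding lemma (unique $\beta_n\in\{0,\ldots,p^n-1\}$ with $|x-\beta_n|_p\le p^{-n}$, obtained by approximating $x$ by a rational $a/b$ with $p\nmid b$ and then inverting $b$ modulo $p^n$) is exactly the standard route taken in the cited reference, and every step checks out.
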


Proposition \ref{prop:exp} implies that any $p$-adic number $a$ can be written uniquely as
$a=\sum_{n=n_0}^{\infty} a_np^n$
where $0\le a_n\le p-1$ and $a_{n_0}>0$, which is called \textit{$p$-adic expansion of $a$}. We have that the absolute value defined in Theorem \ref{thm:p-adics}, $|a|_p$, coincides with $p^{-n_0}$. This motivates to define $\ord_p(a):=n_0$ and call it \textit{order} of $a$.

In the following we will drop the subindex $p$ in $\ord_p$ and $|\cdot|_p$.

The topology of the $p$-adic field is very different from the reals, despite both being completions of the rationals with different metrics.
\begin{theorem}[{\cite[Corollaries 3.3.6 and 3.3.7]{Gouvea}}]\label{thm:topology}
	The following statements hold.
	\begin{itemize}
		\item The $p$-adic metric on $\Qp$ given by $d_p(a,b)=|a-b|$ satisfies the inequality
		$d_p(a,c)\le\max\{d_p(a,b),d_p(b,c)\}.$
		This makes $\Qp$ an \emph{ultrametric space}.
		\item $\Qp$ is totally disconnected, that is, all sets with more than one element are disconnected.
		\item The balls in the ultrametric space $\Qp$ are given by
		\[\mathrm{B}_\epsilon(x_0)=\{x\in\Qp\mid|x-x_0|\le \epsilon\}.\]
		Replacing $x_0$ by any other point in the ball does not change the ball.
		\item All balls are compact and open (in particular, $\Zp$ is compact and open). $\Qp$ is locally compact.
	\end{itemize}
\end{theorem}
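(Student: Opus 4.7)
The plan is to derive each of the four items from the non-archimedean property of the absolute value $|\cdot|_p$ (which was asserted in the paragraph defining it) together with the completeness of $\Qp$ (Theorem \ref{thm:p-adics}) and the explicit $p$-adic expansion (Proposition \ref{prop:exp}).

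First I would prove the ultrametric inequality directly: $d_p(a,c)=|a-c|=|(a-b)+(b-c)|\le\max\{|a-b|,|b-c|\}=\max\{d_p(a,b),d_p(b,c)\}$, where the middle inequality is exactly the non-archimedean property. Next, for the radius-independence of the center in item (3), I would take $y\in\mathrm{B}_\epsilon(x_0)$ and note that for any $z\in\Qp$, the ultrametric inequality yields $|z-y|\le\max\{|z-x_0|,|x_0-y|\}$ and $|z-x_0|\le\max\{|z-y|,|y-x_0|\}$; since $|x_0-y|\le\epsilon$, these two inequalities together show $|z-x_0|\le\epsilon\iff|z-y|\le\epsilon$, so $\mathrm{B}_\epsilon(x_0)=\mathrm{B}_\epsilon(y)$.

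For item (2), I would use item (3) to observe that each ball $\mathrm{B}_\epsilon(x_0)$ is \emph{clopen}: it is open by definition of a metric ball, and its complement is open because if $y\notin\mathrm{B}_\epsilon(x_0)$, then by the center-swapping in item (3) the ball $\mathrm{B}_\epsilon(y)$ is disjoint from $\mathrm{B}_\epsilon(x_0)$ (any common point would force the two balls to coincide). Given any subset $S\subset\Qp$ with distinct points $a\ne b$, choosing $\epsilon$ with $0<\epsilon<|a-b|$ gives $S=(S\cap\mathrm{B}_\epsilon(a))\sqcup(S\setminus\mathrm{B}_\epsilon(a))$, a non-trivial clopen partition, so $S$ is disconnected. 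This also re-proves that all balls are open and now shows they are closed as well, which is half of item (4).

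For the compactness half of item (4), the plan is to identify $\Zp$ with the product $\prod_{n=0}^\infty\{0,1,\ldots,p-1\}$ via the $p$-adic expansion of Proposition \ref{prop:exp}; a careful check shows the topology induced by $|\cdot|_p$ agrees with the product topology (two elements of $\Zp$ lie within distance $p^{-n}$ iff their first $n$ digits agree), so Tychonoff's theorem yields compactness of $\Zp$. Alternatively, and this is what I expect to be the only mildly subtle step, one proves $\Zp$ is totally bounded (the finite set $\{0,1,\ldots,p^n-1\}$ is a $p^{-n}$-net by Proposition \ref{prop:exp}) and complete as a closed subset of the complete space $\Qp$ from Theorem \ref{thm:p-adics}(3), which is equivalent to compactness in a metric space. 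Every ball $\mathrm{B}_{p^{-n}}(x_0)$ is homeomorphic to $\Zp$ via $y\mapsto p^{-n}(y-x_0)$, so all balls are compact, and since every point of $\Qp$ lies in some such ball, $\Qp$ is locally compact.
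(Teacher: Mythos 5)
Your proof is correct. The paper does not actually prove this statement --- it is quoted directly from \cite[Corollaries 3.3.6 and 3.3.7]{Gouvea} --- and your argument is the standard one found there: the ultrametric inequality from the non-archimedean property, center-independence of balls and hence clopenness and total disconnectedness, and compactness of $\Zp$ via total boundedness (the $p^{-n}$-net $\{0,\ldots,p^n-1\}$ from Proposition \ref{prop:exp}) plus completeness (Theorem \ref{thm:p-adics}), transported to arbitrary balls by the affine homeomorphism $y\mapsto p^{-n}(y-x_0)$. The only phrase worth tightening is ``open by definition of a metric ball'': the balls here are \emph{closed} balls, so their openness is not definitional, but it does follow immediately from the center-swapping property you establish in the same breath.
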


\begin{corollary}\label{cor:disjoint}
	An open subset of $\Qp$ is a disjoint union of balls.
\end{corollary}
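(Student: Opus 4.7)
The plan is to exploit the ultrametric property established in Theorem \ref{thm:topology}, specifically two facts about balls in $\Qp$: (i) every point of a ball is a center, and (ii) two balls with nonempty intersection are nested (one contains the other). These both follow immediately from $|a-c|\le \max\{|a-b|,|b-c|\}$. A further useful fact is that the set of possible radii is the discrete set $\{p^n : n \in \Z\}$.

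First, for each $x$ in an open set $U \subset \Qp$, consider the set of integers
\[R_x=\{n\in\Z \mid \mathrm{B}_{p^n}(x)\subset U\}.\]
Since $U$ is open, $R_x$ is nonempty. If $U\ne\Qp$, pick any $y\notin U$; then $\mathrm{B}_{p^n}(x)\not\subset U$ whenever $p^n\ge|x-y|$, so $R_x$ is bounded above. By discreteness of $\{p^n\}$ it has a largest element $n_x$, and I define $B_x:=\mathrm{B}_{p^{n_x}}(x)$, the largest ball centered at $x$ contained in $U$.

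Next I would show that $\{B_x\}_{x\in U}$ consists, after removing duplicates, of pairwise disjoint balls whose union is $U$. Clearly $U=\bigcup_{x\in U}B_x$. If $B_x\cap B_y\ne\varnothing$, pick $z$ in the intersection; by fact (i) we may rewrite $B_x=\mathrm{B}_{p^{n_x}}(z)$ and $B_y=\mathrm{B}_{p^{n_y}}(z)$, so by fact (ii) one contains the other, say $B_x\subset B_y$. But then $B_y$ is a ball inside $U$ containing $x$, which re-centered at $x$ equals $\mathrm{B}_{p^{n_y}}(x)$; by maximality of $n_x$ we must have $n_y\le n_x$, and combined with $B_x\subset B_y$ this forces $n_x=n_y$ and $B_x=B_y$.

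The only case not covered is $U=\Qp$, where $R_x=\Z$ has no maximum. Here I would simply fix any set $S\subset\Qp$ of coset representatives for the quotient group $\Qp/\Zp$ and observe that
\[\Qp=\bigsqcup_{a\in S}(a+\Zp),\]
which is a disjoint union of balls of radius $1$. Strictly speaking, there is no ``main obstacle'' in the proof: the whole argument is just the ultrametric dichotomy applied to an appropriate choice of maximal balls, with the only subtlety being to notice that a maximum of $R_x$ may fail to exist precisely when $U=\Qp$, a case that must be handled separately as above.
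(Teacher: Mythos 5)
Your proof is correct. The paper itself does not give an argument here: its ``proof'' of this corollary is a one-line citation of Theorem \ref{thm:topology} together with \cite[Lemma 1.4]{Schneider}, so you have essentially supplied the content of the cited lemma. Your route --- assigning to each $x\in U$ the maximal ball $B_x=\mathrm{B}_{p^{n_x}}(x)\subset U$ and using the ultrametric facts that every point of a ball is a center and that intersecting balls are nested to conclude that the distinct $B_x$ are pairwise disjoint --- is the standard argument, and all the steps check out: $R_x$ is nonempty because $U$ is open, bounded above by any $n$ with $p^n\ge|x-y|$ for $y\notin U$, and the maximality argument correctly forces $n_x=n_y$ when $B_x\cap B_y\ne\varnothing$. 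You were also right to flag the edge case $U=\Qp$, where no maximal ball exists and one instead takes the cosets of $\Zp$ (or of any $p^n\Zp$); this is a genuine gap in the naive ``maximal ball'' argument that is easy to overlook. What your write-up buys over the paper's citation is a self-contained, explicit canonical decomposition; what it costs is nothing, since the argument is short.
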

\begin{proof}
	This is a consequence of the previous theorem and \cite[Lemma 1.4]{Schneider}.
\end{proof}

The following is an important theorem in $p$-adic algebra. For a polynomial $f(x)$, we denote by $f'(x)$ the formal derivative of $f(x)$.

\begin{theorem}[{Hensel's lifting, \cite[Theorem 3.4.1 and Problem 112]{Gouvea}}]\label{thm:hensel}
	Let $f$ be a polynomial in $\Zp[x]$. Let $\alpha_1$ be a $p$-adic integer, $r=\ord(f(\alpha_1))$ and $s=\ord(f'(\alpha_1))$. If $r>2s$, there exists $\alpha\in\Zp$ such that $\ord(\alpha-\alpha_1)\ge r-s$ and $f(\alpha)=0$.
\end{theorem}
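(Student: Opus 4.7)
The plan is to prove this via $p$-adic Newton iteration, constructing a Cauchy sequence $(\alpha_n)_{n\ge 1}$ in $\Zp$ whose limit is the desired root $\alpha$. Starting from the given $\alpha_1$, I would define inductively
\[\alpha_{n+1}=\alpha_n-\frac{f(\alpha_n)}{f'(\alpha_n)}.\]
The point of the hypothesis $r>2s$ is to guarantee that this iteration is well-defined (since $f'(\alpha_n)\ne 0$ along the way) and that the orders $\ord(f(\alpha_n))$ grow while the orders $\ord(f'(\alpha_n))$ stay constant equal to $s$.

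The heart of the proof is the following inductive claim: for every $n\ge 1$, one has $\alpha_n\in\Zp$, $\ord(f'(\alpha_n))=s$, and $\ord(f(\alpha_n))\ge 2^{n-1}(r-2s)+2s$. To establish this, I would expand $f$ and $f'$ via the finite Taylor expansion in $\Zp[x]$ (which is purely algebraic, so it is available even without any analytic machinery):
\[f(\alpha_n+h)=f(\alpha_n)+f'(\alpha_n)\,h+h^2\,g(\alpha_n,h),\qquad f'(\alpha_n+h)=f'(\alpha_n)+h\,\tilde g(\alpha_n,h),\]
for polynomials $g,\tilde g$ with coefficients in $\Zp$ (so $\ord(g),\ord(\tilde g)\ge 0$ at integer arguments). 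Plugging in $h=-f(\alpha_n)/f'(\alpha_n)$, the first two terms cancel by construction, and the quadratic term has order at least $2(\ord(f(\alpha_n))-s)$; this gives the doubling in the order of $f(\alpha_{n+1})$. For $f'(\alpha_{n+1})$, the second expansion combined with the strict ultrametric inequality shows that, as long as $\ord(h)>s$ (which is exactly where $r>2s$ enters at the base step and is preserved at later steps), the value $\ord(f'(\alpha_{n+1}))=s$ remains unchanged.

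Once this induction is in place, the increments satisfy $\ord(\alpha_{n+1}-\alpha_n)=\ord(f(\alpha_n))-s\to\infty$, so by completeness of $\Zp$ (which is a closed subset of $\Qp$ by Theorem~\ref{thm:topology}), the sequence converges to some $\alpha\in\Zp$. Passing to the limit in $f(\alpha_n)\to 0$ and using continuity of the polynomial $f$ yields $f(\alpha)=0$. For the distance bound, the ultrametric inequality applied to the telescoping sum $\alpha-\alpha_1=\sum_{n\ge 1}(\alpha_{n+1}-\alpha_n)$ gives
\[\ord(\alpha-\alpha_1)\ge\min_{n\ge 1}\ord(\alpha_{n+1}-\alpha_n)=\ord(\alpha_2-\alpha_1)=r-s,\]
which is the claimed estimate.

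The main obstacle is the bookkeeping in the inductive step: one must track simultaneously that $\alpha_n$ stays in $\Zp$ (so that the Taylor expansion has integral coefficients and the order estimates for $g,\tilde g$ are valid), that the order of $f(\alpha_n)$ grows at least linearly (quadratically, in fact), and that the order of $f'(\alpha_n)$ stays pinned at $s$. The delicate point is the last one, because if one ever had $\ord(f'(\alpha_n))>s$, then $\alpha_{n+1}$ could leave $\Zp$ and the whole scheme would collapse; this is exactly why the hypothesis is the strict inequality $r>2s$ rather than $r\ge 2s$, and it is what makes the ultrametric argument for $\ord(f'(\alpha_{n+1}))=\ord(f'(\alpha_n))$ go through at each step.
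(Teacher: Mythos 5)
Your proof is correct: the Newton iteration $\alpha_{n+1}=\alpha_n-f(\alpha_n)/f'(\alpha_n)$, the algebraic Taylor expansion with $\Zp$-coefficients, the induction pinning $\ord(f'(\alpha_n))=s$ while doubling $\ord(f(\alpha_n))-2s$, and the ultrametric telescoping estimate $\ord(\alpha-\alpha_1)\ge\ord(\alpha_2-\alpha_1)=r-s$ all check out. The paper itself gives no proof (it cites Gouv\^ea), and your argument is essentially the standard one found in that reference, so there is nothing to flag.
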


\begin{corollary}\label{cor:hensel}
	If $a,b\in\Zp\setminus p\Zp$ and $k\in\N,k\ge 2$, $a^2\equiv b^2\mod p^k$ if and only if $a\equiv \pm b\mod p^k$ and $p\ne 2$, or $p=2$ and $a\equiv \pm b\mod 2^{k-1}$.
\end{corollary}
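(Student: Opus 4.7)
The plan is to work directly with the factorization $a^2 - b^2 = (a-b)(a+b)$ and to use the multiplicativity of $\ord$ together with the ultrametric inequality. The observation that $a,b \in \Zp \setminus p\Zp$ are units will control the orders of $a+b$ and $a-b$.

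First I would dispatch the reverse implication, since it is routine. For $p \neq 2$, if $a \equiv \pm b \mod p^k$ then $\ord(a \mp b) \ge k$, and hence $\ord(a^2 - b^2) = \ord(a-b) + \ord(a+b) \ge k$. For $p = 2$, if $a \equiv \pm b \mod 2^{k-1}$ then $\ord(a \mp b) \ge k-1$; moreover, since $a$ and $b$ are odd, $a \pm b$ is even, so $\ord(a \pm b) \ge 1$, giving $\ord(a^2 - b^2) \ge k$.

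For the forward implication, I would assume $\ord((a-b)(a+b)) \ge k$ and analyze the two summands separately. For $p \neq 2$, the key remark is that $\ord(a-b)$ and $\ord(a+b)$ cannot both be positive: their sum is $2a$, and since both $2$ and $a$ lie in $\Zp \setminus p\Zp$, we have $\ord(2a) = 0$, whereas the ultrametric inequality would force $\ord(2a) > 0$ if both summands had positive order. Hence one of $\ord(a \pm b)$ is $0$, so the other must be $\ge k$, yielding $a \equiv \pm b \mod p^k$. For $p = 2$, both $a+b$ and $a-b$ are even, so their orders are $\ge 1$; but their sum $2a$ has order exactly $1$, so the ultrametric inequality forces at least one of $\ord(a+b), \ord(a-b)$ to equal $1$. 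The remaining one must therefore be $\ge k-1$, yielding $a \equiv \pm b \mod 2^{k-1}$.

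I do not foresee any real obstacle: the whole argument is a one-line application of the non-archimedean triangle inequality to $2a = (a+b) + (a-b)$, combined with the elementary observation that $2$ is a unit exactly when $p \neq 2$. The mild subtlety to keep track of is the ``off-by-one'' phenomenon in the $p=2$ case, which is precisely why the modulus drops from $p^k$ to $2^{k-1}$; this is exactly the loss coming from the fact that the squaring map on $\Z_2^\times$ has a $2$-power kernel $\{\pm 1\}$ whose elements are congruent modulo $2$ but not modulo $4$.
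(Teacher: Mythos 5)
Your proof is correct, but it takes a genuinely different route from the paper's. The paper proves the forward implication by invoking its Hensel lifting statement (Theorem \ref{thm:hensel}) applied to $f(x)=x^2-a^2$ at $\alpha_1=b$: the lifted root $\alpha$ must be $\pm a$ and lies within distance $p^{-(r-s)}$ of $b$, with $s=\ord(2b)$ accounting for the $p=2$ discrepancy. You instead factor $a^2-b^2=(a+b)(a-b)$ and apply the ultrametric inequality to the identity $(a+b)+(a-b)=2a$, concluding that the two factors cannot both have large order because $\ord(2a)$ is $0$ (resp.\ $1$ when $p=2$); the factor of small order then forces the other to carry essentially all of $\ord(a^2-b^2)$. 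Your argument is more elementary and self-contained --- it needs nothing beyond multiplicativity of $\ord$ and the strong triangle inequality --- and it quietly avoids a small delicacy in the Hensel route, namely verifying the hypothesis $r>2s$ when $p=2$ and $k$ is small (for $p=2$, $s=1$, so one needs $r\ge 3$, whereas the corollary is stated for all $k\ge 2$; those low cases happen to be trivial since odd squares are $1 \bmod 8$, but your approach never has to notice this). Both directions of your argument check out, including the observation that for units $a,b$ the factors $a\pm b$ lie in $\Zp$ (so their orders are nonnegative) and, for $p=2$, are both even, which is exactly where the shift from $2^k$ to $2^{k-1}$ comes from.
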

\begin{proof}
	The implication to the left is obvious if $p>2$. If $p=2$, we have that $b=a+2^{k-1}t$ for some $t$, so
	\[b^2=a^2+2^kat+2^{2k-2}t^2.\]
	
	To prove the other implication, we apply Hensel's lifting to $f(x)=x^2-a^2$ and $\alpha_1=b$. If $p\ne 2$, we have $r=\ord(b^2-a^2)\ge k$ and $s=\ord(2b)=0$, so there is $\alpha$ with $\ord(\alpha-b)\ge k$ and $\alpha^2-a^2=0$. This implies $\alpha=\pm a$, so $\ord(\pm a-b)\ge k$, as we wanted. The case $p=2$ is similar but with $s=1$.
\end{proof}

The following result is a consequence of the absolute value being non-archimedean.

\begin{proposition}[{\cite[Corollary 4.1.2]{Gouvea}}]\label{prop:sum}
	A series in $\Qp$ converges if and only if the sequence of its terms converges to zero.
\end{proposition}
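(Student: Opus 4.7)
The plan is to prove the two directions separately. The nontrivial direction is the one specific to non-archimedean fields, while the other direction holds in any normed field.

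For the ``only if'' direction, suppose the series $\sum_{n=0}^\infty a_n$ converges to some $S\in\Qp$. Let $S_N=\sum_{n=0}^N a_n$ denote the partial sums. Then $a_N=S_N-S_{N-1}$, and since both $S_N$ and $S_{N-1}$ tend to $S$ with respect to $|\cdot|_p$, their difference tends to $0$. This part uses only that $|\cdot|_p$ is an absolute value and does not rely on the non-archimedean property.

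For the ``if'' direction, which is the essential content, suppose that $|a_n|\to 0$. I would show that the sequence of partial sums $(S_N)$ is Cauchy, and then conclude by completeness of $\Qp$ (Theorem \ref{thm:p-adics}(3)). Given $\epsilon>0$, choose $N_0$ such that $|a_n|<\epsilon$ whenever $n>N_0$. For any $M>N>N_0$, the ultrametric inequality (iterated finitely many times) gives
\[
|S_M-S_N|=\left|\sum_{n=N+1}^M a_n\right|\le\max_{N+1\le n\le M}|a_n|<\epsilon.
\]
Hence $(S_N)$ is a Cauchy sequence in $\Qp$, and therefore convergent. The key step, and the main point where the $p$-adic setting differs from the real one, is precisely the upgrade of the triangle inequality to the ultrametric inequality, which allows control of the tail of the series by the supremum of the magnitudes of its terms rather than by their sum. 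There is no serious obstacle beyond recognizing that this ultrametric bound, combined with completeness, is exactly what makes convergence equivalent to the terms tending to zero.
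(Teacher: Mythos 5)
Your proof is correct and is the standard argument for this fact; the paper itself does not prove the proposition but simply cites \cite[Corollary 4.1.2]{Gouvea}, whose proof is exactly the one you give (partial sums are Cauchy by the ultrametric inequality, then invoke completeness of $\Qp$).
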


Now we define some concepts we need concerning the topology of $(\Qp)^n$.
\begin{itemize}
	\item For any $n\in\N$, we define the \textit{$p$-adic norm} on $(\Qp)^n$ by
	\[\|v\|=\max_{1\le i\le n} |v_i|.\]
	\item The \textit{balls} in $(\Qp)^n$ are defined with this norm:
	\[\mathrm{B}_\epsilon(x_0)=\{x\in(\Qp)^n\mid\|x-x_0\|\le \epsilon\}.\]
	The resulting topology in $(\Qp)^n$ is the $n$-th product of the topology in $\Qp$.
	\item For any $n,m\in\N$, the \textit{limit} of a function $f:U\to(\Qp)^m$, where $U$ is an open set in $(\Qp)^n$, at a point $x_0\in \overline{U}$, is equal to $y_0$ if and only if, for any $\epsilon>0$, there is $\delta>0$ such that $f(\mathrm{B}_\delta(x_0)\cap U)\subset \mathrm{B}_\epsilon(y_0)$; we denote this by $\lim_{x\to x_0} f(x)=y_0$.
	\item $f$ is \textit{continuous} at $x_0\in U$ if $\lim_{x\rightarrow x_0} f(x)=f(x_0)$.
	\item $f$ is \textit{continuous} at $U$ if it is continuous in each $x_0\in U$ (this is equivalent to the standard definition of continuous function between two topological spaces).
\end{itemize}

Because of Theorem \ref{thm:topology}, continuous functions look very different from their real counterparts. For example, the functions $x\mapsto \ord(x)$ and $x\mapsto |x|$ are both continuous in $\Qp\setminus\{0\}$, despite having discrete images.

$p$-adic differentiation is defined in analogy to the real case. Let $U\subset(\Qp)^n$ be an open set. (Actually, by Corollary \ref{cor:disjoint}, we can take $U$ to be a ball.)
A function $f:U\to(\Qp)^m$ is \textit{differentiable at $x\in U$} if there is a linear map $\dd f(x):(\Qp)^n\to(\Qp)^m$ such that
\[\lim_{v\rightarrow 0} \frac{\|f(x+v)-f(x)-\dd f(x)(v)\|}{\|v\|}=0.\]

It is easy to check that
If $f:U\to(\Qp)^m$ is differentiable at $x$, then the limit
\[\frac{\partial f}{\partial x_i}(x):=\lim_{t\rightarrow 0}\frac{f(x+t\e_i)-f(x)}{t}\]
exists and
$\dd f(x)(v)=\sum_{i=1}^n\frac{\partial f}{\partial x_i}(x)v_i.$
The derivatives of elementary functions give the same result in the real and $p$-adic cases. For example,
$\frac{\dd}{\dd x}x^n=nx^{n-1}$
and
$\frac{\dd}{\dd x}\sqrt{x}=\frac{1}{2\sqrt{x}}.$
The easiest way of seeing this is just taking the limits:
\[\lim_{t\rightarrow 0}\frac{(x+t)^n-x^n}{t}=\lim_{t\rightarrow 0}(nx^{n-1}+\binom{n}{2}x^{n-2}t+\ldots)=nx^{n-1};\]
\[\lim_{t\rightarrow 0}\frac{\sqrt{x+t}-\sqrt{x}}{t}=\lim_{t\rightarrow 0}\frac{x+t-x}{t(\sqrt{x+t}+\sqrt{x})}=\frac{1}{2\sqrt{x}}.\]

The previous results convey, at a formal level that there is a high degree of similarity between the real and $p$-adic cases. However, upon closer analysis, one realizes that this is not necessarily the case. Indeed, consider the function $f:\Qp\to\Qp$ given by
$f(x)=\sum_{n=\ord(x)}^\infty a_np^{2n}$
where
$x=\sum_{n=\ord(x)}^\infty a_np^n$
is the $p$-adic expansion of $x$. We can check that
$|f(x+t)-f(x)|=p^{-\ord(f(x+t)-f(x))}=p^{-2\ord(t)}=|t|^2$
which implies that the function is continuous, and also that
the function has zero derivative everywhere. In the real case, such a function would necessarily be constant. However, $f$ is not only non-constant, but it is actually injective.

\subsection{$p$-adic initial value problems}

It is not a good idea, at least in principle, to use differentiable functions in general in the context of $p$-adic symplectic geometry: for any differential equation, the solution will not be unique, not even locally, because we could add an injective function with zero derivative to the solution and we will have another solution. The workaround is to restrict to analytic functions.

A \textit{power series} in $(\Qp)^n$ is given by
$f(x)=\sum_{I\in\N^n}a_I(x-x_0)^I$
where $x^I$ means $x_1^{i_1}\ldots x_n^{i_n}$ and $a_I$ are coefficients in $\Qp$. The following result is well-known and will be useful to us later.

\begin{proposition}[{\cite[Proposition 4.2.1]{Gouvea}}]\label{prop:convergence}
	Consider a power series $f$ in one variable in $\Qp$. The convergence radius of the series is given by
	\[\rho=\frac{1}{\limsup\sqrt[i]{|a_i|}}=p^{-r}\quad
	\text{where the convergence order is}\quad
	r=-\liminf\frac{\ord(a_i)}{i}\]
	Then:
	\begin{itemize}
		\item If $\rho=0$ (that is, $r=\infty$), then $f(x)$ converges only when $x=x_0$.
		\item If $\rho=\infty$ (that is, $r=-\infty$), then $f(x)$ converges for every $x\in\Qp$.
		\item If $0<\rho<\infty$ and $\lim_{i\rightarrow\infty}|a_i|\rho^i=0$ (that is, $\lim_{i\rightarrow\infty}\ord(a_i)+ir=\infty$), then $f(x)$ converges if and only if $|x|\le\rho$ (that is, $\ord(x)\ge r$).
		\item If $0<\rho<\infty$ and $|a_i|\rho^i$ does not tend to zero, then $f(x)$ converges if and only if $|x|<\rho$ (that is, $\ord(x)>r$).
	\end{itemize}
\end{proposition}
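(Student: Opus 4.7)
The plan is to reduce everything to Proposition \ref{prop:sum}, which states that in a non-archimedean field a series converges if and only if its general term tends to zero. After translating so that $x_0 = 0$, the convergence of $\sum a_i x^i$ is therefore equivalent to the numerical condition
\[\ord(a_i) + i\,\ord(x) \to \infty \quad \text{as } i \to \infty.\]
I would then compare $\ord(x)$ to $r := -\liminf_i \ord(a_i)/i$ (so $\rho = p^{-r}$ by definition) in three cases.

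If $\ord(x) < r$, the definition of $\liminf$ provides a subsequence $i_k$ with $\ord(a_{i_k})/i_k \to -r$. Writing
\[\ord(a_{i_k}) + i_k\,\ord(x) = i_k\!\left(\frac{\ord(a_{i_k})}{i_k} + \ord(x)\right),\]
the parenthesis is eventually bounded above by a fixed negative constant, so this subsequence tends to $-\infty$; the terms do not go to zero and the series diverges. If instead $\ord(x) > r$, choose $\epsilon > 0$ with $\ord(x) - r - \epsilon > 0$. By the definition of $\liminf$, for $i$ large enough we have $\ord(a_i)/i \ge -r - \epsilon$, hence
\[\ord(a_i) + i\,\ord(x) \ge i\bigl(\ord(x) - r - \epsilon\bigr) \to \infty,\]
and the series converges.

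The remaining boundary case $\ord(x) = r$ is exactly where the proposition splits into two subcases. There the general term $a_i x^i$ has order $\ord(a_i) + i r$, which tends to $\infty$ if and only if $|a_i|\rho^i \to 0$. So the series converges at points with $|x| = \rho$ precisely when this auxiliary condition holds, giving the distinction between the closed disc $|x| \le \rho$ and the open disc $|x| < \rho$.

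The four bullets of the proposition are then pure bookkeeping: $\rho = 0$ (i.e.\ $r = \infty$) forces $\ord(x) < r$ for every $x \neq 0$, so convergence only at $x_0$; $\rho = \infty$ (i.e.\ $r = -\infty$) forces $\ord(x) > r$ everywhere, so entire convergence; and the two intermediate bullets are exactly the inclusion or exclusion of the boundary $|x| = \rho$ according to the behavior of $|a_i|\rho^i$. The only mild subtlety in the plan is the boundary case, where one must not confuse ``$\liminf$ equals $-r$'' with ``limit equals $-r$''; the former still permits the sequence $\ord(a_i) + ir$ to be unbounded above, and it is precisely this extra information that is encoded in the convergence of $|a_i|\rho^i$ to zero. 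The non-archimedean nature of $|\cdot|$ is what makes the argument so clean compared to the real case, since the absolute convergence criterion of Proposition \ref{prop:sum} replaces delicate summation-by-parts type arguments at the boundary.
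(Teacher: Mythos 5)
Your proof is correct. The paper itself does not prove this proposition (it is quoted from Gouv\^ea), but your argument --- reducing everything via Proposition \ref{prop:sum} to the condition $\ord(a_i)+i\,\ord(x-x_0)\to\infty$ and then splitting on $\ord(x-x_0)$ versus $r$ with the $\liminf$ subsequence argument for divergence and the boundary case $\ord(x-x_0)=r$ governed by $|a_i|\rho^i\to 0$ --- is exactly the standard proof, and your closing remark correctly identifies the one subtlety (that $\liminf=-r$ does not pin down the behavior of $\ord(a_i)+ir$ at the boundary).
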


Let $U\subset (\Qp)^n$ be an open set. A function $f:U\to\Qp$ is \textit{analytic} \cite[page 38]{Schneider} if $U$ can be expressed as
$U=\bigcup_{i\in I}U_i$
where $U_i=x_i+p^{r_i}(\Zp)^n$, for some $x_i\in(\Qp)^n$ and $r_i\in\Z$, and there is a power series $f_i$ converging in $U_i$ such that $f(x)=f_i(x)$ for every $x\in U_i$.

\begin{proposition}[$p$-adic analytic initial value problem]\label{prop:initial}
	Let $U,V$ be open subsets of $\Qp$. An initial value problem, of the form
	\[\left\{\begin{aligned}
		&\frac{\dd y}{\dd x}=f(x,y) \\
		&y(x_0)=y_0
	\end{aligned}\right.\]
	where $f:U\times V\to \Qp$ is analytic, $x_0\in U$ and $y_0\in V$, has an analytic solution in a neighborhood of $x_0$. The solution is locally unique among analytic functions, that is, any other solution coincides with it near $x_0$.
\end{proposition}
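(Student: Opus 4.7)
The plan is to construct the solution as a convergent power series in $x$ around $x_0$, following the Cauchy--Kovalevskaya approach but adapted to $p$-adic estimates. By translation we may assume $x_0 = 0$ and $y_0 = 0$. Analyticity of $f$ at the origin gives a convergent power series expansion $f(x,y) = \sum_{i,j \ge 0} a_{ij}\, x^i y^j$ on some polydisc $\{|x|_p \le \rho_1,\; |y|_p \le \rho_2\}$, and in particular a bound $|a_{ij}|_p \le M R^i S^j$ with $R = \rho_1^{-1}$, $S = \rho_2^{-1}$ and $M > 0$.

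First I would produce the unique formal power series solution $y(x) = \sum_{k \ge 1} c_k\, x^k$. Substituting into $y' = f(x,y)$ and equating coefficients of $x^{k-1}$ yields
\[
k\, c_k \;=\; \sum_{i + m_1 + \cdots + m_j = k-1} a_{ij}\, c_{m_1}\cdots c_{m_j},
\]
where the sum ranges over $i,j \ge 0$ and $m_1,\ldots,m_j \ge 1$. This determines $c_k$ uniquely from $c_1,\ldots,c_{k-1}$ and the $a_{ij}$. Local uniqueness of analytic solutions follows immediately, since any other analytic solution has a Taylor expansion at $x_0$ whose coefficients obey the same recursion and must therefore coincide with the $c_k$ on a common neighborhood.

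The central step, and the principal obstacle, is to prove that the formal series converges on some ball around $0$. I would show by induction on $k$ a bound of the form $|c_k|_p \le C\, k\, \beta^{k-1}$ for suitable constants $C, \beta > 0$ (with $\beta$ chosen large enough to dominate $R$ and $1/S$, and $C$ large enough to absorb the base case $|c_1|_p = |a_{00}|_p \le M$). Inserting the inductive hypothesis into the recursion and using the elementary estimate $|1/k|_p \le k$ closes the induction, essentially mimicking the classical method of majorants, but with an extra factor of $k$ arising because division by $k$ is not contractive in the $p$-adic norm. Since $k^{1/k} \to 1$, this gives $\limsup_k |c_k|_p^{1/k} \le \beta$, so by Proposition \ref{prop:convergence} the series $\sum c_k x^k$ defines an analytic function on any ball $|x|_p \le r$ with $r < \beta^{-1}$.

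Finally I would verify that the resulting $y(x)$ actually satisfies the ODE. Shrinking $r$ if necessary so that $|y(x)|_p \le \rho_2$ on the ball, the composition $f(x, y(x))$ is an analytic function there, and by construction its Taylor expansion at $0$ agrees termwise with the formal derivative $y'(x) = \sum k\, c_k\, x^{k-1}$, so $y' = f(x,y)$ holds pointwise on the ball. The main $p$-adic difficulty throughout is that $|1/k|_p$ can be as large as $k$ when $p \mid k$, so the division by $k$ in the recursion no longer shrinks coefficients as it does over $\R$; one must absorb a polynomial-in-$k$ correction into the inductive bound, which is harmless for $\limsup_k |c_k|_p^{1/k}$ but must be tracked carefully throughout the induction.
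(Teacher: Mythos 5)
Your overall route is the same as the paper's: construct the formal power series solution from the coefficient recursion, and deduce local uniqueness from the fact that any analytic solution has the same Taylor coefficients at $x_0$. You are also right to single out convergence of the formal series as the real issue (the paper's own proof passes over this point, simply invoking ``the convergence domain of the series''). However, the convergence argument as you have set it up contains a genuine gap: the induction with hypothesis $|c_k|_p\le C\,k\,\beta^{k-1}$ does not close. Two losses occur, and both are exponential in $k$, not polynomial. First, substituting the hypothesis into the recursion produces the factor $m_1\cdots m_j$ with $m_1+\cdots+m_j=k-1-i$, which can be as large as roughly $3^{k/3}$. Second, and more fundamentally, the correction coming from $|1/k|_p\le k$ is not a one-time factor: it compounds multiplicatively through the recursion. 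Unrolling completely, each term of $c_k$ is an integer times a product of $a_{ij}$'s divided by the product of the labels along a computation tree, and that denominator can be as large as $k!$ (realized by the chain $k,k-1,\dots,1$), so the honest bound carries a factor $|1/k!|_p$, which can equal $p^{\ord(k!)}\approx p^{k/(p-1)}$. Consequently the inductive step yields at best something like $|c_k|_p\le kM(3^{1/3}\beta)^{k-1}$, which is not $\le Ck\beta^{k-1}$ for large $k$: no bound of the form ``polynomial in $k$ times a \emph{fixed} $\beta^{k}$'' can propagate through this recursion.

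The repair is to choose an inductive hypothesis of the correct shape, namely $|c_k|_p\le C\,p^{\ord(k!)}\,\gamma^{k}$. Since $m_1!\cdots m_j!$ divides $(k-1-i)!$, one has $\sum_l\ord(m_l!)\le\ord((k-1)!)$, and together with $|1/k|_p=p^{\ord(k)}$ this closes the induction once $\gamma\ge\max(R,M/C)$ and $SC\le 1$. Because $\ord(k!)\le k/(p-1)$, this gives $\limsup_k|c_k|_p^{1/k}\le p^{1/(p-1)}\gamma<\infty$, hence a positive radius of convergence --- in general strictly smaller than that of $f$, as the example $y'=y$ with solution $\exp$ already shows, so some loss of radius is unavoidable and your target bound was too optimistic. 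An alternative that avoids the recursion entirely is the Lie-series formula $c_n=(D^{n-1}f)(x_0,y_0)/n!$ with $D=\partial_x+f\,\partial_y$: there the only division is the single $n!$, and the remaining coefficients are non-negative integers, hence of $p$-adic absolute value at most $1$. The rest of your argument (uniqueness, and the termwise verification that the sum satisfies the equation) is correct and matches the paper.
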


\begin{proof}
	We may assume without loss of generality (shrinking $U$ if necessary) that $f$ is given by a power series in $U$, centered at $x_0$. Take
	$y(x)=\sum_{i=0}^\infty a_i(x-x_0)^i.$
	The initial value implies that $a_0=y_0$. The differential equations give
	\[\sum_{i=0}^\infty a_ii(x-x_0)^{i-1}=f(x,\sum_{i=0}^\infty a_i(x-x_0)^i).\]
	The degree $k$ part at the left-hand side gives $(k+1)a_{k+1}$ and the right-hand side gives a polynomial in $a_0,\ldots,a_k$. Hence $a_{k+1}$ is uniquely determined from the previous ones. The resulting $y(x)$ is a solution in a neighborhood of the origin (the intersection of $U$ with the convergence domain of the series), and it is locally unique because any other analytic solution would have the same power series around $x_0$ and so it coincides with $y$ near this point.
\end{proof}

\begin{remark}
	Proposition \ref{prop:initial} implies that we can now speak of ``the solution'' of an analytic differential equation, maybe not in the sense of ``the unique solution'', but in the sense of ``the germ of every solution''.
\end{remark}

\subsection{Properties of trigonometric and exponential series on the $p$\--adic field}\label{app:analytic}
In the $p$-adic setting, one defines the following analogs of the well known sine, cosine, and exponential series in the real case.
\[\exp(x)=\sum_{i=0}^\infty \frac{x^i}{i!};\quad
\cos(x)=\sum_{i=0}^\infty \frac{(-1)^ix^{2i}}{(2i)!};\quad
\sin(x)=\sum_{i=0}^\infty \frac{(-1)^ix^{2i+1}}{(2i+1)!}.\]

\begin{figure}
	\includegraphics[scale=0.5]{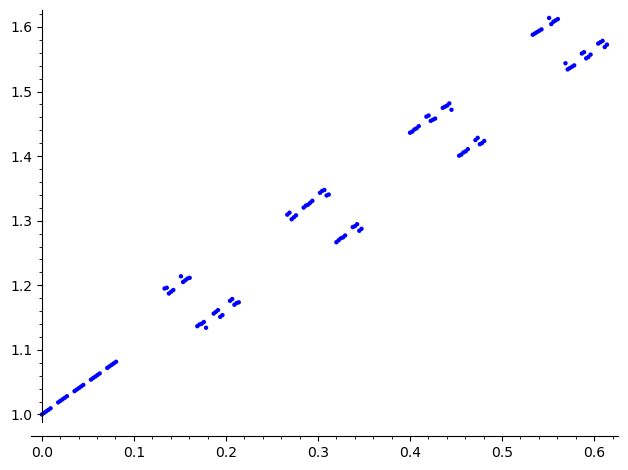}
	\caption{Graphical representation of the exponential series in $\Q_5$. The blue points represent $(x,\exp(x))$ for $x\in 5\Z_5$.}
	\label{fig:exp}
\end{figure}

These expressions, when adequately restricted, define functions which have the following properties (quite different, again, from the real counterparts), which we have not found as explicitly written elsewhere but that for us were essential in Section \ref{sec:circles}, so we prove all of them.

A study of the $p$-adic exponential series may be found in Conrad \cite[Section 4]{Conrad}. The first statement below corresponds to \cite[page 13 and Examples 4.2 and 8.15]{Conrad}. The sine series in the second statement below is mentioned in \cite[page 21]{Conrad} where the author also indicates that the proof that the exponential series converges on $p\mathbb{Z}_p$ for $p= 2$ and on $4\mathbb{Z}_2$ for $p= 2$ also applies to the case of the p-adic sine series; this corresponds to the first part of the second statement below. We have not found the second part of the second statement below or the third statement below explicitly written here or elsewhere.  We do not have a complete explicit description of the image of the cosine series, and hence the claim for this part (third part of statement) refers to an inclusion into a set, not an equality. The proof method we follow below to determine the domain and image of the exponential series is essentially self-contained and the calculations are carried out in a slightly different form (but using essentially the same technique) than how they are presented in \cite{Conrad}.

\begin{proposition}\label{prop:functions} Let $d=2$ if $p=2$, otherwise $d=1$.
	\begin{enumerate}
		\item The domain of the exponential series $\exp$ is $p^d\Zp$. Its image is $1+p^d\Zp$.
		\item The domain of the sine series $\sin$ is $p^d\Zp$. Its image is $p^d\Zp$.
		\item The domain of the cosine series $\cos$ is $p^d\Zp$. Its image is contained in $1+p^{2d-1}\Zp$.
	\end{enumerate}
\end{proposition}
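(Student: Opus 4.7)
The plan is to use Legendre's formula $\ord_p(n!) = (n - s_p(n))/(p-1)$, where $s_p(n)$ denotes the sum of the base-$p$ digits of $n$, as the single estimate driving all three statements. Applied to Proposition \ref{prop:convergence}, it immediately yields the domains: for the exponential series $a_i = 1/i!$ gives $\ord(a_i) = -(i-s_p(i))/(p-1)$, whence $\liminf_i \ord(a_i)/i = -1/(p-1)$, so the convergence order is $r = 1/(p-1)$. Since at the boundary $|a_i|\rho^i$ fails to tend to zero (along the subsequence $i = p^n$, the $s_p(i)$ term is just $1$), strict inequality $\ord(x) > 1/(p-1)$ is needed, which translates to $\ord(x) \ge 1$ when $p \ge 3$ and $\ord(x) \ge 2$ when $p = 2$, i.e., $x\in p^d\Zp$. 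The identical asymptotics for $\ord((2i+1)!)/(2i+1)$ and $\ord((2i)!)/(2i)$ give the same domain for $\sin$ and $\cos$.

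For the image of $\exp$, I would first check the containment $\exp(p^d\Zp) \subseteq 1 + p^d\Zp$ by verifying $\ord(x^i/i!) \ge d$ for $x\in p^d\Zp$ and $i \ge 1$. Using Legendre this reduces to the inequality $(p-1)(i-1)d \ge i - s_p(i)$, which splits cleanly into $p=2,d=2$ (where it becomes $i+s_2(i) \ge 2$) and $p\ge 3,d=1$ (where $(p-1)(i-1)\ge i\ge i-s_p(i)$ for $i\ge 2$). The ultrametric triangle inequality then gives $\exp(x)-1\in p^d\Zp$. For surjectivity, I would introduce the formal logarithm $\log(1+y) = \sum_{i\ge 1}(-1)^{i+1} y^i/i$ and check, by the analogous estimate $id-\ord(i) \ge d$ for $i \ge 1$, that $\log$ maps $1+p^d\Zp$ into $p^d\Zp$. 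The identity $\exp(\log(1+y)) = 1+y$, being a formal power series identity over $\Q$, then lifts to a genuine equality on the region where both series converge absolutely (which they do $p$-adically by Proposition \ref{prop:sum}), proving surjectivity.

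For the sine series, the containment $\sin(p^d\Zp)\subseteq p^d\Zp$ is proved term-by-term in the same fashion. Surjectivity onto $p^d\Zp$ I would obtain via Hensel's lifting (Theorem \ref{thm:hensel}) applied to $f(x) = \sin(x) - y$, starting from $\alpha_1 = y$. The term-by-term estimate shows $\ord(\sin(y)-y) \ge 3d - \ord(3!) \ge 2$, while $\cos(y) \in 1 + p^{2d-1}\Zp$ (the $\cos$ statement, which I prove first) gives $\ord(f'(\alpha_1)) = \ord(\cos(y)) = 0$, so the hypothesis $r > 2s$ holds and $y$ lifts to a root of $f$. For the cosine statement, only the containment $\cos(p^d\Zp) \subseteq 1 + p^{2d-1}\Zp$ is required: it reduces to $\ord(x^{2i}/(2i)!) \ge 2d-1$ for $i\ge 1$, which via Legendre becomes $2id-(2i-s_p(2i))/(p-1)\ge 2d-1$, again verifiable directly in the two cases $p=2$ and $p\ge 3$.

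The main obstacle is the book-keeping of the case-sensitive inequalities involving $s_p(n)$: the compensation between the factor $p-1$ in Legendre's formula and the extra factor of $2$ in $d$ when $p=2$ is what makes every estimate hold uniformly, but making this tight across $\exp$, $\sin$, $\cos$ requires handling $p=2$ and $p\ge 3$ separately at each step. A secondary subtlety is the rigorous justification of $\exp\circ\log = \mathrm{id}$ on $1+p^d\Zp$: I would invoke the fact that an identity of formal power series in $\Q[[y]]$ remains valid after $p$-adic specialization provided the iterated sum converges absolutely, which is guaranteed here by $\log(1+p^d\Zp) \subseteq p^d\Zp$ together with the domain computation for $\exp$.
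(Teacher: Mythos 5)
Your proposal is correct in substance and, for the image computations, takes a genuinely different route from the paper. The paper works directly with $p$-adic digit expansions: it expands $\exp(x)$ as a double sum indexed by tuples $(n_1,\ldots,n_i)$, reduces modulo $p$ (modulo $4$, resp.\ $8$, when $p=2$) to get the containments, and proves surjectivity of $\exp$ and $\sin$ by an explicit digit-by-digit induction that solves for $x_n$ in terms of $y_0,\ldots,y_n$ and $x_0,\ldots,x_{n-1}$. You instead get the containments from term-by-term valuation estimates (your Legendre-formula computations for the domain and for $\ord(x^i/i!)$, $\ord(x^{2i}/(2i)!)$ all check out, including the boundary analysis along $i=p^n$), and you get surjectivity from inverse functions: the $p$-adic logarithm for $\exp$ and a Newton/Hensel iteration for $\sin$. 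This is the more standard textbook route; it avoids the bookkeeping of the paper's digit induction at the cost of needing the composition theorem for $p$-adic power series (which you correctly flag, and which does hold here since $\max_i|y^i/i|=|y|\le p^{-d}$ lies inside the disk of convergence of $\exp$).

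There is, however, one step that is not supported as written: the surjectivity of $\sin$ via Theorem \ref{thm:hensel}. That theorem is stated for \emph{polynomials} $f\in\Zp[x]$, whereas $f(x)=\sin(x)-y$ is a transcendental power series whose coefficients $1/(2i+1)!$ do not even lie in $\Zp$. The conclusion you want is true and your valuation estimates ($r=\ord(\sin(y)-y)\ge 2$, $s=\ord(\cos(y))=0$) are correct, but you need a version of Hensel/Newton for convergent power series: after substituting $x=p^dt$ the series in $t$ has coefficients in $\Zp$ tending to $0$ (a restricted power series), and the Newton iteration $t\mapsto t-f(t)/f'(t)$, or equivalently the contraction $x\mapsto x-\sin(x)+y$ on $p^d\Zp$ (a contraction because $1-\cos(x)\in p^{2d-1}\Zp$), converges to a root. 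Either supply such a lemma explicitly or replace this step by the paper's digit-by-digit construction; as it stands, the citation does not license the application.
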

\begin{proof}
	In order to determine the domains of definition of these functions, it suffices to apply Proposition \ref{prop:convergence}. We prove the claim for the exponential and the other two cases are analogous:
	\[r=-\liminf\frac{\ord(a_i)}{i}=\limsup\frac{\ord(i!)}{i}=\limsup\frac{1}{i}\sum_{j=1}^{\infty}\left\lfloor\frac{i}{p^j}\right\rfloor\]\[\le\limsup\frac{1}{i}\sum_{j=1}^{\infty}\frac{i}{p^j}=\limsup\frac{1}{i}\frac{i}{p-1}=\frac{1}{p-1}.\]
	
	Let $r_i=\lfloor\log_p i\rfloor$, so that the sum in $\ord(i!)$ actually ends in $r_i$.
	\[r=\limsup\frac{1}{i}\sum_{j=1}^{r_i}\left\lfloor\frac{i}{p^j}\right\rfloor\ge\limsup\frac{1}{i}\sum_{j=1}^{r_i}\left(\frac{i}{p^j}-1\right)=\limsup\frac{1}{i}\left(\frac{i}{p-1}-r_i\right)=\frac{1}{p-1}.\]
	
	So we have $r=1/(p-1)$. For $p>2$, this implies that the convergence domain of the series is $p\Zp$. For $p=2$, we have $r=1$, and
	$-\ord(i!)+i=i-\sum_{j=1}^{\infty}\left\lfloor\frac{i}{2^j}\right\rfloor.$
	This equals $1$ for an $i$ that is a power of $2$, so it does not tend to infinity, and the domain is $4\Z_2$.
	
	Now we turn to the image. Let
	$x=\sum_{n=\ord(x)}^\infty x_np^n\in\Qp, 0\le x_n\le p-1$
	be the $p$-adic expansion for $x$, and the same for $y$. If $y=\exp (x)$, we have that $\ord(x)\ge d$ and the series implies that $\ord(y)\ge 0$. Now
	\begin{equation}
		\sum_{n=0}^\infty y_np^n =	\sum_{i=0}^\infty\frac{1}{i!}\left(\sum_{n=d}^\infty x_np^n\right)^i\label{eq:exp}
	\end{equation}
	\[
	= \sum_{i=0}^\infty\frac{1}{i!}\sum_{n_1,\ldots,n_i\ge d} x_{n_1}\ldots x_{n_i}p^{n_1+\ldots+n_i}= \sum_{n=0}^\infty p^n\sum_{\substack{n_1,\ldots,n_i\ge d \\ n_1+\ldots+n_i=n}}\frac{1}{i!}x_{n_1}\ldots x_{n_i}.
	\]
	Taking modulo $p$ at both sides, the only term with $n=0$ in the right corresponds to $i=0$, and it is an empty product that equals $1$, and the rest of terms have $n\ge i>\ord(i!)$, which makes them multiples of $p$. So we have $y_0=1$. If $p=2$, we take modulo $4$. The only terms left on the left-hand side are $1+2y_1$. On the right all terms with $n>0$ have automatically $n\ge 2$ and
	$\ord(i!)\le i-1\le n/2-1\le n-2$,
	so they are multiples of $p^2$ and the right-hand side is still $1$. This implies $y_1=0$. In any case, we have proved that the image is contained in $1+p^d\Zp$.
	
	Conversely, suppose that $y\in 1+p^d\Zp$. Then the construction of $x$ with $y=\exp (x)$ can be done inductively on $n$. Supposing that we already have $x_k$ for $k<n$, we will deduce $x_n$. In order to do this, we take mod $p^{n+1}$ on both sides of \eqref{eq:exp} and get in this way that
	$\sum_{k=0}^n y_kp^k\equiv \sum_{k=0}^np^k\sum_{\substack{n_1,\ldots,n_i\ge d \\ n_1+\ldots+n_i=k}}\frac{1}{i!}x_{n_1}\ldots x_{n_i}.$
	By induction hypothesis, the components with $k<n$ on the left coincide modulo $p^n$ with those on the right, so the expression simplifies to
	\[y_np^n\equiv tp^n+\sum_{\substack{n_1,\ldots,n_i\ge d \\ n_1+\ldots+n_i=n}}\frac{1}{i!}x_{n_1}\ldots x_{n_i}p^n\]
	for some $t$. One term in the right-hand side is exactly $x_np^n$, and the rest only depend on $x_k$ for $k<n$, which we already know. This means that we can obtain a unique value for $x_n$, and the induction step is complete.
	
	The case of the sine series is similar to the exponential, but with $y_0=0$, because there is no term of degree $0$ in the sine series.
	
	For the cosine series, we start in the same way, noticing that
	\[\sum_{n=0}^\infty y_np^n=\sum_{n=0}^\infty p^n\sum_{\substack{n_1,\ldots,n_{2i}\ge d \\ n_1+\ldots+n_{2i}=n}}\frac{(-1)^i}{(2i)!}x_{n_1}\ldots x_{n_{2i}}.\]
	For $p\ne 2$, we take modulo $p$, and obtain again $y_0=1$, as we want. For $p=2$, we take modulo $8$. At the right, all terms with $i\ge 1$ have $n\ge 4$ and
	$\ord(2i!)\le 2i-1\le n/2-1\le n-3.$
	This means only $1$ survives from this sum, and $y_0=1,y_1=y_2=0$, as we wanted.
\end{proof}

\section{$p$-adic analytic functions, vector fields and forms}\label{app:vector}
Throughout this section $p$ is a fixed prime number. The content of this section is directly analogous to the real case, we include it here for completeness and also because it gives us the chance to discuss some peculiarities of the $p$-adic case which do not appear in the real case.

Given a $p$-adic analytic manifold $M$, a function $f:M\to\Qp$ is ($p$-adic) \textit{analytic} \cite[page 49]{Schneider} if it is analytic as a map between manifolds, that is, for the charts $\phi$ of $M$, $f|_{U_\phi}\circ \phi^{-1}$ is analytic on $\phi(U_\phi)$. Let $\Omega^0(M)$ be the space of analytic maps $M\to\Qp$.

A \textit{tangent vector} to $q\in M$ is a linear map $v:\Omega^0(M)\to\Qp$ such that
$v(fg)=v(f)g(q)+f(q)v(g)$
for all $f,g\in\Omega^0(M)$. Let $\mathrm{T}_qM$ be the space of tangent vectors to $q$. If $M$ is a $p$-adic analytic manifold, then $\mathrm{T}M$ has also the structure of an analytic manifold. An \textit{analytic vector field} on $M$ is an analytic map $M\to \mathrm{T}M$ that assigns a tangent vector to each point, or equivalently, a linear map $X:\Omega^0(M)\to\Omega^0(M)$ such that
$X(fg)=X(f)g+fX(g)$
for all $f,g\in\Omega^0(M)$. Let $\X(M)$ be the space of vector fields in $M$. An \textit{analytic $k$-form} in $M$ is a linear antisymmetric map
$\alpha:\X(M)^k\to\Omega^0(M).$
Let $\Omega^k(M)$ be the space of $k$-forms in $M$.

The \textit{pullback} $F^*(f)$ of $f\in\Omega^0(N)$ by $F$ and the \textit{push-forward} $F_*(v)$ of a vector $v\in \mathrm{T}_qM$ are defined exactly as in the real case. Similarly, if $F$ is bi-analytic, the \textit{push-forward} of a vector field $X\in \X(M)$ is defined as in the real case and denoted by $F_*(X)\in\X(N)$. Similarly for the \textit{pullback} of a form $\alpha\in\Omega^k(N)$, denoted as usual by $F^*(\alpha)\in\Omega^k(M)$.

In the linear case where $M$ is an open subset of $(\Qp)^n$, as $(\Qp)^n$ is paracompact, analytic functions can be given by a family of power series, each one converging in an element $U$ of a partition of $M$ in open sets (actually, balls):
\[f(x_1,\ldots,x_n)=\sum_{I\in\N^n}a_I(x-x_0)^I\]
for any $x_0\in U$ and some coefficients $a_I$. This allows us to define the vector field $\partial/\partial x_i\in\X(M)$, for $M$ open in $(\Qp)^n$ is given by
\[\frac{\partial}{\partial x_i}(f)=\frac{\partial f}{\partial x_i}=\sum_{I\in\N^n}a_Ii_j(x-x_0)^{I_j}\]
for $f\in\Omega^0(M)$, where $I=(i_1,\ldots,i_n)$ and $I_j$ is defined as $(i_1,\ldots,i_j-1,\ldots,i_n)$.
It follows that for any function $f\in\Omega^0(M)$ and $x_0\in M$, there are functions $g_i\in\Omega^0(M)$ such that
\begin{equation}\label{eq:decomp}
	f(x)=f(x_0)+\sum_{i=1}^n(x_i-x_{0i})g(x)
\end{equation}
and $g(x_0)=\frac{\partial f}{\partial x}(x_0)$. Also,
for any $X\in\X(M)$ and $f\in\Omega^0(M)$, we have that
$X(f)=\sum_{i=1}^n X(x_i)\frac{\partial f}{\partial x_i}$. This says that the vector fields $\partial/\partial x_i$ form a basis of the space $\X(M)$, or locally, that the vectors $(\partial/\partial x_i)_q$ form a basis of the vector space $\mathrm{T}_q(M)$.

In the real case, the proof of the formula \eqref{eq:decomp} is usually done by a method involving integrals (as for example in \cite[Theorem 3.4]{GamRuiz}). One has to be careful if one wants to derive it for $p$-adic smooth functions (which we have not defined, since we are restricting to the analytic case, but the definition is the natural one) because it is more delicate to work with integrals. However, the formula can still be derived without integrals, by using induction on $n$. Supposing it is true for $n$, to prove it for $n+1$ we only need to find $g_{n+1}$ such that
\[f(x_1,\ldots,x_n,x_{n+1})=f(x_1,\ldots,x_n,x_{0,n+1})+(x_{n+1}-x_{0,n+1})g_{n+1}(x_1,\ldots,x_n,x_{n+1}).\]
The formula already gives the value of $g_{n+1}$ for $x_{n+1}\ne x_{0,n+1}$. For $x_{n+1}=x_{0,n+1}$ we take $g_{n+1}$ to be the partial derivative of $f$ with respect to $x_{n+1}$ in those points. By smoothness of $f$, this function is also continuous on those points, and in fact smooth.

Finally, if $M\subset (\Qp)^n$ and $f\in\Omega^0(M)$, the differential form $\dd f$ is defined as the map sending a vector field $X$ to $X(f)$. In particular, if
\[X=\sum_{i=1}^n f_i\frac{\partial}{\partial x_i},\quad f_i:M\to\Qp,\]
we have $\dd x_i(X)=X(x_i)=f_i$. Hence, the $1$-forms $\dd x_i$ are a basis of $\Omega^1(M)$, dual to the basis of $\X(M)$.

All of the previous definitions generalize to the context of $p$-adic analytic manifolds. Indeed, let $M$ be a $p$-adic analytic manifold and $U=U_\phi$ an open set of $M$. The vector field $\partial/\partial x_i\in\X(U)$ is defined as the push-forward by $\phi$ of the corresponding vector field in $(\Qp)^n$, and the $1$-forms $\dd x_i\in\Omega^1(U)$ are defined as the pullback of the corresponding forms in $(\Qp)^n$. (With this definition, the vector fields $\partial/\partial x_i$ and the $1$-forms $\dd x_i$ are again bases of their respective spaces.)

The wedge operation $\wedge$ is defined as usual, and similarly for the differential operator
\[\dd(f\cdot \dd x_I):=\dd f\wedge\dd x_I,\]
extending linearly to all forms, where $\dd x_I$ is shorthand for $\dd x_{i_1}\wedge\ldots\wedge\dd x_{i_k}$. A form is \textit{closed} if its differential is $0$. Also, given a $k$-form $\omega$ and a vector field $X$, $\imath(X)\omega$ is defined as usual.

\section{$p$-adic hamiltonian actions}\label{app:actions}
In Sections \ref{sec:circ-oscillator} and \ref{sec:spin} of the paper we discuss Hamiltonian $(\mathrm{S}_p^1)$-actions, which are essential to understand the Jaynes-Cummings model. In this appendix we briefly recall the concept of $p$-adic Hamiltonian action.

\begin{definition}
	A \textit{$p$-adic analytic Lie group} is a group endowed with a structure of $p$-adic analytic manifold that makes the group operations analytic. The Lie algebra associated to a group is given, as usual, by the left-invariant vector fields, identified by tangent vectors at the identity.
\end{definition}

The first example is $\Qp$, whose Lie algebra is $\Qp$ itself with the Lie bracket equal to $0$. Unlike real Lie groups, we can restrict this: $\Zp$ is still a group, and it contains all the elements near the identity of $\Qp$, hence it has the same Lie algebra, and the same applies to $p^r\Zp$. (Note that the Lie algebra still characterizes locally the group.)

It is a known result about $p$-adic Lie groups that every $p$-adic Lie group is paracompact \cite[Corollary 18.8]{Schneider}. Hence (Theorem \ref{thm:paracompact}) it is an ultrametric space.

In what follows let $G$ be a $p$-adic Lie group with Lie algebra $\g$ and $(M,\omega)$ a $p$-adic symplectic manifold. We have the canonical actions of $G$ in itself:
\[\mathrm{L}_g(h):=gh, \mathrm{R}_g(h):=hg\]
By slight abuse of notation, we call (as usual when speaking of Lie algebras)
\[g\xi:=(\mathrm{L}_g)_*(\xi), \xi g:=(\mathrm{R}_g)_*(\xi)\]
for $g\in G$ and $\xi\in\g$. (These two are tangent vectors at $g$.)

\begin{definition}
	Let $G$ be a $p$-adic analytic Lie group with Lie algebra $\g$ and $(M,\omega)$ a $p$-adic analytic symplectic manifold. A $G$-action $G\times M\to M$ is \textit{analytic} if it is analytic as a map between the manifolds $G\times M$ and $M$. Given such an action, for $g\in G$, we denote by $\psi_g$ the map from $M$ to $M$ sending $q$ to $g\cdot q$. The action is \textit{symplectic} if, for any $g\in G$, $\psi_g$ is a symplectomorphism.
	
	Given a symplectic action of $G$ over $M$, and a vector $\xi\in\g$, we define a vector field $X_\xi$ assigning to each point $q\in M$ the push-forward of $\xi$ by the application $g\mapsto g\cdot q$, that is,
	\[X_\xi(q)(f)=\xi(g\mapsto f(g\cdot q))\]
	for $f\in\Omega^0(M)$.
\end{definition}

\begin{definition}\label{def:momentum-map}
	Let $G$ be a $p$-adic analytic Lie group with Lie algebra $\g$ and $(M,\omega)$ a $p$-adic analytic symplectic manifold. We say that the action is \textit{weakly Hamiltonian} if, for any $\xi\in\g$, the induced vector field $X_\xi$ is Hamiltonian with Hamiltonian function $\mu_\xi:M\to\R$. The \textit{momentum map} of a weakly Hamiltonian action is the map $\mu:M\to\g^*$ defined by $\mu_\xi(q)=\langle\mu(q), \xi\rangle$ for all $\xi\in\g$. The action is called \textit{Hamiltonian} if, for all $\xi\in\g$ and $g\in G$,
	\[\mu_\xi\circ\psi_g=\mu_{g^{-1}\xi g}.\]
	(Note that if $\xi$ is a tangent vector at the identity, $\xi g$ is a tangent vector at $g$ and $g^{-1}\xi g$ is again a tangent vector at the identity.)
\end{definition}

\begin{remark}
	In terms of momentum map, the condition for the action to be Hamiltonian is written as $\mu(g\cdot q)=g\mu(q)g^{-1}$.
	In the well studied case of $G$ being a compact connected Abelian Lie group (a torus) acting on a (real) symplectic manifold, a weakly Hamiltonian action is Hamiltonian if and only if $\mu$ is $G$-invariant. If this torus acts on a compact $M$, which is the setting of the well known convexity theorem of Atiyah \cite{Atiyah} and Guillemin-Sternberg \cite{GuiSte}, then being weakly Hamiltonian is equivalent to being Hamiltonian.
\end{remark}

\end{document}